\theoremstyle{plain}
\newtheorem{theorem}{Theorem}[section]
\newtheorem{proposition}[theorem]{Proposition}
\newtheorem{lemma}[theorem]{Lemma}
\theoremstyle{definition}
\newtheorem{definition}[theorem]{Definition}
\newtheorem{remark}[theorem]{Remark}
\newtheorem{example}[theorem]{Example}
\newtheorem{examples}[theorem]{Examples}
\numberwithin{theorem}{section}
\numberwithin{equation}{section}
\newcommand{\average}{{\mathchoice {\kern1ex\vcenter{\hrule height.4pt
width 6pt depth0pt} \kern-9.7pt} {\kern1ex\vcenter{\hrule
height.4pt width 4.3pt depth0pt} \kern-7pt} {} {} }}
\def\R{\mathbb{R}}
\def\C{\mathbb{C}}
\def\N{\mathbb{N}}
\newcommand{\scrS }{\mathscr{S}}
\renewcommand{\k}{\kappa}
\newcommand{\tk}{\tilde{\kappa}}
\renewcommand{\H}{{\mathcal H}}
\newcommand{\Bquad}{{Q}}
\newcommand{\bquad}{{q}}
\newcommand{\eps}{\varepsilon}
\let \Re \relax
\DeclareMathOperator{\Re}{Re}
\let \Im \relax
\DeclareMathOperator{\Im}{Im}
\newcommand{\LS}{Lopatinski\u{\i}-\v{S}apiro\xspace}
\newcommand{\y}{\varrho}
\newcommand{\tchi}{\tilde{\chi}}
\newcommand{\nhd}{neighborhood\xspace}
\newcommand{\bld}[1]{\mbox{\boldmath $#1$}}
\newcommand{\U}{\mathscr U}
\newcommand{\br}{\ensuremath{_{|x_d=0^+}}}
\newcommand{\bd}{\ensuremath{_{|\partial\Omega}}}
\newcommand{\ovl}[1]{\overline{#1}}
\newcommand{\Rdp}{\R^d_+}
\newcommand{\Op}{\ensuremath{\mathrm{Op}}}
\newcommand{\Opt}{\ensuremath{\mathrm{Op}_{\mbox{\tiny ${\mathsf T}$}}}}
\newcommand{\suff}{sufficiently\xspace}
\DeclareMathOperator{\trace}{tr}
\DeclareMathOperator{\range}{Ran}
\newcommand{\E}{\ensuremath{\mathcal E}}
\newcommand{\transp}{\ensuremath{\phantom{}^{t}}}
\newcommand{\Norm}[2]{{\| #1 \|}_{#2}}
\newcommand{\bigNorm}[2]{{\big\| #1 \big\|}_{#2}}
\newcommand{\norm}[2]{{|#1|}_{#2}}
\newcommand{\Normsc}[2]{{\| #1 \|}_{#2, \tau}}
\newcommand{\normsc}[2]{{| #1 |}_{#2, \tau}}
\newcommand{\bignormsc}[2]{{\big| #1 \big|}_{#2, \tau}}
\newcommand{\inp}[2]{( #1, #2 )} 
\newcommand{\biginp}[2]{\big( #1, #2 \big)}
\newcommand{\dup}[2]{\langle #1 , #2 \rangle}
\newcommand{\lsc}{\lambda_\tau}
\newcommand{\lsct}{\lambda_{\mbox{\tiny ${\mathsf T}$},\tau}}
\newcommand{\Lsct}{\Lambda_{\mbox{\tiny ${\mathsf T}$},\tau}}
\newcommand{\Ssc}{S_\tau}
\newcommand{\Psisc}{\Psi_\tau}
\newcommand{\Dsc}{\mathscr{D}_{\tau}}
\newcommand{\Ssct}{S_{\mbox{\tiny ${\mathsf T}$},\tau}}
\newcommand{\Psisct}{\Psi_{\mbox{\tiny ${\mathsf T}$},\tau}}
\newcommand{\Dsct}{\mathscr{D}_{\mbox{\tiny ${\mathsf T}$},\tau}}
\newcommand{\Psig}{P_{\sigma}}
\newcommand{\psig}{p_{\sigma}}
\newcommand{\Qsig}{Q_{\sigma}}
\newcommand{\qsig}{q_{\sigma}}
\newcommand{\Pconj}{P_{\sigma,\varphi}}
\newcommand{\pconj}{p_{\sigma,\varphi}}
\newcommand{\Qconj}{Q_{\sigma,\varphi}}
\newcommand{\qconj}{q_{\sigma,\varphi}}
\newcommand{\Qs}{Q_{s}}
\newcommand{\qs}{q_{s}}
\newcommand{\Qa}{Q_{a}}
\newcommand{\qa}{q_{a}}
\newcommand{\un}[1]{\Sigma_{#1}}
\newcommand{\z}{\mathbf{z}}
\newcommand{\Con}{\ensuremath{\mathscr C}}
\newcommand{\Cinf}{\ensuremath{\Con^\infty}}
\newcommand{\Cinfc}{\ensuremath{\Con^\infty_c}}
\newcommand{\Cbarc}{\ensuremath{\overline \Con^\infty}_c}
\newcommand{\Sbarp}{\ensuremath{\overline{\mathscr{S}}(\Rdp)}}
\newcommand{\Pell}{\mathsf P_0}
\newcommand{\csp}{\gamma}
\newcommand{\scrO}{\mathscr O}
\DeclareMathOperator{\id}{Id}
\DeclareMathOperator{\supp}{supp}
\DeclareMathOperator{\rank}{rank}
\DeclareMathOperator{\dist}{dist}
\DeclareMathOperator{\ind}{ind}
\DeclareMathOperator{\Span}{Span}
\begin{document}
\title[]{Stabilization of the damped plate equation under general
  boundary conditions}

\author{Jérôme Le Rousseau}
\address{Universit\'e Sorbonne Paris Nord, Laboratoire Analyse, G\'eom\'etrie et Applications, LAGA, CNRS, UMR 7539, F-93430, Villetaneuse, France.}
\email{jlr@math.univ-paris13.fr}

\author{Emmanuel Wend-Benedo Zongo}
\address{
Dip. di Matematica, Via Saldini 50, 20133 Milano, Italy
and
 Universit\'e Sorbonne Paris Nord, Laboratoire Analyse, G\'eom\'etrie et Applications, LAGA, CNRS, UMR 7539, F-93430, Villetaneuse, France.}
\email{wend.zongo@unimi.it}

\date{today}
\begin{abstract}
We consider a damped plate equation on an open bounded
subset of $\R^d$, or a smooth manifold, with boundary, along with
general boundary operators fulfilling the \LS condition. 
The damping term acts on a region without
imposing a geometrical condition. We derive a
resolvent estimate for the generator of the damped plate semigroup
that yields a logarithmic decay of the energy of the solution to the
plate equation. The resolvent estimate is a consequence of a Carleman
inequality obtained for the bi-Laplace operator involving a spectral
parameter under the considered boundary conditions. The derivation
goes first though microlocal estimates, then local estimates, and
finally a global estimate.\\

\noindent
Keywords: Carleman estimates; stabilization; \LS condition; resolvent estimate.
\end{abstract}

\maketitle
\tableofcontents

%%%%%%%%%%%%%
% Section          %
%%%%%%%%%%%%%
\section{Introduction}

 Let $\Omega$ be a bounded connected open subset in $\R^d$, or a smooth bounded connected $d$-dimensional manifold, with smooth
boundary $\partial\Omega$, where we  consider a damped plate equation 
\begin{equation}\label{eq: damped plate equation - intro}
\begin{cases}
\partial^2_t y +\Delta^2y+\alpha(x)\partial_t y =0 
&(t,x) \in  \R_+ \times\Omega,\\
B_1 y_{|\R_+ \times\partial \Omega}= B_2y_{|\R_+ \times\partial \Omega}=0,\\
y_{|t=0}=y^0, \ \ \partial_t y_{|t=0}=y^1,
\end{cases}
\end{equation}
where $\alpha\geq 0$ and 
where $B_1$ and $B_2$ denote two boundary differential operators.
The damping property is provided by $+\alpha(x)\partial_t$ thus
referred as the {\em damping term}.
As introduced below $\Delta^2$ is the bi-Laplace operator, that is,
the square of the Laplace operator. Here, it is associated with a
smooth metric $g$ to be introduced below; it is thus rather the
bi-Laplace-Beltrami operator. This equation appears in models for the description of mechanical vibrations of
thin domains. The two boundary  operators are of $k_j$, $j=1,2$ respectively, yet at most of order $3$ in the direction
 normal to the boundary. They 
are chosen such that the two following properties are fulfilled: 
\begin{enumerate}
  \item the \LS boundary condition holds (this condition is fully
    described in what follows);
  \item along with the homogeneous boundary conditions given above the
    bi-Laplace operator is self-adjoint and nonnegative. This guarantees the
   conservation of the energy of the solution in the case of a
    damping free equation, that is, if $\alpha=0$.  
\end{enumerate}
We are concerned with the decay of the energy of the solution in the
case $\alpha$ is not identically zero. We shall prove that the damping
term yields a stabilization property: the energy decays to zero as
time $t \to \infty$ and we shall prove that the decay rate is at least logarithmic.

\subsection{On the stabilization of waves and plates}
If no geometrical condition is imposed on the damping region, here as
given by the
support of the function $\alpha$ one cannot expect a exponential decay
rate as observed in the case this region fulfills the celebrated
Rauch-Taylor condition, often coined GCC for geometrical control
condition \cite{RT:74,BLR:92}. The GCC expresses that all rays of
geometrical optics reach the damping region $\{\alpha>0\}$ in a finite time. 
Here, with no such condition, a logarithmic decay rate is quite
natural if one has in mind the equivalent result obtained for the wave
equation in the works of G.~Lebeau~\cite{Lebeau:96} and G.~Lebeau and
L.~Robbiano~\cite{LR:97}. We also refer to Chapter~6 in
\cite{JGL-vol1} and Chapters~10 and 11 in \cite{JGL-vol2} where the
result of \cite{Lebeau:96,LR:97} are reviewed and generalized in
particular on the framework of general boundary conditions as those we
consider here. See also the work of P.~Cornilleau and L.~Robbiano for
a quite exotic boundary condition, namely the Zaremba condition.

\bigskip
Among the existing results available in the literature for plate type
equations, many of them concern the ``hinged'' boundary conditions,
that is, $u_{|\partial \Omega}=0$ and $\Delta u_{|\partial \Omega}=0$.
We first mention these result. An important result obtained in
\cite{Jaffard:90} on the controllability of the plate equation on a
rectangle domain with an arbitrarily small control domain. The method
relies on the generalization of Ingham type inequalities in
\cite{Kahane:62}. An exponential stabilization result, in the same
geometry, can be found in \cite{RTT:06}, using similar techniques. In
\cite{RTT:06} the localized damping term involves the time derivative
$\partial_ty$ as in \eqref{eq: damped plate equation - intro}. Interior nonlinear
feedbacks can be used for exponential stabilization
\cite{Tebou:09}. There, feedbacks are localized in a neighborhood of
part of the boundary that fulfills multiplier-type conditions.  A
general analysis of nonlinear damping that includes the plate equation
is provided in \cite{AA:11} under multiplier-type conditions.  For
``hinged'' boundary conditions also, with a boundary damping term, we
cite \cite{ATT:07} where, on a square domain, a necessary and
sufficient condition is provided for exponential stabilization.

Note that under ``hinged'' boundary conditions the bi-Laplace operator
is precisely the square of  the Dirichlet-Laplace operator. This makes
its mathematical analysis much easier, in particular where using
spectral properties, and this explains why this type of boundary
conditions appears very frequently in the mathematical literature.  

A more challenging type of boundary condition is the so-called 
``clamped'' boundary conditions,
that is, $u_{|\partial \Omega}=0$ and $\partial_\nu  u_{|\partial \Omega}=0$,
for which few results are available. We
cite \cite{Alabau:06}, where a general analysis of nonlinearly damped
systems that includes the plate equation 
under multiplier-type conditions is provided. In \cite{APT:17}, the
analysis of discretized general nonlinearly damped system is also
carried out, with the plate equation as an application.  In \cite{Tebou:12}, a nonlinear
damping involving the p-$Laplacian$ is used also under multiplier-type
conditions. In \cite{DS:15}, an
exponential decay is obtained in the case of ``clamped'' boundary
conditions, yet with a damping term of the Kelvin-Voigt type, that is
of the form $\partial_t \Delta y$, that acts over the whole domain.
In the case of the ``clamped'' boundary conditions,  the logarithmic-type stabilization
result we obtain here was proven in \cite{JL}. The present article
thus stands as a generalization of the stabilization result of
\cite{JL} if considering a whole class of boundary condition instead
of specializing to a certain type. The present work contains in
particular also the case of ``hinged'' boundary conditions.

\subsection{Method}
Following the works of \cite{Lebeau:96,LR:97,JL} we obtain a
logarithmic decay rate for the energy of the solution to~\eqref{eq: damped plate equation - intro} 
by means of a resolvent estimate
for the generator of the semigroup associated with the damped plate
equation~\eqref{eq: damped plate equation - intro}. This estimate follows from a Carleman inequality
derived for the operator $\Psig=\Delta^2 - \sigma^4$ where $\sigma$ is a
spectral parameter for the   generator of the semigroup.

Our first goal is thus the derivation of the Carleman inequality for
the operator $\Psig$ near the boundary under the boundary conditions
given by $B_1$ and $B_2$.

Then, from the Carleman estimate one deduces an observation inequality
for the operator $\Psig$ in the case of the prescribed boundary
conditions.  The resolvent estimate then follows from this observation inequality.

\subsection{On Carleman estimates}
A Carleman estimate is  a weighted {\em a priori} inequality for the solutions of a partial partial differential equation, where the weight is of exponential type. 
For instance, for a partial differential operator $P$ away from the boundary, it takes the form 
\begin{align*}
\Norm{e^{\tau\varphi}u}{L^2(\Omega)}
  \leq  C \Norm{e^{\tau\varphi} P u}{L^2(\Omega)},
\end{align*}
for $u \in \Cinfc(\Omega)$ and $\tau\geq\tau_0$ for $\varphi$ well
chosen and  some $\tau_0$ chosen \suff large. The exponential weight
function involves a parameter $\tau$ that can be taken as large as
desired, making Carleman inequalities very powerful estimates.
Additional terms on the left-hand side of the inequality can be
obtained, including higher-order derivatives of the function $u$,
depending of course of the order of the operator $P$ itself. 
For a second-order elliptic operator such as the Laplace operator one
has   
\begin{align*}
 \tau^{3/2}\Norm{e^{\tau\varphi}u}{L^2(\Omega)}
  + \tau^{1/2}\Norm{e^{\tau\varphi} D u}{L^2(\Omega)}
  + \tau^{-1/2}\sum_{|\beta|=2}\Norm{e^{\tau\varphi} D^\beta u}{L^2(\Omega)}
  \leq  C \Norm{e^{\tau\varphi} \Delta u}{L^2(\Omega)},
\end{align*}
under the so-called sub-ellipticity condition; see Chapter~3 in
\cite{JGL-vol1}. Note that the power of the large parameter $\tau$
adds to $3/2$ with the order of the derivative in each term on the
left-hand side. In fact, in the calculus used to derive such estimates
one power of $\tau$ is equivalent to a derivative of order one. Thus
with this $3/2$ compared with the order two of the operator one says
that one looses a half-derivative in the estimate.

This type of estimate was used for the first time by T.~Carleman
\cite{Carleman:39} to achieve uniqueness properties for the Cauchy
problem of an elliptic operator. Later, A.-P.~Calder\'on and
L.~H\"ormander further developed Carleman's method
\cite{Calderon:58,Hoermander:58}.  To this day, the method based on Carleman estimates
remains essential to prove unique continuation properties;
see for instance \cite{Zuily:83} for an overview.  On such questions,
more recent advances have been concerned with differential operators
with singular potentials, starting with the contribution of D.~Jerison
and C.~Kenig \cite{JK:85}. There, Carleman estimates rely on
$L^p$-norms rather than $L^2$-norms as in the estimates
above. The proof of such $L^p$ Carleman estimates is very
delicate. The reader is also referred to \cite{Sogge:89,KT:01,KT:02,DDSF:05,KT:05}.
In more recent years, the field of applications of Carleman estimates
has gone beyond the original domain; they are also used in the study
of:
\begin{itemize}
\item Inverse problems, where Carleman estimates are used to obtain
  stability estimates for the unknown sought quantity (for instance
  coefficient, source term) with respect to norms on measurements
  performed on the solution of the PDE, see for instance
  \cite{BK:81,Isakov:98,Kubo:00,IIY:03}; Carleman estimates are also
  fundamental in the construction of complex geometrical optic solutions
  that lead to the resolution of inverse problems such as the Calder\'on
  problem with partial data \cite{KSU:07,DKSU:09}.
  
\item Control theory for PDEs;  Carleman estimates yield the null controllability of linear parabolic equations
\cite{LR:95} and the null controllability of classes of semi-linear
parabolic equations \cite{FI:96,Barbu:00,FZ:00}.
They can also be used to prove unique continuation properties, that in
turn are crucial for the treatment of low frequencies for  exact
controllability results for  hyperbolic equations as in \cite{BLR:92}.
\end{itemize}

For a function supported near a point at the boundary, in normal geodesic coordinates where $\Omega$ is locally given by $\{
x_d >0\}$ (see Section~\ref{sec:
  Geometrical setting} below) the estimate
can take the form
\begin{align*}
  \sum_{|\beta|\leq 2} \tau^{3/2-|\beta|}
  \Norm{e^{\tau\varphi} D^\beta u}{L^2(\Omega)}
  + \sum_{|\beta|\leq 1} \tau^{3/2-|\beta|}
  \norm{e^{\tau\varphi} D^\beta u\br}{L^2(\Omega)}
  \leq  C \Norm{e^{\tau\varphi} \Delta u}{L^2(\Omega)}.
\end{align*}
This is the type of estimate we seek here for the operator $\Psig$,
with some uniformity with respect to $\sigma$.

\subsection{Geometrical setting}
\label{sec: Geometrical setting}

On $\Omega$ we consider a Riemannian metric $g_x = (g_{ij}(x))$, with
associated cometric $(g^{ij}(x))  = (g_x)^{-1}$.
It stands as a bilinear form that act on vector fields,
\begin{align*}
  g_x (u_x, v_x) = g_{ij}(x) u_x^i v_x^j,
  \quad u_x = u_x^i \partial_{x_i}, \
  v_x = v_x^i \partial_{x_i}.
\end{align*}

\medskip
For $x \in \partial \Omega$ we denote by $\nu_x$ the unit outward
pointing {\em normal vector} at $x$, unitary in the sense of the
metric $g$, that is
\begin{align*}
  g_x(\nu_x, \nu_x)=1
  \ \ \text{and} \ \
  g_x(\nu_x, u_x)=1 \quad \forall u_x \in T_x \partial \Omega.
  \end{align*}
We denote by $\partial_\nu$ the associated derivative at the boundary,
that is, $\partial_\nu f (x) = \nu_x (f)$. 
We also denote by $n_x$ the unit outward
pointing {\em conormal vector} at $x$, that is,
$n_x = \nu_x^\flat$, that is, $(n_x)_i = g_{i j} \nu_x^j$. 

Near a boundary point we shall often use normal geodesic coordinates
where $\Omega$ is locally given by $\{ x_d >0\}$ and the  
metric $g$ takes the form
\begin{align*}
  g=dx^d\otimes dx^d
  +\sum\limits_{1\leq i,j\leq d-1} g_{ij} dx^i\otimes  dx^j.
\end{align*}
Then, the vector field $\nu_x$ is locally given by $(0, \dots, 0,
-1)$. The same for the  one form $n_x$.  

Normal geodesic coordinates allow us to locally formulate 
boundary problems in a half-space  geometry. We write 
$$\Rdp:=\{ x\in\R^d,\ x_d>0\}\qquad \text{where}\ 
x=(x',x_d) \ \text{with}\ x'\in\R^{d-1}, x_d\in\R.$$ 
We shall naturally denote its closure by $\ovl{\Rdp}$, that is,
$\ovl{\Rdp} = \{ x\in \R^d; x_d\geq 0\}$.

The Laplace-Beltrami operator
is given by
\begin{align}
\label{eq: laplace-Beltrami}
  (\Delta_g f) (x) =(\det g_x)^{-1/2} 
  \sum_{1\leq i, j\leq d} \partial_{x_i}\big(
  (\det g_x)^{1/2} g^{i j}(x)
  \partial_{x_j}  f \big)(x).
\end{align}
in local coordinates. Its principal part is given by
$\sum_{1\leq i, j\leq d} g^{i j}(x) \partial_{x_i} \partial_{x_j}$ and
its principal symbol by $\sum_{1\leq i, j\leq d} g^{i j}(x) \xi_i
\xi_j$.

The bi-Laplace operator is $P= \Delta^2_g$. In the main text of the
article we shall write $\Delta$, $\Delta^2$ in place of $\Delta_g$, $\Delta_g^2$.

\subsection{Main results}

\subsubsection{Carleman estimate}
We state the main Carleman estimate for the operator $\Psig$ in
normal geodesic coordinates as presented in Section~\ref{sec:
  Geometrical setting}. A point $x^0 \in \partial \Omega$ is
considered and a weight function $\varphi$ is assumed to be defined
locally and 
such that 
\begin{enumerate}
  \item $\partial_d \varphi \geq C>0$ locally.
  \item $(\Delta \pm \sigma^2, \varphi)$ satisfies the sub-ellipticity
    condition of Definition~\ref{de1} locally. This is a necessary and
    sufficient condition for a Carleman estimate to hold for a
    second-order operator $\Delta \pm \sigma^2$, regardless of
    boundary conditions \cite[Chapters 3 and 4]{JGL-vol1}.
  \item $(\Psig, B_1, B_2,\varphi)$ satisfies the
  \LS condition of Definition~\ref{def: LS after conjugation} at  $\y'=(x^0,\xi',\tau,\sigma)$
  for all $(\xi',\tau,\sigma) \in \R^{d-1} \times [0,+\infty) \times
  [0,+\infty)$ such that $\tau \geq \k_0\sigma$, for some $\k_0>0$. This means that the
  \LS condition holds after the conjugation of the operator $\Psig$
  and the boundary operators $B_1$ and $B_2$ by the weight function
  $\exp(\tau \varphi)$.  
\end{enumerate}

%%%%%%%%%%%%%%%%%%%%%%%%
% theorem              %
%%%%%%%%%%%%%%%%%%%%%%%%
\begin{theorem}[Carleman estimate]
  \label{theorem: local Carleman estimate-intro}
  Let $\k_0' > \k_0>0$. Let $x^0 \in \partial \Omega$.
  Let $\varphi$ be such that the properties above hold locally. 
  Then, there exists $W^0$ a \nhd of $x^0$, $C>0$, $\tau_0>0$ such that
  \begin{align}\label{eq: local estimate Pconj-final-intro}
    \tau^{-1/2} \Normsc{e^{\tau \varphi} u}{4}
    + \normsc{ \trace( e^{\tau \varphi} u)}{3,1/2}
    \leq
    C \big(
    \Norm{e^{\tau \varphi} \Psig u}{+}
    + \sum\limits_{j=1}^{2}
    \normsc{e^{\tau \varphi} B_{j}v\br}{7/2-k_{j}}
    \big),
\end{align}
for $\tau\geq \tau_0$,
$\k_0 \sigma \leq \tau \leq \k_0' \sigma $, and $u\in \Cbarc(W^0_+)$.
\end{theorem}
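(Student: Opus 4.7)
The strategy is a microlocal analysis near the boundary, based on the factorization $\Psig = (\Delta - \sigma^2)(\Delta + \sigma^2)$: sub-elliptic Carleman estimates for the two second-order factors handle the interior contribution, and the \LS condition assumed in property (3) closes the estimate on boundary traces.

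Setting $v = e^{\tau\varphi} u$, the claim is equivalent to a Carleman estimate (without weight) for the conjugated operator $\Pconj = e^{\tau\varphi} \Psig e^{-\tau\varphi}$ acting on $v$, together with conjugated boundary operators $B_{j,\varphi} = e^{\tau\varphi} B_j e^{-\tau\varphi}$ on the traces of $v$. Writing $Q^{\pm} = e^{\tau\varphi}(\Delta \pm \sigma^2) e^{-\tau\varphi}$, one has $\Pconj = Q^- Q^+$ up to a commutator of lower tangential-semiclassical order. In normal geodesic coordinates, $\pconj$ is a polynomial of degree four in $\xi_d$ with coefficients homogeneous in $(\xi', \tau, \sigma)$, and the symbols of $Q^{\pm}$ are of degree two.

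Next, I would partition the tangential phase space into finitely many conic open sets $\mathscr{G}_1, \dots, \mathscr{G}_N$ on each of which the number of roots of $\pconj(x, \xi', \cdot, \tau, \sigma)$ with positive imaginary part is constant. Properties (1)--(2) imply that the sub-ellipticity for $(\Delta \pm \sigma^2, \varphi)$ excludes real roots of the symbols of $Q^{\pm}$ at which the relevant Poisson bracket vanishes, so only finitely many regimes occur and the partition is stable uniformly on the cone $\k_0 \sigma \leq \tau \leq \k_0' \sigma$. On each $\mathscr{G}_j$ I would construct a microlocal Carleman estimate for $\chi_j v$, where $\chi_j$ is a tangential pseudodifferential cut-off supported in $\mathscr{G}_j$. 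In the region where all four roots of $\pconj$ have nonzero imaginary part, $\Pconj$ is microlocally elliptic and the trace estimate reduces to inverting the boundary symbol; its quantitative invertibility is precisely the \LS condition assumed in property (3). In a region where some roots are real, I would apply the second-order sub-elliptic Carleman estimates in cascade: first estimate $w := Q^+ v$ by applying the estimate for $Q^-$ to $\Pconj v = Q^- w + [\text{l.o.t.}]$, then estimate $v$ from $Q^+ v = w$. At each step the boundary data required by the second-order estimate must be supplied from the two data $B_{j,\varphi} v$ through the \LS condition.

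Finally, sum the microlocal estimates using a partition of unity $1 = \sum_j \chi_j^2$ on tangential phase space; the commutators $[\Pconj, \chi_j]$ and $[B_{j,\varphi}, \chi_j]$ are of lower tangential-semiclassical order and are absorbed into the left-hand side for $\tau_0$ sufficiently large. Translating back to $u = e^{-\tau\varphi} v$ yields the estimate on a \nhd $W^0$ of $x^0$. The main obstacle is the analysis in the mixed regions of the partition: one must carefully relate the boundary traces of $v$ and of $w = Q^+ v$ arising from the cascade, and use the \LS condition to recover the missing traces from $B_{j,\varphi} v$ alone. Turning the algebraic invertibility underlying the \LS condition into a microlocal estimate with the correct semiclassical weight $\lsc$, uniformly over the entire cone $\k_0 \sigma \leq \tau \leq \k_0' \sigma$, is where the bookkeeping becomes genuinely delicate.
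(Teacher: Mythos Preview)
Your overall strategy---conjugate, factor $\Pconj$ into two second-order pieces, partition tangential phase space by root configuration, cascade the second-order estimates, and close the traces via the \LS condition---matches the paper's. Two points deserve comment.

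First, a genuine gap in the cascade. A sub-elliptic Carleman estimate for a second-order factor loses $\tau^{-1/2}$ precisely when that factor has a real root; if both $Q^+$ and $Q^-$ had a real root at the same $(x,\xi',\tau,\sigma)$, your cascade would lose a full derivative and the stated estimate would fail. The paper's key algebraic observation (Lemma~\ref{lemma: no real double root}) is that the upper constraint $\tau\le\k_0'\sigma$ forces $\sigma>0$, and when $\sigma>0$ the relevant roots of $Q^-$ and $Q^+$ cannot both be real. Hence in every microlocal region at most one factor loses a half-derivative while the other gives a lossless estimate, so the concatenation loses only $\tau^{-1/2}$. You should make this explicit---it is the reason the hypothesis $\tau\le\k_0'\sigma$ is there at all.

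Second, the paper's organization is cleaner than yours and sidesteps the bookkeeping you anticipate. Rather than feeding the actual boundary data $B_{j,\varphi}v$ into each second-order step of the cascade, the paper runs the cascade (Proposition~\ref{prop: first microlocal estimate P}) with the trivial Dirichlet operator as boundary condition for the second-order factors; Dirichlet always satisfies the second-order \LS condition once $\partial_d\varphi>0$. The resulting local estimate (Proposition~\ref{prop: first local estimate P}) carries the full trace norm $\normsc{\trace(v)}{3,1/2}$ on the right-hand side. Separately (Proposition~\ref{prop: local boundary estimate}), the fourth-order \LS condition for $(\Psig,B_1,B_2,\varphi)$ is used once to bound $\normsc{\trace(v)}{3,1/2}$ by $\Norm{\Pconj v}{+}+\sum_j\normsc{B_{j,\varphi}v\br}{7/2-k_j}$ plus a lower-order volume term $\Normsc{v}{4,-1}$. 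The two propositions are then combined and the lower-order term absorbed for $\tau$ large. This decoupling avoids entirely the problem of relating traces of $v$ and of $Q^+v$ that you flagged as the main obstacle.
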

The volume norm is given by
\begin{align*}
  \Normsc{e^{\tau \varphi} u}{4} = \sum_{|\beta| \leq 4}
  \tau^{4 - |\beta|}\Norm{e^{\tau\varphi} D^\beta u}{L^2(\Omega)}.
\end{align*}
The trace norm is given by
\begin{align*}
  \normsc{ \trace( e^{\tau \varphi} u)}{3,1/2}
  = \sum_{0\leq n \leq 3} \normsc{ \partial_\nu^n (e^{\tau \varphi} u)\br}{7/2 -n},
\end{align*}
where the norm $\normsc{.}{7/2 -n}$ is the $L^2$-norm in $\R^{d-1}$
after applying the Fourier multiplier $(\tau^2 + |\xi'|^2)^{7/4
  -n/2}$. These norms are well described in Section~\ref{sec: Function norms}. 

Observe that the Carleman estimate of Theorem~\ref{theorem: local
  Carleman estimate-intro} exhibits a loss of a half-derivative. A
more precise statement is given in Theorem~\ref{theorem: main Carleman estimate} in Section~\ref{sec: Carleman final estimate}.

\subsubsection{Stabilization result} 

Let $(\Pell, D(\Pell))$ be the unbounded operator on $L^2(\Omega)$ given by the domain
\begin{align}
  \label{eq: domain P intro}
  D(\Pell) = \big\{ u \in H^4(\Omega); \ B_1
  u_{|\partial \Omega}
  = B_2 u_{|\partial \Omega}=0\big\},
\end{align}
given by $\Pell u = \Delta^2 u$ for $u \in D(\Pell)$. As written above
the two boundary differential operators are such that $(\Pell, D(\Pell))$
  is self-adjoint and nonnegative.

  Let $y(t)$ be a strong solution of the plate equation~\eqref{eq:
    damped plate equation - intro}. A precise definition of strong
  solutions is given in Section~\ref{sec: Strong and weak
    solutions}. One has $y^0 \in D(\Pell)$ and
  $y^1 \in D(\Pell^{1/2})$. Its energy is defined as
  \begin{align*}
    \E( y) (t) = \frac12 \big(
    \Norm{\partial_t y (t)}{L^2(\Omega)}^2
    + \inp{\Pell y (t)}{y (t)}_{L^2(\Omega)}
    \big).
  \end{align*}
  %%%%%%%%%%%%%%%%%%%%%%%%
% theorem              %
%%%%%%%%%%%%%%%%%%%%%%%%
\begin{theorem}[logarithmic stabilization for the damped plate equation]
  \label{theorem: stabilisation theorem-intro}
  There exists $C>0$ such that for any such strong solution to the
  damped plate equation~\eqref{eq: damped plate equation - intro}  one
  has
  \begin{align*}
    \E(y)(t) \leq  \frac{C}{\big(\log(2+t)\big)^{4}}
    \big(\Norm{\Pell y^0+ \alpha y^1}{L^2(\Omega)}^2
    + \Norm{\Pell^{1/2} y^1}{L^2(\Omega)}^2
    \big).
\end{align*}  
\end{theorem}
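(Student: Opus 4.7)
The plan is to recast~(\ref{eq: damped plate equation - intro}) as a first-order evolution problem in a Hilbert space, to deduce the logarithmic energy decay from a high-frequency resolvent estimate for the generator, and to obtain that resolvent estimate from the Carleman inequality of Theorem~\ref{theorem: local Carleman estimate-intro}.

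\emph{Semigroup setup.} Set $\mathcal H = D(\Pell^{1/2})\times L^2(\Omega)$ with the inner product $\langle (u,v),(u',v')\rangle_{\mathcal H} = \langle \Pell^{1/2} u,\Pell^{1/2} u'\rangle_{L^2} + \langle v,v'\rangle_{L^2}$, and let $\mathcal A(u,v) = (v,-\Pell u - \alpha v)$ on $D(\mathcal A) = D(\Pell)\times D(\Pell^{1/2})$. Since $\Pell$ is self-adjoint and nonnegative and $\alpha\geq 0$, $\mathcal A$ is maximal dissipative and by Lumer--Phillips generates a contraction semigroup on $\mathcal H$. For a strong solution $y$ of~(\ref{eq: damped plate equation - intro}), $U(t) := (y(t),\partial_t y(t))^{\transp}$ satisfies $\E(y)(t) = \tfrac12\Norm{U(t)}{\mathcal H}^2$, while $\Norm{\mathcal A U(0)}{\mathcal H}^2 = \Norm{\Pell^{1/2} y^1}{L^2}^2 + \Norm{\Pell y^0 + \alpha y^1}{L^2}^2$ is exactly the right-hand side of the inequality to prove. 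By the Batty--Duyckaerts/Borichev--Tomilov resolvent characterization of polynomial-logarithmic decay rates, it suffices to establish (i) $i\R\subset\rho(\mathcal A)$, and (ii) an exponential resolvent bound $\Norm{(i\mu-\mathcal A)^{-1}}{\mathcal L(\mathcal H)} \leq C e^{C|\mu|^{1/2}}$ for $|\mu|$ large. The exponent $1/2$ matches the scaling $\tau\sim\sigma\sim|\mu|^{1/2}$ underlying Theorem~\ref{theorem: local Carleman estimate-intro}, and squaring the resulting $(\log(2+t))^{-2}$ norm decay produces the exponent $4$.

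\emph{Reduction of the resolvent equation.} Given $F=(f_1,f_2)\in\mathcal H$ and $\mu\in\R$, writing $(i\mu-\mathcal A)U=F$ with $U=(u,v)\in D(\mathcal A)$ yields $v=i\mu u - f_1$ and $(\Pell-\mu^2+i\mu\alpha)u = f_2 + (i\mu+\alpha)f_1 =: g$, with $B_1 u_{|\partial\Omega}=B_2 u_{|\partial\Omega}=0$. Setting $\sigma=|\mu|^{1/2}$, this reads $\Psig u = g - i\mu\alpha u$. Since the boundary traces of $u$ vanish, Theorem~\ref{theorem: local Carleman estimate-intro} (together with its standard interior counterpart) gives, after patching in the Lebeau--Robbiano manner a finite family of local weights whose sublevel sets reach the observation region $\{\alpha\geq\alpha_0\}$, a global estimate $\Norm{u}{L^2}\leq C e^{C|\mu|^{1/2}}\bigl(\Norm{g}{L^2}+|\mu|\Norm{\alpha^{1/2} u}{L^2}\bigr)$, together with analogous bounds on the higher derivatives carried by $\normsc{e^{\tau\varphi}u}{4}$. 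The dissipation identity $\Re\langle F,U\rangle_{\mathcal H}=\Norm{\alpha^{1/2} v}{L^2}^2$ combined with the relation $|\mu|\,\alpha^{1/2}u=-i(\alpha^{1/2}v+\alpha^{1/2}f_1)$ yields $|\mu|\Norm{\alpha^{1/2} u}{L^2}\leq \Norm{F}{\mathcal H}^{1/2}\Norm{U}{\mathcal H}^{1/2}+C\Norm{F}{\mathcal H}$. Using $\Norm{g}{L^2}\leq C|\mu|\Norm{F}{\mathcal H}$, reading $\Norm{v}{L^2}$ and $\Norm{\Pell^{1/2} u}{L^2}$ off the Carleman left-hand side up to polynomial $\tau$-factors absorbed into $e^{C|\mu|^{1/2}}$, and a Young splitting with parameter of size $e^{-C|\mu|^{1/2}}$ to absorb the term in $\Norm{U}{\mathcal H}^{1/2}$, one arrives at $\Norm{U}{\mathcal H}\leq C e^{C|\mu|^{1/2}}\Norm{F}{\mathcal H}$, which is (ii). Item~(i) follows from a Fredholm alternative applied to $i\mu-\mathcal A$, whose injectivity part reduces to unique continuation for $(\Pell-\mu^2+i\mu\alpha)u=0$ with $u\equiv 0$ on $\{\alpha>0\}$; that unique continuation is itself a direct by-product of the Carleman estimate.

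\emph{Principal obstacle.} The hard step is the passage from the local boundary Carleman estimate to the global interpolation inequality $\Norm{u}{L^2}\leq C e^{C\sigma}\bigl(\Norm{\Psig u}{L^2}+\Norm{\alpha^{1/2} u}{L^2}\bigr)$, valid on all of $\Omega$ in the absence of any geometric control condition. One must produce a finite collection of weights, each satisfying locally the three conditions preceding Theorem~\ref{theorem: local Carleman estimate-intro}, and in particular the \LS condition for $(\Psig,B_1,B_2,\varphi)$ uniformly in the regime $\kappa_0\sigma\leq\tau\leq\kappa_0'\sigma$, and arrange their sublevel sets into a chain that connects every point of $\overline\Omega$ to a neighborhood of $\{\alpha\geq\alpha_0\}$. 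At each link a three-ball-type interpolation inequality with exponential loss is extracted by testing against cutoff functions, and the resulting boundary and lower-order cross-terms -- including the perturbation $i\mu\alpha u$ -- must be absorbed without spoiling the $e^{C|\mu|^{1/2}}$ scaling. This constitutes the technical heart of the proof, the abstract semigroup and dissipation parts being comparatively routine.
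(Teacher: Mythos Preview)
Your overall strategy---Carleman estimate $\Rightarrow$ observability/global interpolation $\Rightarrow$ resolvent bound $e^{C|\mu|^{1/2}}$ $\Rightarrow$ Batty--Duyckaerts---is correct and is exactly the route taken in the paper. The reduction of the resolvent equation, the dissipation identity, and the final absorption argument are essentially those of Theorem~\ref{res}.

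The genuine difference is in how you globalize. You propose to chain a finite family of \emph{local} Carleman estimates, each with its own weight, via ``three-ball-type'' propagation from the interior to the damping region. The paper instead constructs a \emph{single} global weight $\varphi=e^{\gamma\psi}$ from a function $\psi$ with $\psi>0$ in $\Omega$, $\psi=0$ and $\partial_\nu\psi<0$ on $\partial\Omega$, and $d\psi\neq 0$ outside a small set inside $\{\alpha>0\}$ (Section~\ref{sec: Global Carleman estimate}). This yields one global Carleman inequality~\eqref{eq: global estimate Pconj} directly, with the only ``bad'' term supported in $\scrO$; the observability inequality of Theorem~\ref{th: observation} then drops out by comparing $\inf\varphi$ and $\sup\varphi$. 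Your chaining approach should also work, since each link costs a factor $e^{C\tau}=e^{C\sigma}$ and finitely many links preserve the $e^{C|\mu|^{1/2}}$ scale; but be careful with the phrase ``three-ball interpolation'': the standard interpolation form $\Norm{u}{1}\leq C\Norm{u}{0}^\delta\Norm{u}{2}^{1-\delta}$ requires optimizing over $\tau$ freely, which is \emph{not} available here because of the constraint $\kappa_0\sigma\leq\tau\leq\kappa_0'\sigma$ (the paper flags exactly this obstruction in Section~1.7.3). What you need, and what your sketch actually uses, is the cruder propagation-with-exponential-loss form; the single-global-weight route avoids this subtlety entirely.

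One gap: your inner product on $\mathcal H=D(\Pell^{1/2})\times L^2$ is only a semi-inner product when $\ker(\Pell)\neq\{0\}$, and then $\mathcal A$ does not generate a contraction semigroup on a Hilbert space as stated. The paper handles this (Section~\ref{sec: The plate semigroup generator}) by splitting $\H=\dot\H\oplus\mathcal N$ along $\ker A$, working with the reduced generator $\dot A$ on $\dot\H$ where the energy norm \emph{is} a norm, and noting that the kernel part is static and invisible to $\E(y)$. You should incorporate this reduction; without it, the Lumer--Phillips step and the resolvent estimate are not well posed.
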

A more precise and more general statement is given in Theorem~\ref{theorem:
  stabilisation theorem} in Section~\ref{sec: Stabilization result}.

\subsection{Some open questions}

\subsubsection{Boundary damping}
Here, we have considered a damping that acts in the interior of the
domain $\Omega$. The study of boundary damping, as in \cite{LR:97} for
the wave equation, is also of relevance.
Yet we foresee that it requires to specify more the used boundary
operators. This was not our goal here as we wished to treat general
boundary operators here.  

\subsubsection{Spectral inequality}
If the boundary operators $B_1$ and $B_2$ are well chosen, the
bi-Laplace operator $\Delta^2$ can be selfadjoint on $L^2(\Omega)$;
see Section~\ref{sec: unbounded bi-Laplace operator}. Associated with
the operator is then a Hilbert basis  $(\phi_j)_{j \in \N}$ of
$L^2(\Omega)$. In the case of ``clamped'' boundary condition the
following spectral inequality was proven in \cite{JL}. 
%%%%%%%%%%%%%%%%%%%%%%%%
    % theorem              %
    %%%%%%%%%%%%%%%%%%%%%%%% 
    \begin{theorem}[Spectral inequality for the ``clamped'' bi-Laplace
      operator] 
      \label{theorem: spectral inequality} 
      Let $\O$ be an open subset of $\Omega$.
      There exists $C>0$ such that 
      \begin{align*}
        \Norm{u}{L^2(\Omega)} \leq C e^{C \mu^{1/4}}
        \Norm{u}{L^2(\O)}, \qquad \mu >0, \quad   u \in \Span \{\phi_j; \ \mu_j \leq \mu\}.
      \end{align*}
\end{theorem}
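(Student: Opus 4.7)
My plan is to adapt the Lebeau-Robbiano method to the fourth-order setting by producing an auxiliary function on a one-dimensional extension of $\Omega$ that solves an elliptic augmented equation, and then to derive the spectral inequality from a Carleman estimate for that equation. Fix $S_0>0$ and set $\sigma:=\mu^{1/4}$. On the cylinder $Y:=(-S_0,S_0)\times\Omega$ I would define
\begin{equation*}
F(s,x):=\sum_{j:\,\mu_j\leq\mu}a_j\,\phi_j(x)\,\frac{\sinh(\beta_j s)}{\beta_j},\qquad \beta_j:=(\sigma^4-\mu_j)^{1/4}\in[0,\sigma],
\end{equation*}
where the quotient is extended by continuity to $s$ at $\beta_j=0$. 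Since $\beta_j^4=\sigma^4-\mu_j$ and $\Delta^2\phi_j=\mu_j\phi_j$, a direct computation shows that the elliptic augmented operator $\hat{P}_\sigma:=\partial_s^4+\Delta^2-\sigma^4$ annihilates $F$ in $Y$; moreover $F$ inherits the clamped conditions on $(-S_0,S_0)\times\partial\Omega$, and at $s=0$ one has $F(0,\cdot)=0$, $\partial_s F(0,\cdot)=u$. From $|\sinh(\beta_j s)/\beta_j|\leq |s|\,e^{\sigma|s|}$ one obtains the growth bound $\Norm{F}{L^2((-S_0,S_0)\times X)}\leq C\,e^{\sigma S_0}\,\Norm{u}{L^2(X)}$ for every measurable $X\subset\Omega$.

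The next step is to derive a Carleman estimate on $Y$ for the fourth-order elliptic operator $\hat{P}_\sigma$ under the clamped lateral boundary conditions, uniformly in $\tau\asymp\sigma$. The scheme of Theorem~\ref{theorem: local Carleman estimate-intro} transfers: the principal symbol is $\eta^4+(g^{ij}\xi_i\xi_j)^2$, one selects a weight $\psi(s,x)$ with non-vanishing gradient which is sub-elliptic for $\hat{P}_\sigma$ and for which the \LS condition for $(\hat{P}_\sigma,B_1,B_2,\psi)$ holds on the lateral face, and the local estimates are patched into a global one. Applied to the cutoff $\chi(s)F$, with $\chi\equiv 1$ on $[-S_1,S_1]$ and supported in $(-S_0,S_0)$, the source reduces to $[\hat{P}_\sigma,\chi]F$, supported near $|s|=S_0$. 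A three-ball/propagation argument \`a la Lebeau-Robbiano then produces an interpolation inequality
\begin{equation*}
\Norm{F}{L^2((-\delta,\delta)\times\Omega)}\leq C\,e^{C\tau}\,\Norm{F}{L^2((-S_0,S_0)\times\O)}^{\theta}\,\Norm{F}{L^2(Y)}^{1-\theta},
\end{equation*}
for some $\theta\in(0,1)$. Using the Taylor expansion $F(s,x)=s\,u(x)+O(s^3)$ near $s=0$, which yields $\Norm{F}{L^2((-\delta,\delta)\times\Omega)}^2\geq c\,\delta^3\,\Norm{u}{L^2(\Omega)}^2$, together with the growth bound from the previous step on both $L^2$ norms on the right, and balancing $\tau\asymp\sigma=\mu^{1/4}$ with $\delta$, one reaches the desired estimate $\Norm{u}{L^2(\Omega)}\leq C\,e^{C\mu^{1/4}}\,\Norm{u}{L^2(\O)}$.

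The main obstacle is the Carleman estimate for $\hat{P}_\sigma$ on the cylinder, where boundary conditions are prescribed only on the lateral face $(-S_0,S_0)\times\partial\Omega$ and nothing is imposed at $s=\pm S_0$. One must verify the \LS condition for $(\hat{P}_\sigma,B_1,B_2)$ in the enlarged $(s,x)$-geometry—the additional $\eta^4$ in the principal symbol further strengthens ellipticity at the lateral boundary, but the symbol analysis at $\{x_d=0\}$ must be redone for the augmented operator—and construct a single weight $\psi(s,x)$ satisfying simultaneously sub-ellipticity, the \LS condition, and the geometric requirements for propagation (large near $\{s=0\}\times\O$ and smaller outside). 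Once this Carleman estimate is in hand, the remaining propagation and extraction steps follow the standard Lebeau-Robbiano chain-of-balls pattern.
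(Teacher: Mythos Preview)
The paper does not prove this theorem; it is quoted from \cite{JL} and discussed only as background in the section on open questions. The one-line summary given there is that the proof rests on a Carleman estimate for the augmented operator $D_s^4+\Delta^2$ on the cylinder, together with the Lebeau--Robbiano scheme. Your outline is in this spirit, but there is a genuine gap in the way you set it up.

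The problem is your choice to keep the spectral parameter inside the operator. You work with $\hat P_\sigma=\partial_s^4+\Delta^2-\sigma^4$ and announce a Carleman estimate ``uniformly in $\tau\asymp\sigma$''; you then write a three-ball interpolation inequality with exponent $\theta\in(0,1)$ and a factor $e^{C\tau}$. These two things are incompatible. The $\theta$--interpolation arises precisely by \emph{optimizing} in the Carleman parameter $\tau$ over $[\tau_0,+\infty)$; if $\tau$ is tied to $\sigma=\mu^{1/4}$, no optimization is possible and you do not obtain an interpolation inequality --- only a single weighted estimate, from which the chain-of-balls propagation cannot be run. The paper flags exactly this obstruction (see the discussion of the quantification of unique continuation): the estimate of Theorem~\ref{theorem: local Carleman estimate-intro} lives in the window $\kappa_0\sigma\le\tau\le\kappa_0'\sigma$, and the standard local interpolation argument therefore does not follow from it.

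The route taken in \cite{JL}, and the one the paper alludes to, removes the spectral parameter from the augmented operator altogether: one proves a Carleman estimate for $D_s^4+\Delta^2$ on the cylinder with $\tau$ a genuinely free large parameter, and builds the auxiliary function so that it is annihilated by $D_s^4+\Delta^2$ (the frequency $\mu^{1/4}$ then enters only through the growth bounds on that function, not through the operator). For the clamped conditions specifically, the \LS condition for the augmented operator on the lateral boundary can indeed be verified --- so the obstacle you single out as ``the main obstacle'' is in fact surmountable here --- but the Carleman estimate you would need is not the one produced by transporting Theorem~\ref{theorem: local Carleman estimate-intro} with $\tau\asymp\sigma$. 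If you want to keep your construction with $\hat P_\sigma$, you must either produce a Carleman estimate for it valid for all large $\tau$ (uniformly in $\sigma$), or abandon the interpolation step and argue the spectral inequality directly from a single weighted estimate; neither of these is addressed in your proposal.
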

The proof of this theorem is based on a Carleman inequality for the
fourth-order elliptic operator $D_s^4 + \Delta^2$, that is, after the
addition of a variable $s$. Extending this strategy to the type of
boundary conditions treated here was not successful because it is not
guaranteed that having the \LS condition for $\Delta^2$, $B_1$, and
$B_2$ implies that the  \LS condition holds for $D_s^4 +\Delta^2$, $B_1$, and
$B_2$. Yet, the \LS condition is at the heart of the proof of our
Carleman estimate. Proving a spectral estimate as in the above
statement for the general boundary conditions considered here is 
an open question.

\subsubsection{Quantification of the unique continuation property}

For a second-order operator like the Laplace operator $\Delta$ and a
boundary operator $B$ of order $k$ (yet of order at most one in the
normal direction) such that the \LS holds one can derive the
following inequality that locally quantifies the unique continuation property
up to the boundary; see \cite[Lemma~9.2]{JGL-vol1}.
%%%%%%%%%%%%%%%%%%%%%%%%
% proposition          %
%%%%%%%%%%%%%%%%%%%%%%%%
\begin{proposition}
  \label{prop: unique continuation boundary}
  Let $x^0\in\partial \Omega$ and $V$ be a \nhd of $x^0$ where the \LS
  condition holds. There exist $W$ an open  \nhd of $x^0$, $\eps\in
  (0,1)$, $\delta \in (0,1)$, and $C>0$, such that 
  \begin{align}
    \label{eq: local interpolation at the boundary-manifold1}
    \Norm{u}{H^1(W)} 
      \leq C \Norm{u}{H^1(V)}^{1-\delta} 
    \Big(\Norm{\Delta u}{L^2(V)} 
    + \norm{B u}{H^{1-k}(V \cap \partial \Omega)} 
    + \Norm{u}{H^1(V_\eps)}
    \Big)^\delta, 
  \end{align}  
  for  $u \in H^2(V)$, with
  $V_\eps = \{ x\in V; \ \dist(x, \partial\Omega) > \eps \}$.
\end{proposition}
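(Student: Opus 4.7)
The plan is to deduce this local interpolation inequality from a boundary Carleman estimate for the Laplace operator $\Delta$ under the \LS condition for the boundary operator $B$, combined with the classical Lebeau--Robbiano optimization that converts an exponentially-weighted inequality into a Hölder-type interpolation.

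First I would construct, on a \nhd $U \subset V$ of $x^0$, a weight function $\varphi$ with $\partial_\nu \varphi > 0$ on $U \cap \partial \Omega$, such that $(\Delta, \varphi)$ satisfies the sub-ellipticity condition and $(\Delta, B, \varphi)$ satisfies the \LS condition. A standard choice is $\varphi = e^{\lambda \psi}$ in normal geodesic coordinates, with $\psi$ an auxiliary function with non-vanishing gradient and $\partial_d \psi > 0$; for $\lambda$ large enough sub-ellipticity is enforced, and the \LS condition for the conjugated operator reduces at $\tau = 0$ to the assumed \LS condition for $(\Delta, B)$, so it persists by openness for $\tau$ in a fixed compact range (and in particular at the finitely many $\tau$ we shall use). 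With such a $\varphi$ the standard local boundary Carleman estimate gives
\begin{align*}
\tau^{3/2}\Norm{e^{\tau\varphi} v}{L^2} + \tau^{1/2}\Norm{e^{\tau\varphi} D v}{L^2}
\leq C\,\Norm{e^{\tau\varphi}\Delta v}{L^2} + C\,\norm{e^{\tau\varphi} B v\br}{H^{1-k}},
\end{align*}
for $\tau \geq \tau_0$ and $v \in H^2$ compactly supported in $\ovl{U} \cap \ovl{\Omega}$ (boundary traces of $v$ and $\partial_\nu v$ can be absorbed on the left using the \LS condition).

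Next, choose constants $\alpha_2 < \alpha_1 < \alpha_0 < A := \sup_U \varphi$ such that $W := \{x \in U : \varphi(x) > \alpha_0\}$ is a \nhd of $x^0$, and pick a cutoff $\chi \in C^\infty_c(U)$ with $\chi \equiv 1$ on $\{\varphi \geq \alpha_1\}$, $\supp \chi \subset \{\varphi > \alpha_2\}$, chosen independent of the normal variable in a neighborhood of $U \cap \partial \Omega$ so that $\chi$ commutes with $B$ on the boundary. By taking $\eps$ small enough the support of $\nabla \chi$ (which lies in $\{\alpha_2 \leq \varphi \leq \alpha_1\}$) can be arranged to sit inside $V_\eps$. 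Applying the Carleman estimate to $v = \chi u$ and expanding $\Delta(\chi u) = \chi \Delta u + [\Delta, \chi] u$: the left-hand side dominates $e^{\tau \alpha_0} \Norm{u}{H^1(W)}$; the commutator term contributes at most $C e^{\tau \alpha_1}\Norm{u}{H^1(V_\eps)}$; and the source terms contribute $C e^{\tau A}\big(\Norm{\Delta u}{L^2(V)} + \norm{Bu}{H^{1-k}(V \cap \partial \Omega)}\big)$. Setting $D := \Norm{\Delta u}{L^2(V)} + \norm{Bu}{H^{1-k}(V \cap \partial \Omega)} + \Norm{u}{H^1(V_\eps)}$ and $N := \Norm{u}{H^1(V)}$, this produces
\begin{align*}
\Norm{u}{H^1(W)} \leq C\big(e^{\tau(A - \alpha_0)} D + e^{-\tau(\alpha_0 - \alpha_1)} N\big), \qquad \tau \geq \tau_0.
\end{align*}

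The final step is a standard optimization in $\tau$: if $N \leq e^{\tau_0(A - \alpha_1)} D$ then the desired inequality follows at once from $\Norm{u}{H^1(W)} \leq N$; otherwise the choice $\tau = (A - \alpha_1)^{-1}\log(N/D)$ is admissible and balances the two terms, yielding $\Norm{u}{H^1(W)} \leq C N^{1-\delta} D^\delta$ with $\delta = (\alpha_0 - \alpha_1)/(A - \alpha_1) \in (0,1)$. I expect the main obstacle to lie in the first step: one must find a \emph{single} $\varphi$ for which both the sub-ellipticity and the \LS conditions hold locally, and verifying that the exponential trick $\varphi = e^{\lambda \psi}$ preserves the \LS condition as $\lambda$ grows requires a direct inspection of the roots of the conjugated principal symbol. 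The cutoff bookkeeping near the boundary, ensuring $B(\chi u) = \chi B u$ on $\partial \Omega$, is the only other delicate point; both are by now standard (see Chapter~9 of \cite{JGL-vol1}).
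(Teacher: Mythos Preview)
The paper does not actually prove this proposition; it quotes it from \cite[Lemma~9.2]{JGL-vol1} and only sketches the mechanism (``a Carleman estimate \dots with the large parameter $\tau$ allowed to be chosen as large as desired \dots exploited in an optimization procedure on the parameter $\tau$''). Your overall plan follows exactly this sketch, so the strategy is correct.

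There is, however, a genuine gap in your cutoff step. You require simultaneously that $\chi\in\Cinfc(U)$, that $\chi\equiv 1$ near $x^0\in\partial\Omega$, that $\chi$ be independent of $x_d$ near the boundary, and that $\supp(\nabla\chi)\subset V_\eps=\{\dist(\cdot,\partial\Omega)>\eps\}$. These are mutually incompatible: independence in $x_d$ for $x_d<\eps$ forces $\chi\equiv 1$ on the whole strip $\{x_d<\eps\}\cap U$, contradicting compact support in $U$. More tellingly, if your claim held, your own derivation would give $\Norm{u}{H^1(W)}\leq C e^{\tau(A-\alpha_0)}D$ for \emph{every} $\tau\geq\tau_0$ (since $e^{\tau\alpha_1}\Norm{u}{H^1(V_\eps)}\leq e^{\tau A}D$), hence $\Norm{u}{H^1(W)}\leq C'D$ --- a Lipschitz estimate from a single boundary condition and interior data, which is false in general. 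The $e^{-\tau(\alpha_0-\alpha_1)}N$ term in your displayed inequality has no source in the argument as written.

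The correct bookkeeping splits $\supp(\nabla\chi)$ into two pieces. The piece coming from $\partial_{x_d}\chi$ lies in $\{x_d\geq h_1\}\subset V_\eps$; there $\varphi$ is \emph{large} (since $\partial_d\varphi>0$), so it contributes $e^{\tau A}\Norm{u}{H^1(V_\eps)}$ to the $D$-side. The piece coming from $\nabla_{x'}\chi$ reaches the boundary; with $\varphi$ tangentially peaked at $x^0$ and the cutoff annulus thin in $x_d$, one arranges $\varphi\leq\alpha'<\alpha_0$ there, producing $e^{\tau\alpha'}\Norm{u}{H^1(V)}$ and hence the $e^{-\tau(\alpha_0-\alpha')}N$ term after division. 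The boundary commutator $[B,\chi]$ (nonzero when $B$ has tangential derivatives --- your claim that ``$\chi$ commutes with $B$'' only holds for purely normal $B$) is supported in the same tangential annulus and is absorbed the same way. With this split both terms appear for the right reason and your optimization goes through. A secondary looseness: the \LS condition for the conjugated operator does not follow from openness at $\tau=0$ alone, since one needs it for all directions in $(\xi',\tau)$; this is where the constraint $|d_{x'}\varphi|\ll\partial_d\varphi$ (as in Proposition~\ref{prop: LS after conjugation}, specialized to the second-order case) enters, and you rightly flag this as the delicate point.
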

This inequality can be obtained from a Carleman estimate as in
Theorem~\ref{theorem: local Carleman estimate-intro} for the Laplace
operator, yet with the large parameter $\tau$ allowed to be chosen as
large as desired. This is exploited in an optimization procedure on
the parameter $\tau$ in $[\tau_0,+\infty[$ for some $\tau_0$. Note
however that in the statement of Theorem~\ref{theorem: local Carleman
  estimate-intro}  one has $\sigma \lesssim \tau \lesssim
\sigma$. Thus, the
optimization procedure cannot be carried out in $[\tau_0,+\infty[$. While having a
result quantifying the unique continuation under \LS-type conditions
for the bi-Laplace operator
similar to that of Proposition~\ref{prop: unique continuation
  boundary} can be 
expected, the Carleman estimate we obtain in the present article cannot be used, at
least directly, for a proof as carried out in  \cite{JGL-vol1} or in
former sources such as \cite{LR:95}.

\subsection{Outline}
This article is organized as follows.
In Section~\ref{sec: Psido} we recall some basic aspects of
pseudo-differential operators with a large parameters $\tau>0$ and
some positivity inequality of G{\aa}rding type in particular for
quadratic forms in a half-space or at the boundary. Associated with
the large parameters are Sobolev like norms, also in a half-space or
at the boundary.

In Section~\ref{sec: LS condition}, the \LS boundary condition are
properly defined for an elliptic operator, we give examples focusing
on the Laplace and bi-Laplace operator and we give a formulation in
local  normal geodesic coordinates that we shall mostly use throughout
the article. For the bi-Laplace operator we provide a series of
examples of boundary operators for which the \LS boundary conditions
holds and moreover the resulting operator is symmetric. We also show
that the algebraic conditions that characterize the \LS condition are
robust under perturbation. This last aspect is key in the
understanding of how the \LS condition get preserved under conjugation
and the introduction of a spectral parameter. This is done in
Section~\ref{sec: LS condition for conjugated bilaplace}, where an
analysis of the configuration of the  roots of the conjugated
bi-Laplace operator is performed. In Section~\ref{sec: Symbol postivity at the boundary} the \LS condition for the conjugated
operator is exploited to obtain a symbol positivity for a quadratic
form to prepare for the derivation of a Carleman estimate.

In Section~\ref{sec: boundary norm under LS} we derive a estimation of
the boundary traces. This is precisely where the \LS condition is
used. The result is first obtained microlocally and we then apply a
patching procedure.

To obtain the Carleman estimate for the bi-Laplace operator with
spectral parameter $\Delta^2 - \sigma^4$ in Section~\ref{sec: Microlocal estimate for each second-order factors} we first
derive microlocal estimates for  the operators $\Delta \pm  \sigma^2$.
Imposing $\sigma$ to be non-zero, in the sense that $\sigma \gtrsim
\tau$, the previous estimates exhibits losses in different microlocal
regions. Thus concatenating the two estimates one derives an estimate
for $\Delta^2 - \sigma^4$ where losses do not accumulates. A local  Carleman
estimate with only a loss of a half-derivative is obtained.  This is
done in Section~\ref{sec: Local Carleman estimate for the fouth-order operator}. With the traces estimation obtained in Section~\ref{sec:
  boundary norm under LS} one obtains the local Carleman estimate of
Theorem~\ref{theorem: local Carleman estimate-intro}.

For the application to stabilization we have in mind, in
Section~\ref{sec: Global Carleman estimate} we use a global weight
function and derive a global version of the Carleman estimate for
$\Delta^2 - \sigma^4$ on the whole $\Omega$.
This leads to an observability inequality.

In Section~\ref{sec: Solutions to the damped plate equations} we
recall aspects of strong and weak solutions to the damped plate
equation, in particular through a semigroup formulation.
With the observability inequality obtained in Section~\ref{sec: Global
  Carleman estimate} we derive in Section~\ref{sec:resolvent, stab} a
resolvent estimate for the generator of the plate semigroup that in
turn implies the stabilization result of Theorem~\ref{theorem: stabilisation theorem-intro}.

\subsection{Some notation}

The canonical inner product in $\C^m$ is denoted by $(z,z')_{\C^m}=\sum\limits_{k=0}^{m-1}z_k
\bar{z'}_k,$ for $z=(z_0,\cdots,z_{m-1})\in\C^m,$
$z'=(z'_0,\cdots,z'_{m-1})\in\C^m.$ The associated norm will be
denoted $|z|^2_{\C^m}=\sum\limits_{k=0}^{m-1}|z_k|^2.$

We shall use the notations $a\lesssim b$ for $a\leq C b$ and
$a\gtrsim b$ for $a\geq C b$, with a constant $C>0$ that may change
from one line to another. We also write $a\asymp b$ to denote
$a\lesssim b\lesssim a.$

For an open set $U$ of $\R^d$ we set $U_+ = U \cap \Rdp$ and 
\begin{equation}
   \label{eq: notation Cbarc}
 \Cbarc  (U_+)=\{ u=v_{|\Rdp}; v\in\Cinfc(\R^d) \text{ and }\supp (v)\subset U\}.
\end{equation}

We set $\Sbarp=\{u_{|\Rdp};\ u\in\scrS(\R^d)\}$ with
$\scrS(\R^d)$ the usual Schwartz space in $\R^d$:
\begin{align*}
  u \in \scrS(\R^d)
  \ \ \Leftrightarrow \ \ 
  u\in \Cinf(\R^d) \ \text{and} \ 
  \forall\alpha,\beta\in\N^d 
  \sup\limits_{x\in\R^d} |x^{\alpha}D_x^{\beta} u(x)|<\infty.
\end{align*}

We recall that the Poisson bracket of two smooth functions is given by $$\{f,g\}=\sum\limits_{j=1}^{d}\left(
\partial_{\xi_j}f\partial_{x_j}g-
\partial_{x_j}f\partial_{\xi_j}g\right).$$

%%%%%%%%%%%%%
% Section          %
%%%%%%%%%%%%%
\section{Pseudo-differential calculus: notation, definitions, and some
  properties}
\label{sec: Psido}

In a half-space geometry motivated by the normal geodesic coordinates
introduced in Section~\ref{sec: Geometrical setting}
we shall use $\xi=(\xi',\xi_d)\in\R^{d-1}\times\R$ and we shall
consider the operators $D_d=-i\partial_d$ and $D'=-i\partial'$, with
$\partial' = (\partial_{x_1}, \dots, \partial_{x_{d-1}})$.

 %%%%%%%%%%%%%
% Section          %
%%%%%%%%%%%%% 
\subsection{Pseudo-differential operators with a large a parameter on $\R^d$}\label{ss1}
In this subsection we recall some notions on semi-classical
pseudo-differential operators with large parameter $\tau\geq 1.$ We
denote by $\Ssc^m$ the space of smooth functions $a(x,\xi,\tau)$
defined on $\R^d\times\R^d,$ with $\tau\geq 1$ as a large parameter,
that satisfies the following : for all multi-indices
$\alpha,\beta\in\N^d$ and $m\in\R$, there exists
$C_{\alpha,\beta}>0$ such
that
\begin{align*}
  |\partial_x^{\alpha}\partial^{\beta}_{\xi}a(x,\xi,\tau)|
  \leq C_{\alpha,\beta}\lsc^{m-|\beta|},
  \ \ \text{where} \ \ \lsc^2=\tau^2+|\xi|^2,
\end{align*}
for all
$(x,\xi,\tau)\in\R^d\times\R^d\times [1, \infty).$ For $a\in \Ssc^m,$
one defines the associated pseudo-differential operator of order $m$, denoted by
$A=\Op (a)$, 
\begin{align*}
  Au(x):=\frac{1}{(2\pi)^d}
  \int_{\R^d}e^{ix\cdot\xi}a(x,\xi,\tau)\hat{u}(\xi)d\xi,
  \qquad u\in \mathscr{S}(\R^d).
\end{align*}
One says that $a$ is the symbol of $A$. We denote $\Psisc^m$ the set
of pseudo-differential operators of order $m.$ We shall denote by
$\Dsc^m$ the space of semi-classical differential operators, i.e, the
case when the symbol $a(x,\xi,\tau)$ is a polynomial function of order
$m$ in $(\xi,\tau).$

%%%%%%%%%%%%%
% Section          %
%%%%%%%%%%%%%
\subsection{Tangential pseudo-differential operators with a large a parameter}\label{ss2}
Here, we consider pseudo-differential operators that only act in the
tangential direction $x'$ with $x_d$ behaving as a parameter. We shall
denote by $\Ssct^m$, the set of smooth functions $b(x,\xi',\tau)$
defined for $\tau\geq 1$ as a large parameter and satisfying the
following: for all multi-indices $\alpha\in\N^d$,
$\beta\in\N^{d-1}$ and $m\in\R$, there exists
$C_{\alpha,\beta}>0$ such
that
\begin{align*}
  |\partial_x^{\alpha}\partial^{\beta}_{\xi'}b(x,\xi',\tau)|\leq
  C_{\alpha,\beta}\lsct^{m-|\beta|}, \  \
  \text{where} \ \ \lsct^2=\tau^2+|\xi'|^2,
\end{align*}
for all
$(x,\xi',\tau)\in\R^d\times\R^{d-1}\times [1, \infty).$ For $b\in \Ssct^m$,
we define the associated tangential pseudo-differential operator
$B=\Opt (b)$ of order $m$ by
  \begin{align*}
    Bu(x):=\frac{1}{(2\pi)^{d-1}}
    \int_{\R^{d-1}}e^{i x'\cdot \xi'}b(x,\xi',\tau)\hat{u}(\xi',x_d)d{\xi'},
    \qquad u\in\Sbarp.
    \end{align*}
We define $\Psisct^m$ as the set of tangential
pseudo-differential operators of order $m,$ and
$\Dsct^m$ the set of tangential differential
operators of order $m.$ We also set
\begin{align*}
  \Lsct^m=\Opt (\lsct^m).
\end{align*}

\bigskip
Let $m\in\N$ and $m'\in\R.$ If we consider $a$ of the form
\begin{align*}
  a(x,\xi,\tau)=\sum\limits_{j=0}^{m}a_j(x,\xi',\tau)\xi_d^j,
  \quad a_j\in \Ssct^{m+m'-j},
  \end{align*}
  we define $\Op (a)=\sum\limits_{j=0}^{m}\Opt (a_j)D^j_{x_d}.$ We
  write $a\in \Ssc^{m,m'}$ and $\Op (a)\in \Psisc^{m,m'}.$

  %%%%%%%%%%%%%
% Section          %
%%%%%%%%%%%%%
\subsection{Function norms}
\label{sec: Function norms}
For functions norms we use the notation $\|.\|$ for functions
defined in the interior of the domain and $|.|$ for functions defined on the boundary.
In that spirit, we shall use the notation
\begin{align*}
  \Norm{u}{+}=\Norm{u}{L^2(\Rdp)}, \quad
  \inp{u}{\tilde{u}}_+=\inp{u}{\tilde{u}}_{L^2(\Rdp)},
\end{align*}
for functions defined in $\Rdp$ and
\begin{align*}
  \norm{w}{\partial}=\Norm{w}{L^2(\R^{d-1})}, \quad
  \inp{w}{\tilde{w}}_{\partial}=\inp{w}{\tilde{w}}_{L^2(\R^{d-1})},
\end{align*}
for functions defined on $\{ x_d =0\}$, such as traces.

We introduce the following norms, for $m\in\N$ and $m'\in\R,$
\begin{align*}
  & \Normsc{u}{m,m'}
    \asymp \sum\limits_{j=0}^{m}
    \Norm{\Lsct^{m+m'-j} D_{x_d}^j u}{+},\\
  & \Normsc{u}{m}
    =\Normsc{u}{m,0}
    \asymp\sum\limits_{j=0}^{m}
    \Norm{\Lsct^{m-j} D_{x_d}^j u}{+},
\end{align*}
for $u\in\Sbarp$. One has 
\begin{align*}
  \Normsc{u}{m}
  \asymp
  \sum\limits_{|\alpha|\leq m}\tau^{m-|\alpha|}
  \Norm{D^{\alpha}u}{+},
\end{align*}
and in the case $m'\in\N$ one has
\begin{align*}
  \Normsc{u}{m,m'}
  \asymp\sum\limits_{\substack{\alpha_d\leq m\\ |\alpha|\leq m+m'}}
  \tau^{m+m'-|\alpha|}
  \Norm{D^{\alpha}u}{+},
\end{align*}
  with $\alpha=(\alpha',\alpha_d)\in\N^d$.

  \medskip
  The following argument will be used on numerous occasions: for $m\in\N,$ $m',\ell\in\R,$ with $\ell>0,$
  \begin{align*}
    \Normsc{u}{m,m'}
    \ll \Normsc{u}{m,m'+\ell},
  \end{align*}
  if $\tau $ is chosen
\suff large.

\medskip
At the boundary $\{x_d=0\}$ we define the following norms, for $m\in\N$ and $m'\in\R$,
\begin{align*}
  \normsc{\trace(u)}{m,m'}^2
  =\sum\limits_{j=0}^{m}
  \norm{\Lsct^{m+m'-j} D^j_{x_d}u\br}{\partial}^2,
  \qquad u\in\Sbarp.
\end{align*}

%%%%%%%%%%%%%
% Section          %
%%%%%%%%%%%%%
\subsection{Differential quadratic forms}
\label{sec: Differential quadratic forms}

\subsubsection{Differential quadratic forms in a half-space}
%%%%%%%%%%%%%%
% definition         %
%%%%%%%%%%%%%%
\begin{definition}[interior differential  quadratic form]
  \label{def: interior quadratic form}
  Let $u\in \Sbarp$. We say that
\begin{equation}
  \label{eq: quadratic form}
  Q(u)=\sum_{s=1}^N\inp{A^su}{B^su}_+, 
  \qquad A^s = \Op(a^s), \ 
  B^s= \Op(b^s), 
\end{equation}
is an interior differential quadratic form of type $(m,r)$ with smooth
coefficients, if for each $s=1,\dots N$, we have $a^s(\y)\in
\Ssc^{m,r'}$ and $b^s(\y)\in
\Ssc^{m,r''}$, with $r' + r'' = 2
r$, $\y = (x,\xi,\tau)$.
  
The principal symbol of the quadratic form $Q$ is defined as the
class of 
\begin{equation}
  \label{eq: symbol quadratic form}
  q(\y)=\sum_{s=1}^N a^{s}(\y)\ovl{b^s}(\y)
\end{equation}
in $\Ssc^{2m,2 r} / \Ssc^{2m,2 r-1}$.
\end{definition}
A result we shall use is the following microlocal G{\aa}rding inequality.
%%%%%%%%%%%%%%
% proposition       %
%%%%%%%%%%%%%%
\begin{proposition}[microlocal G{\aa}rding inequality]
  \label{prop: Gaarding quadratic forms}
  Let $K$ be a compact set of $\ovl{\Rdp}$ and let 
$\U$ be an conic open set of $\ovl{\Rdp} \times \R^{d-1} \times
  \R_+$ contained in $K \times \R^{d-1} \times
  \R_+$. 
  Let also $\chi \in \Ssct^0$ be homogeneous of degree $0$, be such
  that 
  $\supp (\chi) \subset \U$.  Let $Q$ be an interior differential quadratic
  form of type $(m,r)$ with homogeneous
  principal symbol $q\in \Ssc^{2m,2 r}$ satisfying, for some $C_0>0$
  and $\tau_0>0$,
  \begin{equation*}
    \Re q(\y)\geq C_0 \lsc^{2m} \lsct^{2r},\quad 
    \text{for} \ \tau \geq \tau_0,\ \ \y = (\y',\xi_d),\
    \y'=(x,\xi',\tau)\in\U,\ \xi_d\in\R.
  \end{equation*}
  
  For $0< C_1 < C_0$ and $N \in \N$ there exist $\tau_\ast$, $C>0$,
  and $C_N>0$ such that  
  \begin{align*}
    \Re Q(\Opt(\chi) u)
    &\geq
    C_1\Normsc{\Opt(\chi) u}{m,r}^2
    -C\normsc{\trace(\Opt(\chi) u)}{m-1,r+1/2}^2
    - C_N \Normsc{u}{m,-N}^2,
  \end{align*}
  for $u\in \Sbarp$ and  $\tau\geq \tau_\ast$.
\end{proposition}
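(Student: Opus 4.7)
The plan is to reduce $\Re Q(\Opt(\chi) u)$ to a pairing of the form $\Re \inp{Cv}{v}_+$ with $v = \Opt(\chi) u$ and $C \in \Psisc^{2m, 2r}$ whose principal symbol coincides with $\Re q$, and then to apply a G{\aa}rding-type positivity argument supported on $\chi$. Classical semiclassical calculus with a large parameter in the tangential variables, combined with Plancherel in the normal direction, delivers the announced estimate once the boundary corrections from integration by parts are collected.

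\emph{Reduction step.} Each operator decomposes as $A^s = \sum_{j=0}^m A^s_j D_d^j$ with $A^s_j \in \Psisct^{m+r'-j}$, and similarly for $B^s$. I would integrate by parts in $x_d$ to move every $D_d$-derivative from the right factor onto the left, and symmetrize using $\Re Q(v) = \tfrac12(Q(v) + \overline{Q(v)})$. This produces the identity
\begin{equation*}
  \Re Q(v) = \Re \inp{Cv}{v}_+ + \mathcal{B}(v),
  \qquad
  C = \tfrac12\sum_{s=1}^N\big((B^s)^* A^s + (A^s)^* B^s\big) \in \Psisc^{2m, 2r},
\end{equation*}
where $C$ has principal symbol $\Re q$ modulo $\Ssc^{2m, 2r-1}$, and $\mathcal{B}(v)$ is a hermitian boundary form bounded by $\normsc{\trace v}{m-1, r+1/2}^2$ after exploiting the cancellation of asymmetric boundary pairings produced by the symmetrization.

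\emph{G{\aa}rding step.} Since $\supp \chi \subset \U$ and $\Re q \geq C_0 \lsc^{2m} \lsct^{2r}$ on $\U$, for any $0 < C_1 < C_0$ the symbol $|\chi|^2(\Re q - C_1 \lsc^{2m} \lsct^{2r})$ is nonnegative on $\ovl{\Rdp} \times \R^d \times [1, \infty)$. The standard G{\aa}rding inequality with large parameter applied to this nonnegative symbol, together with the symbolic identity $\Opt(\chi)^* C \Opt(\chi) - \Op(|\chi|^2 \Re q) \in \Psisc^{2m, 2r-1}$ and repeated use of $\Normsc{u}{m,m'} \ll \Normsc{u}{m, m'+\ell}$ for $\tau$ large (to absorb commutator remainders), yields
\begin{equation*}
  \Re \inp{Cv}{v}_+ \geq C_1 \Normsc{\Opt(\chi) u}{m, r}^2 - C_N \Normsc{u}{m, -N}^2,
\end{equation*}
for any prescribed $N$ and $\tau \geq \tau_\ast$ sufficiently large. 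Combining this with the bound on $\mathcal{B}(v)$ gives the claim.

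\emph{Main obstacle.} The principal technical difficulty is the boundary form $\mathcal{B}(v)$: a priori, integrating by parts a product of two order-$m$ normal differential operators produces boundary pairings with total normal order up to $2m - 1$, whereas the statement tolerates only traces of normal order $\leq m - 1$. The resolution combines the symmetrization, which cancels odd-in-$D_d$ pairings and balances the remaining ones, with tangential trace inequalities that reduce any pairing involving $\partial_d^j v \br$ for $j \geq m$ to $\eps \Normsc{v}{m, r}^2$ plus terms of normal order $\leq m - 1$ on the boundary; the factor $\eps$ is then absorbed into the $C_1 \Normsc{v}{m, r}^2$ term by taking $C_1$ slightly smaller. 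A careful bookkeeping of the $\Lsct$-weights in each boundary contribution is what makes the final trace norm $\normsc{\trace v}{m-1, r+1/2}^2$ clean.
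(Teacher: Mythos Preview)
The paper does not supply its own argument for this proposition; it simply refers to \cite[Proposition~3.5]{ML} and \cite[Theorem~6.17]{JGL-vol2}. So there is no in-paper proof to match. That said, your sketch has a genuine gap, and it is precisely at the point you flag as the ``main obstacle''.

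Your reduction step moves every normal derivative to one side so as to obtain $\Re\inp{Cv}{v}_+$ with $C\in\Psisc^{2m,2r}$. When you integrate $\inp{D_d^j v}{D_d^k v}_+$ all the way down to $\inp{D_d^{j+k} v}{v}_+$, the boundary contributions contain pairings $\inp{D_d^{p}v\br}{D_d^{q}v\br}_\partial$ with $p$ ranging up to $j+k-1\le 2m-1$. Your proposed cure---symmetrization plus ``tangential trace inequalities that reduce any pairing involving $\partial_d^j v\br$ for $j\ge m$ to $\eps\Normsc{v}{m,r}^2$''---does not exist. A trace $D_d^{m}v\br$ simply cannot be controlled by $\Normsc{v}{m,r}$: take $v(x)=\phi(x_d/\delta)w(x')$ with $\phi$ a fixed bump and let $\delta\to 0^+$; then $\norm{D_d^{m}v\br}{\partial}\asymp\delta^{-m}$ while $\Normsc{v}{m,r}\asymp\delta^{1/2-m}$, so the ratio blows up. Symmetrization (taking the real part) does not remove these terms either; it only makes the boundary form hermitian.

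The route that actually works, and is the one behind the cited references, never leaves the class of type-$(m,r)$ quadratic forms. One writes, for $\y'\in\U$, the strictly positive degree-$2m$ polynomial $\xi_d\mapsto \Re q(\y',\xi_d)-C_1\lsc^{2m}\lsct^{2r}$ as a finite sum $\sum_k|\ell_k(\y',\xi_d)|^2$ with each $\ell_k$ polynomial of degree $\le m$ in $\xi_d$ (this is elementary for nonnegative polynomials in one real variable, and the factors are smooth symbols on $\supp\chi$). Setting $\tilde Q(v)=C_1\Normsc{v}{m,r}^2+\sum_k\Norm{\Op(\ell_k)v}{+}^2$, one has a quadratic form of type $(m,r)$ with the \emph{same} principal symbol as $\Re Q$. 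The comparison $\Re Q-\tilde Q$ is then handled by rebalancing each $\inp{C_{jk}D_d^j v}{D_d^k v}_+$ (here only tangential adjoints are taken, no boundary term yet) towards its mirror $\inp{D_d^{j'} v}{D_d^{k'} v}_+$ with $j+k=j'+k'$ and $j',k'\le m$: each single integration by parts moves $(j,k)\to(j\mp1,k\pm1)$ and produces a boundary pairing of orders $(j-1,k)$ or $(j,k-1)$, both $\le m-1$ since one starts and stays in the range $0\le j,k\le m$. This is why only $\normsc{\trace v}{m-1,r+1/2}^2$ appears, and why the full reduction to $\inp{Cv}{v}_+$ must be avoided.
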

We refer to \cite[Proposition~3.5]{ML} and \cite[Theorem~6.17]{JGL-vol2} for a proof. 
A local version of the result is the following one that follows from Proposition~\ref{prop: Gaarding quadratic forms}.
%%%%%%%%%%%%%%
% proposition       %
%%%%%%%%%%%%%%
\begin{proposition}[G{\aa}rding inequality]
  \label{prop: Gaarding quadratic forms local}
  Let $U_0$ be a bounded open subset
of $\ovl{\Rdp}$ and let $Q$ be an interior differential quadratic
  form of type $(m,r)$ with homogeneous
  principal symbol $q\in \Ssc^{2m,2 r}$ satisfying, for some $C_0>0$
  and $\tau_0>0$,
  \begin{equation*}
    \Re q(\y)\geq C_0 \lsc^{2m} \lsct^{2r},\quad 
    \text{for} \ \tau \geq \tau_0,\ \ \y = (\y',\xi_d),\
    \y'=(x,\xi',\tau)\in U_0 \times \R^{d-1} \times \R_+,\ \xi_d\in\R.
  \end{equation*}
  
  For $0< C_1 < C_0$ there exist $\tau_\ast$, $C>0$such that  
  \begin{align*}
    \Re Q(u)
    &\geq
    C_1\Normsc{ u}{m,r}^2
    -C\normsc{\trace(u)}{m-1,r+1/2}^2,
  \end{align*}
  for $u\in \Sbarp$ and  $\tau\geq \tau_\ast$.
\end{proposition}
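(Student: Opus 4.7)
The plan is to deduce the local estimate from the microlocal G{\aa}rding inequality (Proposition~\ref{prop: Gaarding quadratic forms}) by inserting a purely spatial cutoff that reduces to the identity on $u$. Concretely, I would pick $\psi \in \Cinfc(U_0)$ equal to $1$ on a neighborhood of the spatial support of $u$---the natural reading of the statement being that $u$ has its spatial support in a compact subset of $U_0$, which is the way the result is used in the sequel. Viewed as a $\xi'$-independent symbol, $\psi$ lies in $\Ssct^0$ and is trivially homogeneous of degree zero. Letting $\U$ be a conic open neighborhood of $\supp \psi \times \R^{d-1} \times \R_+$ whose compact spatial projection is still contained in $U_0$, the positivity hypothesis on $q$ persists on $\U$ with the same lower constant $C_0$.

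Applying Proposition~\ref{prop: Gaarding quadratic forms} with $\chi := \psi$ and an intermediate constant $C_1' \in (C_1, C_0)$ yields
\begin{equation*}
  \Re Q(\Opt(\psi) u) \geq C_1' \Normsc{\Opt(\psi) u}{m,r}^2 - C \normsc{\trace(\Opt(\psi) u)}{m-1,r+1/2}^2 - C_N \Normsc{u}{m,-N}^2.
\end{equation*}
Since $\psi$ is independent of $\xi'$, the operator $\Opt(\psi)$ is simply multiplication by $\psi(x)$; as $\psi \equiv 1$ on $\supp u$, one has $\Opt(\psi) u = u$ exactly (no remainder from pseudodifferential calculus, and no cross-talk with the trace at $x_d = 0$ either, because $\psi$ does not touch $\xi_d$ or $D_{x_d}$). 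The displayed inequality therefore reduces to a lower bound for $\Re Q(u)$ with the correct volume and trace norms, up to the surplus term $C_N \Normsc{u}{m,-N}^2$.

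It remains to absorb this remainder. Using the elementary comparison $\Normsc{u}{m,m'} \ll \Normsc{u}{m,m'+\ell}$ for $\ell > 0$ and $\tau$ large, recorded in Section~\ref{sec: Function norms}, I would choose $N$ large enough that $-N < r$ and then $\tau_\ast$ large enough so that $C_N \Normsc{u}{m,-N}^2 \leq (C_1' - C_1)\Normsc{u}{m,r}^2$. This absorbs the surplus into the leading term and drops the constant from $C_1'$ to the prescribed $C_1$. The only substantive point to watch is the compatibility between the cutoff $\psi$, the spatial support of $u$, and the conic neighborhood of persistent positivity; this is exactly where the boundedness of $U_0$ enters, and it is the main---but mild---obstacle, the remainder of the argument being a routine reduction to Proposition~\ref{prop: Gaarding quadratic forms}.
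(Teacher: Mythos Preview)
Your proposal is correct and is the natural deduction the paper has in mind: the paper itself gives no details beyond ``follows from Proposition~\ref{prop: Gaarding quadratic forms}'', and reducing to that result via a purely spatial cutoff $\psi$ (so that $\Opt(\psi)$ is multiplication and equals the identity on $u$) followed by absorption of the $\Normsc{u}{m,-N}$ remainder for large $\tau$ is exactly the expected argument. Your observation that the statement implicitly requires $\supp u$ to lie in a fixed compact subset of $U_0$ is correct---the result is only ever applied to $w\in\Cbarc(W_+)$ (see the proof of Lemma~\ref{fg2}), so the constants may depend on that compact set via the choice of $\psi$.
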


\subsubsection{Boundary  differential quadratic forms}
\label{sec: Boundary  differential quadratic forms}
%%%%%%%%%%%%%%
% definition         %
%%%%%%%%%%%%%%
\begin{definition}
\label{def: boundary quadratic form}
Let $u\in  \Sbarp$. We say that
\begin{equation*}
  \Bquad(u)=\sum_{s=1}^N\inp{A^s u\br}{B^s u\br}_\partial, 
  \quad A^s = a^s(x,D,\tau),  \ B^s = b^s(x,D,\tau),  
\end{equation*}
is a boundary differential quadratic form of type $(m-1,r)$ with $\Cinf$
coefficients, if for each $s=1,\dots N$, we have $a^s(\y)\in
\Ssc^{m-1,r'}(\ovl{\Rdp} \times\R^d)$, $b^s(\y)\in
\Ssc^{m-1,r''}(\ovl{\Rdp}  \times\R^d)$ with $r'+r''=2
r$, $\y=(\y',\xi_d)$ with $\y'=(x,\xi',\tau)$. The symbol of the
boundary differential quadratic form $\Bquad$ is defined by
\begin{equation*}
  \bquad(\y',\xi_d,\tilde{\xi}_d)=\sum_{s=1}^N
  a^{s}(\y',\xi_d)\ovl{b^s}(\y',\tilde{\xi}_d). 
\end{equation*}
\end{definition}

For $\z=(z_0,\dots,z_{\ell-1})\in\C^\ell$ and $a(\y)\in
\Ssc^{\ell-1,t}$, of the form 
$a(\y',\xi_d) = \sum_{j=0}^{\ell-1} a_j(\y') \xi_d^j$ with $a_j(\y') \in \Ssct^{\ell-1+t-j}$
we set
\begin{equation}
  \label{eq: symbol z}
  \un{a}(\y',\z)=\sum_{j=0}^{\ell-1}a_j(\y')z_j.
\end{equation}
From the boundary differential quadratic form $\Bquad$ we introduce the  following
bilinear symbol $\un{\Bquad}:\C^m\times\C^m\to \C $
\begin{equation}
  \label{eq: bilinear symbol-boundary quadratic form}
  \un{\Bquad}(\y',\z,\z')
  =\sum_{s=1}^N \un{a^s}(\y',\z)\ovl{\un{b^s}}(\y',\ovl{\z}'),
  \quad \z,\z'\in\C^m.
\end{equation}

We let $\mathscr W$ be an open conic set in $\R^{d-1} \times \R^{d-1} \times \R_+$.
%%%%%%%%%%%%%%
% definition         %
%%%%%%%%%%%%%%
\begin{definition}
  Let $\Bquad$ be a boundary differential quadratic form of type $(m-1,r)$ with
  homogeneous principal symbol and 
  associated with the bilinear symbol $\un{\Bquad}(\y',\z,\z')$. We say that $\Bquad$ is
  positive definite in $\mathscr W$ if there exist $C>0$ and $R>0$ such that 
\begin{equation*}
  \Re \un{\Bquad}(\y'', x_d = 0^+, \z,\z)\geq
  C\sum_{j=0}^{m-1}  \lsct^{2(m-1-j+r)}|z_j|^2,
\end{equation*}
for $\y''=(x',\xi',\tau) \in \mathscr W$, with $|(\xi', \tau)| \geq
R$, and
$\z=(z_0,\dots,z_{m-1})\in \C^m$.
\end{definition}

We have the following G{\aa}rding inequality.
%%%%%%%%%%%%%%
% lemma             %
%%%%%%%%%%%%%%
\begin{proposition}
  \label{prop: boundary form -Gaarding tangentiel}
  Let $\Bquad$ be a boundary differential quadratic form of type $(m-1,r)$,
  positive definite in $\mathscr W$, an open conic set in $\R^{d-1}
  \times \R^{d-1} \times \R_+$, with bilinear symbol
  $\un{\Bquad}(\y',\z,\z')$. Let $\chi \in \Ssct^0$ be homogeneous of
  degree 0, with $\supp(\chi\br)\subset \mathscr W$ and let $N\in \N$. Then
  there exist $\tau_\ast\geq 1$, $C>0$, $C_N >0$ such that 
  \begin{equation*}
    \Re \Bquad(\Opt(\chi) u)
    \geq C\normsc{\trace(\Opt(\chi) u)}{m-1,r}^2
    -C_N\normsc{\trace(u)}{m-1,-N}^2,
  \end{equation*}
  for $u\in \Sbarp$ and $\tau\geq\tau_\ast$.
\end{proposition}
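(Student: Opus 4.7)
The plan is to rewrite $\Bquad(\opt(\chi) u)$ as a matrix-valued tangential quadratic form acting on the vector of boundary traces and then invoke a standard matrix-valued microlocal G{\aa}rding inequality on $\R^{d-1}$. Set $u_{\partial,j} := D_d^j u\br$ for $j = 0, \dots, m-1$; since $\opt(\chi)$ is tangential, it commutes with both $D_d$ and restriction to $\{x_d=0\}$, so $D_d^j(\opt(\chi) u)\br = \opt(\chi) u_{\partial,j}$.

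The first step is the expansion. Using $A^s = \sum_j \opt(a^s_j) D_d^j$, $B^s = \sum_k \opt(b^s_k) D_d^k$, together with the tangential-calculus identity $\opt(b^s_k)^\ast \opt(a^s_j) = \opt\bigl(\overline{b^s_k}\, a^s_j\bigr) + r^s_{jk}$ with $r^s_{jk}$ of tangential symbolic order one less, one obtains
\begin{equation*}
  \Bquad(\opt(\chi) u)
  = \sum_{j,k=0}^{m-1}
  \bigl(\opt(M_{jk})\opt(\chi) u_{\partial,j},\,\opt(\chi) u_{\partial,k}\bigr)_\partial
  + R(u),
\end{equation*}
where $M_{jk}(\y') = \sum_s a^s_j(\y')\,\overline{b^s_k(\y')}$. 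The collection $(M_{jk})_{j,k}$ is precisely the Hermitian-matrix symbol whose associated sesquilinear form is $\un{\Bquad}(\y',\z,\z)$, so the positive-definiteness hypothesis rereads as $\Re\sum_{j,k} M_{jk}(\y')\, z_j\,\overline{z}_k \geq C \sum_j \lsct^{2(m-1-j+r)} |z_j|^2$ on $\mathscr W$ with $|(\xi',\tau)|\geq R$.

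Next I would rescale by the natural weights: setting $w_j := \opt(\Lsct^{m-1-j+r})\opt(\chi) u_{\partial,j}$, one has $\normsc{\trace(\opt(\chi)u)}{m-1,r}^2 \asymp \sum_j \norm{w_j}{\partial}^2$, and the main term takes the form $\bigl(\opt(\widetilde M) \mathbf{w}, \mathbf{w}\bigr)_\partial$ with $\widetilde M$ matrix-valued of tangential order zero and principal symbol satisfying $\Re \widetilde M(\y') \geq C' \id$ on $\supp(\chi\br) \subset \mathscr W$. A matrix-valued tangential G{\aa}rding inequality on $\R^{d-1}$ then yields, for $\tau \geq \tau_\ast$ large enough,
\begin{equation*}
  \Re\bigl(\opt(\widetilde M) \mathbf{w}, \mathbf{w}\bigr)_\partial
  \geq C'' \sum_j \norm{w_j}{\partial}^2 - C_N \normsc{\trace(u)}{m-1,-N}^2,
\end{equation*}
the final remainder stemming from the pseudo-locality of $\opt(\chi)$ outside $\mathscr W$. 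Re-expressing the main term as $\normsc{\trace(\opt(\chi) u)}{m-1,r}^2$ concludes the proof.

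The main obstacle is the systematic bookkeeping of the tangential symbolic orders in $R(u)$, in the various compositions and adjoints, and in the commutators with $\opt(\chi)$. Each such error term gains a factor of $\Lsct^{-1}$ compared with the principal contribution, so upon choosing $\tau_\ast$ large enough it is absorbed into $\normsc{\trace(\opt(\chi) u)}{m-1,r}^2$; the terms whose wave-front set lies outside $\mathscr W$ produce contributions that decay faster than any power of $\Lsct$ and are controlled by $C_N \normsc{\trace(u)}{m-1,-N}^2$ for arbitrary $N \in \N$. This is the direct boundary counterpart of the proof of Proposition~\ref{prop: Gaarding quadratic forms}, with the tangential pseudo-differential calculus in place of the full one.
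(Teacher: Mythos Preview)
The paper does not prove this proposition; it is stated as a known result, parallel to Proposition~\ref{prop: Gaarding quadratic forms} for which the paper cites \cite{ML} and \cite{JGL-vol2}. Your approach---rewriting the boundary form as a matrix-valued tangential quadratic form in the trace vector $(u_{\partial,0},\dots,u_{\partial,m-1})$, identifying the principal matrix symbol with $\un{\Bquad}$, rescaling by $\Lsct^{m-1-j+r}$, and invoking a microlocal matrix G{\aa}rding inequality on $\R^{d-1}$---is exactly the standard route taken in those references, and the bookkeeping you sketch for the lower-order remainders is correct.

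One imprecision to fix: you claim that $\opt(\chi)$ commutes with $D_d$. It does not, since $\chi$ may depend on $x_d$; one has $[D_d,\opt(\chi)]=\opt(-i\partial_{x_d}\chi)\in\Psisct^0$. Thus
\[
D_d^j(\opt(\chi)u)\br=\opt(\chi\br)\,u_{\partial,j}+\sum_{k<j}\binom{j}{k}\,\opt\big((D_d^{j-k}\chi)\br\big)\,u_{\partial,k}.
\]
The extra terms carry a trace of index $k<j$ but enter the form with the weight $\lsct^{m-1-j+r}$, which is one full $\lsct$-order below the principal $u_{\partial,k}$ contribution; they are therefore absorbed for $\tau$ large exactly as you describe in your final paragraph. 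So the argument survives, but the asserted exact commutation should be replaced by this Leibniz expansion. Commutation with restriction to $\{x_d=0\}$, on the other hand, is genuine.
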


%%%%%%%%%%%%%
% Section          %
%%%%%%%%%%%%%
\subsection{Symbols and operators with an additional large parameter}
\label{sec: calculus with spectral parameter}
\bigskip In this article, we shall use operators with a  symbol that depends
on an additional large parameter $\sigma$,  say $a(x,\xi,\tau,\sigma)$. They
will satisfy estimate of the form
\begin{align*}
  |\partial^{\alpha}_x\partial^{\beta}_{\xi}a(x,\xi,\tau,\sigma)|\leq
  C_{\alpha,\beta}(\tau^2+|\xi|^2+\sigma^2)^{(m-|\beta|)/2}.
\end{align*}
We observe that  if  $\tau\gtrsim \sigma$ one has
\begin{align*}
 \lsc^2 \leq  \tau^2+|\xi|^2+\sigma^2  \lesssim \lsc^2.
\end{align*}
Thus, as far as pseudo-differential calculus is concerned it is as if
$a\in \Ssc^m$ and this property will be
exploited in what follows.

Similarly if $a = a(x,\xi',\tau,\sigma)$ fulfills a tangential-type estimate of the form
\begin{align*}
  |\partial^{\alpha}_x\partial^{\beta}_{\xi'} a(x,\xi',\tau,\sigma)|\leq
  C_{\alpha,\beta}(\tau^2+|\xi'|^2+\sigma^2)^{(m-|\beta|)/2},
\end{align*}
if one has $\tau\gtrsim \sigma$ one will be able to apply techniques
adapted to symbols in $\Ssct^m$ and associated operators, like for instance the results on differential quadratic forms listed in
Section~\ref{sec: Differential quadratic forms}.

%%%%%%%%%%%%%
% Section          %
%%%%%%%%%%%%%
\section{Lopatinski\u{\i}-\v{S}apiro boundary conditions for an elliptic operator}
\label{sec: LS condition}
Let $P$ be an elliptic differential operator of order $2k$ on
$\Omega$, $(k\geq 1)$, with principal symbol $p(x,\omega)$ for
$(x,\omega)\in T^*\Omega$. One defines the following polynomial in
$z,$
$$\tilde{p}(x,\omega',z)=p(x,\omega'-z n_x),$$ for
$x\in\partial\Omega,$ $\omega'\in T^*_x\partial\Omega,$ $z\in\R$ and
where $n_x$ denotes the outward unit  pointing conormal vector at $x$
(see Section~\ref{sec: Geometrical setting}). Here $x$ and $\omega'$
are considered to act as parameters.  We denote by $\rho_j(x,\omega'),$ $1\leq j\leq 2k$ the complex roots of $\tilde{p}.$ One sets $$\tilde{p}^+(x,\omega',z)=\prod\limits_{\Im\rho_j(x,\omega')\geq 0}(z-\rho_j(x,\omega')).$$ Given boundary operators $B_1,\cdots, B_k$ in a neighborhood of $\partial\Omega,$ with principal symbols $b_j(x,\omega)$, $j=1,\cdots,k,$ one also sets $\tilde{b}_j(x,\omega',z)=b_j(x,\omega'-zn_x).$
%%%%%%%%%%%%%%%%%%%%%%%%
% definition           %
%%%%%%%%%%%%%%%%%%%%%%%%
\begin{definition}[\LS boundary condition]\label{def: LS}
Let $(x,\omega')\in T^*\partial\Omega$ with $\omega'\neq 0.$ One says that the \LS condition holds for $(P, B_1,\cdots, B_k)$ at $(x,\omega')$ if for any polynomial function $f(z)$ with complex coefficients, there exists $c_1,\cdots,c_k\in\C$ and a polynomial function $g(z)$ with complex coefficients such that, for all $z\in\C,$
$$f(z)=\sum\limits_{1\leq j\leq k}c_j\tilde{b_j}(x,\omega',z)+g(z)\tilde{p}^+(x,\omega',z).$$
We say that the \LS condition holds for $(P, B_1, \cdots, B_k)$ at $x\in\partial\Omega$ if it holds at $(x,\omega')$ for all $\omega'\in T^*_x\partial\Omega$ with $\omega'\neq 0.$
\end{definition}

%%%%%%%%%%%%%%%%%%%%%%%%
% example              %
%%%%%%%%%%%%%%%%%%%%%%%%
\subsection{Some examples}

For instance the \LS condition holds in the following cases
\begin{itemize}
\item $P=-\Delta$ on $\Omega,$ with the Dirichlet boundary condition, $Bu_{|\partial\Omega}=u_{|\partial\Omega}.$
\item $P=\Delta^2$ on $\Omega,$ along  with the so-called clamped
  boundary conditions, i.e,
  $B_1u_{|\partial\Omega}=u_{|\partial\Omega}$ and
  $B_2u_{|\partial\Omega}=\partial_{\nu}u_{|\partial\Omega},$ where
  $\nu$ is the normal outward pointing unit vector to
  $\partial\Omega$; see Section~\ref{sec: Geometrical setting}.
\item $P=\Delta^2$ on $\Omega,$ along with the so-called hinged boundary conditions, i.e, $B_1u_{|\partial\Omega}=u_{|\partial\Omega}$ and  $B_2u_{|\partial\Omega}=\Delta u_{|\partial\Omega}.$
\end{itemize}

%%%%%%%%%%%%%
% Section          %
%%%%%%%%%%%%%
\subsection{Case of the bi-Laplace operator}
\label{sec: LS for bi-Laplace}
With $P=\Delta^2$ on $\Omega$,  along with the general boundary operators $B_1$ and $B_2$ of orders $k_1$ and $k_2$ respectively,  we give a matrix criterion of the \LS condition.  The general boundary operators $B_1$ and $B_2$ are then given by
\begin{equation*}
  B_\ell(x,D)
  = \sum\limits_{0\leq j\leq\min(3,k_\ell)}
  B_\ell^{k_\ell-j}(x,D') (i \partial_\nu) ^j,
  \qquad \ell=1,2,
\end{equation*}
with $B_\ell^{k_\ell-j}(x,D')$ differential operators acting in the
tangential variables.  
We denote by $b_1(x,\omega)$ and $b_2(x,\omega)$ the principal symbols of $B_1$ and $B_2$ respectively. For $(x,\omega')\in T^*\partial\Omega,$ we set 
\begin{equation*}
  \tilde{b}_\ell(x,\omega',z)
  =\sum\limits_{0\leq j\leq \min(3,k_\ell)}
  b_\ell^{k_\ell-j}(x,\omega')z^j,
  \qquad \ell=1,2.
\end{equation*}
We recall that the principal symbol of $P$ is given by
$p(x,\omega)=|\omega|^4_g.$ One thus has 
\begin{equation*}
  \tilde{p}(x,\omega',z)=p(x,\omega'-zn_x) =  \big( z^2 + |\omega'|_g^2\big)^2.
\end{equation*}
Therefore
$\tilde{p}(x,\omega',z)=(z-i|\omega'|_g)^2(z+i|\omega'|_g)^2$. According
to the above definition we set 
$\tilde{p}^+(x,\omega',z)=(z-i|\omega'|_g)^2.$ Thus, the
\LS condition holds at $(x,\omega')$ with $\omega'\neq 0$ if
and only if for any function $f(z)$ the complex number $i|\omega'|_g$
is a root of the polynomial function $z\mapsto
f(z)-c_1\tilde{b}_1(x,\omega',z)-c_2\tilde{b}_2(x,\omega',z)$ and its
derivative for some $c_1,c_2\in\C$. This leads to the
following determinant condition.
%%%%%%%%%%%%%%%%%%%%%%%%
% lemma                %
%%%%%%%%%%%%%%%%%%%%%%%%
\begin{lemma}\label{lemma: Lopatinskii bi-laplacian op}  
  Let $P=\Delta^2$ on $\Omega$, $B_1$ and $B_2$ be two boundary
  operators. If $x \in \partial\Omega$, $\omega' \in T_x^* \partial\Omega$, with
  $\omega' \neq 0$, the \LS
   condition holds at $(x,\omega')$ if and only if
  \begin{equation}\label{cond1}
    \det
    \begin{pmatrix}
      \tilde{b}_1 &\tilde{b}_2 \\[2pt]
      \partial_z\tilde{b}_1& \partial_z\tilde{b}_2
    \end{pmatrix} (x,\omega',z=i|\omega'|_g)
    \neq 0.
\end{equation}
\end{lemma}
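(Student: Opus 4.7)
The plan is to recast the \LS condition of Definition~\ref{def: LS} as a linear algebra statement via Euclidean division by $\tilde p^+(x,\omega',z)$, and then to unravel what this says in the case at hand where $\tilde p^+$ is a square.

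First I would fix $(x,\omega')$ with $\omega'\neq 0$. By the Euclidean algorithm applied in $\C[z]$, any polynomial $f(z)$ admits a unique decomposition $f(z) = g(z)\tilde p^+(x,\omega',z) + r(z)$ with $\deg r < \deg \tilde p^+ = 2$. Therefore the \LS condition amounts to the statement that for every remainder $r(z)$ of degree $\leq 1$, there exist $c_1,c_2\in\C$ such that $r(z) \equiv c_1\tilde b_1(x,\omega',z) + c_2\tilde b_2(x,\omega',z) \pmod{\tilde p^+(x,\omega',z)}$. In other words, the linear map
\begin{equation*}
  \Phi: \C^2 \longrightarrow \C[z]/\bigl(\tilde p^+(x,\omega',z)\bigr),
  \qquad (c_1,c_2) \longmapsto c_1 \tilde b_1 + c_2 \tilde b_2 \pmod{\tilde p^+},
\end{equation*}
is surjective. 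Since the target space has $\C$-dimension $2$, surjectivity is equivalent to $\Phi$ being a bijection, hence to $\det \Phi \neq 0$ in any choice of bases.

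Next, I use that $\tilde p^+(x,\omega',z) = (z - i|\omega'|_g)^2$ as computed in the paragraph preceding the lemma, and choose the basis $\{1,\ z - i|\omega'|_g\}$ of the target quotient. For any polynomial $q(z)$, Taylor expansion at the point $z_0 = i|\omega'|_g$ gives
\begin{equation*}
  q(z) \equiv q(z_0) + (\partial_z q)(z_0)\,(z - z_0) \pmod{(z-z_0)^2}.
\end{equation*}
Applied to $q = \tilde b_\ell$, this says that the matrix of $\Phi$ in the chosen bases is exactly
\begin{equation*}
  \begin{pmatrix}
    \tilde b_1(x,\omega',z_0) & \tilde b_2(x,\omega',z_0) \\[2pt]
    \partial_z \tilde b_1(x,\omega',z_0) & \partial_z \tilde b_2(x,\omega',z_0)
  \end{pmatrix},
\end{equation*}
whose nonvanishing determinant is precisely condition~\eqref{cond1}.

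There is no real obstacle here: once the reduction modulo $\tilde p^+$ is made and the form of $\tilde p^+$ is used, the equivalence follows from elementary linear algebra. The only point that deserves mention is that $\omega'\neq 0$ ensures $z_0 = i|\omega'|_g$ is a genuine double root (not zero) so that the Taylor coefficients at $z_0$ actually form a well-defined coordinate system on $\C[z]/(z-z_0)^2$.
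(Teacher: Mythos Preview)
Your proof is correct and follows essentially the same approach as the paper: reduce the \LS condition modulo $\tilde p^+(x,\omega',z)=(z-i|\omega'|_g)^2$ and observe that requiring $f-c_1\tilde b_1-c_2\tilde b_2$ to vanish to second order at $z_0=i|\omega'|_g$ yields the determinant condition. The paper in fact states the lemma without a formal proof, merely noting in the preceding paragraph that the condition amounts to $i|\omega'|_g$ being a root of $f-c_1\tilde b_1-c_2\tilde b_2$ and of its derivative; your write-up supplies the clean linear-algebra justification behind that sentence. One minor remark: your closing comment that $\omega'\neq 0$ ensures $z_0$ is ``not zero'' is unnecessary, since the Taylor-coefficient argument works at any point $z_0\in\C$; the hypothesis $\omega'\neq 0$ is there simply because the \LS condition of Definition~\ref{def: LS} is only formulated for nonzero cotangent vectors.
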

%%%%%%%%%%%%%%%%%%%%%%%%
% remark               %
%%%%%%%%%%%%%%%%%%%%%%%%
\begin{remark}\label{RR0}
With the determinant condition and homogeneity, we note that if the
\LS condition holds for $(P,B_1, B_2)$ at $(x,\omega')$ it also holds
in a conic neighborhood of $(x,\omega')$ by continuity. If it holds at $x\in\Omega,$ it also holds in a neighborhood of $x$.
\end{remark}

%%%%%%%%%%%%%
% Section          %
%%%%%%%%%%%%%
\subsection{Formulation in normal geodesic coordinates}
Near a boundary point $x\in\partial\Omega$, we shall use normal
geodesic coordinates. These coordinates are recalled at the beginning
of Section \ref{sec: Psido}. Then the principal symbols of $\Delta$ and
$\Delta^2$ are given by $\xi_d^2+r(x,\xi')$ and
$(\xi_d^2+r(x,\xi'))^2$ respectively, where $r(x,\xi')$ is the
principal symbol of a tangential differential elliptic operator
$R(x,D')$ of order 2, with
$$r(x,\xi')=\sum\limits_{1\leq i,j\leq
  d-1}g^{ij}(x)\xi'_i\xi'_j \ \ \text{and} \ \  r(x,\xi')\geq C|\xi'|^2.$$
Here $g^{ij}$ is the inverse of the metric $g_{ij}.$ Below, we shall
often write $|\xi'|^2_x=r(x,\xi')$ and we shall also write
$|\xi|^2_x=\xi_d^2+r(x,\xi'),$ for $\xi=(\xi',\xi_d).$

If $b_1(x,\xi)$ and $b_2(x,\xi)$ are the principal symbols of the
boundary operators $B_1$ and $B_2$ in the normal geodesic
coordinates then the \LS condition for $(P, B_1, B_2)$ with $P =
\Delta^2$ at $(x,\xi')$ reads
\begin{equation*}
    \det
    \begin{pmatrix}
      b_1&b_2\\[2pt]
      \partial_{\xi_d}b_1& \partial_{\xi_d}b_2
    \end{pmatrix}(x,\xi',\xi_d=i |\xi'|_x)
    \neq 0,
  \end{equation*}
  if $|\xi'|_x \neq 0$ according to Lemma~\ref{lemma: Lopatinskii bi-laplacian op}. If the \LS condition holds at some $x^0$,
  because of homogeneity, there exists $C_0>0$ such that
  \begin{equation}\label{eq: homog formul LS condition}
    \left|\det
    \begin{pmatrix}
      b_1 &b_2\\[2pt]
      \partial_{\xi_d}b_1& \partial_{\xi_d}b_2 
    \end{pmatrix}\right|(x^0,\xi',i |\xi'|_x)
    \geq C_0 |\xi'|_x^{k_1+k_2 -1},
    \qquad \xi' \in \R^{d-1}.
  \end{equation}

  %%%%%%%%%%%%%
% Section          %
%%%%%%%%%%%%%
  \subsection{Stability of the \LS condition under perturbation}
  \label{sec: Stability LS}
To prepare for the study of how the \LS condition behaves under
conjugation with Carleman exponential weight and the addition of a
spectral parameter, we introduce some perturbations in the formulation
of the \LS condition for $(P, B_1, B_2)$ as written in \eqref{eq:
  homog formul LS condition}.
%%%%%%%%%%%%%%%%%%%%%%%%
% lemma                %
%%%%%%%%%%%%%%%%%%%%%%%%
\begin{lemma}
  \label{lemma: perturbation LS}
  Let $V^0$ be a compact set of $\partial\Omega$ be such that the \LS
  condition holds for $(P, B_1, B_2)$ at every point $x$ of $V^0$. There exist $C_1>0$
  and $\eps>0$ such that
   \begin{equation}\label{eq: LS + perturbation-1}
    \left|\det
    \begin{pmatrix}
      b_1
      &b_2\\[2pt]
      \partial_{\xi_d}b_1
      & \partial_{\xi_d}b_2 
    \end{pmatrix}\right|(x,\xi' + \zeta', \xi_d = i |\xi'|_x+ \delta)
    \geq C_1 |\xi'|_x^{k_1+k_2 -1},
  \end{equation}
  for $x \in V^0$, $\xi' \in \R^{d-1}$, $\zeta' \in \C^{d-1}$, and $\delta \in \C$, if $|\zeta'| + |\delta| \leq \eps |\xi'|_x$. 
  Moreover one has
  \begin{equation}\label{eq: LS + perturbation-2}
    \left|\det
    \begin{pmatrix}
      b_1 (x,\xi' + \zeta', \xi_d = i |\xi'|_x+ \delta)
      &b_2 (x,\xi' + \zeta', \xi_d = i |\xi'|_x+ \delta)\\[4pt]
      b_1 (x,\xi' + \zeta', \xi_d = i |\xi'|_x+ \tilde{\delta})
      & b_2 (x,\xi' + \zeta', \xi_d = i |\xi'|_x+ \tilde{\delta})
    \end{pmatrix}\right|
    \geq C_1 |\delta - \tilde{\delta}|\,  |\xi'|_x^{k_1+k_2 -1},  
  \end{equation}
  for  $x \in V^0$, $\xi' \in \R^{d-1}$, $\zeta' \in \C^{d-1}$, and $\delta, \tilde{\delta}\in \C$,
  if $|\zeta'| + |\delta| + |\tilde{\delta}|\leq \eps  |\xi'|_x$. 
\end{lemma}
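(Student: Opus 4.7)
The plan is to exploit the joint homogeneity of the entries of the Lopatinski\u{\i} matrix in $(\xi',\xi_d)$, the compactness of $V^0$, and the fact that the entries are polynomials in $\xi$, so that everything extends holomorphically to complex perturbations $\zeta', \delta$ and one can conclude by a uniform continuity / compactness argument on the unit cosphere bundle over $V^0$. Writing
\begin{equation*}
  D(x,\xi',\xi_d) := \det \begin{pmatrix} b_1 & b_2 \\ \partial_{\xi_d} b_1 & \partial_{\xi_d} b_2 \end{pmatrix}(x,\xi',\xi_d),
\end{equation*}
the determinant $D$ is a polynomial in $(\xi',\xi_d)\in \C^d$ that is homogeneous of degree $k_1+k_2-1$ in $(\xi',\xi_d)$ jointly. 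Setting $\hat\xi' = \xi'/|\xi'|_x$, $\hat\zeta' = \zeta'/|\xi'|_x$, $\hat\delta = \delta/|\xi'|_x$, homogeneity gives
\begin{equation*}
  D(x,\xi'+\zeta', i|\xi'|_x + \delta) = |\xi'|_x^{k_1+k_2-1}\, D(x, \hat\xi' + \hat\zeta', i + \hat\delta),
\end{equation*}
with $|\hat\xi'|_x = 1$, so the whole statement reduces to a uniform lower bound on the cosphere bundle of the rescaled quantity for small $|\hat\zeta'|+|\hat\delta|$.

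For \eqref{eq: LS + perturbation-1}, the unperturbed \LS bound \eqref{eq: homog formul LS condition}, applied pointwise on $V^0$ and combined with continuous dependence on $x$ plus compactness of $V^0$, yields $|D(x,\hat\xi', i)| \geq C_0 > 0$ for every $x \in V^0$ and every $\hat\xi'$ with $|\hat\xi'|_x = 1$. The set of such $(x,\hat\xi')$ is compact, and $D$ is polynomial in $(\xi',\xi_d)$, hence uniformly continuous on bounded subsets of $\C^d$. Uniform continuity then produces $\eps > 0$ such that $|D(x, \hat\xi' + \hat\zeta', i + \hat\delta)| \geq C_0/2$ whenever $|\hat\zeta'|+|\hat\delta|\leq \eps$, and rescaling by $|\xi'|_x^{k_1+k_2-1}$ gives \eqref{eq: LS + perturbation-1} with $C_1 = C_0/2$.

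For \eqref{eq: LS + perturbation-2}, I factor $\tilde\delta-\delta$ out by row reduction and the fundamental theorem of calculus. Abbreviating $b_\ell(\mu) = b_\ell(x, \xi'+\zeta', i|\xi'|_x+\mu)$, one has
\begin{equation*}
  b_\ell(\tilde\delta) - b_\ell(\delta) = (\tilde\delta-\delta) \int_0^1 (\partial_{\xi_d} b_\ell)(\delta_s)\, ds,
  \qquad \delta_s = \delta + s(\tilde\delta - \delta),
\end{equation*}
so subtracting the first row from the second and using multilinearity yields
\begin{equation*}
  \det\begin{pmatrix} b_1(\delta) & b_2(\delta) \\ b_1(\tilde\delta) & b_2(\tilde\delta) \end{pmatrix}
  = (\tilde\delta-\delta) \int_0^1 \det\begin{pmatrix} b_1(\delta) & b_2(\delta) \\ (\partial_{\xi_d} b_1)(\delta_s) & (\partial_{\xi_d} b_2)(\delta_s)\end{pmatrix} ds.
\end{equation*}
For each $s\in[0,1]$ the matrix under the integral is a perturbation of the \LS matrix, with first row evaluated at the shift $\delta$ and second row evaluated at the shift $\delta_s$, where $|\delta_s|\leq \max(|\delta|,|\tilde\delta|)$. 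Thus it suffices to apply \eqref{eq: LS + perturbation-1} separately to the two rows (which is harmless, as the argument of the previous paragraph bounds the determinant of the \LS matrix whose rows are evaluated at two possibly different shifts, by a straightforward extension of the same continuity argument).

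The only point that requires care is choosing the smallness constants compatibly: one first fixes $\eps$ so that \eqref{eq: LS + perturbation-1} holds, and then for \eqref{eq: LS + perturbation-2} one uses $\eps/2$ (or any smaller value), which ensures that both $\delta$ and every intermediate $\delta_s$ remain in the region where the perturbed determinant is controlled. No genuine new analytic difficulty beyond \eqref{eq: LS + perturbation-1} appears; the main obstacle is really conceptual, namely recognizing that the apparent degeneracy at $\delta=\tilde\delta$ in \eqref{eq: LS + perturbation-2} is exactly captured by the extra factor $|\delta-\tilde\delta|$ that the row-reduction step extracts.
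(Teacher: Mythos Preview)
Your argument for the first inequality is essentially the paper's: reduce by homogeneity to the unit cosphere over the compact set $V^0$, then use uniform continuity of the polynomial determinant. No issue there.

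For the second inequality your route differs from the paper's. You use the first-order Taylor identity (fundamental theorem of calculus) and row reduction to write
\[
  \det\begin{pmatrix} b_1(\delta) & b_2(\delta) \\ b_1(\tilde\delta) & b_2(\tilde\delta)\end{pmatrix}
  = (\tilde\delta-\delta)\int_0^1 M(s)\,ds,
  \qquad
  M(s)=\det\begin{pmatrix} b_1(\delta) & b_2(\delta) \\ (\partial_{\xi_d}b_1)(\delta_s) & (\partial_{\xi_d}b_2)(\delta_s)\end{pmatrix}.
\]
The paper instead Taylor-expands to second order, which splits off the \emph{exact} \LS determinant $D(x,\xi'+\zeta',i|\xi'|_x+\delta)$ as a main term plus a remainder $(\tilde\delta-\delta)\int_0^1(1-s)[\cdots]\,ds$ that is bounded \emph{above} by $C\,|\xi'|_x^{k_1+k_2-2}$ via homogeneity; the conclusion then follows directly from the first part. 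Your first-order approach is perfectly viable, but be careful with the final step: knowing $|M(s)|\geq C_1|\xi'|_x^{k_1+k_2-1}$ for each $s$ (which is what ``apply \eqref{eq: LS + perturbation-1} separately to the two rows'' literally gives) does \emph{not} by itself bound $\bigl|\int_0^1 M(s)\,ds\bigr|$ from below, since the complex values $M(s)$ could in principle cancel. What saves you is the stronger statement that your continuity argument actually proves: after rescaling, each $M(s)/|\xi'|_x^{k_1+k_2-1}$ lies within $C_0/2$ of the \emph{fixed} nonzero number $D(x,\hat\xi',i)$, uniformly in $s$, and therefore so does the average $\int_0^1 M(s)\,ds/|\xi'|_x^{k_1+k_2-1}$. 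So the argument is correct, but the justification you wrote (a pointwise modulus lower bound on the integrand) is not the right one; you need closeness to a fixed value, which is precisely what the paper's second-order expansion makes explicit.
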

%%%% proof of lemma
\begin{proof}

  From \eqref{eq: homog formul LS condition}, since $V^0$ is compact
  having the \LS condition holding at every point $x$ of $V^0$ means
  there exists $C_0>0$ such that
  \begin{equation}
    \label{eq: homog formul LS condition local}
    \left|\det
    \begin{pmatrix}
      b_1 &b_2\\[2pt]
      \partial_{\xi_d}b_1& \partial_{\xi_d}b_2 
    \end{pmatrix}\right|(x,\xi',i |\xi'|_x)
    \geq C_0 |\xi'|_x^{k_1+k_2 -1},
    \qquad x \in V^0, \xi' \in \R^{d-1}.
  \end{equation}
  The first part is a consequence of the mean value theorem,
  homogeneity and \eqref{eq: homog formul LS
  condition local} with say $C_1 = C_0 /2$. 
  
For the second part it is sufficient to assume that
$\delta \neq \tilde{\delta}$ since the result is obvious otherwise.
For $j=1,2$ one writes the Taylor formula
  \begin{align*}
    b_j (x,\xi' + \zeta', i |\xi'|_x+ \tilde{\delta})
    &= b_j (x,\xi' + \zeta', i |\xi'|_x+ \delta)
    + ( \tilde{\delta} - \delta)
    \partial_{\xi_d} b_j (x,\xi' + \zeta', i |\xi'|_x+ \delta)\\
    &\quad + ( \tilde{\delta} - \delta)^2
    \int_0^1 (1- s) \partial_{\xi_d}^2 b_j
    (x,\xi' + \zeta', i |\xi'|_x+ \delta_s )
    \, ds,
  \end{align*}
   with $\delta_s =  (1-s)\delta +s \tilde{\delta}$,
  yielding
  \begin{align*}
    \frac{1}{\tilde{\delta} - \delta}
    &\det \begin{pmatrix}
      b_1 (x,\xi' + \zeta', i |\xi'|_x+ \delta)
      &b_2 (x,\xi' + \zeta', i |\xi'|_x+ \delta)\\[4pt]
      b_1 (x,\xi' + \zeta', i |\xi'|_x+ \tilde{\delta})
      & b_2 (x,\xi' + \zeta', i |\xi'|_x+ \tilde{\delta})
    \end{pmatrix}\\
    &\qquad \quad     = \det \begin{pmatrix}
      b_1  &b_2 \\[2pt]
      \partial_{\xi_d} b_1 &\partial_{\xi_d} b_2
    \end{pmatrix}(x,\xi' + \zeta', i |\xi'|_x+ \delta)\\
    &\qquad \quad    \quad  +
     (\tilde{\delta} - \delta) \int_0^1 (1-s)
      \det \begin{pmatrix}
        b_1 (x,\xi' + \zeta', i |\xi'|_x+ \delta)
        &b_2 (x,\xi' + \zeta', i |\xi'|_x+ \delta)\\[2pt]
        \partial_{\xi_d}^2 b_1 (x,\xi' + \zeta', i |\xi'|_x+\delta_s )
        &\partial_{\xi_d}^2 b_2 (x,\xi' + \zeta', i |\xi'|_x+\delta_s )
    \end{pmatrix}
      d s.
  \end{align*}
  With homogeneity, if $|\zeta'| + |\delta| + |\tilde{\delta}| \lesssim  |\xi'|_x$ one
  finds
  \begin{align*}
    \left| \det \begin{pmatrix}
        b_1 (x,\xi' + \zeta', i |\xi'|_x+ \delta)
        &b_2 (x,\xi' + \zeta', i |\xi'|_x+ \delta)\\[2pt]
        \partial_{\xi_d}^2 b_1 (x,\xi' + \zeta', i |\xi'|_x+\delta_s )
        &\partial_{\xi_d}^2 b_2 (x,\xi' + \zeta', i |\xi'|_x+\delta_s )
    \end{pmatrix}\right| 
     \lesssim |\xi'|_x^{k_1 + k_2 -2},
  \end{align*}
  Thus with $|\delta - \tilde{\delta}|\leq \eps  |\xi'|_x$, for
  $\eps>0$ chosen \suff small, using the first part of the lemma one obtains
  the second result. 
\end{proof}

%%%%%%%%%%%%%
% Section          %
%%%%%%%%%%%%%
\subsection{Examples  of boundary operators yielding symmetry}
\label{sec: Examples  of boundary operators yielding symmetry}

We give some examples of pairs of boundary operators $B_1, B_2$ that
fulfill (1) the
\LS condition and (2) yield symmetry for the bi-Lalace operator $P = \Delta^2$, that is, 
\begin{align*}
  \inp{P u }{v}_{L^2(\Omega)} =  \inp{u }{P v}_{L^2(\Omega)} 
\end{align*}
for $u,v\in H^4(\Omega)$ such that $B_j u_{|\partial\Omega} = B_j
v_{|\partial\Omega} =0$, $j=1,2$.

We first recall that following Green formula
\begin{align}
\label{eq: Green formula}
  \inp{\Delta u }{v}_{L^2(\Omega)} 
  =  \inp{u }{\Delta  v}_{L^2(\Omega)} 
  + \inp{\partial_{n} u\bd}{v\bd}_{L^2(\partial\Omega)}  
  - \inp{u\bd}{\partial_{n} v\bd}_{L^2(\partial\Omega)}, 
\end{align}
which applied twice gives
$\inp{P u }{v}_{L^2(\Omega)} 
  =  \inp{u }{P  v}_{L^2(\Omega)} + T(u,v)$
with \begin{align}
  \label{eq: Green formula biLaplace}
  T(u,v) &= \inp{\partial_{n} \Delta  u\bd}{v\bd}_{L^2(\partial\Omega)}  
  - \inp{\Delta  u\bd}{\partial_{n} v\bd}_{L^2(\partial\Omega)}\notag\\
  &\quad+ \inp{\partial_{n} u\bd}{\Delta  v\bd}_{L^2(\partial\Omega)}  
  - \inp{u\bd}{\partial_{n} \Delta  v\bd}_{L^2(\partial\Omega)}.
\end{align}
Using normal geodesic coordinates in a \nhd of the whole boundary
$\partial\Omega$ allows one to write $\Delta = \partial_n^2 + \Delta'$
where $\Delta'$ is the resulting Laplace operator on the boundary,
that is, associated with the trace of the metric on
$\partial\Omega$. Since $\Delta'$ is selfadjoint on $\partial\Omega$ 
this allows one to write formula \eqref{eq: Green formula biLaplace}
in the alternative forms
\begin{align}
  \label{eq: Green formula biLaplace-bis}
  T(u,v) &= \inp{\partial_{n}^3 u\bd}{v\bd}_{L^2(\partial\Omega)}  
  - \inp{(\partial_{n}^2 + 2 \Delta') u\bd}{\partial_{n}v\bd}_{L^2(\partial\Omega)}
\notag\\
  &\quad 
    + \inp{\partial_{n} u\bd}{ (\partial_{n}^2 + 2 \Delta')v\bd}_{L^2(\partial\Omega)}  
  - \inp{u\bd}{\partial_{n}^3  v\bd}_{L^2(\partial\Omega)},
\end{align}
or 
\begin{align}
  \label{eq: Green formula biLaplace-ter}
  T(u,v) &= 
  \inp{(\partial_{n}^3 
  + 2 \Delta'\partial_{n})  u\bd}{v\bd}_{L^2(\partial\Omega)}  
  - \inp{\partial_{n}^2  u\bd}{\partial_{n}v\bd}_{L^2(\partial\Omega)}
  \notag\\
  &\quad 
   + \inp{\partial_{n} u\bd}{ \partial_{n}^2 v\bd}_{L^2(\partial\Omega)}  
  - \inp{u\bd}{(\partial_{n}^3 
           + 2 \Delta'\partial_{n})  v\bd}_{L^2(\partial\Omega)}.
\end{align}

\bigskip
We start our list of examples with the most basics ones.
\begin{example}[Hinged boundary conditions]
\label{ex: hinged boundary conditions}
This type of conditions refers to $B_1 u\bd = u\bd$ and $B_2 u\bd
= \Delta u\bd$. With \eqref{eq: Green formula biLaplace} one finds
$T(u,v) =0$ in the case of homogeneous conditions, hence symmetry.

Note that the hinged boundary conditions are equivalent to having 
$B_1 u\bd = u\bd$ and $B_2 u\bd
=  \partial_n^2 u\bd$. 
With the notation of Section~\ref{sec: LS condition} this gives
$\tilde{b}_1(x,\omega',z)=1$ and $\tilde{b}_2(x,\omega',z)=(-iz)^2 =
-z^2$. It follows that 
\begin{equation*}
    \det
    \begin{pmatrix}
      \tilde{b}_1 &\tilde{b}_2 \\[2pt]
      \partial_z\tilde{b}_1& \partial_z\tilde{b}_2
    \end{pmatrix} (x,\omega',z=i|\omega'|_g)
    =
     \det
    \begin{pmatrix}
     1 &|\omega'|^2_g \\[2pt]
     0&-2i|\omega'|_g
    \end{pmatrix} =-2i|\omega'|_g \neq 0,
\end{equation*}
if $\omega'\neq 0$ and thus the \LS condition holds by Lemma~\ref{lemma: Lopatinskii bi-laplacian op}.

With the hinged boundary conditions observe that the bi-Laplace operator is precisely
the square of the Dirichlet-Laplace operator. This makes its analysis
quite simple and this explains why this type of boundary condition is
often chosen in the mathematical literature. Observe that symmetry is
then obvious without invoking the above formulae.
\end{example}
\begin{example}[Clamped  boundary conditions]
This type of conditions refers to $B_1 u\bd = u\bd$ and $B_2 u\bd
= \partial_n u\bd$. With \eqref{eq: Green formula biLaplace-bis}  one finds
$T(u,v) =0$ in the case of homogeneous conditions, hence symmetry.
With the notation of Section~\ref{sec: LS condition} this gives
$\tilde{b}_1(x,\omega',z)=1$ and $\tilde{b}_2(x,\omega',z)=-iz$. It follows that 
\begin{equation*}
    \det
    \begin{pmatrix}
      \tilde{b}_1 &\tilde{b}_2 \\[2pt]
      \partial_z\tilde{b}_1& \partial_z\tilde{b}_2
    \end{pmatrix} (x,\omega',z=i|\omega'|_g)
    =
     \det
    \begin{pmatrix}
     1 &|\omega'|_g \\[2pt]
     0&-i
    \end{pmatrix} =-i \neq 0.
\end{equation*}
Thus the \LS condition holds by Lemma~\ref{lemma: Lopatinskii bi-laplacian op}.

Note that with the clamped boundary conditions the bi-Laplace operator
cannot be seen as the square 
of the Laplace operator with some well chosen boundary condition as
opposed to the case of the hinged boundary conditions displayed
above. 
\end{example}
\begin{examples}[More examples]
$\phantom{-}$
\begin{enumerate}
\item Take  $B_1 u\bd =\partial_n  u\bd$ and $B_2 u\bd
= \partial_n \Delta u\bd$. With these boundary conditions the bi-Laplace operator is precisely
the square of the Neumann-Laplace operator. The symmetry property is
immediate and so is the \LS condition.
%%
%%
% \item \label{ex: sym2}Take $B_1 u\bd = \partial_n u\bd$ and $B_2 u\bd
% = \partial_n^3 u\bd$. With \eqref{eq: Green formula biLaplace-bis} one finds
% $T(u,v) =0$ in the case of homogeneous conditions, hence symmetry.

% We have $\tilde{b}_1(x,\omega',z)=-iz$ and $\tilde{b}_2(x,\omega',z)=iz^3$ and
% \begin{equation*}
%     \det
%     \begin{pmatrix}
%       \tilde{b}_1 &\tilde{b}_2 \\[2pt]
%       \partial_z\tilde{b}_1& \partial_z\tilde{b}_2
%     \end{pmatrix} (x,\omega',z=i|\omega'|_g)
%     =
%      \det
%     \begin{pmatrix}
%      |\omega'|_g &|\omega'|^3_g \\[2pt]
%      -i&-3i|\omega'|^2_g
%     \end{pmatrix} 
%     =-2i|\omega'|^3_g \neq 0,
% \end{equation*}
% if $\omega'\neq 0$ and thus the \LS condition holds by Lemma~\ref{lemma: Lopatinskii bi-laplacian op}.
%%
%%
\item Take $B_1 u\bd = (\partial_n^2 + 2 \Delta') u\bd$ and $B_2 u\bd
= \partial_n^3 u\bd$. With \eqref{eq: Green formula biLaplace-bis} one finds
$T(u,v) =0$ in the case of homogeneous conditions, hence symmetry.

We have $\tilde{b}_1(x,\omega',z)=-z^2 - 2 |\omega'|_g^2$ and
$\tilde{b}_2(x,\omega',z)=iz^3$ and
\begin{equation*}
    \det
    \begin{pmatrix}
      \tilde{b}_1 &\tilde{b}_2 \\[2pt]
      \partial_z\tilde{b}_1& \partial_z\tilde{b}_2
    \end{pmatrix} (x,\omega',z=i|\omega'|_g)
    =
     \det
    \begin{pmatrix}
     -|\omega'|_g^2 &|\omega'|^3_g \\[2pt]
     -2 i |\omega'|_g&-3i|\omega'|^2_g
    \end{pmatrix} 
    =5 i |\omega'|^4_g \neq 0,
\end{equation*}
if $\omega'\neq 0$ and thus the \LS condition holds by Lemma~\ref{lemma: Lopatinskii bi-laplacian op}.
\item Take $B_1 u\bd = \partial_n u\bd$ and $B_2 u\bd
= (\partial_n^3+ A')  u\bd$, with $A'$ a symmetric differential
operator of order less than or equal to three on
$\partial \Omega$, with homogeneous principal symbol $a'(x,\omega')$ such that
$a'(x,\omega') \neq 2 |\omega'|_g ^3$ for $\omega' \neq 0$, that
is, $a'(x,\omega')\neq 2$ for $|\omega'|_g=1$.

With \eqref{eq: Green formula biLaplace-bis} one
finds
\begin{align*}
T(u,v)  =  \inp{-A' u\bd}{v\bd}_{L^2(\partial\Omega)}  
  + \inp{u\bd}{A' v\bd}_{L^2(\partial\Omega)} =0,
\end{align*}
in the case of homogeneous conditions, hence symmetry for $P$.

We have $\tilde{b}_1(x,\omega',z)=-i z$ and
$\tilde{b}_2(x,\omega',z)=iz^3 + a'(x,\omega')$ with $a'$ the
principal symbol of $A'$.
\begin{equation*}
    \det
    \begin{pmatrix}
      \tilde{b}_1 &\tilde{b}_2 \\[2pt]
      \partial_z\tilde{b}_1& \partial_z\tilde{b}_2
    \end{pmatrix} (x,\omega',z=i|\omega'|_g)
    =
     \det
    \begin{pmatrix}
     |\omega'|_g &|\omega'|^3_g + a'(x,\omega')\\[2pt]
    - i &-3i|\omega'|^2_g
    \end{pmatrix} 
    = i \big( a'(x,\omega') -2  |\omega'|^3_g\big)\neq 0,
\end{equation*}
if $\omega'\neq 0$ since $a'(x,\omega') \neq 2 |\omega'|^3_g$  by
assumption implying
that the \LS condition holds by Lemma~\ref{lemma: Lopatinskii bi-laplacian op}.
\item 
 Take  $B_1 u\bd =u\bd$ and $B_2 u\bd
=(\partial_n^2+ A' \partial_n) u\bd$  with $A'$ a symmetric  differential
operator of order less than or equal to one on
$\partial \Omega$, with homogeneous principal symbol $a'(x,\omega')$ such that
$a'(x,\omega') \neq - 2 |\omega'|_g$ for $\omega' \neq 0$, that
is, $a'(x,\omega')\neq -2$ for $|\omega'|_g=1$.
This is  a refinement of the case of hinged boundary conditions given
in Example~\ref{ex: hinged boundary conditions} above.

With \eqref{eq: Green formula biLaplace-bis} one
finds
\begin{align*}
  T(u,v)  
  =  \inp{A' \partial_n u\bd}{\partial_n v\bd}_{L^2(\partial\Omega)} 
  + \inp{\partial_n u\bd}{- A' \partial_n v\bd}_{L^2(\partial\Omega)}  
  =0,
\end{align*}
in the case of homogeneous conditions, hence symmetry for $P$.

We have $\tilde{b}_1(x,\omega',z)=1$ and
$\tilde{b}_2(x,\omega',z)=-z^2 -i z a'(x,\omega')$ with $a'$ the
principal symbol of $A'$.
\begin{equation*}
    \det
    \begin{pmatrix}
      \tilde{b}_1 &\tilde{b}_2 \\[2pt]
      \partial_z\tilde{b}_1& \partial_z\tilde{b}_2
    \end{pmatrix} (x,\omega',z=i|\omega'|_g)
    =
     \det
    \begin{pmatrix}
     1&|\omega'|^2_g + |\omega'|_g   a'(x,\omega')\\[2pt]
    0 &-2i|\omega'|_g - i a'(x,\omega')
    \end{pmatrix} 
    = - i \big( a'(x,\omega') + 2  |\omega'|_g\big)\neq 0,
\end{equation*}
if $\omega'\neq 0$ since $a'(x,\omega') \neq - 2 |\omega'|_g$ by
assumption implying
that the \LS condition holds by Lemma~\ref{lemma: Lopatinskii bi-laplacian op}.
\item Take $B_1 u\bd = (\partial_n^2  + A' \partial_n) u\bd$ and $B_2 u\bd
= (\partial_n^3+ 2 \partial_n\Delta')  u\bd$,  with $A'$ a symmetric  differential
operator of order less than or equal to one on
$\partial \Omega$, with homogeneous principal symbol $a'(x,\omega')$ such that
$2 a'(x,\omega') \neq - 3 |\omega'|_g $ for $\omega' \neq 0$, that
is, $a'(x,\omega')\neq -3/2$ for $|\omega'|_g=1$.
With \eqref{eq: Green formula biLaplace-ter} one
finds
\begin{align*}
  T(u,v)  
  =  \inp{A' \partial_n u\bd}{\partial_n v\bd}_{L^2(\partial\Omega)} 
  + \inp{\partial_n u\bd}{- A' \partial_n v\bd}_{L^2(\partial\Omega)}  
  =0,
\end{align*}
in the case of homogeneous conditions, hence symmetry for $P$.

We have $\tilde{b}_1(x,\omega',z)=-z^2 -i z a'(x,\omega')$ and
$\tilde{b}_2(x,\omega',z)=iz^3 + 2 i z |\omega'|_g^2$ and
\begin{align*}
    \det
    \begin{pmatrix}
      \tilde{b}_1 &\tilde{b}_2 \\[2pt]
      \partial_z\tilde{b}_1& \partial_z\tilde{b}_2
    \end{pmatrix} (x,\omega',z=i|\omega'|_g)
    &=
     \det
    \begin{pmatrix}
     |\omega'|^2_g + |\omega'|_g   a'(x,\omega')&-|\omega'|^3_g \\[2pt]
     -2i|\omega'|_g - i a'(x,\omega')&-i|\omega'|^2_g
    \end{pmatrix} \\
    &=- i |\omega'|^3_g \big( 2 a'(x,\omega') + 3 |\omega'|_g\big) \neq 0,
\end{align*}
if $\omega'\neq 0$ since $2 a'(x,\omega') + 3 |\omega'|_g  \neq 0$ by
assumption implying that  the \LS condition holds by Lemma~\ref{lemma: Lopatinskii bi-laplacian op}.
\end{enumerate}
\end{examples}

%%%%%%%%%%%%%
% Section          %
%%%%%%%%%%%%%
\section{Lopatinski\u{\i}-\v{S}apiro condition for the conjugated
  bi-Laplacian with spectral parameter}
\label{sec: LS condition for conjugated bilaplace}

Set $\Psig=\Delta^2-\sigma^4$ with $\sigma\in[0,+\infty) $ and denote by $\Pconj=e^{\tau\varphi}\Psig e^{-\tau\varphi}$ the conjugate operator of $\Psig$ with $\tau\geq 0$ a large parameter and $\varphi\in\Cinf(\R^d, \R)$. We shall refer to $\varphi$ as the weight function. The principal symbol of $\Psig$ in normal geodesic coordinates is given by
$$\psig(x,\xi)=(\xi_d^2+r(x,\xi'))^2-\sigma^4.$$ Observe that $e^{\tau\varphi}D_je^{-\tau\varphi}=D_j+
i\tau\partial_j\varphi\in\Dsc^1.$ So, after conjugation, the principal symbol becomes
\begin{align*}
  \pconj(x,\xi,\tau) &= \psig(x,\xi + i \tau d_x \varphi)\\
  &=\left((\xi_d+i\tau\partial_d\varphi)^2
    +r(x,\xi'+i\tau d_{x'}\varphi)\right)^2
    -\sigma^4\\
  &= \left((\xi_d+i\tau\partial_d\varphi)^2
    +r(x,\xi'+i\tau d_{x'}\varphi)-\sigma^2\right)\left((\xi_d+i\tau\partial_d\varphi)^2+r(x,\xi'+i\tau d_{x'}\varphi)+\sigma^2\right)
\end{align*}
We write
$\pconj(x,\xi,\tau)=\qconj^1(x,\xi,\tau)\qconj^2(x,\xi,\tau)$
with
\begin{align*}
  \qconj^j(x,\xi,\tau)
  =(\xi_d+i\tau\partial_d\varphi)^2+r(x,\xi'+i\tau
  d_{x'}\varphi)+(-1)^j\sigma^2,
  \ \
  j=1,2.
\end{align*} 

We consider  two boundary operators
$B_1$ and $B_2$ of order $k_1$ and $k_2$ with $b_j(x,\xi)$ for
principal symbol, $j=1,2$. The associated conjugated operators 
\begin{align*}
  B_{j,\varphi} = e^{\tau \varphi} B_j e^{-\tau \varphi},
\end{align*}
have respective principal symbols 
\begin{align*}
  b_{j,\varphi} (x,\xi, \tau) = b_j(x,\xi+i\tau d\varphi),\quad j=1,2.
\end{align*}

We assume that the \LS condition holds for $(P_0, B_1, B_2)$ as in
Definition \ref{def: LS} for any point $(x,\omega')\in
T^*_x\partial\Omega.$ We wish to know if the \LS condition can hold
for $(\Psig , B_1, B_2,\varphi),$ as given by the following
definition (in local coordinates for simplicity).
%%%%%%%%%%%%%%%%%%%%%%%%
% definition           %
%%%%%%%%%%%%%%%%%%%%%%%%
\begin{definition}\label{def: LS after conjugation}
  Let
  $(x,\xi',\tau,\sigma)\in
  \partial\Omega\times\R^{d-1}\times[0,+\infty) \times
   [0,+\infty)$ with
  $(\xi',\tau,\sigma)\neq0$. One says that the \LS condition holds for
  $(\Psig ,B_1, B_2, \varphi)$ at $(x,\xi',\tau,\sigma)$ if for
  any polynomial function $f(\xi_d)$ with complex coefficients there
  exist $c_1, c_2\in\C$ and a polynomial function
  $\ell(\xi_d)$ with complex coefficients such that, for all
  $\xi_d\in\C$
  \begin{equation*}
    f(\xi_d)=c_1 b_{1,\varphi}(x,\xi',\xi_d,\tau)
    +c_2b_{2,\varphi}(x,\xi',\xi_d,\tau)
    +\ell(\xi_d) \pconj^+(x,\xi',\xi_d,\tau),
\end{equation*}
with
\begin{equation*}
 \pconj^+ (x,\xi',\xi_d,\tau)
  =\prod\limits_{\Im\rho_j(\xi',\tau,\sigma)\geq
    0}(\xi_d-\rho_j(\xi',\tau,\sigma)),
\end{equation*}
where $\rho_j(x,\xi',\tau,\sigma),$ $j=1,\cdots,4,$ denote the complex
roots of $\pconj(x,\xi',\xi_d,\tau)$ viewed as a polynomial in
$\xi_d$.
%
% One says that  \LS condition holds for
%   $(\Psig ,B_1, B_2, \varphi)$ at $x \in \partial\Omega$ if it holds at $\y' =
%    (x,\xi',\tau,\sigma)$ for any $(\xi',\tau,\sigma)\neq0$ as above. 
\end{definition}
In what follows, we shall assume that $\partial_d\varphi>0.$
Locally, one has $\partial_d\varphi\geq C_1>0$, for some $C_1>0$.

%%%%%%%%%%%%%
% Section          %
%%%%%%%%%%%%%
\subsection{Discussion on the Lopatinski\u{\i}-\v{S}apiro condition
  according to the position of the roots}
\label{sec: discussion LS root positions}
With the assumption that $\partial_d\varphi>0,$ for any point
$(x,\xi',\tau,\sigma)$ at most two roots lie in the upper complex closed half-plane (this is explained below). We then enumerate the following cases.
\begin{enumerate}
\item[•] Case 1 : No root lying in the upper complex closed
  half-plane, then $\pconj^+ (x,\xi',\xi_d,\tau)=1$ and the
  \LS condition of Definition~\ref{def: LS after conjugation} holds trivially at $(x,\xi',\tau,\sigma)$.
\item[•] Case  2 : One root lying in the upper complex closed
  half-plane. Let us denote by  $\rho^+$ that root, then
  $\pconj^+ (x,\xi',\xi_d,\tau)=\xi_d-\rho^+.$ With
  Definition~\ref{def: LS after conjugation}, for any choice of $f$, the polynomial
  function $\xi_d\mapsto
  f(\xi_d)-c_1b_{1,\varphi}(x,\xi',\xi_d,\tau)-c_2b_{2,\varphi}(x,\xi',\xi_d,\tau)$
  admits $\rho^+$ as a root for $c_1,c_2\in\C$ well
  chosen. Hence, the \LS condition holds  at
  $(x,\xi',\tau,\sigma)$ if and only
  if
  \begin{align*}
    b_{1,\varphi}(x,\xi',\xi_d=\rho^+,\tau) \neq 0
    \ \ \text{or} \ \
    b_{2,\varphi}(x,\xi',\xi_d=\rho^+,\tau)\neq 0.
  \end{align*}
  Note that it then suffices to have
  \begin{equation*}
    \det
    \begin{pmatrix}
      b_{1,\varphi} &b_{2,\varphi}\\[2pt]
      \partial_{\xi_d}b_{1,\varphi}& \partial_{\xi_d}b_{2,\varphi} 
    \end{pmatrix}(x,\xi',\xi_d=\rho^+,\tau) \neq 0.
\end{equation*}
\item[•] Case 3 : Two different roots lying in the upper complex
  closed half-plane. Let denote by $\rho^+_1$ and $\rho^+_2$ these
  roots. One has
  $\pconj^+ (x,\xi',\xi_d,\tau)=(\xi_d-\rho^+_1)\xi_d-\rho^+_2).$
  The \LS condition holds at $(x,\xi',\tau,\sigma)$ if and only
  if the complex numbers $\rho^+_1$ and $\rho^+_2$ are the roots of
  the polynomial function
  $\xi_d\mapsto
  f(\xi_d)-c_1b_{1,\varphi}(x,\xi',\xi_d,\tau)-c_2b_{2,\varphi}(x,\xi',\xi_d,\tau),$
  for $c_1,c_2$ well chosen. This reads
\begin{equation*}
\begin{cases}
f(\rho^+_1)= c_1b_{1,\varphi}(x,\xi',\xi_d=\rho^+_1,\tau)+c_2b_{2,\varphi}(x,\xi',\xi_d=\rho^+_1,\tau),\\
f(\rho^+_2)= c_1b_{1,\varphi}(x,\xi',\xi_d=\rho^+_2,\tau)+c_2b_{2,\varphi}(x,\xi',\xi_d=\rho^+_2,\tau).
\end{cases}
\end{equation*}
Being able to solve this system in $c_1$ and $c_2$ for any $f$ is equivalent to having
\begin{equation}\label{kan1}
  \det 
\begin{pmatrix}
b_{1,\varphi}(x,\xi',\xi_d=\rho^+_1,\tau) & b_{2,\varphi}(x,\xi',\xi_d=\rho^+_1,\tau) \\[4pt]
b_{1,\varphi}(x,\xi',\xi_d=\rho^+_2,\tau) & b_{2,\varphi}(x,\xi',\xi_d=\rho^+_2,\tau)
\end{pmatrix}\neq 0.
\end{equation}
 \item[•] Case 4 : A double root lying in the upper complex closed
  half-plane. Denote by $\rho^+$ this root; one has
  $\pconj^+(x,\xi',\xi_d,\tau)=(\xi_d-\rho^+)^2.$ The
  \LS condition holds at  at $(x,\xi',\tau,\sigma)$ if and only if for any choice of $f$, the
  complex number $\rho^+$ is a double root of the polynomial function
  $\xi_d\mapsto f(\xi_d)-c_1
  b_{1,\varphi}(x,\xi',\xi,\tau)-c_2b_{2,\varphi}(x,\xi',\xi,\tau)$
  for some $c_1,c_2\in\C.$ Thus having the
  \LS condition is equivalent of having the following
  determinant condition,
  \begin{equation}\label{kkan1}
    \det 
\begin{pmatrix}
b_{1,\varphi}(x,\xi',\xi_d=\rho^+,\tau) & b_{2,\varphi}(x,\xi',\xi_d=\rho^+,\tau) \\[4pt]
\partial_{\xi_d}b_{1,\varphi}(x,\xi',\xi_d=\rho^+,\tau) & \partial_{\xi_d}b_{2,\varphi}(x,\xi',\xi_d=\rho^+,\tau)
\end{pmatrix}\neq 0.
\end{equation}
\end{enumerate}

Observe that case~4 can only occur if $\sigma =0$ (then one has
$(\xi',\tau) \neq (0,0)$). If $\sigma > 0$ then only cases 1, 2, and 3
are possible. This is precisely stated in Lemma~\ref{lemma: no real
  double root}. This will be an important point in what follows.

We now state the following important proposition.
%%%%%%%%%%%%%%%%%%%%%%%%
% proposition          %
%%%%%%%%%%%%%%%%%%%%%%%%
\begin{proposition}
  \label{prop: LS after conjugation}
  Let $x^0\in\partial\Omega$. Assume that the \LS condition holds for
  $(P_0, B_1, B_2)$ at $x^0$ and thus in a compact neighborhood $V^0$
  of $x^0$ (by Remark~\ref{RR0}). Assume also that
  $\partial_d\varphi\geq C_1> 0$ in $V^0$. There exist $\mu_0>0$ and
  $\mu_1>0$ such that if
  $(x,\xi',\tau,\sigma)\in V^0\times\R^{d-1}\times[0,+\infty) \times[0,+\infty) $ with
  $(\xi',\tau,\sigma)\neq (0, 0,0)$,
  \begin{align*}
    |d_{x'}\varphi(x)|\leq \mu_0\partial_d\varphi(x)
    \ \ \text{and} \ \
    \sigma \leq \mu_1\tau \partial_d\varphi(x),
  \end{align*}
    then the \LS condition holds for $(\Psig , B_1, B_2, \varphi)$ at
  $(x,\xi',\tau,\sigma).$
\end{proposition}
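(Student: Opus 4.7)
The strategy is to factor $\pconj = \qconj^1 \qconj^2$, analyze the $\xi_d$-roots of each quadratic factor separately, and then invoke the stability of the \LS condition under perturbation provided by Lemma~\ref{lemma: perturbation LS}. Since $\qconj^j = (\xi_d + i\tau\partial_d\varphi)^2 + W_j$ with $W_j = -r(x,\xi' + i\tau d_{x'}\varphi) - (-1)^j \sigma^2$, its two $\xi_d$-roots sum to $-2i\tau\partial_d\varphi$. Together with $\partial_d\varphi \geq C_1 > 0$, this forces at most one of them to lie in $\{\Im \xi_d \geq 0\}$, so $\pconj$ has at most two roots in the closed upper half-plane and we land in one of the cases (1)--(4) of Section~\ref{sec: discussion LS root positions}. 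Case~(1) is trivial, and case~(4) only arises when $\sigma = 0$; in the remaining cases the \LS condition reduces to the non-vanishing of an explicit $2\times 2$ determinant, namely~\eqref{kan1} in case~(3) and~\eqref{kkan1} in cases~(2) and~(4).

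The core of the argument is a root-location lemma. Writing $\rho_j^+ = -i\tau\partial_d\varphi + \sqrt{W_j}$ for the branch of the square root with nonnegative imaginary part, the upper root of $\qconj^j$ exists precisely when $\Im \sqrt{W_j} \geq \tau\partial_d\varphi$. Expanding $r(x,\xi' + i\tau d_{x'}\varphi)$ and using the smallness assumptions $|d_{x'}\varphi| \leq \mu_0 \partial_d\varphi$ and $\sigma \leq \mu_1 \tau\partial_d\varphi$, one obtains
\begin{equation*}
W_j = -|\xi'|_x^2 + O\big((\mu_0 + \mu_1)\, \tau\partial_d\varphi \,|\xi'|_x\big) + O\big((\mu_0^2 + \mu_1^2)(\tau\partial_d\varphi)^2\big).
\end{equation*}
A direct bound on $(|W_j| - \Re W_j)/2 = |\Im \sqrt{W_j}|^2$ then shows that whenever an upper root exists one must have $|\xi'|_x \gtrsim \tau\partial_d\varphi$, provided $\mu_0, \mu_1$ are chosen small enough. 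Under this lower bound the perturbative terms in $W_j$ are controlled by $(\mu_0 + \mu_1)|\xi'|_x^2$, so that $\sqrt{W_j} = i|\xi'|_x + \delta_j$ with $|\delta_j| \leq \eps(\mu_0,\mu_1)\,|\xi'|_x$ and $\eps(\mu_0,\mu_1) \to 0$ as $(\mu_0,\mu_1) \to 0$.

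With this estimate in hand the proposition follows by direct application of Lemma~\ref{lemma: perturbation LS}. Indeed, $b_{j,\varphi}(x,\xi',\xi_d,\tau) = b_j(x, \xi' + i\tau d_{x'}\varphi, \xi_d + i\tau\partial_d\varphi)$, so evaluating at an upper root $\rho_j^+$ amounts to evaluating $b_j$ at $(x,\xi' + \zeta', i|\xi'|_x + \delta_j)$ with $\zeta' = i\tau d_{x'}\varphi$. The bound $|\zeta'| \leq \mu_0\tau\partial_d\varphi \lesssim \mu_0 |\xi'|_x$ together with $|\delta_j| \leq \eps |\xi'|_x$ places us in the scope of Lemma~\ref{lemma: perturbation LS}. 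Its first inequality~\eqref{eq: LS + perturbation-1} gives the determinant condition~\eqref{kkan1} in cases~(2) and~(4), while its second inequality~\eqref{eq: LS + perturbation-2} applied with $\delta = \delta_1$ and $\tilde{\delta} = \delta_2$ yields~\eqref{kan1} in case~(3), the factor $|\delta_1 - \delta_2| = |\rho_1^+ - \rho_2^+|$ being positive precisely because the two upper roots are then distinct.

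The main obstacle lies in the root-location step: verifying uniformly in $(\xi',\tau,\sigma)$ that the existence of an upper root forces $|\xi'|_x \gtrsim \tau\partial_d\varphi$, and then showing that the correct branch of $\sqrt{W_j}$ produces the expansion $i|\xi'|_x + O((\mu_0+\mu_1)|\xi'|_x)$. The constants $\mu_0$ and $\mu_1$ in the statement are fixed precisely so that this quantitative root analysis produces a perturbation small enough to fall within the window controlled by Lemma~\ref{lemma: perturbation LS}; once this is settled, the case-by-case verification is a matter of bookkeeping.
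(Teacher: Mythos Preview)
Your proposal is correct and follows essentially the same route as the paper: reduce to the regime $\tau\partial_d\varphi \lesssim |\xi'|_x$ via the observation that an upper root forces this (the paper's Lemma~\ref{lemma: Im root < 0 if tau large} contrapositively), expand the root as $i|\xi'|_x$ plus a small perturbation controlled by $\mu_0,\mu_1$ (the paper's Lemma~\ref{lemma: small perturbation}), and then feed this into Lemma~\ref{lemma: perturbation LS} case by case. One cosmetic slip: with your sign convention $W_j = -r(x,\xi'+i\tau d_{x'}\varphi) - (-1)^j\sigma^2$ one has $\qconj^j = (\xi_d+i\tau\partial_d\varphi)^2 - W_j$, not $+W_j$; the rest of your analysis is internally consistent with your $W_j$, so this does not affect the argument.
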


%%%%%%%%%%%%
% Changes
%%%%%%%%%%%%
The proof of Proposition~\ref{prop: LS after conjugation} is given below. We first need to
analyze the configuration of the roots of the symbol
$\pconj$ starting with each factor $\qconj^j$, $j=1,2$.

%%%%%%%%%%%%%
% Section          %
%%%%%%%%%%%%%
\subsection{Root configuration for each factor}
\label{sec: Root configuration for each factor}
We consider either factors $\xi_d\mapsto
\qconj^j(x,\xi',\xi_d,\tau)$. We recall that 
\begin{align*}
  \qconj^j(x,\xi,\tau)
  =(\xi_d+i\tau\partial_d\varphi)^2
  +r(x,\xi'+i\tau d_{x'}\varphi) + (-1)^j \sigma^2,
  \ \ j=1,2.
\end{align*}

First, we  consider the case  $r(x,\xi'+i\tau
d_{x'}\varphi) + (-1)^j \sigma^2 \in \R^-$, that is, equal to $-
\beta^2$ with $\beta\in \R$ . 
Then, the roots of $\xi_d\mapsto
\qconj^j(x,\xi',\xi_d,\tau)$ are given by
\begin{align*}
  -i\tau\partial_d\varphi + \beta
  \quad \text{and} \quad
  -i\tau\partial_d\varphi - \beta.
\end{align*}
Both lie in the lower complex open half-plane.

Second, we consider the case $r(x,\xi'+i\tau
d_{x'}\varphi) + (-1)^j \sigma^2 \in\C\setminus
\R^-$. There exists a unique $\alpha_j \in \C$ such
that $\Re \alpha_j >0$ and 
\begin{align}\label{Ee1}\notag
  \alpha_j^2 &= r(x,\xi'+i\tau d_{x'}\varphi) + (-1)^j \sigma^2\\
             &= r(x,\xi') - \tau^2 r(x, d_{x'}\varphi) + (-1)^j \sigma^2
               +  i 2 \tau \tilde{r}(x,\xi',d_{x'}\varphi)^2,
\end{align}
where $\tilde{r}(x,.,.)$ denotes the symmetric bilinear form associated with the quadratic form $r(x,.)$.
Then, the two roots of $\xi_d\mapsto
\qconj^j(x,\xi',\xi_d,\tau)$ are given by
\begin{align}
  \label{eq: form of the roots of q}
  \pi_{j,1}= -i\tau\partial_d\varphi - i \alpha_j
  \quad \text{and} \quad
  \pi_{j,2}=-i\tau\partial_d\varphi + i \alpha_j.
\end{align}
One has $\Im \pi_{j,1} <0$ since $\partial_d\varphi \geq C_1>0$. 
With $\Im \pi_{j,2} = -\tau\partial_d\varphi + \Re \alpha_j$ one sees that
the sign of $\Im \pi_{j,2}$ may change.
The following lemma gives an algebraic characterization of the sign of  $\Im \pi_{j,2}$.
%%%%%%%%%%%%%%%%
% Lemma
%%%%%%%%%%%%%%%%
\begin{lemma}
  \label{lemma: caracterisation Im pi2 <0}
  Assume that $\partial_d \varphi>0$. Having $\Im \pi_{j,2}<0$ is
  equivalent to having
  \begin{equation*}
    (\partial_d\varphi)^2r(x,\xi')+\tilde{r}(x,\xi',d_{x'}\varphi)^2
    <\tau^2(\partial_d\varphi)^2|d_x\varphi|_x^2+(-1)^{j+1}\sigma^2(\partial_d\varphi)^2.
\end{equation*}
\end{lemma}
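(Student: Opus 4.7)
The plan is to translate the complex-geometric condition $\Im \pi_{j,2}<0$ into a real inequality by computing $\Re \alpha_j$ from $\alpha_j^2$, then to recognize the final expression via the metric identity available in normal geodesic coordinates.

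First I would recall from \eqref{eq: form of the roots of q} that $\pi_{j,2}=-i\tau\partial_d\varphi+i\alpha_j$, so
\begin{equation*}
  \Im \pi_{j,2} = -\tau\partial_d\varphi + \Re\alpha_j.
\end{equation*}
Since $\tau\geq 0$ and $\partial_d\varphi>0$, the condition $\Im\pi_{j,2}<0$ is equivalent to $\Re\alpha_j<\tau\partial_d\varphi$, and both sides being nonnegative real numbers (recall $\Re\alpha_j>0$ by construction), this is in turn equivalent to $(\Re\alpha_j)^2<\tau^2(\partial_d\varphi)^2$. The case $\tau=0$ is trivial: then the condition $\Im\pi_{j,2}<0$ fails (as $\Re\alpha_j>0$), and the claimed inequality of the lemma also fails since its left-hand side is $(\partial_d\varphi)^2 r(x,\xi')\geq 0$ while its right-hand side reduces to $(-1)^{j+1}\sigma^2(\partial_d\varphi)^2$, which, in the regime $r(x,\xi')+(-1)^j\sigma^2\notin\R^-$ where $\alpha_j$ is defined, is not larger. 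So I can assume $\tau>0$.

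Next I would compute $(\Re\alpha_j)^2$ explicitly. Writing $\alpha_j=a+ib$ with $a>0$ and setting, from \eqref{Ee1},
\begin{equation*}
  A=r(x,\xi')-\tau^2 r(x,d_{x'}\varphi)+(-1)^j\sigma^2,
  \qquad B=2\tau\,\tilde r(x,\xi',d_{x'}\varphi),
\end{equation*}
the identity $\alpha_j^2=A+iB$ gives $a^2-b^2=A$ and $2ab=B$, hence $a^2+b^2=\sqrt{A^2+B^2}$ and
\begin{equation*}
  (\Re\alpha_j)^2=a^2=\tfrac12\bigl(A+\sqrt{A^2+B^2}\bigr).
\end{equation*}
So the condition $\Im\pi_{j,2}<0$ reads $\sqrt{A^2+B^2}<2\tau^2(\partial_d\varphi)^2-A$.

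Now I would square this inequality. Note that if the right-hand side is $\leq 0$ the condition fails, but the same holds for the target inequality (as I verify below), so we may freely square in the relevant regime. Squaring yields $B^2<4\tau^2(\partial_d\varphi)^2\bigl(\tau^2(\partial_d\varphi)^2-A\bigr)$. Substituting $B^2=4\tau^2\tilde r(x,\xi',d_{x'}\varphi)^2$ and dividing by $4\tau^2$ (legitimate since $\tau>0$), I obtain
\begin{equation*}
  \tilde r(x,\xi',d_{x'}\varphi)^2 <(\partial_d\varphi)^2\bigl(\tau^2(\partial_d\varphi)^2-A\bigr).
\end{equation*}
Plugging in the expression of $A$ and rearranging gives
\begin{equation*}
  (\partial_d\varphi)^2 r(x,\xi')+\tilde r(x,\xi',d_{x'}\varphi)^2 < \tau^2(\partial_d\varphi)^2\bigl[(\partial_d\varphi)^2+r(x,d_{x'}\varphi)\bigr]+(-1)^{j+1}\sigma^2(\partial_d\varphi)^2.
\end{equation*}

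Finally I would use the normal geodesic form of the metric recalled in Section~\ref{sec: Geometrical setting}: since $g=dx^d\otimes dx^d+\sum_{i,j<d}g_{ij}dx^i\otimes dx^j$, the cometric satisfies $g^{dd}=1$ and $g^{id}=0$ for $i<d$, so $|d_x\varphi|_x^2=(\partial_d\varphi)^2+r(x,d_{x'}\varphi)$. This turns the bracket on the right-hand side into $|d_x\varphi|_x^2$, yielding exactly the claimed inequality. To close the squaring argument, I note that if the target inequality holds, then $\tau^2(\partial_d\varphi)^2-A>0$ (the right-hand side exceeds the nonnegative quantity $\tilde r^2/(\partial_d\varphi)^2$), hence $2\tau^2(\partial_d\varphi)^2-A>\tau^2(\partial_d\varphi)^2>0$, so the squaring step is reversible. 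I don't anticipate a genuine obstacle here; the only point that requires a moment's care is verifying that the squaring of $\sqrt{A^2+B^2}<2\tau^2(\partial_d\varphi)^2-A$ is an equivalence rather than just an implication, which is handled by the positivity remark just made.
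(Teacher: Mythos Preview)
Your proof is correct and follows essentially the same route as the paper: both reduce $\Im\pi_{j,2}<0$ to $\Re\alpha_j<\tau\partial_d\varphi$ and then translate this into a real algebraic inequality on $\Re\alpha_j^2$ and $\Im\alpha_j^2$, finishing with the metric identity $|d_x\varphi|_x^2=(\partial_d\varphi)^2+r(x,d_{x'}\varphi)$. The only cosmetic difference is that the paper isolates the algebraic equivalence into a separate lemma (Lemma~\ref{lem1}, stating $|\Re z|<|x_0|\Leftrightarrow 4x_0^2\Re m-4x_0^4+(\Im m)^2<0$ for $m=z^2$), whereas you compute $(\Re\alpha_j)^2=\tfrac12(A+\sqrt{A^2+B^2})$ directly and square; both arguments amount to the same identity.
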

\begin{proof}
  From~\ref{eq: form of the roots of q} one has $\Im \pi_{j,2} <0$ if and only if
$\Re \alpha_j < \tau \partial_d \varphi = | \tau \partial_d\varphi|$, that is, if
and only if 
\begin{align*}
  4 (\tau \partial_d\varphi)^2 \Re \alpha_j^2-4 (\tau \partial_d\varphi)^4+(\Im
  \alpha_j^2)^2 < 0,
\end{align*}
by Lemma~\ref{lem1} below. With \eqref{Ee1} this gives the result. 
\end{proof}
%%%%%%%%%%%%%%%%
% Lemma
%%%%%%%%%%%%%%%%
\begin{lemma}
  \label{lem1}
Let $z\in\C$ such that $m=z^2.$ For $x_0\in\R$ such
that $x_0\neq 0$, we have 
\begin{align*}
  |\Re z|\lesseqqgtr |x_0|
  \qquad \Longleftrightarrow
  \qquad 4x_0^2 \Re m-4x_0^4+(\Im m)^2
  \lesseqqgtr0.
  \end{align*}
\end{lemma}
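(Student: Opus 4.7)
\begin{altproof}{plan for Lemma~\ref{lem1}}
The plan is to reduce the equivalence to a single algebraic factorization by writing $z$ in terms of its real and imaginary parts. Set $z = a + ib$ with $a,b\in\R$. Then $m = z^2 = a^2 - b^2 + 2iab$, so
\begin{align*}
  \Re m = a^2 - b^2, \qquad \Im m = 2ab.
\end{align*}

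Next I would substitute these identities into the quantity $Q(z,x_0) := 4x_0^2 \Re m - 4 x_0^4 + (\Im m)^2$ and compute directly:
\begin{align*}
  Q(z,x_0)
  &= 4 x_0^2 (a^2 - b^2) - 4 x_0^4 + 4 a^2 b^2 \\
  &= 4 a^2 (x_0^2 + b^2) - 4 x_0^2 (x_0^2 + b^2) \\
  &= 4 (x_0^2 + b^2) \big( a^2 - x_0^2 \big).
\end{align*}
Since $x_0 \neq 0$, the factor $x_0^2 + b^2$ is strictly positive. Hence the sign of $Q(z,x_0)$ coincides with the sign of $a^2 - x_0^2 = (\Re z)^2 - x_0^2$, which in turn has the same sign as $|\Re z| - |x_0|$. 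This gives the three claimed equivalences ($<$, $=$, $>$) simultaneously.

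There is essentially no obstacle here: the only thing to notice is the grouping that produces the common factor $x_0^2 + b^2$, after which positivity (from $x_0\neq 0$) closes the argument. The condition $x_0 \neq 0$ is used precisely to ensure $x_0^2 + b^2 > 0$ regardless of whether $b = 0$; otherwise the equivalence could degenerate in the case $z\in i\R$ with $x_0 = 0$.
\end{altproof}
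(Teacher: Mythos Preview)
Your proof is correct and follows essentially the same approach as the paper: write $z=a+ib$, compute $\Re m$ and $\Im m$, and factor the expression as $4(x_0^2+b^2)(a^2-x_0^2)$, then use $x_0\neq 0$ to conclude that the sign is governed by $a^2-x_0^2$. The computation and the key factorization are identical to the paper's argument.
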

\begin{proof}
Let $z=x+iy\in\C.$ On the one hand we have
$z^2=x^2-y^2+2ixy=m$ and $\Re m=x^2-y^2,$  $\Im m=2xy.$ On the other
hand we have 
\begin{align*}
  4x_0^2\Re m-4x_0^4+(\Im m)^2
  &=4x_0^2(x^2-y^2)-4x_0^4+4x^2y^2\\
  &=4(x_0^2+y^2)(x^2-x_0^2),
\end{align*}
thus with the same sign as $(x^2-x_0^2)$.
Since $|\Re z|\lesseqqgtr |x_0| \ \Leftrightarrow \ 
x^2-x_0^2 \lesseqqgtr 0$ the conclusion follows.
\end{proof}

With the following two lemmata we now connect the sign of $\Im \pi_{j,2}$ with the low frequency
regime $|\xi'| \lesssim \tau$.
%%%%%%%%%%%%%%%%
% Lemma
%%%%%%%%%%%%%%%%
\begin{lemma}
  \label{lemma: Im root < 0 if tau large}
  Assume there exists $K_0 >0$ such that $|d_{x'} \varphi| \leq K_0 |\partial_d
  \varphi|$. Then, there exists $C_{K_0}>0$ such that $\Im \pi_{j,2} <0$
    if $C_{K_0} |\xi'|  + \sigma \leq \tau \partial_d \varphi$, $j=0,1$.  
\end{lemma}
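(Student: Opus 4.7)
The plan is to translate the condition $\Im\pi_{j,2}<0$ into the scalar algebraic inequality provided by Lemma~\ref{lemma: caracterisation Im pi2 <0} and then bound the left-hand side using Cauchy--Schwarz for the cometric together with the hypothesis $|d_{x'}\varphi|\leq K_0\partial_d\varphi$. Since the term $(-1)^{j+1}\sigma^2(\partial_d\varphi)^2$ on the right-hand side of that inequality is positive for $j=1$ and negative for $j=2$, the case $j=2$ is the binding one and is the only one to treat; the case $j=1$ then follows \emph{a fortiori}.

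By Lemma~\ref{lemma: caracterisation Im pi2 <0} with $j=2$, establishing $\Im\pi_{2,2}<0$ reduces to
\begin{equation*}
(\partial_d\varphi)^2 r(x,\xi') + \tilde{r}(x,\xi',d_{x'}\varphi)^2 + \sigma^2(\partial_d\varphi)^2
< \tau^2(\partial_d\varphi)^2 |d_x\varphi|_x^2.
\end{equation*}
Cauchy--Schwarz for the positive-definite bilinear form $\tilde r$ gives $\tilde r(x,\xi',d_{x'}\varphi)^2\leq r(x,\xi')\,r(x,d_{x'}\varphi)$, while the hypothesis combined with a uniform upper bound $r(x,\eta')\leq C_{V^0}|\eta'|^2$ on the compact set $V^0$ yields $r(x,d_{x'}\varphi)\leq C_{V^0}K_0^2(\partial_d\varphi)^2$. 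Combining these two estimates,
\begin{equation*}
(\partial_d\varphi)^2 r(x,\xi') + \tilde{r}(x,\xi',d_{x'}\varphi)^2
\leq \big(1 + C_{V^0}K_0^2\big)(\partial_d\varphi)^2 r(x,\xi').
\end{equation*}

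Using the crude bound $|d_x\varphi|_x^2\geq (\partial_d\varphi)^2$ on the right-hand side and dividing by $(\partial_d\varphi)^2>0$, which is legitimate thanks to $\partial_d\varphi\geq C_1>0$ on $V^0$, the target becomes $M_{K_0}|\xi'|^2 + \sigma^2 < \tau^2(\partial_d\varphi)^2$ with $M_{K_0}=C_{V^0}(1+C_{V^0}K_0^2)$. Squaring the hypothesis $C_{K_0}|\xi'|+\sigma\leq\tau\partial_d\varphi$ yields $C_{K_0}^2|\xi'|^2 + 2C_{K_0}|\xi'|\sigma + \sigma^2\leq\tau^2(\partial_d\varphi)^2$, so choosing $C_{K_0}$ large enough that $C_{K_0}^2>M_{K_0}$ closes the estimate. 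Given Lemma~\ref{lemma: caracterisation Im pi2 <0}, the calculation is largely routine; the only item requiring attention is uniformity of all constants over $V^0$, which follows from compactness of $V^0$ together with $\partial_d\varphi\geq C_1>0$.
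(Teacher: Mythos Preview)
Your proof is correct and follows essentially the same route as the paper: both reduce to the algebraic criterion of Lemma~\ref{lemma: caracterisation Im pi2 <0}, bound the left-hand side by $K(\partial_d\varphi)^2|\xi'|^2$ using the hypothesis $|d_{x'}\varphi|\leq K_0\partial_d\varphi$, bound the right-hand side from below via $|d_x\varphi|_x^2\geq(\partial_d\varphi)^2$, and arrive at the sufficient condition $\tau^2(\partial_d\varphi)^2\geq K|\xi'|^2+\sigma^2$. Your version is slightly more explicit (isolating $j=2$ as the worst case and invoking Cauchy--Schwarz for $\tilde r$), but the argument is the same.
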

\begin{proof}
  With Lemma~\ref{lemma: caracterisation Im pi2 <0}  having $\Im
  \pi_{j,2} <0$ reads
\begin{equation}
  \label{eq1: proof  Im root < 0 if tau large}
(\partial_d\varphi)^2r(x,\xi')+\tilde{r}(x,\xi',d_{x'}\varphi)^2
<\tau^2(\partial_d\varphi)^2|d_x\varphi|_x^2+(-1)^{j+1}\sigma^2(\partial_d\varphi)^2.
\end{equation}
On the one hand, since $|d_{x'} \varphi| \leq K_0 |\partial_d
  \varphi|$ one has 
\begin{equation*}
  (\partial_d\varphi)^2r(x,\xi')+\tilde{r}(x,\xi',d_{x'}\varphi)^2
  \leq K (\partial_d\varphi)^2 |\xi'|^2,
\end{equation*}
for some $K>0$ that depends on $K_0$, using that $|\xi'|_x \eqsim
|\xi'|$.
On the other hand one has
\begin{align*}
  \tau^2(\partial_d\varphi)^2|d_x\varphi|_x^2+(-1)^{j+1}\sigma^2(\partial_d\varphi)^2
  \geq  \tau^2 (\partial_d\varphi)^4 - \sigma^2(\partial_d\varphi)^2.
\end{align*}
Thus \eqref{eq1: proof  Im root < 0 if tau large} holds if one has
\begin{align*}
  \tau^2 (\partial_d\varphi)^4
  - \sigma^2(\partial_d\varphi)^2
  \geq K (\partial_d\varphi)^2 |\xi'|^2,
\end{align*}
that is, $\tau^2 (\partial_d\varphi)^2 \geq K |\xi'|^2
  + \sigma^2$.
\end{proof}
%%%%%%%%%%%%%%%%%%%%%%%%
% lemma                %
%%%%%%%%%%%%%%%%%%%%%%%%
\begin{lemma}
  \label{lemma: low frequency caracterisation}
  Let $W$ be a bounded open set of $\R^d$ and $x^0 \in W$. Assume that
  $\partial_d \varphi>0$ in $\ovl{W}$ and let $\k_0>0$. Then, there exists $C>0$ such
  that
  \begin{align*}
  |\xi'| \leq C \tau \quad
    \text{if} \ \Im \pi_{j,2} (x,\xi', \tau,\sigma) <0 
    \ \text{and}\ \k_0 \sigma \leq \tau, \quad x\in W.
  \end{align*}  
\end{lemma}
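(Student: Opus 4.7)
My plan is to derive the conclusion directly from the algebraic characterization of $\Im \pi_{j,2} < 0$ provided by Lemma~\ref{lemma: caracterisation Im pi2 <0}, combined with the ellipticity of $r(x,\cdot)$ and the compactness of $\ovl W$. The point is that $\Im \pi_{j,2} <0$ forces the tangential frequency $\xi'$ to be controlled, quadratically, by $\tau$ and $\sigma$; then the assumption $\k_0 \sigma \leq \tau$ absorbs $\sigma$ into $\tau$.

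More precisely, I would start by invoking Lemma~\ref{lemma: caracterisation Im pi2 <0}: $\Im \pi_{j,2}(x,\xi',\tau,\sigma)<0$ is equivalent to
\begin{align*}
  (\partial_d\varphi)^2 r(x,\xi')+\tilde{r}(x,\xi',d_{x'}\varphi)^2
  <\tau^2(\partial_d\varphi)^2|d_x\varphi|_x^2
  +(-1)^{j+1}\sigma^2(\partial_d\varphi)^2.
\end{align*}
Since $\ovl W$ is compact and $\partial_d\varphi>0$ there, there exists $c_0>0$ such that $\partial_d\varphi(x) \geq c_0$ for $x \in \ovl W$; similarly $|d_x\varphi|_x \leq M$ on $\ovl W$ for some $M>0$. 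Using the ellipticity $r(x,\xi') \geq C|\xi'|^2$ on $\ovl W$ and dropping the nonnegative second term on the left-hand side, the left-hand side is bounded below by $C c_0^2 |\xi'|^2$. Bounding the right-hand side from above (keeping the $\sigma^2$ term for the case $j=1$ where it is positive, and discarding it when $j=2$) yields
\begin{align*}
  C c_0^2 |\xi'|^2 \leq \tau^2(\partial_d\varphi)^2 M^2 + \sigma^2(\partial_d\varphi)^2
  \lesssim \tau^2 + \sigma^2.
\end{align*}

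Finally, the hypothesis $\k_0\sigma \leq \tau$ gives $\sigma^2 \leq \tau^2/\k_0^2$, so $|\xi'|^2 \lesssim (1+\k_0^{-2})\tau^2$, which is the desired estimate $|\xi'| \leq C\tau$ with a constant $C$ depending on $W$, $\varphi$, the metric $g$, and $\k_0$. There is no real obstacle here; the only delicate point is to remember that the sign of $(-1)^{j+1}\sigma^2$ only helps the estimate when $j=2$, while for $j=1$ one must retain the $\sigma^2$ contribution on the right — which is precisely where the assumption $\k_0\sigma\leq\tau$ is used.
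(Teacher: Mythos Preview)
Your proof is correct and follows essentially the same route as the paper: invoke the algebraic characterization of Lemma~\ref{lemma: caracterisation Im pi2 <0}, drop the nonnegative $\tilde r$-term on the left, bound the right-hand side using the compactness of $\ovl W$, and absorb $\sigma^2$ into $\tau^2$ via $\k_0\sigma\leq\tau$. The paper is marginally more economical in that it divides through by $(\partial_d\varphi)^2$ immediately, obtaining $r(x,\xi') < \tau^2|d_x\varphi|_x^2 + (-1)^{j+1}\sigma^2 \leq (\sup_W|d_x\varphi|_x^2 + \k_0^{-2})\tau^2$ in one line, but the argument is the same.
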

%%%% proof of lemma
\begin{proof}
  With Lemma~\ref{lemma: caracterisation Im pi2 <0}  having $\Im
  \pi_{j,2} <0$ reads
\begin{equation*}
(\partial_d\varphi)^2r(x,\xi')+\tilde{r}(x,\xi',d_{x'}\varphi)^2
<\tau^2(\partial_d\varphi)^2|d_x\varphi|_x^2+(-1)^{j+1}\sigma^2(\partial_d\varphi)^2.
\end{equation*} 
In particular,
this implies 
\begin{equation*}
  r(x,\xi') <\tau^2 |d_x\varphi|_x^2+(-1)^{j+1}\sigma^2
  \leq (\sup_W  |d_x\varphi|_x^2 + 1/\k_0^2) \tau^2.
\end{equation*}
The result follows since $|\xi'| \asymp r(x,\xi')$.
\end{proof}

As mentioned in Section~\ref{sec: discussion LS root positions}, we
have the following result.
%%%%%%%%%%%%%%%%%%%%%%%%
% lemma                %
%%%%%%%%%%%%%%%%%%%%%%%%
\begin{lemma}
  \label{lemma: no real double root}
  Assume that $\sigma >0$. Then, 
  $\pi_{1,2}\neq \pi_{2,2}$.  Moreover, the roots  $\pi_{1,2} $ and $\pi_{2,2}$ cannot be both real.
\end{lemma}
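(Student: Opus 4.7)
The plan is to exploit the explicit form of the roots given in \eqref{eq: form of the roots of q} together with the identity $\alpha_1^2-\alpha_2^2=-2\sigma^2$ which follows immediately from \eqref{Ee1} by subtraction (the only difference between $\alpha_1^2$ and $\alpha_2^2$ is the term $(-1)^j\sigma^2$, while the rest, including the imaginary part $2\tau\tilde{r}(x,\xi',d_{x'}\varphi)$, is common). Since $\sigma>0$ this identity gives at once $\alpha_1^2\neq \alpha_2^2$, hence $\alpha_1\neq\alpha_2$, and therefore
\begin{equation*}
  \pi_{1,2}-\pi_{2,2}=i(\alpha_1-\alpha_2)\neq 0,
\end{equation*}
which handles the first assertion.

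For the second assertion I would argue by contradiction. Suppose $\pi_{1,2}$ and $\pi_{2,2}$ are both real. From \eqref{eq: form of the roots of q}, $\pi_{j,2}=i(\alpha_j-\tau\partial_d\varphi)$, so its reality forces $\alpha_j-\tau\partial_d\varphi\in i\R$, that is,
\begin{equation*}
  \Re\alpha_j=\tau\partial_d\varphi,\qquad j=1,2.
\end{equation*}
Writing $\alpha_j=\tau\partial_d\varphi+ib_j$ with $b_j\in\R$, one computes
\begin{equation*}
  \alpha_j^2=(\tau\partial_d\varphi)^2-b_j^2+2i\,\tau\partial_d\varphi\,b_j,
\end{equation*}
and the real identity $\alpha_1^2-\alpha_2^2=-2\sigma^2$ splits into the imaginary constraint $2\tau\partial_d\varphi\,(b_1-b_2)=0$ and the real constraint $b_2^2-b_1^2=-2\sigma^2$. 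Since $\partial_d\varphi>0$ locally, provided $\tau>0$ the imaginary constraint gives $b_1=b_2$, which is incompatible with $b_2^2-b_1^2=-2\sigma^2\neq 0$; that is the contradiction sought.

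The only subtle point, and what I expect to be the main obstacle, is the case $\tau=0$, for which the previous argument breaks down because $\tau\partial_d\varphi$ vanishes. I would dispose of it by a direct computation: when $\tau=0$, \eqref{Ee1} reduces to $\alpha_j^2=r(x,\xi')+(-1)^j\sigma^2\in\R$, and the requirement $\Re\alpha_j>0$ together with the fact that the $\pi_{j,2}$ are defined only when $\alpha_j^2\notin\R^-$ forces $\alpha_j^2>0$, i.e.\ $\alpha_j\in\R_+^*$. Then $\pi_{j,2}=i\alpha_j$ is purely imaginary and nonzero, hence not real, so neither of the two roots can be real, a fortiori not both. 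Combining both cases concludes the proof.
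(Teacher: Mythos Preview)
Your proof is correct and follows the same line as the paper's. For the second assertion, both arguments reduce to the observation that $\Re\alpha_1=\Re\alpha_2=\tau\partial_d\varphi$ together with $\Im\alpha_1^2=\Im\alpha_2^2$ forces $\alpha_1^2=\alpha_2^2$; the paper routes this through the auxiliary Lemma~\ref{lem1}, whereas you write $\alpha_j=\tau\partial_d\varphi+ib_j$ and compute $\alpha_j^2$ by hand, which is slightly more transparent and bypasses that lemma entirely. Your separate treatment of $\tau=0$ is an addition not present in the paper's proof (Lemma~\ref{lem1} requires $x_0\neq0$), though in every application of the result one has $\tau\gtrsim\sigma>0$, so the case never actually arises.
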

%%%% proof of lemma
\begin{proof}
  With the forms of the roots given in \eqref{eq: form of the roots of
    q} if $\pi_{1,2}=\pi_{2,2}$ then $\alpha_1 = \alpha_2$, thus
  $\alpha_1^2 = \alpha_2^2$ implying $\sigma^2=0$. 

  Assume now that $\pi_{1,2} \in \R$ and $\pi_{2,2}\in \R$, that is,
  $\Im \pi_{1,2} = \Im \pi_{2,2}=0$. This reads $\Re \alpha_j = \tau
  \partial_d \varphi$, giving 
  $|\Re \alpha_j| = |\partial_d \varphi|$, for $j=1$ and $2$. With Lemma~\ref{lem1} one
  has 
  \begin{align*}
  4 (\tau \partial_d\varphi)^2 \Re \alpha_j^2-4 (\tau \partial_d\varphi)^4+(\Im
  \alpha_j^2)^2 =0,\qquad j=1,2.
\end{align*}
  Observing that $\Im \alpha_1^2 = \Im \alpha_2^2$ one thus obtains
  $\Re \alpha_1^2 = \Re \alpha_2^2$,
and the conclusion follows as for the first part. 
\end{proof}

%%%%%%%%%%%%%
% Section          %
%%%%%%%%%%%%%
\subsection{Proof of Proposition \ref{prop: LS after conjugation}}\label{rf1}

Here, according to the statement of Proposition \ref{prop: LS after
  conjugation} we consider
\begin{align*}
  |d_{x'}\varphi|\leq \mu_0 \partial_d \varphi
  \ \ \text{and} \ \ 
 \sigma \leq \mu_1 \tau  \partial_d \varphi.
\end{align*}
First, we choose $0 < \mu_0 \leq 1$ and $0< \mu_1 \leq  1/2$. Below both may be chosen much
smaller. According to Lemma~\ref{lemma: Im root < 0 if tau large},
with $K_0=1$ therein, 
for some $C_2 = 2 C_{K_0}>0$ if one has $C_2|\xi'| \leq \tau \partial_d \varphi$
then all four roots of $\xi_d \mapsto \pconj(x,\xi',\xi_d
,\tau)$ lie in the lower complex open half-plane. If so, 
we  face Case~1 as in the discussion of Section~\ref{sec: discussion LS root
  positions} and the \LS condition holds.  
To carry on with the proof of  Proposition~\ref{prop: LS after conjugation} we
now only have to consider having 
\begin{align}
  \label{eq: condition tau small}
  \tau \partial_d \varphi \leq C_2  |\xi'|.
\end{align}
Our proof of Proposition \ref{prop: LS after conjugation} relies on
the following lemma.
%%%%%%%%%%%%%%%
% lemma
%%%%%%%%%%%%%%%
\begin{lemma}
  \label{lemma: small perturbation}
  There exists $C_3>0$ such that, for $j=1$ or $2$, for $0<\mu_0\leq
  1$,  $0< \mu_1\leq 1/2$, and for all
  $(x, \xi', \tau, \sigma) \in
  \ovl{V^0}\times\R^{d-1}\times[0,+\infty) \times[0,+\infty) ,$
  one has 
\begin{equation*}
  |d_{x'}\varphi|\leq \mu_0\partial_d\varphi,
  \ \ \sigma\leq  \mu_1\tau \partial_d \varphi
\ \   \text{and} \  \
\Im \pi_{j,2}\geq 0  
\ \ \Longrightarrow \ \ 
\big| \alpha_j-|\xi'|_x\big|+\tau|d_{x'}\varphi|
\leq  |\xi'|_x C_3 (\mu_0+\mu_1^2).
\end{equation*}
\end{lemma}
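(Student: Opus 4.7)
The plan is to exploit the hypothesis $\Im\pi_{j,2}\geq 0$, which by the root formula \eqref{eq: form of the roots of q} simply asserts
\[
\Re\alpha_j\geq\tau\,\partial_d\varphi.
\]
(The alternative root configuration, which arises when $r(x,\xi'+i\tau d_{x'}\varphi)+(-1)^j\sigma^2\in\R^-$, produces both roots with imaginary part $-\tau\partial_d\varphi<0$ and is therefore excluded by the hypothesis, so $\alpha_j$ is well defined under the assumptions.) I then aim first to derive the one-sided bound $\tau\partial_d\varphi\lesssim|\xi'|_x$, which immediately yields $\tau|d_{x'}\varphi|\leq\mu_0\tau\partial_d\varphi\lesssim\mu_0|\xi'|_x$; and second, using the factorization $\alpha_j^2-|\xi'|_x^2=(\alpha_j-|\xi'|_x)(\alpha_j+|\xi'|_x)$ together with $\big|\alpha_j+|\xi'|_x\big|\geq|\xi'|_x$ (a consequence of $\Re\alpha_j\geq 0$), to convert a bound on $\big|\alpha_j^2-|\xi'|_x^2\big|$ into the target bound on $\big|\alpha_j-|\xi'|_x\big|$.

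For the first step, I would combine the identity $2\Re\alpha_j\,\Im\alpha_j=\Im\alpha_j^2=2\tau\,\tilde{r}(x,\xi',d_{x'}\varphi)$ read off from \eqref{Ee1} with the Cauchy--Schwarz inequality $|\tilde{r}(x,\xi',d_{x'}\varphi)|\leq|\xi'|_x|d_{x'}\varphi|_x$ and the lower bound $\Re\alpha_j\geq\tau\partial_d\varphi>0$ to deduce $|\Im\alpha_j|\leq C\mu_0|\xi'|_x$, with $C$ depending only on the metric on $\ovl{V^0}$. Inserting this and the expression $\Re\alpha_j^2=|\xi'|_x^2-\tau^2 r(x,d_{x'}\varphi)+(-1)^j\sigma^2$ into $(\Re\alpha_j)^2=\Re\alpha_j^2+(\Im\alpha_j)^2$, and, in the case $j=2$, absorbing $\sigma^2\leq\mu_1^2(\Re\alpha_j)^2$ on the left-hand side (this step is the reason for imposing $\mu_1\leq 1/2$), I get $(\Re\alpha_j)^2\leq C|\xi'|_x^2$, and therefore $\tau\partial_d\varphi\leq\Re\alpha_j\leq C|\xi'|_x$.

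For the second step, inserting the preceding bounds into
\[
\alpha_j^2-|\xi'|_x^2=-\tau^2 r(x,d_{x'}\varphi)+(-1)^j\sigma^2+2i\tau\,\tilde{r}(x,\xi',d_{x'}\varphi),
\]
the three terms on the right-hand side are dominated by $C\mu_0^2|\xi'|_x^2$, $C\mu_1^2|\xi'|_x^2$, and $C\mu_0|\xi'|_x^2$ respectively; hence $\big|\alpha_j^2-|\xi'|_x^2\big|\leq C(\mu_0+\mu_1^2)|\xi'|_x^2$. Dividing by $\big|\alpha_j+|\xi'|_x\big|\geq|\xi'|_x$ yields $\big|\alpha_j-|\xi'|_x\big|\leq C(\mu_0+\mu_1^2)|\xi'|_x$, which combined with the bound on $\tau|d_{x'}\varphi|$ produces the claim with a uniform constant $C_3$.

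The main obstacle is the slightly circular estimate $(\Re\alpha_j)^2\leq C|\xi'|_x^2$ in the case $j=2$: the hypothesis $\Re\alpha_j\geq\tau\partial_d\varphi$ has to be used both to bound $|\Im\alpha_j|$ and, via $\sigma\leq\mu_1\tau\partial_d\varphi\leq\mu_1\Re\alpha_j$, to absorb the $+\sigma^2$ contribution. This absorption is what forces $\mu_1<1$ and also explains why $\mu_1$ enters the final conclusion only through $\mu_1^2$ rather than $\mu_1$.
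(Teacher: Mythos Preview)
Your proof is correct and follows essentially the same line as the paper's: estimate $\big|\alpha_j^2-|\xi'|_x^2\big|$ by $(\mu_0+\mu_1^2)|\xi'|_x^2$ and then pass to $\big|\alpha_j-|\xi'|_x\big|$. The one noteworthy difference is how you obtain the key bound $\tau\partial_d\varphi\lesssim|\xi'|_x$. The paper invokes it as \eqref{eq: condition tau small}, established just before the lemma via Lemma~\ref{lemma: Im root < 0 if tau large} applied contrapositively (the hypothesis $\Im\pi_{j,2}\geq 0$ forces $C_{K_0}|\xi'|+\sigma>\tau\partial_d\varphi$, and $\sigma\leq\tfrac12\tau\partial_d\varphi$ then gives $\tau\partial_d\varphi<2C_{K_0}|\xi'|$). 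You instead derive it internally from $\Re\alpha_j\geq\tau\partial_d\varphi$ together with a real/imaginary-part analysis of $\alpha_j^2$, absorbing the $+\sigma^2$ term when $j=2$ via $\sigma\leq\mu_1\Re\alpha_j$. Your route makes the lemma self-contained; the paper's is shorter once the preceding lemma is in hand. The final passage from $\alpha_j^2$ to $\alpha_j$ is also phrased differently---you factor, while the paper writes $\alpha_j=|\xi'|_x\big(1+O(\mu_0+\mu_1^2)\big)$ via a square-root expansion---but these are the same estimate, since your lower bound $|\alpha_j+|\xi'|_x|\geq|\xi'|_x$ is precisely what makes that expansion uniform.
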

\begin{proof}
  With \eqref{eq: condition tau small} one has
  \begin{align}
    \label{eq: est 1 small perturbation}
    \tau|d_{x'}\varphi| \leq \mu_0 \tau \partial_d\varphi
    \lesssim \mu_0  |\xi'|_x.
  \end{align}
  With the first-order Taylor formula one has
  \begin{align*}
    \alpha_j^2 &= r(x,\xi'+i\tau d_{x'}\varphi)+(-1)^j\sigma^2\\ 
               &= r(x,\xi')
                 +\int_{0}^1 d_{\xi'}r(x,\xi'+i \tau s\, d_{x'}\varphi)
                 (i \tau d_{x'}\varphi)ds
                 +(-1)^j\sigma^2.
  \end{align*}
  With \eqref{eq: est 1 small perturbation} and homogeneity one
  has
  \begin{align*}
    \big|
    d_{\xi'}r(x,\xi'+i \tau s\, d_{x'}\varphi)  (i \tau d_{x'}\varphi)
    \big|
    \lesssim \mu_0  |\xi'|_x^2.
  \end{align*}
  One also has $\sigma \leq \mu_1 \tau \partial_d\varphi
    \lesssim \mu_1  |\xi'|_x$.
  Since $r(x,\xi') = |\xi'|_x^2$, this yields
  $\alpha_j^2  = |\xi'|_x^2  \big(1 + O (\mu_0 + \mu_1^2)\big)$ and hence 
  $\alpha_j   = |\xi'|_x \big(1 + O (\mu_0 + \mu_1^2)\big)$. This and
  \eqref{eq: est 1 small perturbation} gives the result. 
\end{proof}

Before proceeding, we make the following computation. For $j=1,2$
and $\ell=1,2$ we write 
\begin{align}
  \label{lop6}
  b_{\ell,\varphi} (x,\xi',\xi_d =\pi_{j,2},\tau)
  &=b_\ell(x,\xi'+i\tau d_{x'}\varphi, \pi_{j,2}+i\tau\partial_d\varphi) 
    = b_\ell(x,\xi'+i\tau d_{x'}\varphi, i\alpha_j)\notag\\
  &=b_\ell(x,\xi'+i\tau d_{x'}\varphi, i|\xi'|_x+i(\alpha_j-|\xi'|_x)).
\end{align}
We use Lemma~\ref{lemma: perturbation LS} and the value of $\eps>0$
  given therein.
  We choose $0< \mu_0\leq 1$ and $0 < \mu_1 \leq 1/2$ such that
  \begin{align}
    \label{eq: uniform condition mu0 mu1 small}
    C_3 (\mu_0+\mu_1^2) \leq \eps,
  \end{align}
with $C_3>0$ as given by Lemma~\ref{lemma: small perturbation}.

We now consider the root configurations that remain to consider
according to the discussion in Section~\ref{sec: discussion LS root
  positions}.

\noindent 
{\bfseries Case 2.}\\
In this case, one root of $\pconj$ lies in the upper complex closed
half-plane. We denote this root by $\rho^+$. According to the
discussion in Section~\ref{sec: discussion LS root positions} it
suffices to prove that
\begin{equation}
  \label{eq: LS case 2}
    \det
    \begin{pmatrix}
      b_{1,\varphi} &b_{2,\varphi}\\[2pt]
      \partial_{\xi_d}b_{1,\varphi}& \partial_{\xi_d}b_{2,\varphi} 
    \end{pmatrix}(x,\xi',\xi_d=\rho^+,\tau) \neq 0.
  \end{equation}
  In fact, one has $\rho^+ = \pi_{j,2}$ with $j=1$ or $2$.  We use
  the first part of Lemma~\ref{lemma: perturbation LS} with
  $\zeta' = i \tau d_{x'} \varphi$ and
  $\delta = i (\alpha_j -  |\xi|_x)$. With \eqref{lop6} and \eqref{eq:
    uniform condition mu0 mu1 small} with Lemma~\ref{lemma: small
    perturbation} and the first part of Lemma~\ref{lemma: perturbation
    LS} one obtains \eqref{eq: LS case 2}.

  \noindent 
  {\bfseries Case 3.}\\
  In this case $\Im \pi_{1,2}>0$ and $\Im \pi_{2,2} >0$.  According to the
  discussion in Section~\ref{sec: discussion LS root positions} it
  suffices to prove that
  \begin{equation}\label{eq: LS case 3}
  \det 
\begin{pmatrix}
  b_{1,\varphi}(x,\xi',\xi_d=\pi_{1,2},\tau) & b_{2,\varphi}(x,\xi',\xi_d=\pi_{1,2},\tau) \\[4pt]
  b_{1,\varphi}(x,\xi',\xi_d=\pi_{2,2},\tau) & b_{2,\varphi}(x,\xi',\xi_d=\pi_{2,2},\tau)
\end{pmatrix}\neq 0.
\end{equation}
We use the second part of 
  Lemma~\ref{lemma: perturbation LS} with
  $\zeta' = i \tau d_{x'} \varphi$, 
  $\delta = i (\alpha_1 -  |\xi|_x)$, and $\tilde{\delta} = i (\alpha_2 -  |\xi|_x)$. With \eqref{lop6} and \eqref{eq:
    uniform condition mu0 mu1 small} with Lemma~\ref{lemma: small
    perturbation} and the second part of Lemma~\ref{lemma: perturbation
    LS} one obtains \eqref{eq: LS case 3}.
    
  \noindent 
  {\bfseries Case 4.}\\
  In this case (that only occurs if $\sigma=0$) the \LS condition 
  holds also if one has \eqref{eq: LS case 2}. The proof is thus
  the same as for Case~2. This concludes the proof of
  Proposition~\ref{prop: LS after conjugation}.\hfill \qedsymbol \endproof
 
%%%%%%%%%%%%%
% Section          %
%%%%%%%%%%%%% 
\subsection{Local stability of the algebraic conditions associated
  with the Lopatinski\u{\i}-\v{S}apiro condition}
\label{sec: Stability LS-conj}
In Section~\ref{sec: LS condition} we saw that the \LS condition for
$(\Psig, B_1, B_2)$ in Definition~\ref{def: LS} exhibits some stability
property.  This was used in the statement of Proposition~\ref{prop: LS
  after conjugation} that states how the \LS condition for
$(\Psig, B_1, B_2)$ can imply the \LS condition of
Definition~\ref{def: LS after conjugation} for
$(\Psig, B_1, B_2, \varphi)$, that is, the version of this condition
for the conjugated operators.

A natural question would then be: does the \LS condition for the
conjugated operators enjoy the same stability property? The answer is
yes. Yet, this is not needed in what follows. In fact, below one
exploits the algebraic conditions listed in Section~\ref{sec: discussion LS root positions} once the
\LS condition is know to hold at a point $\y^{0\prime}  = (x^0,
\xi^{0\prime}, \tau^0, \sigma^0)$ in tangential
phase space. One thus rather needs to know that these algebraic
conditions are stable. Here also the answer is positive and is the
subject of the present section.

\medskip

As in Definition~\ref{def: LS after conjugation} for $\y' =(x,\xi',\tau,\sigma)$ one denotes by
$\rho_j (\y')$
the roots of $\pconj(x,\xi',\xi_d, \tau)$ viewed as a polynomial in
$\xi_d$.

 Let $\y^{0\prime}  = (x^0, \xi^{0\prime}, \tau^0, \sigma^0)\in \partial
 \Omega \times \R^{d-1}\times [0,+\infty) \times [0,+\infty)$.
 One sets
 \begin{align*}
   J^+= \big\{ j \in \{1,2,3,4\}; \ \Im \rho_j (\y^{0\prime}) \geq 0\}, 
   \qquad 
    J^-= \big\{ j \in \{1,2,3,4\}; \ \Im \rho_j (\y^{0\prime}) < 0\}
 \end{align*}
 and, for $\y' = (x,\xi', \tau,\sigma)$,
 \begin{align*}
   \k^+_{\y^{0\prime}} (\y') 
   = \prod_{j \in J^+} \big(\xi_d -\rho_j(\y')\big), 
   \qquad 
   \k^-_{\y^{0\prime}} (\y') 
   = \prod_{j \in J^-} \big(\xi_d -\rho_j(\y')\big). 
 \end{align*}
 Naturally, one has $\k^+_{\y^{0\prime}} (\y^{0\prime},\xi_d) = \pconj^+(x^0,
 \xi^{0\prime}, \xi_d, \tau^0)$ and $\k^-_{\y^{0\prime}} (\y^{0\prime},\xi_d) = \pconj^-(x^0,
 \xi^{0\prime}, \xi_d, \tau^0)$. 
 Moreover, there exists a conic \nhd $\U_0$ of
 $\y^{0\prime}$ where both $\k^+_{\y^{0\prime}}(\y')$ and
 $\k^-_{\y^{0\prime}}(\y')$ are smooth with respect to $\y'$.
One has 
\begin{align*}
  \pconj = \pconj^+ \pconj^- 
  = \k^+_{\y^{0\prime}} \k^-_{\y^{0\prime}} .
 \end{align*}
 Note however that for $\y' = (x,\xi',\tau,\sigma) \in \U_0$ it may very well happen that
 $\pconj^+(x,\xi', \xi_d, \tau) \neq \k^+_{\y^{0\prime}} (\y',\xi_d)$
 and  $\pconj^-(x,\xi', \xi_d, \tau) \neq \k^-_{\y^{0\prime}} (\y',\xi_d)$.

 The following proposition can be found in \cite[proposition 1.8]{ML}.
 %%%%%%%%%%%%%%%%%%%%%%%%
% proposition          %
%%%%%%%%%%%%%%%%%%%%%%%%
 \begin{proposition}
   \label{prop: stability LS algegraic conditions}
   Let the \LS condition hold at $\y^{0\prime}  = (x^0, \xi^{0\prime}, \tau^0, \sigma^0)\in \partial
 \Omega \times \R^{d-1}\times [0,+\infty) \times [0,+\infty)$ for
 $(\Psig, B_1, B_2, \varphi)$. Then,
 \begin{enumerate}
 \item The polynomial
 $\xi_d \mapsto \pconj^+(x^0,
 \xi^{0\prime}, \xi_d, \tau^0)$ is of degree less than or equal to
 two.
 \item There exists a conic \nhd $\U$ of $\y^{0\prime}$ such that
   $\{b^1_{\varphi}(\y',\xi_d),b^2_{\varphi}(\y',\xi_d)\}$ is complete
   modulo $\k^+_{\y^{0\prime}} (\y',\xi_d)$ at every point $\y' = (x,\xi',
   \tau, \sigma) \in \U$,
   namely for
  any polynomial function $f(\xi_d)$ with complex coefficients there
  exist $c_1, c_2\in\C$ and a polynomial function
  $\ell(\xi_d)$ with complex coefficients such that, for all
  $\xi_d\in\C$
  \begin{equation}
    \label{eq: extended LS condition}
    f(\xi_d)=c_1 b_{1,\varphi}(x,\xi',\xi_d,\tau)
    +c_2b_{2,\varphi}(x,\xi',\xi_d,\tau)
    +\ell(\xi_d) \k^+_{\y^{0\prime}}(\y',\xi_d).
\end{equation}
\end{enumerate}
\end{proposition}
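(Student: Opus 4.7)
For part (1), I would read the LS condition at $\y^{0\prime}$ of Definition~\ref{def: LS after conjugation} as a statement about the finite-dimensional quotient ring $\C[\xi_d]/\bigl(\pconj^+(x^0,\xi^{0\prime},\cdot,\tau^0)\bigr)$. Reducing the defining identity modulo $\pconj^+$ shows that every polynomial is a $\C$-linear combination of the images of $b_{1,\varphi}$ and $b_{2,\varphi}$ in this quotient; since $\pconj^+$ is monic (as a product of monic linear factors), the quotient has $\C$-dimension $\deg_{\xi_d}\pconj^+$, which must therefore be at most $2$.

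For part (2), the key structural input has already been recorded in the paragraph preceding the proposition: on some conic neighborhood $\U_0$ of $\y^{0\prime}$ the polynomial $\k^+_{\y^{0\prime}}(\y',\xi_d)$ is smooth in $\y'$ (because at $\y^{0\prime}$ the two root clusters indexed by $J^+$ and $J^-$ are disjoint in $\C$, a standard Rouch\'e / holomorphic functional calculus argument produces a smooth factorisation of $\pconj$ respecting that partition). The integer $n := \#J^+$ is then constant on $\U_0$ and $\k^+_{\y^{0\prime}}(\y',\cdot)$ is monic of degree $n \leq 2$, so polynomial long division of $b_{j,\varphi}(\y',\xi_d)$ by $\k^+_{\y^{0\prime}}(\y',\xi_d)$ produces remainders
\begin{equation*}
  b_{j,\varphi}(\y',\xi_d) \equiv \sum_{k=0}^{n-1} \beta_{j,k}(\y')\,\xi_d^k
  \pmod{\k^+_{\y^{0\prime}}(\y',\xi_d)},
  \qquad j=1,2,
\end{equation*}
whose coefficients $\beta_{j,k}$ depend smoothly on $\y' \in \U_0$.

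Completeness of $\{b_{1,\varphi}(\y',\cdot), b_{2,\varphi}(\y',\cdot)\}$ modulo $\k^+_{\y^{0\prime}}(\y',\cdot)$ is equivalent to the matrix $\bigl(\beta_{j,k}(\y')\bigr)_{1\leq j\leq 2,\, 0\leq k\leq n-1}$ having rank $n$ (the case $n = 0$ being vacuous). At the point $\y^{0\prime}$ one has $\k^+_{\y^{0\prime}}(\y^{0\prime},\cdot) = \pconj^+(\y^{0\prime},\cdot)$, so this rank condition amounts to the determinant conditions identified in the case analysis of Section~\ref{sec: discussion LS root positions} (cases~2, 3 and 4), which hold by the standing LS hypothesis. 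Since the relevant $n \times n$ minors of $(\beta_{j,k})$ are continuous in $\y'$, nonvanishing persists on some smaller conic neighborhood $\U \subset \U_0$, which yields the representation \eqref{eq: extended LS condition} at every $\y' \in \U$.

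The main conceptual obstacle I expect is to handle Case~4 (a double root at $\y^{0\prime}$, which by Lemma~\ref{lemma: no real double root} forces $\sigma^0 = 0$) jointly with the neighbouring configurations in which the two roots split apart. Working with the \emph{fixed} divisor $\k^+_{\y^{0\prime}}(\y',\cdot)$ --- defined by its labelling at $\y^{0\prime}$ and transported smoothly --- and expressing the remainders in the monomial basis $\{1,\xi_d,\ldots,\xi_d^{n-1}\}$ of the quotient is precisely what reconciles the ``double-root'' determinant condition at $\y^{0\prime}$ with the ``distinct-roots'' determinant condition at nearby points, so that a single continuous quantity governs the whole neighborhood.
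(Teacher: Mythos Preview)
Your argument is correct. The paper itself does not give a proof of this proposition; it simply records the statement and refers to \cite[Proposition~1.8]{ML}. Your approach---reading the \LS condition at $\y^{0\prime}$ as a surjectivity statement onto the quotient $\C[\xi_d]/(\pconj^+)$ for part~(1), and for part~(2) performing Euclidean division of $b_{j,\varphi}(\y',\cdot)$ by the smoothly varying monic divisor $\k^+_{\y^{0\prime}}(\y',\cdot)$ to reduce completeness to a rank condition on a continuously varying $2\times n$ matrix---is the standard one and matches what the cited reference does. Your observation that working in the monomial basis $\{1,\dots,\xi_d^{n-1}\}$ of the quotient makes the rank condition a \emph{single} continuous (indeed polynomial) expression in $\y'$, thereby bridging Cases~3 and~4 of Section~\ref{sec: discussion LS root positions} without a separate argument, is exactly the right way to handle the double-root degeneration.

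One small point worth making explicit in a write-up: the coefficients of $\k^+_{\y^{0\prime}}(\y',\cdot)$ and of $b_{j,\varphi}(\y',\cdot)$ are homogeneous in $(\xi',\tau,\sigma)$, hence so are the remainder coefficients $\beta_{j,k}(\y')$ and the minors built from them; this is what guarantees that the open neighborhood on which the rank condition persists can be taken \emph{conic}.
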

We emphasize again that the second property in Proposition~\ref{prop:
  stability LS algegraic conditions} looks very much like the
statement of \LS condition for $(\Psig, B_1, B_2, \varphi)$ at $\y'$
in Definition~\ref{def: LS after conjugation}. Yet, it differs by
having $\pconj^+(x,\xi',\xi_d,\tau)$ that only depends on the root configuration at $\y'$ replaced
 by $ \k^+_{\y^{0\prime}}(\y',\xi_d)$ whose structure is based on the
 root configuration at $\y^{0\prime}$. 

\medskip
  Let $m^+$ be the common degree of $\pconj^+(\y^{0\prime},\xi_d)$ and
$\k^+_{\y^{0\prime}} (\y',\xi_d)$ and 
$m^-$ be the common degree of $\pconj^-(\y^{0\prime},\xi_d)$ and
$\k^-_{\y^{0\prime}} (\y',\xi_d)$ for $\y'\in \U$. One has $m^+ + m^-
= 4$ and thus $m^- \geq 2$ for $\y'\in \U$ since $m^+ \leq 2$. 

Invoking the Euclidean division of polynomials, one sees that it is
sufficient to consider polynomials $f$ of degree less than or equal to
$m^+-1\leq 1$  in \eqref{eq: extended LS condition}.
Since the degree of $b_{j,\varphi} (\y',\xi_d)$ can be as high as
$3> m^+-1$ it however makes sense to consider $f$ of degree less than or equal to
$m=3$. Then, the second property in Proposition~\ref{prop:
  stability LS algegraic conditions} is equivalent to having 
\begin{align*}
  \{ b_{1,\varphi}(x,\xi',\xi_d,\tau),
  b_{2,\varphi}(x,\xi',\xi_d,\tau)\} \cup \bigcup_{0 \leq \ell \leq
  3- m^+} \{
  \k^+_{\y^{0\prime}}(\y',\xi_d) \xi_d^\ell\} 
\end{align*}
be a complete in the set of polynomials of degree less than or equal to
$m=3$. Note that this family is made of $m' = 6- m^+ = 2 + m^-$
polynomials. 

\medskip
We now express an inequality that follows from Proposition~\ref{prop:
  stability LS algegraic conditions} that will be key in the boundary
estimation given in Proposition~\ref{prop: local boundary
  estimate} below.

%%%%%%%%%%%%%
% Section          %
%%%%%%%%%%%%%
\subsection{Symbol positivity at the boundary}
\label{sec: Symbol postivity at the boundary}
The symbols $b_{j,\varphi}$, $j=1,2$, are polynomial in $\xi_d$ of
degree $k_j \leq 3$ and we
may thus write them in the form
\begin{align*}
b_{j,\varphi}(\y',\xi_d)=\sum\limits_{\ell=0}^{k_j}b_{j,\varphi}^\ell (\y')\xi_d^\ell,
\end{align*}
with $b_{j,\varphi}^\ell$ homogeneous of degree $k_j-\ell$.

The polynomial $\xi_d \mapsto \k^+_{\y^{0\prime}} (\y',\xi_d)$ is of
degree $m^+ \leq 2$ for $\y'\in \U$ with $\U$ given by Proposition~\ref{prop: stability LS
  algegraic conditions}.
Similarly, we write 
\begin{align*}
\k^+_{\y^{0\prime}} (\y',\xi_d)=\sum\limits_{\ell=0}^{m^+}\k^{+,\ell}_{\y^{0\prime}} (\y')\xi_d^\ell,
\end{align*}
with $\k^{+,\ell}_{\y^{0\prime}}$  homogeneous of degree $m^+-\ell$.
We introduce 
\begin{align*}
  e_{j, \y^{0\prime}} (\y', \xi_d) = 
\begin{cases}
  b_{j,\varphi}(\y',\xi_d) & \text{if} \ j=1,2,\\
  \k^+_{\y^{0\prime}} (\y',\xi_d) \xi_d^{j-3} & \text{if} \ j=3,\dots, m'.
\end{cases}
\end{align*}
As explained above, all these polynomials are of degree less than or
equal to three. 
If we now write 
\begin{align*}
  e_{j, \y^{0\prime}} (\y', \xi_d) 
  = \sum_{\ell=0}^{3} e_{j,
  \y^{0\prime}}^\ell (\y') \xi_d^\ell,
\end{align*}
for $j=1,2$ one has $e_{j, \y^{0\prime}}^\ell (\y') =
b_{j,\varphi}^\ell (\y')$, with $\ell =0, \dots, k_j$ and  $e_{j,
  \y^{0\prime}}^\ell (\y') =0$ for $\ell > k_j$, and 
\begin{align*}
  \text{for} \ j=3,\dots,m',  \quad 
   e_{j, \y^{0\prime}}^\ell (\y') = 
  \begin{cases}
   0 & \text{if} \ \ell <  j-3,\\
   \k^{+,\ell+3-j}_{\y^{0\prime}} (\y') & \text{if} \ \ell=j-3,\dots,
   m^+ + j- 3\leq m^+ + m' -3 = 3,\\
   0 & \text{if} \ \ell > m^+ + j- 3.
  \end{cases}
\end{align*}
In particular $e_{j, \y^{0\prime}}^\ell (\y')$ is homogeneous of
degree $m^+ +j - \ell -3$. 
We thus have the following result. 
%%%%%%%%%%%%%%%%%%%%%%%%
% lemma                %
%%%%%%%%%%%%%%%%%%%%%%%%
\begin{lemma}
  \label{lemma: algebraic condition LS}
  Set the $m' \times (m+1) $ matrix $M (\y') = (M_{j, \ell} (\y')
  )_{{1\leq j \leq m'} \atop {0\leq \ell \leq m}}$ with $M_{j, \ell} (\y')  = e_{j, \y^{0\prime}}^\ell (\y')$. Then, the second point
  in Proposition~\ref{prop: stability LS algegraic conditions} states
  that $M (\y') $ is of rank $m+1=4$ for $\y ' \in \U$. 
\end{lemma}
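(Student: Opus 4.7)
The plan is to translate the completeness condition of Proposition~\ref{prop: stability LS algegraic conditions}.(2) into a purely linear-algebraic rank condition on $M(\y')$. Once one reduces to polynomials of degree at most $m = 3$, the statement becomes an elementary linear algebra equivalence between surjectivity of a map and rank of its representing matrix.

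First, I would argue that the completeness condition in Proposition~\ref{prop: stability LS algegraic conditions}.(2), a priori asserted for every polynomial $f(\xi_d)$, is equivalent to its restriction to polynomials of degree at most $m = 3$. The nontrivial direction uses Euclidean division by $\k^+_{\y^{0\prime}}(\y', \xi_d)$, which has degree $m^+ \leq 2$: writing $f = Q \k^+_{\y^{0\prime}} + R$ with $\deg R \leq m^+ - 1 \leq 1 \leq 3$, one applies the property to $R$ and absorbs $Q$ into $\ell$.

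Second, for $f$ of degree at most $3$, I would show that the polynomial $\ell$ in \eqref{eq: extended LS condition} can be taken of degree at most $3 - m^+$. This is the only step requiring a little care. Since $\deg f \leq 3$ and $\deg b_{j,\varphi} \leq k_j \leq 3$, the product $\ell\,\k^+_{\y^{0\prime}}$ must also have degree at most $3$; as $\k^+_{\y^{0\prime}}$ has exact degree $m^+$ on $\U$, this forces $\deg \ell \leq 3 - m^+$. Writing $\ell(\xi_d) = \sum_{j=0}^{3-m^+} d_j \xi_d^j$ and setting $(c_3, \dots, c_{m'}) = (d_0, \dots, d_{3-m^+})$, the completeness condition becomes: every polynomial $f$ of degree at most $3$ can be expressed as
\[
f(\xi_d) = \sum_{j=1}^{m'} c_j \, e_{j, \y^{0\prime}}(\y', \xi_d),
\qquad c_1, \dots, c_{m'} \in \C.
\]

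Finally, reading each $e_{j, \y^{0\prime}}(\y', \cdot)$ in the monomial basis $\{1, \xi_d, \xi_d^2, \xi_d^3\}$, its coefficient vector $\big( e_{j, \y^{0\prime}}^0(\y'), \dots, e_{j, \y^{0\prime}}^3(\y') \big)$ is exactly the $j$-th row of $M(\y')$. The statement that the polynomials $\{ e_{j, \y^{0\prime}}(\y', \cdot) \}_{j=1}^{m'}$ span the space of polynomials of degree at most $3$ (of dimension $m+1 = 4$) is therefore equivalent to the rows of $M(\y')$ spanning $\C^{m+1}$, that is, to $\rank M(\y') = m + 1 = 4$. The main obstacle is the degree bookkeeping in the second paragraph, which pins down the correct number $3 - m^+$ of auxiliary generators $\k^+_{\y^{0\prime}} \xi_d^\ell$ so that the matrix $M(\y')$ has the prescribed dimensions; beyond this, the argument is a direct linear algebra translation.
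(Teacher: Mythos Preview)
Your proposal is correct and follows essentially the same approach as the paper: the paper does not give a separate proof of this lemma but rather presents it as the summary of the paragraph immediately preceding it, where Euclidean division reduces to $f$ of degree at most $m=3$, and then the condition is rephrased as the family $\{b_{1,\varphi}, b_{2,\varphi}\} \cup \{\k^+_{\y^{0\prime}} \xi_d^\ell : 0 \leq \ell \leq 3-m^+\}$ being complete in polynomials of degree at most $3$. Your degree-bookkeeping argument showing $\deg \ell \leq 3 - m^+$ is exactly the mechanism behind this, made explicit.
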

Recall that  $m' = m^- + 2 \geq 4$.

\medskip
We now set 
\begin{align}
  \label{eq: boundary symbol}
  \un{e_{j, \y^{0\prime}}} (\y', \z) = \sum_{\ell=0}^{3} e_{j,
  \y^{0\prime}}^\ell (\y') z_\ell  = \sum_{\ell=0}^{3}  M_{j, \ell}
  (\y') z_\ell , \qquad \z = (z_0, \dots, z_3).
\end{align}
in agreement with the notation introduced in \eqref{eq: symbol z} in Section~\ref{sec: Boundary  differential quadratic forms}.
One has the following positivity result. 
%%%%%%%%%%%%%%%%%%%%%%%%
% lemma                %
%%%%%%%%%%%%%%%%%%%%%%%%
\begin{lemma}
  \label{lemma: postivity boundary form}
  Let the \LS condition hold at $\y^{0\prime}  = (x^0, \xi^{0\prime}, \tau^0, \sigma^0)\in \partial
 \Omega \times \R^{d-1}\times [0,+\infty) \times [0,+\infty)$ for
 $(\Psig, B_1, B_2, \varphi)$ and let $\U$ be as given by
 Proposition~\ref{prop: stability LS algegraic conditions}.
 Then, if $\y' \in \U$ there exists $C>0$ such that 
 \begin{align*}
  \sum_{j=1}^{m'} \big|\un{e_{j, \y^{0\prime}}} (\y', \z)\big|^2 \geq
   C |\z|_{\C^{4}}^2, 
   \qquad \z = (z_0, \dots, z_3) \in \C^{4}.
\end{align*}
\end{lemma}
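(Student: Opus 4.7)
The plan is to recast the sum on the left as
\begin{equation*}
\sum_{j=1}^{m'} \bigl|\un{e_{j,\y^{0\prime}}}(\y', \z)\bigr|^2
= |M(\y')\z|_{\C^{m'}}^2
= \bigl\langle M(\y')^* M(\y')\z,\, \z\bigr\rangle_{\C^4},
\end{equation*}
so the asserted inequality is equivalent to a uniform positive-definiteness estimate for the $4 \times 4$ Hermitian matrix $M(\y')^* M(\y')$ on $\U$. By Lemma~\ref{lemma: algebraic condition LS}, the rectangular matrix $M(\y')$ has full column rank $m+1=4$ at every $\y' \in \U$. Consequently $M(\y')^* M(\y')$ is positive definite at each such $\y'$, and its smallest eigenvalue $\lambda_{\min}(\y')$ is strictly positive; this immediately yields the pointwise bound $|M(\y')\z|_{\C^{m'}}^2 \geq \lambda_{\min}(\y') \, |\z|_{\C^4}^2$.

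To upgrade to a single constant $C > 0$ on $\U$, I would argue by continuity and homogeneity. Each entry $M_{j,\ell}(\y')$ is smooth in $\y'$ and the smallest eigenvalue of a Hermitian matrix depends continuously on its entries, so $\y' \mapsto \lambda_{\min}(\y')$ is continuous and strictly positive on $\U$. Using the homogeneity of the entries (of fixed degree $m^+ + j - \ell - 3$ in $(\xi',\tau,\sigma)$), one reduces the inequality to a compact subset of $\U$—for example, after shrinking $\U$ around $\y^{0\prime}$ and intersecting with the unit sphere in the dilation variables $(\xi',\tau,\sigma)$—on which continuity of $\lambda_{\min}$ provides a uniform positive lower bound. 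Homogeneity then extends the estimate back to the full cone.

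The mildly delicate point is that the different rows of $M(\y')$ do not scale with a common power of the semiclassical size $\lsct$: row~$j$ carries the homogeneity weight $m^+ + j - 3$ relative to a base power in $\ell$. This means that the cleanest uniform form of the bound tends to come out with a weighted norm on $\z$ rather than the plain $|\z|_{\C^4}$. However, this reweighting is only bookkeeping and does not affect the linear-algebraic argument underlying the lemma; the substantive input has already been packaged into Lemma~\ref{lemma: algebraic condition LS}, and beyond it the proof reduces to continuity of the smallest eigenvalue plus a compactness-by-homogeneity reduction.
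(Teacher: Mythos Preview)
Your first paragraph is exactly the paper's proof: rewrite the sum as $|M(\y')\z|_{\C^{m'}}^2 = \inp{\transp\ovl{M(\y')}M(\y')\z}{\z}_{\C^4}$ and use Lemma~\ref{lemma: algebraic condition LS} to conclude that $M(\y')$ has rank $4$, hence $\transp\ovl{M(\y')}M(\y')$ is positive definite. That is all the paper does.

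Your second and third paragraphs address a question the lemma does not ask. The statement is pointwise: \emph{for each} $\y'\in\U$ there exists $C=C(\y')>0$. Uniformity in $\y'$ is obtained later, in the proof of Proposition~\ref{prop: microlocal boundary estimate}, precisely by the compactness-on-the-sphere argument you sketch (restricting to $\mathbb S_{\ovl{\U_0}}$ and then using homogeneity). So your extra discussion is not wrong, but it belongs to a different step of the argument; for the lemma itself your first paragraph already suffices.
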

%%%% proof of lemma
\begin{proof}
In $\C^4$ define  the bilinear form 
$\Sigma_{\mathscr B} (\z, \z') 
  = \sum_{j=1}^{m'} \un{e_{j,\y^{0\prime}}} (\y', \z)  
  \ovl{\un{e_{j,\y^{0\prime}}} (\y', \z')}$.
With \eqref{eq: boundary symbol} one has
\begin{align*}
  \Sigma_{\mathscr B} (\z, \z') 
  = \biginp{M (\y') \z}{M (\y') \z'}_{\C^{m'}}
  = \biginp{\transp \ovl{M (\y')}  M (\y') \z}{\z'}_{\C^{4}}.
\end{align*}
As $\rank \transp \ovl{M (\y')}  M (\y') = \rank  M (\y') =4$ by
Lemma~\ref{lemma: algebraic condition LS} one obtains
the result. 
\end{proof}

%%%%%%%%%%%%%
% Section          %
%%%%%%%%%%%%%
 \section{Estimate for the boundary norm under
    Lopatinski\u{\i}-\v{S}apiro condition}
  \label{sec: boundary norm under LS}
Near $x^0 \in \partial \Omega$ we consider two boundary operators $B_1$
and $B_2$. As in Section~\ref{sec: LS condition for conjugated
  bilaplace} the associated conjugated operators are denoted by 
$B_{j,\varphi}$, $j=1,2$ with respective principal symbols $b_{j,\varphi}(x,\xi,\tau)$.

The main result of this section is the following proposition for the
fourth-order conjugated operator $\Pconj$. It is key in the final
result of the present article. It states that all traces are
controlled by norms of $B_{1,\varphi} v\br$ and $B_{2,\varphi} v\br$ if the
\LS condition holds for $(P,B_1, B_2,\varphi)$.
%%%%%%%%%%%%%%%%%%%%%%%%
% proposition          %
%%%%%%%%%%%%%%%%%%%%%%%%
\begin{proposition}
  \label{prop: local boundary estimate}
  Let $\k_0>0$. Let $x^0 \in \partial \Omega$, with
  $\Omega$ locally given by $\{ x_d>0\}$.
  Assume that  $(\Psig, B_1, P_2,\varphi)$ satisfies the
  \LS condition of Definition~\ref{def: LS after conjugation} at  $\y'=(x^0,\xi',\tau,\sigma)$
  for all $(\xi',\tau,\sigma) \in \R^{d-1} \times [0,+\infty) \times
  [0,+\infty)$ such that $\tau \geq \k_0\sigma$.

  Then, there exist $W^0$ a \nhd of $x^0$, $C>0$, $\tau_0>0$ such that
  \begin{align*}
    \normsc{ \trace(v)}{3,1/2} \leq C 
    \big(
    \Norm{\Pconj v}{+}
    +\sum\limits_{j=1}^{2}
    \normsc{B_{j, \varphi}v\br}{7/2-k_{j}}
    + \Normsc{v}{4,-1}
    \big),
  \end{align*}
  for $\sigma\geq 0$, $\tau\geq \max(\tau_0, \k_0\sigma)$  and $v\in \Cbarc(W^0_+)$.
\end{proposition}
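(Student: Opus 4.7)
The plan is to prove the estimate microlocally in the tangential phase-space variables $(\xi',\tau,\sigma)$ using the \LS condition, then patch via a partition of unity. Throughout, because $\tau\geq\kappa_0\sigma$, Section~\ref{sec: calculus with spectral parameter} lets us treat $\sigma$ as part of the tangential weight, so the symbolic calculus of Section~\ref{ss2} applies with $\lsct^2\asymp\tau^2+|\xi'|^2+\sigma^2$.

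Fix $\y^{0\prime}=(x^0,\xi^{0\prime},\tau^0,\sigma^0)$ with $\tau^0\geq\kappa_0\sigma^0$ and $(\xi^{0\prime},\tau^0,\sigma^0)\neq 0$. Proposition~\ref{prop: stability LS algegraic conditions} yields a conic \nhd $\U$ of $\y^{0\prime}$ on which the smooth factorization $\pconj=\k^+_{\y^{0\prime}}\k^-_{\y^{0\prime}}$ holds with $m^+=\deg_{\xi_d}\k^+_{\y^{0\prime}}\leq 2$, $m^-=4-m^+\geq 2$, and the family $\{e_{j,\y^{0\prime}}\}_{j=1}^{m'}$ of Section~\ref{sec: Symbol postivity at the boundary} is complete in polynomials in $\xi_d$ of degree $\leq 3$. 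Crucially, by construction of $J^-$ the roots of $\k^-_{\y^{0\prime}}(\y',\cdot)$ lie strictly in $\{\Im\xi_d<0\}$ uniformly on $\U$. Pick a cutoff $\chi\in\Ssct^0$, homogeneous of degree $0$, with $\supp(\chi)\subset\U$, and set $u=\Opt(\chi)v$. By Lemma~\ref{lemma: postivity boundary form} the bilinear symbol of the boundary differential quadratic form
\begin{align*}
  \Bquad_{\y^{0\prime}}(u)
  =\sum_{j=1}^{m'}\normsc{\Op(e_{j,\y^{0\prime}})u\br}{w_j}^2,
\end{align*}
with $w_j$ the weight corresponding to the homogeneity order of $e_{j,\y^{0\prime}}$, is positive definite on $\U$, so Proposition~\ref{prop: boundary form -Gaarding tangentiel} gives
\begin{align*}
  \normsc{\trace(u)}{3,1/2}^2
  \lesssim \Bquad_{\y^{0\prime}}(u)+\normsc{\trace(v)}{3,-N}^2.
\end{align*}

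The sum in $\Bquad_{\y^{0\prime}}(u)$ over $j=1,2$ equals $\sum_{j=1,2}\normsc{B_{j,\varphi}u\br}{7/2-k_j}^2$ since $e_{j,\y^{0\prime}}=b_{j,\varphi}$; commuting $\Opt(\chi)$ past $B_{j,\varphi}$ replaces $u$ by $v$ modulo lower-order traces. The heart of the argument is to bound the remaining $j\geq 3$ terms, which are $\normsc{\Op(\k^+_{\y^{0\prime}})D_d^{j-3}u\br}{w_j}^2$, by $\Norm{\Pconj v}{+}^2 + \Normsc{v}{4,-1}^2$. Set $\psi=\Op(\k^+_{\y^{0\prime}})u$. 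Since the principal symbols of $\Op(\k^+_{\y^{0\prime}})$ and $\Op(\k^-_{\y^{0\prime}})$ commute with product $\pconj$, one has
\begin{align*}
  \Op(\k^-_{\y^{0\prime}})\,\psi=\Pconj u+Ru,
\end{align*}
with $R$ of order $\leq 3$, and the commutator $[\Op(\k^+_{\y^{0\prime}}),D_d^\ell]$ gives $\Op(\k^+_{\y^{0\prime}})D_d^\ell u=D_d^\ell\psi+\text{l.o.t.}$ It therefore suffices to establish
\begin{align*}
  \sum_{\ell=0}^{m^--1}
  \normsc{D_d^\ell\psi\br}{m^--\ell-1/2}^2
  \lesssim \Norm{\Op(\k^-_{\y^{0\prime}})\psi}{+}^2+\text{l.o.t.},
\end{align*}
which is the classical one-dimensional trace estimate: for a monic polynomial $p(\xi_d)=\prod_{j=1}^{r}(\xi_d-\rho_j)$ with $\Im\rho_j<0$ uniformly and a tangential scale $\lsct$, one has, by solving first-order factors successively via $L^2(\R_+)$ Green's functions,
\begin{align*}
  \sum_{\ell=0}^{r-1}\lsct^{2(r-\ell)-1}|D_d^\ell\phi(0)|^2
  \lesssim \Norm{p(D_d)\phi}{L^2(\R_+)}^2,
\end{align*}
uniformly in symbol parameters for $\phi\in\Cinf(\ovl{\R_+})$ of compact support. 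Applying this after tangential Fourier transform, integrating in $\xi'$, and plugging back the factorization gives the control of $\normsc{D_d^\ell\psi\br}{m^--\ell-1/2}^2$ by $\Norm{\Pconj u}{+}^2$ plus volume error terms of total $\lsct$-order at most $3$, absorbed into $\Normsc{v}{4,-1}^2$.

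Finally, compactness of the projective tangential phase space $\{(\xi',\tau,\sigma)\neq 0:\tau\geq\kappa_0\sigma\}$ modulo scaling yields a finite conic cover $\{\U_\alpha\}$ and a homogeneous partition of unity $\{\chi_\alpha\}\subset\Ssct^0$. Summing the microlocal estimates for $u_\alpha=\Opt(\chi_\alpha)v$ and commuting $\Pconj$ past each $\Opt(\chi_\alpha)$ (commutators of order $\leq 3$, absorbed by $\Normsc{v}{4,-1}^2$) delivers the estimate on a \nhd $W^0$ of $x^0$ for $v\in \Cbarc(W^0_+)$. The main obstacle is the uniform one-dimensional trace estimate carried through the parameter-dependent tangential symbol calculus, together with bookkeeping to ensure that every error term generated through microlocalization, factorization, and commutation has $\lsct$-order at most $3$, so that only the weak volume norm $\Normsc{v}{4,-1}$ appears on the right-hand side.
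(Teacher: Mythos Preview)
Your proposal is correct and follows essentially the same approach as the paper: boundary G{\aa}rding via Lemma~\ref{lemma: postivity boundary form} and Proposition~\ref{prop: boundary form -Gaarding tangentiel}, control of the $j\geq 3$ terms through the elliptic factor $\k^-_{\y^{0\prime}}$, then a finite conic partition of unity. The only point to tighten is your ``one-dimensional trace estimate after tangential Fourier transform'': since the roots $\rho_j$ depend on $x$ (not only on $(\xi',\tau,\sigma)$), the paper packages this step as the microlocal elliptic estimate of Lemma~\ref{el1} rather than a constant-coefficient ODE computation, and it also introduces an auxiliary cutoff $\tchi$ so that $\tk^\pm_{\y^{0\prime}}$ are globally defined symbols before quantization.
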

The notation of the function space $\Cbarc(W^0_+)$ is introduced in \eqref{eq: notation Cbarc}.

For the proof of Proposition~\ref{prop: local boundary estimate} we
start with a microlocal version of the result.

%%%%%%%%%%%%%
% Section          %
%%%%%%%%%%%%%
\subsection{A microlocal estimate}
%%%%%%%%%%%%%%%%%%%%%%%%
% proposition          %
%%%%%%%%%%%%%%%%%%%%%%%%
\begin{proposition}
  \label{prop: microlocal boundary estimate}
  Let $\k_1 > \k_0>0$. Let $x^0 \in \partial \Omega$, with
  $\Omega$ locally given by $\{ x_d>0\}$ and let $W$ be a bounded open
  \nhd of $x^0$ in $\R^d$. Let $(\xi^{0\prime},\tau^0,\sigma^0)\in \R^{d-1}\times [0,+\infty)
  \times [0,+\infty)$ nonvanishing with $\tau^0 \geq \k_1 \sigma^0$
  and such that $(\Psig, B_1, P_2,\varphi)$ satisfies the
  \LS condition of Definition~\ref{def: LS after conjugation} at $\y^{0\prime}
  = (x^0, \xi^{0\prime},\tau^0,\sigma^0)$.
 
  Then, there exists $\U$ a
  conic neighborhood of $\y^{0\prime}$ in
  $W\times \R^{d-1}\times [0,+\infty) \times [0,+\infty)$ where
  $\tau \geq \k_0 \sigma$  such that
  if  $\chi \in \Ssct^0$, homogeneous of degree $0$ in $(\xi',\tau,\sigma)$ with
  $\supp (\chi)\subset \U$, there exist $C>0$ and $\tau_0>0$ such that
  \begin{align*}
    \normsc{\trace(\Opt(\chi) v)}{3,1/2} \leq C 
    \Big( 
    \sum\limits_{j=1}^{2}
    \normsc{B_{j, \varphi}v\br}{7/2-k_{j}}
    + \Norm{\Pconj v}{+}
    + \Normsc{v}{4,-1}
    + \normsc{\trace(v)}{3,-1/2} 
    \Big),
  \end{align*}
  for $\sigma\geq 0$, $\tau\geq \max(\tau_0, \k_0\sigma)$  and $v\in \Cbarc(W_+)$.
\end{proposition}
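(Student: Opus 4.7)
The plan is a two-step reduction. First choose the conic \nhd $\U$ of $\y^{0\prime}$ small enough that three properties hold on $\U$: the factorization $\pconj = \k^+_{\y^{0\prime}}\k^-_{\y^{0\prime}}$ from Section~\ref{sec: Stability LS-conj} is smooth; every $\xi_d$-root $\rho_j(\y')$ of $\k^-_{\y^{0\prime}}$ satisfies $\Im\rho_j \leq -c\lsct$ for some $c>0$ (this follows from $\tau\geq\k_1\sigma>\k_0\sigma$ together with Lemma~\ref{lemma: no real double root}, which forbids real roots when $\sigma>0$, and continuity preserves the open-half-plane configuration of the $\y^{0\prime}$ roots); and the symbol positivity of Lemma~\ref{lemma: postivity boundary form} holds. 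The cutoff $\chi\in\Ssct^0$ is supported in $\U$.

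Step~1 is a boundary G{\aa}rding reduction. Introduce the boundary differential quadratic form
\begin{equation*}
\mathsf{B}(u) = \sum_{j=1}^{m'} \inp{E_{j,\y^{0\prime}} u\br}{E_{j,\y^{0\prime}} u\br}_{\partial},
\end{equation*}
with $E_{j,\y^{0\prime}} = B_{j,\varphi}$ for $j=1,2$ and $E_{j,\y^{0\prime}} = \Opt(\k^+_{\y^{0\prime}})D_d^{j-3}$ for $j=3,\dots,m'$. With the weights of Section~\ref{sec: Symbol postivity at the boundary} this form is of type $(3,1/2)$ and, by Lemma~\ref{lemma: postivity boundary form}, positive definite on $\U$. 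Proposition~\ref{prop: boundary form -Gaarding tangentiel} then yields
\begin{equation*}
\normsc{\trace(\Opt(\chi) v)}{3,1/2}^2 \lesssim \sum_{j=1}^{2}\normsc{B_{j,\varphi}v\br}{7/2-k_j}^2 + \sum_{\ell=0}^{m^- -1}\normsc{\Opt(\k^+_{\y^{0\prime}})\,D_d^\ell v\br}{7/2-m^+-\ell}^2 + \normsc{\trace v}{3,-1/2}^2 + \Normsc{v}{4,-1}^2,
\end{equation*}
after commuting $\Opt(\chi)$ past each $E_{j,\y^{0\prime}}$ and absorbing the resulting one-order-lower commutators together with the $C_N\normsc{\trace v}{3,-N}^2$ remainder of the microlocal G{\aa}rding inequality.

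Step~2 controls the $\Opt(\k^+_{\y^{0\prime}})$ traces by $\Pconj v$ through a one-sided parametrix for $\Opt(\k^-_{\y^{0\prime}})$. Set $u_\ell = \Opt(\k^+_{\y^{0\prime}})\,D_d^\ell\,\Opt(\chi)\,v$. Tangential symbolic calculus combined with $\pconj = \k^+_{\y^{0\prime}}\k^-_{\y^{0\prime}}$ gives
\begin{equation*}
\Opt(\k^-_{\y^{0\prime}})\,u_\ell = D_d^\ell\,\Opt(\chi)\,\Pconj v + R_\ell v,\qquad \Norm{R_\ell v}{+}\lesssim \Normsc{v}{4,-1}.
\end{equation*}
Because $\Im\rho_j\leq -c\lsct$ on $\U$, each first-order factor is inverted on $\R_+$ by $w(x_d) = -i\int_{x_d}^{\infty} e^{i\rho_j(x_d-t)}f(t)\,dt$ for $(D_d-\rho_j)w = f$, and the convolution kernel has $L^1$-norm $O(\lsct^{-1})$. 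Young's inequality together with the trace theorem gives $\lsct\Norm{w}{+} + \lsct^{1/2}\norm{w\br}{\partial}\lesssim \Norm{f}{+}$. Factoring $\k^-_{\y^{0\prime}}$ into its $m^-$ first-order pieces and iterating yields the one-sided estimate
\begin{equation*}
\Normsc{u_\ell}{m^-,0} + \normsc{\trace u_\ell}{m^- -1, 0}\lesssim \Norm{\Opt(\k^-_{\y^{0\prime}})u_\ell}{+}\lesssim \Norm{\Pconj v}{+} + \Normsc{v}{4,-1},
\end{equation*}
which in particular controls $\normsc{u_\ell\br}{7/2-m^+-\ell}$ and closes the proof when combined with Step~1.

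The main technical obstacle is the weight bookkeeping of Step~2: the gain $\lsct^{-m^-}$ from the one-sided parametrix for $\Opt(\k^-_{\y^{0\prime}})$ must exactly compensate the weighted order $m^+ = 4 - m^-$ of $\Opt(\k^+_{\y^{0\prime}})$ so that the resulting trace norm lands at the level $\normsc{\cdot}{7/2-m^+-\ell}$ required by Step~1, and every remainder produced along the way --- the commutators $[\Opt(\chi),\Opt(\k^+_{\y^{0\prime}})]$ and $[\Opt(\chi),D_d^\ell]$, the symbolic error in $\k^+_{\y^{0\prime}}\cdot\k^-_{\y^{0\prime}} - \pconj$, and the $C_N\normsc{\trace v}{3,-N}$ correction from Proposition~\ref{prop: boundary form -Gaarding tangentiel} --- must be absorbed by the $\Normsc{v}{4,-1}$ and $\normsc{\trace v}{3,-1/2}$ terms allowed on the right-hand side.
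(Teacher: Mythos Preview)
Your two-step outline matches the paper's strategy: a boundary G{\aa}rding estimate from Lemma~\ref{lemma: postivity boundary form} and Proposition~\ref{prop: boundary form -Gaarding tangentiel}, followed by an elliptic estimate for the factor $\k^-_{\y^{0\prime}}$ whose roots sit in the open lower half-plane. Step~1 is essentially correct.

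Step~2, however, contains a genuine error in the placement of the normal derivatives. You set $u_\ell = \Opt(\k^+_{\y^{0\prime}})\,D_d^\ell\,\Opt(\chi)\,v$ and claim $\Opt(\k^-_{\y^{0\prime}})\,u_\ell = D_d^\ell\,\Opt(\chi)\,\Pconj v + R_\ell v$ with $\Norm{R_\ell v}{+}\lesssim\Normsc{v}{4,-1}$. But for $\ell\geq 1$ the main term $\Norm{D_d^\ell\,\Opt(\chi)\,\Pconj v}{+}$ is \emph{not} bounded by $\Norm{\Pconj v}{+}$: it involves genuine normal derivatives of $\Pconj v$ that you have no control over. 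Moreover the commutator $[\Pconj, D_d^\ell\Opt(\chi)]$ lands in $\Psisc^{3+\ell,0}$, so for $\ell\geq 2$ (which occurs when $m^+\leq 1$) even the remainder $R_\ell$ fails to satisfy $\Norm{R_\ell v}{+}\lesssim\Normsc{v}{4,-1}$.

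The fix, which is exactly what the paper does, is to apply the elliptic estimate of Lemma~\ref{el1} \emph{once} to $w=\Opt(\tk^+_{\y^{0\prime}})v$ with no extra $D_d^\ell$. Then $\Opt(\tk^-_{\y^{0\prime}})\Opt(\chi)w = \Opt(\chi)\Pconj v$ modulo $\Psisc^{4,-1}$, and the right-hand side is bounded by $\Norm{\Pconj v}{+}+\Normsc{v}{4,-1}$. The conclusion of Lemma~\ref{el1} already delivers the full trace norm $\normsc{\trace(\Opt(\chi)w)}{m^--1,1/2}$, and by definition this norm \emph{contains} all the pieces $\normsc{D_d^j\Opt(\tk^+_{\y^{0\prime}})u\br}{m^--1/2-j}$ for $j=0,\dots,m^--1$; after the identification $\xi_d^j\,\tk^+_{\y^{0\prime}}=\tchi\, e_{j+3,\y^{0\prime}}$ near $\supp(\chi)$ this is exactly the sum $\sum_{j=3}^{m'}\normsc{E_j u\br}{13/2-m^+-j}$ required by Step~1. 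The normal derivatives $D_d^j$ thus arise from the \emph{output} of the elliptic estimate (the definition of the trace norm), not as factors inserted into the argument of the parametrix.

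Two minor points: your explicit integral formula for the first-order factors treats each root $\rho_j$ as a constant, which is only literally valid after a tangential Fourier transform at frozen $x$; the rigorous statement is Lemma~\ref{el1}. And the trace level it provides is $m^--1,1/2$ rather than the $m^--1,0$ you wrote; that extra half is precisely what matches $7/2-m^+-\ell = m^--1/2-\ell$.
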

%%%%V Proof of proposition
\begin{proof}
We choose a conic \nhd $\U_0$ of $\y^{0\prime}$ according to
Proposition~\ref{prop: stability LS algegraic conditions} and such that $\U_0 \subset W \times \R^{d-1}
\times [0,+\infty) \times [0,+\infty)$ . Assume
moreover that $\tau \geq \k_0 \sigma$ in $\U_0$.

In Section~\ref{sec: Symbol postivity at the boundary} we introduced
the symbols $e_{j, \y^{0\prime}} (\y', \xi_d)$, $j=1, \dots, m' = m^-
+2 = 6 - m^+$. 
Set $\mathbb S_{\ovl{\U_0}} = \{ \y' = (x,\xi', \tau,\sigma) \in \ovl{\U_0}; \ |(\xi', \tau,\sigma)|=1\}$. 

Consequence of the \LS condition holding at
$\y^{0\prime}$ for all $\y' \in \mathbb S_{\ovl{\U_0}}$, by
Lemma~\ref{lemma: postivity boundary form} there exists $C>0$ such
that  
\begin{align*}
  \sum_{j=1}^{m'} \big|\un{e_{j, \y^{0\prime}}} (\y', \z)\big|^2 \geq
   C |\z|_{\C^{4}}^2, 
   \qquad \z = (z_0, \dots, z_3) \in \C^{4}.
\end{align*}
Since $\mathbb S_{\ovl{\U_0}}$ is compact (recall that $W$ is bounded), there exists $C_0>0$ such that 
\begin{align*}
  \sum_{j=1}^{m'} \big|\un{e_{j, \y^{0\prime}}} (\y', \z)\big|^2 \geq
   C_0 |\z|_{\C^{4}}^2, 
   \qquad \z = (z_0, \dots, z_3) \in \C^{4}, \ \y' \in \mathbb S_{\ovl{\U_0}}.
\end{align*}
Introducing the map $N_{t}\y'=(x, t \xi', t \tau, t \sigma)$, for
$\y'=(x,  \xi',  \tau, \sigma)$ with $t = |(\xi', \tau,\sigma)|^{-1}$ 
one has 
\begin{align}
  \label{eq: symbol positivity 1}
  \sum_{j=1}^{m'} \big|\un{e_{j, \y^{0\prime}}} (N_t \y', \z)\big|^2 \geq
   C_0 |\z|_{\C^{4}}^2, 
   \qquad \z = (z_0, \dots, z_3) \in \C^{4}, \ \y' \in \U_0, 
\end{align}
since $N_t \y' \in \mathbb S_{\ovl{\U_0}}$.
Now, for $j=1,2$  one has
\begin{align*}
  \un{e_{j, \y^{0\prime}}} (\y', \z)= \sum_{\ell=0}^{k_j} e_{j,
  \y^{0\prime}}^\ell (\y') z_\ell,
\end{align*}
with $ e_{j, \y^{0\prime}}^\ell (\y')$ homogeneous of degree
$k_j-\ell$, 
and for $3\leq j \leq m'$ one has 
\begin{align*}
  \un{e_{j, \y^{0\prime}}} (\y', \z)= \sum_{\ell=0}^{3} e_{j,
  \y^{0\prime}}^\ell (\y') z_\ell,
\end{align*}
with $ e_{j, \y^{0\prime}}^\ell (\y')$ homogeneous of degree
$m^++j-\ell-3$.
We define $\z' \in \C^4$ by $z'_\ell = t^{\ell-7/2} z_\ell$, $\ell=0,
\dots, 3$. 
One has 
\begin{align*}
  \un{e_{j, \y^{0\prime}}} (N_t \y', \z')= t^{k_j-7/2} \un{e_{j,
  \y^{0\prime}}} (\y', \z), \qquad j=1,2,
\end{align*}
and 
\begin{align*}
  \un{e_{j, \y^{0\prime}}} (N_t \y', \z')= t^{m^++j-13/2} \un{e_{j,
  \y^{0\prime}}} (\y', \z), \qquad j=3,\dots,m'.
\end{align*}
Thus from \eqref{eq: symbol positivity 1} we deduce
\begin{align}
  \label{eq: symbol positivity 2}
  \sum_{j=1}^{2} \lsct^{2(7/2-k_j)} \big|\un{e_{j, \y^{0\prime}}} (\y', \z)\big|^2 
  + \sum_{j=3}^{m'} \lsct^{2(13/2-m^+-j)} \big|\un{e_{j, \y^{0\prime}}} (\y', \z)\big|^2 
  \geq
   C_0 \sum_{\ell=0}^{3} \lsct^{2(7/2-\ell)}| z_\ell|^2, 
\end{align}
for $\z = (z_0, \dots, z_3) \in \C^{4}$, and $\y' \in \U_0$,
since $t \asymp \lsct^{-1}$ as $\tau \gtrsim \sigma$ in $\U_0$. 

We now choose $\U$ a conic open neighborhood of $\y^{0\prime}$, such that
$\ovl{\U}\subset\U_0$.  Let $\chi\in \Ssct^0$ be as in the statement and let  $\tchi\in
\Ssct^0$ be homogeneous of degree $0$, with
$\supp(\tchi)\subset\U_0$ and $\tchi\equiv 1$ in a neighborhood of
$\ovl{\U}$, and thus in a \nhd of $\supp(\chi)$.

For $j=3, \dots, m'$ one has $e_{j, \y^{0\prime}}(\y',\xi_d) =
\k^+_{\y^{0\prime}} (\y',\xi_d) \xi_d^{j-3} \in \Ssc^{m^++j-3,0}$. 
Set $E_{j} = \Op ( \tchi e_{j, \y^{0\prime}})$. The introduction
of $\tchi$ is made such that $\tchi e_{j, \y^{0\prime}}$ is defined on
the whole tangential phase-space. Observe that 
\begin{align*}
  \mathscr B (w) &= \sum\limits_{j=1}^{2} \normsc{B_{j, \varphi}w\br}{7/2-k_{j}}^2
  +\sum\limits_{j=3}^{m'} \normsc{E_{j}w\br}{13/2-m^+-j}^2\\
  &=\sum\limits_{j=1}^{2} 
    \norm{\Lsct^{7/2-k_{j}} B_{j, \varphi}w\br}{\partial}^2
    + \sum\limits_{j=3}^{m'} 
    \norm{\Lsct^{13/2-m^+-j} E_{j}w\br}{\partial}^2
\end{align*}
is a boundary quadratic form of type $(3,1/2)$ as in
Definition~\ref{def: boundary quadratic form}.
From Proposition~\ref{prop: boundary form -Gaarding tangentiel} and
\eqref{eq: symbol positivity 2} we have 
\begin{align}
  \label{eq: boundary inequality - est traces}
  \normsc{\trace(u)}{3,1/2}^2 \lesssim  \sum\limits_{j=1}^{2} \normsc{B_{j, \varphi}u\br}{7/2-k_{j}}^2
  +\sum\limits_{j=3}^{m'} \normsc{E_{j}u\br}{13/2-m^+-j}^2
  +  \normsc{\trace(v)}{3,-N}^2.
\end{align}
for $u = \Opt(\chi) v$ and $\tau \geq \k_0 \sigma$ chosen \suff
large.

\bigskip
In $\U_0$ one can write 
\begin{align*}
  \pconj = \pconj^+ \pconj^- 
  = \k^+_{\y^{0\prime}} \k^-_{\y^{0\prime}},
 \end{align*}
with $\k^+_{\y^{0\prime}}$ of degree $m^+$ and $\k^-_{\y^{0\prime}}$
of degree $m^-$.
In fact we set 
\begin{align*}
   \tk^+_{\y^{0\prime}} (\y') 
   = \prod_{j \in J^+} \big(\xi_d -\tchi \rho_j(\y')\big), 
   \qquad 
   \tk^-_{\y^{0\prime}} (\y') 
   = \prod_{j \in J^-} \big(\xi_d -\tchi \rho_j(\y')\big),
 \end{align*}
with the notation of Section~\ref{sec: Stability LS-conj},
thus making the two symbols defined on
the whole tangential phase-space.
In $\U$, one has also 
\begin{align*}
  \pconj 
  = \tk^+_{\y^{0\prime}} \tk^-_{\y^{0\prime}}.
 \end{align*}
The factor $\tk^-_{\y^{0\prime}}$ is associated with roots with
negative imaginary part. With Lemma~\ref{el1} given in
Appendix~\ref{SB} one has the following microlocal elliptic estimate
\begin{align*}
  \Normsc{\Opt(\chi)w}{m^-}
  + \normsc{\trace(\Opt(\chi)w)}{m^--1, 1/2}
  \lesssim 
  \Norm{\Opt(\tk^-_{\y^{0\prime}}) \Opt(\chi)w}{+}
  + \Normsc{w}{m^-,-N},
\end{align*}
for $w \in \Sbarp$ and $\tau \geq \k_0 \tau$ chosen \suff large. 
We apply this inequality to $w=\Opt(\tk^+_{\y^{0\prime}} )v$. 
Since 
\begin{align*}
  \Opt(\tk^-_{\y^{0\prime}}) \Opt(\chi)\Opt(\tk^+_{\y^{0\prime}} ) =
  \Opt(\chi) \Pconj \mod \Psisc^{4,-1},
\end{align*}
one obtains
\begin{align*}
  \normsc{\trace(\Opt(\chi) \Opt(\tk^+_{\y^{0\prime}})v)}{m^--1, 1/2}
  \lesssim 
  \Norm{\Pconj v}{+}
  + \Normsc{v}{4,-1}.
\end{align*}
With $[\Opt(\chi), \Opt(\tk^+_{\y^{0\prime}})] \in \Psisc^{m^+,-1}$ one
then has
\begin{align*}
  \normsc{\trace(\Opt(\tk^+_{\y^{0\prime}}) u)}{m^--1, 1/2}
  \lesssim 
  \Norm{\Pconj v}{+}
  + \Normsc{v}{4,-1}
  + \normsc{\trace(v)}{3,-1/2},
\end{align*}
with $u= \Opt(\chi) v$ as above, using that $m^+ + m^-=4$. 
Note that 
\begin{align*}
  \normsc{\trace(\Opt(\tk^+_{\y^{0\prime}}) u)}{m^--1, 1/2}
  & \asymp \sum_{j=0}^{m^--1} 
  \normsc{D_d^j \Opt(\tk^+_{\y^{0\prime}}) u\br}{m^--j- 1/2}\\
  &\gtrsim \sum_{j=3}^{m'} 
  \normsc{E_j u\br}{5/2+ m^--j} - \normsc{\trace(v)\br}{3, - 1/2} ,
\end{align*}
using that $\xi_d^j \tk^+_{\y^{0\prime}} = \tchi e_{j+3,
  \y^{0\prime}}$ in a conic \nhd of $\supp(\chi)$ and using that 
$m^- = m'-2$.
We thus obtain 
\begin{align*}
   \sum_{j=3}^{m'}  \normsc{E_{j}u\br}{13/2-m^+-j}
  \lesssim 
  \Norm{\Pconj v}{+}
  + \Normsc{v}{4,-1}
  + \normsc{\trace(v)}{3,-1/2},
\end{align*}
since $13/2-m^+ = 5/2+ m^-$.
With \eqref{eq: boundary inequality - est traces} then one finds
\begin{align*}
  \normsc{\trace(u)}{3,1/2} \lesssim  \sum \limits_{j=1}^{2} \normsc{B_{j, \varphi}u\br}{7/2-k_{j}}^2
  + \Norm{\Pconj v}{+}
  + \Normsc{v}{4,-1}
  + \normsc{\trace(v)}{3,-1/2}.
\end{align*}
In addition, observing that 
\begin{align*}
  \normsc{B_{j, \varphi}u\br}{7/2-k_{j}}\lesssim
  \normsc{B_{j, \varphi}v\br}{7/2-k_{j}} + \normsc{\trace(v)}{3,-1/2},
\end{align*}
the result of Proposition~\ref{prop: microlocal boundary estimate}
follows.
\end{proof}

%%%%%%%%%%%%%
% Section          %
%%%%%%%%%%%%%
\subsection{Proof of Proposition~\ref{prop: local boundary estimate}}
\label{sec: proof prop: local boundary estimate}
  As mentioned above the proof relies on a patching procedure of
  microlocal estimates given by Proposition~\ref{prop: microlocal
boundary estimate}.

   Let $0 < \k_0'  < \k_0$.
  We set 
  \begin{align*}
    \Gamma^{d-1}_{+, \k_0} = \{ (\xi',\tau,\sigma)\in\R^{d-1}\times [0,+\infty) \times
  [0,+\infty);\ \tau \geq \k_0 \sigma, \},
  \end{align*}
  and 
  \begin{align*}
    \mathbb{S}^{d-1}_{+, \k_0} = \{ (\xi',\tau,\sigma)\in
    \Gamma^{d-1}_{+, \k_0} ;\ |(\xi',\tau,\sigma)|=1\}.
  \end{align*}

 Consider $(\xi^{0\prime},\tau^0,\sigma^0) \in \mathbb{S}^{d-1}_{+, \k_0}$.
  Since the \LS condition holds at
  $\y^{0\prime} = (x^0, \xi^{0\prime},\tau^0,\sigma^0)$, we can invoke 
  Proposition~\ref{prop: microlocal boundary estimate}:
  \begin{enumerate}
  \item There exists a conic open neighborhood
  $\U_{\y^{0\prime}}$ of $\y^{0\prime}$ in
  $W \times \R^{d-1}\times [0,+\infty) \times [0,+\infty)$ where
  $\tau \geq \k_0' \sigma$;
  \item For any $\chi_{\y^{0\prime}} \in \Ssct^0$ homogeneous of
  degree $0$ supported in $\U_{\y^{0\prime}}$ the estimate of
  Proposition~\ref{prop: microlocal boundary estimate} applies to $\Opt(\chi_{\y^{0\prime}} ) v$ for $\tau
  \geq \max (\tau_{\y^{0\prime}} , \k_0\sigma)$. 
  \end{enumerate}
  Without any loss of generality we may choose $\U_{\y^{0\prime}}$ of
  the form $\U_{\y^{0\prime}} = \scrO_{\y^{0\prime}} \times
  \Gamma_{\y^{0\prime}}$, with $\scrO_{\y^{0\prime}} \subset W$ an open \nhd of
  $x^0$ and $\Gamma_{\y^{0\prime}}$ a conic open \nhd of
  $(\xi^{0\prime},\tau^0,\sigma^0)$ in $\R^{d-1}\times [0,+\infty) \times
  [0,+\infty)$ where $\tau \geq \k_0' \sigma$. 

  Since $\{ x^0\} \times \mathbb{S}^{d-1}_{+, \k_0}$ is compact we can
  extract a finite covering of it by open sets of the form of
  $\U_{\y^{0\prime}}$. We denote by
  $\tilde{\U}_i$, $i \in I$ with $|I| < \infty$, such a finite covering. This is
  also a finite covering of 
  $\{x^0\} \times \Gamma^{d-1}_{+, \k_0}$.

  Each $\tilde{\U}_i$ has the form $\tilde{\U}_i =  \scrO_i \times
  \Gamma_i$, with $\scrO_i$ an open \nhd of
  $x^0$ and $\Gamma_i$ a conic open set in $\R^{d-1}\times [0,+\infty) \times
  [0,+\infty)$ where $\tau \geq \k_0' \sigma$. 

  We set $\scrO = \cap_{i \in I} \scrO_i$ and 
  $\U_i = \scrO \times \Gamma_i$, $i \in I$.

  Let $W^0$ be an open \nhd of $x^0$ such that $W^0 \Subset
  \scrO$. The open sets $\U_i$
  give also an open covering of $\ovl{W^0} \times \mathbb{S}^{d-1}_{+,
    \k_0}$ and  $\ovl{W^0} \times \Gamma^{d-1}_{+, \k_0}$.
  With this second covering we associate a partition of unity
  $\chi_i$, $i \in I$, of $\ovl{W^0}  \times \mathbb{S}^{d-1}_{+,
    \k_0}$,
where each $\chi_i$ is chosen smooth and homogeneous
  of degree one for $|(\xi', \tau, \sigma)| \geq 1$, that is:
  \begin{align*}
    \sum\limits_{i \in I}\chi_i(\y')=1 
    \qquad \text{for}\ \y' = (x, \xi', \tau, \sigma) \ \text{in a \nhd
    of}\
    \ovl{W^0} \times \Gamma^{d-1}_{+, \k_0},
    \ \text{and} \  |(\xi', \tau, \sigma)| \geq 1.
  \end{align*}

  Let $u \in \Cbarc(W^0_+)) $.
 Since each $\chi_i$ is in $\Ssct^0$ and 
  supported in $\U_i$,  Proposition~\ref{prop: microlocal boundary
    estimate} applies:
  \begin{align}
     \label{eq: local boundary estimate-1}
    \normsc{\trace(\Opt(\chi_i) v)}{3,1/2} \leq C_i
    \Big( 
    \sum\limits_{j=1}^{2}
    \normsc{B_{j, \varphi}v\br}{7/2-k_{j}}
    + \Norm{\Pconj v}{+}
    + \Normsc{v}{4,-1}
    + \normsc{\trace(v)}{3,-1/2} 
    \Big),
    \end{align}
 for some $C_i>0$, for $\sigma\geq 0$, $\tau\geq \max(\tau_i,
 \k_0\sigma)$ for some $\tau_i>0$. 

 We set $\tchi=1-\sum\limits_{i \in I}\chi_i$. One has $\tchi \in
 \Ssct^{-\infty}$ microlocally in a \nhd of $\ovl{W^0} \times
 \Gamma^{d-1}_{+, \k_0}$.
 Thus, considering the definition of $\Gamma^{d-1}_{+, \k_0}$, if one
 imposes $\tau \geq \k_0 \sigma$, as we do,  then $\tchi \in
 \Ssct^{-\infty}$ locally in a \nhd of $\ovl{W^0}$.
  
 For any $N\in \N$ using that $\supp(v) \subset W^0$ one has
\begin{align*}
  \normsc{\trace (v)}{3,1/2}
  &\leq \sum\limits_{i \in I}  \normsc{\trace (\Opt (\chi_i)v)}{3,1/2}
  + \normsc{\trace (\Opt (\tchi)v)}{3,1/2}\\
  &\lesssim 
    \sum\limits_{i \in I}  \normsc{\trace (\Opt (\chi_i)v)}{3,1/2}
  + \normsc{\trace (v)}{3,-N}\\
  &\lesssim 
    \sum\limits_{i \in I}  \normsc{\trace (\Opt (\chi_i)v)}{3,1/2}
  + \Normsc{v}{4,-N}.
\end{align*}
Summing estimates \eqref{eq: local boundary estimate-1} together for $i
\in I$ we thus obtain
\begin{align*}
    \normsc{\trace(v)}{3,1/2} \lesssim
    \sum\limits_{j=1}^{2}
    \normsc{B_{j, \varphi}v\br}{7/2-k_{j}}
    + \Norm{\Pconj v}{+}
    + \Normsc{v}{4,-1}
    + \normsc{\trace(v)}{3,-1/2} ,
\end{align*}
for $\tau \geq \max( \max_i \tau_i,
 \k_0\sigma)$. 
Therefore, by choosing $\tau \geq \k_0 \sigma$ \suff large one obtains
the result of Proposition~\ref{prop: local boundary estimate}.
\hfill \qedsymbol \endproof

%%%%%%%%%%%%%
% Section          %
%%%%%%%%%%%%%
\section{Microlocal estimate for each second-order factors composing
  $\Pconj$}
\label{sec: Microlocal estimate for each second-order factors}
We recall that
$\Psig =\Delta^2-\sigma^4=(-\Delta-\sigma^2)(-\Delta+\sigma^2)$
with $\sigma\geq 0$. Set $\Qsig^j=-\Delta + (-1)^j\sigma^2$; then
$\Psig =\Qsig^1\Qsig^2.$
We also set $Q = - \Delta$, that is, $Q = Q^1_{0} = Q^2_{0}$.

The principal symbols of $\Qsig^j$ and $Q$ are given by
\begin{align}
  \label{eq: symbols laplace operator}
  \qsig^j (x,\xi)= \xi_d^2 + r(x,\xi') + (-1)^j \sigma^2
  \ \ \text{and} \ \ q (x,\xi) = \xi_d^2 + r(x,\xi'),
\end{align}
respectively.
The conjugated operator
$\Pconj=e^{\tau\varphi}\Psig e^{-\tau}$ reads
\begin{align*}
  \Pconj=\Qconj^1\Qconj^2,
  \ \ \text{with}\ \
  \Qconj^j=e^{\tau\varphi}\Qsig^je^{-\tau\varphi}.
\end{align*}
We set
\begin{align*}
  \Qs^j=\frac{\Qconj^j+(\Qconj^j)^*}{2}
  \ \ \text{and} \ \
  \Qa=\frac{\Qconj^j-(\Qconj^j)^*}{2i},
\end{align*}
both formally selfadjoint and such that 
  $\Qconj^j=\Qs^j+i\Qa$. Note that $\Qa$ is
  independent of $\sigma$. Their respective
  principal symbols
  are
  \begin{align*}
    &\qs^j(x,\xi,\tau,\sigma)=\xi_d^2-(\tau\partial_d\varphi)^2+r(x,\xi')
      + (-1)^j\sigma^2-\tau^2r(x,d_{x'}\varphi),\\
    &\qa(x,\xi,\tau)=2\tau\xi_d\partial_d\varphi+2\tau\tilde{r}(x,\xi',
      d_{x'}\varphi).
  \end{align*}
  Note that $\Qs^j$ and $\Qa$ take the forms
  \begin{align}
    \label{eq: form of Qs Qa}
    \Qs^j = D_d^2 + T_s^j, \qquad
    \Qa = \tau (\partial_d \varphi D_d
    + D_d \partial_d \varphi) + T_a,
  \end{align}
  where $T_s^j, T_a \in \Dsct^2$ are such that $(T_s^j)^* = T_s^j$ and
  $T_a^* = T_a$. 
  Naturally, the principal symbol of $\Qconj^j$ is 
  \begin{align*}
    \qconj^j(x,\xi,\tau)=\qs^j(x,\xi,\tau,\sigma)+i\qa(x,\xi,\tau).
  \end{align*}
The principal symbol of $\Qconj^j=e^{\tau\varphi}\Qsig^je^{-\tau\varphi}$ is 
\begin{align*}
  \qconj^j(x,\xi,\tau,\sigma)
  =(\xi_d+i\tau\partial_d\varphi)^2+r(x,\xi')
  + (-1)^j\sigma^2-\tau^2r(x,d_{x'}\varphi)+2i\tau\tilde{r}(x,\xi',d_{x'}\varphi).
  \end{align*}
As in Section~\ref{sec: Root configuration for each factor} we let  $\alpha_j \in \C$ be such that
\begin{align*}
  \alpha_j(x,\xi',\tau,\sigma)^2
  &= r(x, \xi' + i \tau d_{x'}\varphi) + (-1)^j \sigma^2\\
  &=r(x,\xi')
  -\tau^2 r(x,d_{x'}\varphi)
  +2i\tau\tilde{r}(x,\xi',d_{x'}\varphi)
  + (-1)^j \sigma^2,
\end{align*}
and $\Re \alpha_j \geq 0$. Note that uniqueness in the choice of
$\alpha_j$ holds except if $r(x, \xi' + i \tau d_{x'}\varphi)
+ (-1)^j \sigma^2 \in \R^-$; this lack of uniqueness in that case is however not an issue
in what follows. 
One has 
\begin{align*}
  \qconj^j(x,\xi',\xi_d,\tau)
  &= (\xi_d+i\tau\partial_d\varphi)^2+\alpha_j (x,\xi',\tau,\sigma)^2\\
  &=
    \big(\xi_d+i\tau\partial_d\varphi+i\alpha_j(x,\xi',\tau,\sigma)\big)
    \big(\xi_d+i\tau\partial_d\varphi-i\alpha_j(x,\xi',\tau,\sigma)\big).
\end{align*}
We recall from \eqref{eq: form of the roots of q} that we write $\qconj^j(x,\xi',\xi_d,\tau)=(\xi_d-\pi_{j,1})(\xi_d-\pi_{j,2})$ with
\begin{align*}
  \pi_{j,1}=-i\tau\partial_d\varphi- i\alpha_j(x,\xi',\tau,\sigma)
  \ \ \text{and} \ \
  \pi_{j,2} = -i\tau\partial_d\varphi + i\alpha_j(x,\xi',\tau,\sigma).
\end{align*}
The roots $\pi_{j,k}$, $k=1,2$ are functions of $x,\xi',\tau$ and $\sigma.$

\bigskip We denote by $B$ a boundary operator of order $k$ that takes
the form $$B(x,D)=B^{k}(x,D') + B^{k-1}(x,D')D_d,$$ with $B^{k}(x,D')$
and $B^{k-1}(x,D')$ tangential differential operators of order $k$ and
$k-1$ respectively. The boundary operator $B(x,D)$ has
$b(x,\xi)=b^k(x,\xi') + b^{k-1}(x,\xi')\xi_d$ for principal
symbol. The conjugate boundary operator
$B_{\varphi}=e^{\tau\varphi}Be^{-\tau\varphi}$ is then given by
\begin{align*}
B_{\varphi}(x,D,\tau) &=B^k_{\varphi}(x, D',\tau)+B^{k-1}_{\varphi}(x,D',\tau)(D_d+i\tau\partial_d\varphi)\\
&= 
\hat{B}^k_{\varphi}(x,D',\tau)+
  B^{k-1}_{\varphi}(x,D',\tau)D_d,
\end{align*}
with
$\hat{B}^k_{\varphi}(x,D',\tau)
  =B^k_{\varphi}(x,D',\tau)+i \tau
  B^{k-1}_{\varphi}(x,D',\tau)\partial_d\varphi$.
The principal symbol of $B_{\varphi}(x,D,\tau)$ is 
\begin{align*}
  b_\varphi(x,\xi,\tau) =  \hat{b}^k_{\varphi}(x,\xi',\tau) + b^{k-1}_{\varphi}(x,\xi',\tau) \xi_d,
\end{align*}
 where $b^{k-1}_{\varphi}(x,\xi',\tau)$  is homogeneous of
 degree $k-1$ in $\lsct$ and 
 $\hat{b}^k_{\varphi}(x,\xi',\tau)=b^k_{\varphi}(x,\xi',\tau)+ i \tau
 b^{k-1}_{\varphi}(x,\xi',\tau)\partial_d\varphi$ is homogeneous of
 degree $k$ in $\lsct$.

%%%%%%%%%%%%%
% Section          %
%%%%%%%%%%%%%
\subsection{Sub-ellipticity}
 Set
   \begin{align*}
     \qs(x,\xi,\tau)
     =\xi_d^2+r(x,\xi')-(\tau\partial_d\varphi)^2
     -r(x,\tau d_{x'}\varphi)
     =|\xi|^2_x-|\tau d\varphi|_x^2,
   \end{align*}
   where $|\xi|^2_x=\xi_d^2+r(x,\xi')$. One has $\qs^j= \qs + (-1)^j
   \sigma^2$.
   Observe that $\{\qs^j,\qa\}=\{\qs, \qa\}$.
%%%%%%%%%%%%%%%%%%%%%%%%
% definition           %
%%%%%%%%%%%%%%%%%%%%%%%%
\begin{definition}[Sub-ellipticity]\label{de1}
  Let $W$ be a bounded open subset of $\R^d$ and
  $\varphi\in \Cinf(\ovl{W})$ such that
  $|d_x\varphi|>0$. Let $j =1$ or $2$.  We say that the couple
  $(\Qsig^j,\varphi)$ satisfies the sub-ellipticity condition
  in $\ovl{W}$ if there exist $C>0$ and $\tau_0>0$ such that for
  $\sigma>0$
\begin{align*}
  \forall~(x,\xi)\in\ovl{W}\times\R^d,~\tau\geq
 \tau_0\sigma,~\qconj^j(x,\xi,\tau)=0\Rightarrow
 \{\qs^j,\qa\}(x,\xi,\tau) = \{\qs,\qa\}(x,\xi,\tau)\geq C>0.
 \end{align*}
\end{definition}
%%%%%%%%%%%%%%%%%%%%%%%%
% remark               %
%%%%%%%%%%%%%%%%%%%%%%%%
\begin{remark}
  Note that with homogeneity the sub-ellipticity property also reads
  \begin{align*}
    \forall~(x,\xi)\in\ovl{W}\times\R^d,~\tau\geq
    \tau_0\sigma,~\qconj^j(x,\xi,\tau)=0\Rightarrow
    \{\qs^j,\qa\}(x,\xi,\tau)\geq C \lsc^3.
 \end{align*}
\end{remark}
%%%%%%%%%%%%%%%%%%%%%%%%
% proposition          %
%%%%%%%%%%%%%%%%%%%%%%%%
\begin{proposition}
  \label{prop: sub-ellipticity recipe}
    Let $W$ be a bounded open subset of $\R^d$ and
    $\psi\in\Cinf(\R^d)$ such that $\psi\geq 0$ and
    $|d_x\psi|\geq C>0$ on $\ovl{W}.$ Let $\tau_0>0$.  Then, there exists
    $\csp_0\geq 1$ such that $(\Qsig^j,\varphi)$
    satisfies the sub-ellipticity condition
    on $\ovl{W}$ for $\tau\geq\tau_0 \sigma$ for
    $\varphi=e^{\csp\psi}$, with $\csp\geq \csp_0$, for both
    $j=1$ and $2$.
  \end{proposition}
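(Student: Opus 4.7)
The plan is to reduce the statement to the well-known sub-ellipticity result for the Laplace operator alone, treating the spectral parameter $\sigma$ as a small perturbation of the characteristic set. The key observation is that since $\sigma^2$ is a real constant (independent of $(x,\xi,\tau)$), we have
\begin{equation*}
\{\qs^j,\qa\}(x,\xi,\tau)=\{\qs,\qa\}(x,\xi,\tau),
\end{equation*}
so the Poisson bracket to be estimated is exactly the one that appears in the classical Laplace-operator recipe (\cite[Chapter~3]{JGL-vol1}). What changes relative to the $\sigma=0$ case is only the location of the characteristic set $\{\qconj^j=0\}$, which is now $\{\qs=-(-1)^j\sigma^2,\ \qa=0\}$.

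First, I would expand $\{\qs,\qa\}$ with $\varphi=e^{\csp\psi}$. Using $\partial_j\varphi=\csp\varphi\,\partial_j\psi$ and $\partial_i\partial_j\varphi=\csp\varphi(\partial_i\partial_j\psi+\csp\,\partial_i\psi\,\partial_j\psi)$, together with the formula
\begin{equation*}
\qs(x,\xi,\tau)=|\xi|_x^2-\tau^2|d\varphi|_x^2,\qquad \qa(x,\xi,\tau)=2\tau\,\tilde{r}_d(x,\xi,d\varphi),
\end{equation*}
(where $\tilde{r}_d$ denotes the full polarization of $|\cdot|_x^2$ on $\R^d$), one gets an expansion of the form
\begin{equation*}
\{\qs,\qa\}=\tau\csp\varphi\bigl(\csp^2\varphi^2\,\Phi_3(x,\xi,\tau)+\csp\varphi\,\Phi_2(x,\xi,\tau)+\Phi_1(x,\xi,\tau)\bigr),
\end{equation*}
where each $\Phi_k$ is a second-degree polynomial in $(\xi,\tau)$ with bounded coefficients on $\ovl{W}$, and where $\Phi_3$ carries the Hörmander-type dominant contribution
\begin{equation*}
\Phi_3(x,\xi,\tau)=4\bigl(|\xi|_x^2+\tau^2|d\psi|_x^2\bigr)|d\psi|_x^2+R(x,\xi,\tau),
\end{equation*}
with $R$ of the same order but of smaller magnitude than the first summand uniformly on $\ovl{W}$.

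Next, I would analyze the characteristic set $\{\qs^j=0,\ \qa=0\}$ under the hypothesis $\tau\geq\tau_0\sigma$. Here $\qs^j=0$ reads $|\xi|_x^2=\tau^2|d\varphi|_x^2-(-1)^j\sigma^2$. Since $|d\varphi|_x^2=\csp^2\varphi^2|d\psi|_x^2\geq c_0\csp^2$ on $\ovl{W}$ (using $\psi\geq0$, so $\varphi\geq1$, and $|d\psi|_x\geq C>0$) and $\sigma^2\leq\tau^2/\tau_0^2$, choosing $\csp\geq\csp_0$ with $c_0\csp_0^2\geq 2/\tau_0^2$ yields
\begin{equation*}
\tfrac12\tau^2|d\varphi|_x^2\leq|\xi|_x^2\leq\tfrac32\tau^2|d\varphi|_x^2,\qquad |\xi|_x^2+\tau^2|d\psi|_x^2\gtrsim\csp^2\tau^2,
\end{equation*}
so $\lsc^2\asymp\tau^2\csp^2$ on that set. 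Substituting into the expansion of $\{\qs,\qa\}$ the leading contribution is then
\begin{equation*}
\tau\csp^3\varphi^3\cdot 4\bigl(|\xi|_x^2+\tau^2|d\psi|_x^2\bigr)|d\psi|_x^2\gtrsim\csp^5\tau^3,
\end{equation*}
while all the remaining $\Phi_k$ terms as well as the $\sigma^2$-induced corrections on the characteristic set are, respectively, of order at most $\csp^4\tau^3$ and $\csp\tau\sigma^2\leq\csp\tau^3/\tau_0^2$. For $\csp\geq\csp_0$ taken large enough (depending on $\tau_0$ but not on $\sigma$ nor on $\tau$), positivity follows and in fact $\{\qs^j,\qa\}\gtrsim\lsc^3$ on the characteristic set, proving sub-ellipticity uniformly for $\tau\geq\tau_0\sigma$ and for both $j=1,2$.

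The main obstacle is the bookkeeping: one must show that the extra $\sigma^2$ contribution to the characteristic set — which shifts the relation between $|\xi|_x$ and $\tau|d\varphi|_x$ — remains dominated by the leading $\csp$-power arising from the Hessian of $\varphi=e^{\csp\psi}$. The condition $\tau\geq\tau_0\sigma$ is precisely what allows $\csp$ to be chosen once and for all (independently of $\sigma$) so that this $\sigma^2$ perturbation is absorbed, thereby reducing the whole analysis to the classical Laplacian case.
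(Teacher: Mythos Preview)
Your overall strategy coincides with the paper's: both observe $\{\qs^j,\qa\}=\{\qs,\qa\}$, exploit the exponential-weight structure of $\varphi=e^{\csp\psi}$ to isolate a $\csp$-dominant positive contribution in $\{\qs,\qa\}$, and then use $\{\qconj^j=0\}$ together with $\tau\geq\tau_0\sigma$ to show that the $\sigma^2$-shift of the characteristic set only contributes at lower order. Your analysis of the characteristic set (the two-sided bound $|\xi|_x^2\asymp\tau^2|d\varphi|_x^2$ for $\csp$ large) is correct and matches the paper.

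The gap is your expansion of $\{\qs,\qa\}$. Already in the flat metric one has
\begin{equation*}
\{\qs,\qa\}=4\tau\csp\varphi\,\psi''(\xi,\xi)+4\tau\csp^2\varphi\,(\xi\cdot d\psi)^2+4\tau^3\csp^3\varphi^3\,\psi''(d\psi,d\psi)+4\tau^3\csp^4\varphi^3\,|d\psi|^4,
\end{equation*}
so the top power is $\csp^4$, not $\csp^3$, and its coefficient $4\tau^2|d\psi|^4$ carries no $|\xi|_x^2$. The only $|\xi|$-like contribution at high $\csp$-order is $(\xi\cdot d\psi)^2$, which sits at order $\csp^2$ and moreover \emph{vanishes} on $\{\qa=0\}$ since $\qa=2\tau\csp\varphi\,\tilde r_d(\xi,d\psi)$. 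Hence your claimed hierarchy (leading $\gtrsim\csp^5\tau^3$ versus remainder $\lesssim\csp^4\tau^3$) is not the right one; a naive $\csp$-power bookkeeping would in fact put the $\csp^2$-term at the same order $\csp\tilde\tau^3$ as the true leading $\csp^4$-term (here $\tilde\tau=\csp\tau\varphi$).

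The paper sidesteps this by invoking the identity from \cite[Lemma~3.55]{JGL-vol1},
\begin{equation*}
\{\qs,\qa\}(x,\xi,\tau)=\tau\csp^4\varphi^3\bigl[(H_q\psi(x,\beta))^2+4\tau^2 q(x,d\psi)^2\bigr]+(\csp\varphi)^3\tfrac{1}{2i}\{\ovl{q_\psi},q_\psi\}(x,\beta,\tau),\qquad\beta=\xi/(\csp\varphi),
\end{equation*}
which packages the $\csp^4$ and $\csp^2$ pieces into a single \emph{nonnegative} bracket bounded below by $C\tau^2$. This yields $\{\qs,\qa\}\geq C\csp\tilde\tau^3-C'(\tilde\tau+|\xi|)^3$; on $\{\qconj^j=0\}$ with $\tau\geq\tau_0\sigma$ one has $|\xi|\lesssim\tilde\tau+\sigma\lesssim\tilde\tau$, and positivity follows for $\csp$ large. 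Your plan becomes a complete proof once the three-term $\csp$-expansion is replaced by this identity (or any argument grouping the two competing terms into a nonnegative block).
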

 \begin{proof}
   We note that $|d_x \varphi(x)|\neq0$.

   The proof is slightly different whether one considers the symbol
   $\qconj^1$ or the symbol $\qconj^2.$

   \medskip
   \noindent
\textbf{Case 1:   proof for $\bld{\qconj^1}$}. Assume that
$\qconj^1=0$. Thus $|\xi|_x^2-|\tau
d\varphi|_x^2-\sigma^2=0$ implying $|\xi|\sim \sigma+\csp\tau\varphi$. On the one hand by Lemma 3.55 in \cite{JGL-vol1}, one has 
 \begin{align}\label{mes1}
   \{\qs, \qa\}(x,\xi,\tau)
   &=\tau(\csp^2\varphi)(\csp\varphi)^2\left((H_{q}\psi(x,\beta))^2+4\tau^2 q(x, d\psi(x))^2\right) \notag\\
 &\quad+(\csp\varphi)^3\frac{1}{2i}\{\ovl{q_{\psi}}, q_{\psi}\}(x,\beta,\tau),
 \end{align}
 with $\beta = \xi /(\csp\varphi)$, and where $H_q$ denotes the
 Hamiltonian vector field associated with the symbol $q$ as defined in
 \eqref{eq: symbols laplace operator}. Here, $q_\psi$ denotes the
 principal symbol of $e^{\tau \psi} Q e^{-\tau \psi}$, that is,
 \begin{align*}
   q_\psi (x,\xi, \tau)= q(x,\xi + i \tau d_x \psi(x))
   =   (\xi_d + i \tau \partial_d \psi(x))^2 + r(x,\xi' +
   i \tau d_{x'} \psi(x)).
 \end{align*}
On the other hand, one has $(H_{q}\psi(x,\beta))^2+4\tau^2 q(x,
d\psi(x))^2\gtrsim \tau^2$ and since
$\frac{1}{2i}\{\ovl{q_{\psi}}, q_{\psi}\}(x,\beta,\tau)$ is
homogeneous of degree $3$ in $(\beta,\tau),$ we obtain 
\begin{align*}
  \{\qs, \qa\}
  \geq C \csp(\csp\tau\varphi)^3
  -C'  (\csp\tau\varphi+|\beta|\csp\varphi)^3
  =C\csp\tilde{\tau}^3
  -C''(\tilde{\tau}+|\xi|)^3,\ \ \text{with} \ \tilde{\tau}=\csp\tau\varphi.
   \end{align*}
Yet one has $|\xi|\sim\sigma+\tilde{\tau}$ implying  
\begin{align*}
  \{\qs, \qa\}
  \geq C\csp\tilde{\tau}^3
  -C'''(\tilde{\tau}+|\xi|)^3
  \geq C \csp\tilde{\tau}^3-C^{(4)}(\tilde{\tau}+\sigma)^3.
   \end{align*}
Since $\psi\geq 0$ and $\csp\geq 1$ one has $\varphi\geq1$ implying 
$\tau_0 \sigma \leq\tau\lesssim\tilde{\tau}$ and thus
\begin{align*}
  \{\qs, \qa\}(x,\xi,\tau)\geq \tilde{\tau}^3(C\csp-C^{(5)}).
\end{align*}
It follows that for $\csp$ chosen \suff large one finds $\{\qs, \qa\}(x,\xi,\tau)\geq C>0.$

\textbf{Case 2:   proof for $\bld{\qconj^2}$}.  Assume
that $\qconj^2=0$. Then  $|\xi|_x^2+\sigma^2=|\tau
d\varphi|_x^2$ implying $|\xi|+\sigma\sim\tau|d\varphi| \sim \tilde{\tau}.$
The same computation as in Case~1 gives
\begin{align*}
\{\qs, \qa\}(x,\xi,\tau) \geq C \csp\tilde{\tau}^3-C'(\tilde{\tau}+|\xi|)^3.
\end{align*}
Here $|\xi|+\tilde{\tau}\lesssim \tilde{\tau}$ yielding
\begin{align*}
  \{\qs, \qa\}(x,\xi,\tau) \geq (C \csp-C'')\tilde{\tau}^3.
  \end{align*}
The remaining part of the proof is the same.
\end{proof}

%%%%%%%%%%%%%%%%%%%%%%%%
% lemma                %
%%%%%%%%%%%%%%%%%%%%%%%%
 \begin{lemma}\label{urg1}
   Let $j=1$ or $2$. Let $(\Qsig^j,\varphi)$ have the sub-ellipticity property
   of Definition \ref{de1} in $\ovl{W}$. For $\mu>0$ one sets
   $t(\y)=\mu
   ((\qs^j)^2+\qa^2)(\y)+\tau\{\qs^j,\qa\}(\y)$
   with
   $\y=(x,\xi,\tau,\sigma)\in\ovl{W}\times\R^d\times[0,\infty)\times[0,\infty)$.
   Let $\tau_0 >0$. Then, for $\mu$ chosen \suff large and
   $\tau\geq \tau_0\sigma$ one has $t(\y)\geq C\lsc^4$
   for some $C>0$.
\end{lemma}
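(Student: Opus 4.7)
The plan is a homogeneity/compactness reduction combined with a two-case argument based on whether the characteristic factor $\qconj^j = \qs^j + i\qa$ vanishes. First, observe that $t$ is homogeneous of degree $4$ in $(\xi,\tau,\sigma)$ and that $\lsc^2 \asymp |\xi|^2 + \tau^2 + \sigma^2$ on the cone $\tau \geq \tau_0 \sigma$, so it suffices to prove $t(\y) \geq c_0 > 0$ uniformly on the compact set
\begin{align*}
K = \big\{\y = (x,\xi,\tau,\sigma) \in \ovl{W} \times \R^d \times [0,\infty)^2 : |(\xi,\tau,\sigma)| = 1, \ \tau \geq \tau_0 \sigma\big\}.
\end{align*}

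Next, set $K_0 = \{\y \in K : \qconj^j(\y) = 0\}$. On $K_0$, sub-ellipticity (Definition~\ref{de1}) gives $\{\qs^j,\qa\}(\y) \geq C_1 > 0$. Moreover $\tau > 0$ on $K_0$: if $\tau = 0$, then $\tau \geq \tau_0 \sigma$ would force $\sigma = 0$, whence $\qconj^j = \xi_d^2 + r(x,\xi') = 0$ would give $\xi = 0$, contradicting $\y \in K$. Compactness of $K_0$ then yields $\tau \geq c_2 > 0$, so $\tau\{\qs^j,\qa\} \geq c_2 C_1 > 0$ uniformly on $K_0$. By continuity this extends to an open \nhd $V$ of $K_0$ in $K$ on which $\tau\{\qs^j,\qa\} \geq c_2 C_1 / 2$, and since $(\qs^j)^2 + \qa^2 \geq 0$, we obtain $t(\y) \geq c_2 C_1 / 2$ on $V$ for every $\mu \geq 0$.

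On the closed set $K \setminus V$, which is disjoint from $K_0$, compactness yields $|\qconj^j|^2 = (\qs^j)^2 + \qa^2 \geq \delta > 0$, while $|\tau\{\qs^j,\qa\}| \leq M$ is bounded on all of $K$. Hence $t(\y) \geq \mu \delta - M \geq \mu \delta / 2$ as soon as $\mu \geq 2M/\delta$. Combining the two regions yields $t(\y) \geq \min(\mu \delta / 2, c_2 C_1 / 2) > 0$ on $K$, and homogeneity concludes.

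The step I expect to require the most care is the matching of the two bounds along the interface $\partial V$: the \nhd $V$ must be chosen so that the characteristic-set estimate is preserved on $V$ while the ellipticity estimate $|\qconj^j| \geq \delta$ remains available on $K \setminus V$, uniformly. A minor related subtlety is that Definition~\ref{de1} is formally stated for $\sigma > 0$; its application at points of $K_0$ with $\sigma = 0$ is justified by inspection of the proof of Proposition~\ref{prop: sub-ellipticity recipe}, whose estimates pass continuously to the limit $\sigma \to 0^+$.
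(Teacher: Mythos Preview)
Your proof is correct and follows essentially the same route as the paper: a homogeneity reduction to the compact sphere $K$ (the paper calls it $\mathscr{L}$), followed by the observation that $(\qs^j)^2+\qa^2=0$ forces the Poisson-bracket term $\tau\{\qs^j,\qa\}$ to be strictly positive, and then a standard ``large-$\mu$'' compactness argument. The paper packages the latter step as a separate abstract lemma (Lemma~\ref{urg2}: if $f\geq 0$ and $f=0\Rightarrow g>0$ on a compact set, then $\mu f+g\geq C>0$ for $\mu$ large), whereas you prove that lemma in place via the neighborhood $V$ of the characteristic set; the content is identical. Your explicit check that $\tau>0$ on $K_0$, and your remark on the $\sigma=0$ boundary of Definition~\ref{de1}, are points the paper leaves implicit but are indeed needed for the argument to be complete.
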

The proof of Lemma \ref{urg1} uses the following lemma.
%%%%%%%%%%%%%%%%%%%%%%%%
% lemma                %
%%%%%%%%%%%%%%%%%%%%%%%%
\begin{lemma}\label{urg2}
Consider two continuous functions, $f$ and $g$, defined in a compact set $\mathscr{L},$ and assume that $f\geq 0$ and moreover
$$f(y)=0\Rightarrow g(y)>0~~~\text{for all}~~~y\in\mathscr{L}.$$ Setting $h_{\mu}=\mu f+g,$ we have $h_{\mu}\geq C>0$ for $\mu>0$ chosen \suff large.
\end{lemma}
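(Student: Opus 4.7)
The plan is to argue by contradiction, exploiting the compactness of $\mathscr L$ together with the continuity of $f$ and $g$. First I would suppose, for contradiction, that the conclusion fails. Since we want to show that $h_\mu \geq C > 0$ on $\mathscr L$ for some fixed $C$ as soon as $\mu$ is large enough, the negation gives a sequence $\mu_n \to \infty$ and points $y_n \in \mathscr L$ with $h_{\mu_n}(y_n) \leq 1/n$. By compactness, after extraction $y_n \to y^* \in \mathscr L$.

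The second step is to identify the limiting point. Since $g$ is continuous on the compact set $\mathscr L$, it is bounded, say $|g| \leq M$ on $\mathscr L$. From the definition of $h_\mu$ one has
\begin{equation*}
0 \leq \mu_n f(y_n) = h_{\mu_n}(y_n) - g(y_n) \leq \tfrac{1}{n} + M,
\end{equation*}
so the nonnegative sequence $\mu_n f(y_n)$ is bounded; since $\mu_n \to \infty$, this forces $f(y_n) \to 0$, hence $f(y^*) = 0$ by continuity of $f$. The hypothesis then yields $g(y^*) > 0$.

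Finally I would derive the contradiction: using $\mu_n f(y_n) \geq 0$ and continuity of $g$,
\begin{equation*}
h_{\mu_n}(y_n) \geq g(y_n) \xrightarrow[n \to \infty]{} g(y^*) > 0,
\end{equation*}
which contradicts $h_{\mu_n}(y_n) \leq 1/n \to 0$. There is no real obstacle: the argument is a standard compactness/continuity contradiction. The only point requiring a little care is to observe that boundedness of $g$ (which follows from continuity on a compact set) is what allows one to conclude $f(y_n) \to 0$ from $\mu_n f(y_n)$ bounded and $\mu_n \to \infty$.
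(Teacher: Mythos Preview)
Your proof is correct. The paper states this lemma without proof, treating it as an elementary compactness fact, so your contradiction argument via a convergent subsequence is a perfectly acceptable justification and there is nothing to compare against.
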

\begin{proof}[Proof of Lemma~\ref{urg1}]
  Consider the compact set
  \begin{align*}
    \mathscr{L}=
    \{(x,\xi,\tau,\sigma); \ x\in\ovl{W},\
    |\xi|^2+\tau^2+\sigma^2=1, \ \tau\geq \tau_0\sigma\}.
  \end{align*}
  Applying the result of Lemma \ref{urg2} to $t(\y)$ on
  $\mathscr{L}$ with $f=(\qs^j)^2+\qa^2$ and $g=\tau\{\qs^j,
  \qa\}$ we find for $t(\y)\geq C$ on $\mathscr{L}$ for some
  $C>0$ for $\mu$
  chosen \suff large. Since $t(\y)$ is homogeneous of degree $4$
  in the variables $(\xi,\tau,\sigma)$ it follows that
  $t(\y)\geq C(\sigma^2+\tau^2+|\xi|^2)^4\gtrsim \lsc^4.$
\end{proof}

%%%%%%%%%%%%%
% Section          %
%%%%%%%%%%%%%
 \subsection{Lopatinski\u{\i}-\v{S}apiro condition for the
   second-order factors}

 Above, in Section~\ref{sec: LS condition for conjugated bilaplace}, the \LS condition is addressed for the
 fourth-order operator $\Pconj$. Here, we consider the
 two second-order factors $\Qconj^j$.

  With the roots $\pi_{j,1}$ and $\pi_{j,2}$ defined in \eqref{eq: form of
    the roots of q} one sets
  \begin{align*}
    \qconj^{j,+} (x,\xi', \xi_d ,\tau)
    =\prod\limits_{{\Im\pi_{j,k} \geq 0}\atop {k=1,2}}
    \big(\xi_d-\pi_{j,k} (x,\xi',\tau,\sigma) \big).
  \end{align*}
%%%%%%%%%%%%%%%%%%%%%%%%
% definition           %
%%%%%%%%%%%%%%%%%%%%%%%%
 \begin{definition}\label{de2}
   Let $j=1,2$.  Let $x \in \partial\Omega$, with $\Omega$ locally given by
  $\{ x_d>0\}$. 
   Let $(\xi',\tau,\sigma)\in \R^{d-1}\times [0,+\infty) \times
   [0,+\infty)$ with
   $(\xi',\tau,\sigma)\neq0$. One says that the \LS condition holds for
   $(\Qsig^j,B, \varphi)$ at $\y' = (x,\xi',\tau,\sigma)$ if for any
   polynomial function $f(\xi_d)$ with complex coefficients there
   exist $c\in\C$ and a polynomial function $\ell(\xi_d)$ with
   complex coefficients such that, for all $\xi_d\in\C$
   \begin{equation}\label{sts1}
     f(\xi_d)=c b_{\varphi}(x,\xi',\xi_d,\tau)
     +\ell(\xi_d) \qconj^{j,+}(x,\xi',\xi_d,\tau).
\end{equation}
 % One says that the \LS condition holds for
 %   $(\Qsig^j,B, \varphi)$ at $x \in \partial \Omega$ if it holds at $\y' =
 %   (x,\xi',\tau,\sigma)$ for any $(\xi',\tau,\sigma)\neq0$ as above. 
\end{definition}
%%%%%%%%%%%%%%%%%%%%%%%%
% remark               %
%%%%%%%%%%%%%%%%%%%%%%%%
 \begin{remark}
   \label{sts2}
   With the Euclidean division of polynomials, we see that it suffices
   to consider the polynomial function $f(\xi_d)$ to be of degree less
   than that of $\qconj^{j,+} (x,\xi',\xi_d,\tau)$ in
   \eqref{sts1}. Thus, in any case, the degree of $f(\xi_d)$ can be chosen
   less than or equal to one.
 \end{remark}
 %%%%%%%%%%%%%%%%%%%%%%%%
% lemma                %
%%%%%%%%%%%%%%%%%%%%%%%%
 \begin{lemma}
   \label{sl1}
   Let $j=1$ or $2$. Let $x \in \partial\Omega$ and $(\xi',\tau, \sigma)\in
   \R^{d-1}\times [0,+\infty) \times
   [0,+\infty)$ with $(\xi',\tau,\sigma)\neq 0$. The \LS condition
   holds for $(\Qsig^j,B,\varphi)$ at $(x,\xi',\tau,\sigma)$
   if and only if
   \begin{enumerate}
   \item either $\qconj^{j,+} (x,\xi',\xi_d,\tau)=1$;
 \item or
   $\qconj^{j,+} (x,\xi',\xi_d,\tau)=\xi_d-\pi$
   and $b_{\varphi}(x,\xi',\pi,\tau)\neq 0.$
 \end{enumerate}
 \end{lemma}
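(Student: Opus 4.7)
The plan is to analyze the three possible degrees of $\qconj^{j,+}(x,\xi',\xi_d,\tau)$ as a polynomial in $\xi_d$. Since $\qconj^j$ has degree $2$ in $\xi_d$, the partial product $\qconj^{j,+}$ has degree $0$, $1$, or $2$. By Remark~\ref{sts2} combined with the Euclidean division, it suffices in each case to test the identity \eqref{sts1} on polynomials $f(\xi_d)$ of degree strictly less than $\deg \qconj^{j,+}$.

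\textbf{Degree 0.} Here $\qconj^{j,+}=1$, no test polynomials of negative degree need to be produced, and one trivially picks $c=0$, $\ell=f$; this is exactly case~(1).

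\textbf{Degree 1.} Write $\qconj^{j,+}=\xi_d-\pi$. The test polynomials are the constants. For $f\in\C$, the decomposition $f= c\, b_{\varphi}(x,\xi',\xi_d,\tau) + \ell(\xi_d)(\xi_d-\pi)$ evaluated at $\xi_d=\pi$ yields the scalar equation $f = c\, b_{\varphi}(x,\xi',\pi,\tau)$. This equation is solvable for every $f\in\C$ if and only if $b_{\varphi}(x,\xi',\pi,\tau)\neq 0$. When nonvanishing holds, the polynomial $f(\xi_d)-c\, b_{\varphi}(x,\xi',\xi_d,\tau)$ vanishes at $\xi_d=\pi$ and is therefore divisible by $\xi_d-\pi$, producing the required $\ell$. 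This is exactly case~(2).

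\textbf{Degree 2.} This case must be excluded. The quotient ring $\C[\xi_d]/(\qconj^{j,+})$ has complex dimension $2$, whereas the set $\{c\, b_{\varphi}(x,\xi',\xi_d,\tau) \bmod \qconj^{j,+} : c\in\C\}$ spans at most a one-dimensional subspace. Consequently not every polynomial of degree $\leq 1$ can be written modulo $\qconj^{j,+}$ as $c\, b_{\varphi}$, so \eqref{sts1} fails for some $f$. Likewise, in the degree~1 situation with $b_{\varphi}(x,\xi',\pi,\tau)=0$, the identity fails for any nonzero constant $f$. Thus outside cases~(1) and~(2) the LS condition does not hold, proving the converse.

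The argument is essentially elementary linear algebra in $\C[\xi_d]/(\qconj^{j,+})$; there is no serious obstacle. The only point requiring care is the reduction to low-degree $f$ via Euclidean division so that the test becomes finite-dimensional, and the bookkeeping on when the pairing $c\mapsto c\, b_\varphi \bmod \qconj^{j,+}$ has full image, which is precisely what distinguishes the three cases listed above.
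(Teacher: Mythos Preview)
Your proof is correct and follows essentially the same approach as the paper: both arguments split into the three possible degrees of $\qconj^{j,+}$, handle degree~0 trivially, reduce degree~1 to the scalar condition $b_\varphi(x,\xi',\pi,\tau)\neq 0$ via evaluation at $\pi$, and rule out degree~2 by a dimension count (the paper phrases this as ``the vector space of polynomials of degree $\leq 1$ would be generated by a single polynomial'', which is exactly your quotient-ring argument in different words). The only cosmetic difference is your use of the language $\C[\xi_d]/(\qconj^{j,+})$, which makes the dimension obstruction in degree~2 slightly more explicit.
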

 \begin{proof}
   If
   $\qconj^{j,+} (x,\xi',\xi_d,\tau)=(\xi_d-\pi_{j,1})(\xi_d-\pi_{j,2})$,
   that is, both roots $\pi_{j,1}$ and $\pi_{j,2}$ are in the upper
   complex half-plane, then condition \eqref{sts1} cannot hold, since
   by Remark \ref{sts2} it means that the vector space of polynomials
   of degree less than or equal to one would be  generated by the single polynomial $b_{\varphi}(x,\xi',\xi_d,\tau)$.\\
   Suppose that $\qconj^{j,+} (x,\xi',\xi_d,\tau)=\xi_d-\pi$ that is
   one the root $\pi_{j,1}$ and $\pi_{j,2}$ has a nonnegative imaginary
   part and the other root has a negative imaginary part. Then, the \LS condition holds at $(x,\xi',\sigma,\tau)$ if for any $f(\xi_d)$, the polynomial function $\xi_d\mapsto f(\xi_d)-c b_{\varphi}(x,\xi',\xi_d,\tau)$ admits $\pi$ as a root for some $c\in\C$. A necessary and sufficient condition is then $b_{\varphi}(x,\xi',\xi_d=\pi,\tau)\neq 0$.\\
   Finally if $\qconj^{j,+} (x,\xi',\xi_d,\tau)=1$, that
   is, both roots $\pi_{j,1}$ and $\pi_{j,2}$ lie in the lower complex
   half-plane, then condition \eqref{sts1} trivially holds. 
\end{proof}

%%%%%%%%%%%%%
% Section          %
%%%%%%%%%%%%%
\subsection{Microlocal estimates for a second-order factor}

Here, for $j=1$ or $2$, we establish estimates for the operator
$\Qsig^j$ in a microlocal \nhd of point at the boundary where
$(\Qsig^j,B,\varphi)$ satisfies the \LS condition (after conjugaison)
of Definition~\ref{de2}.

The quality of the estimation depends on the
position of the roots. We shall assume that $\partial_d
\varphi >0$. Thus, from the form of the roots $\pi_{j,1}$
and $\pi_{j,2}$ given in \eqref{eq: form of the roots of q},
the root $\pi_{j,1}$ always lies in the  lower
complex half-plane. The sign of $\Im \pi_{j,2}$ may however vary. 
Three cases can thus occur:
\begin{enumerate}
\item The root $\pi_{j,2}$ at the considered point lies in the upper complex half-plane.
\item The root $\pi_{j,2}$ at the considered point is real.
 \item The root $\pi_{j,2}$ at the considered point lies in the lower complex half-plane.
 \end{enumerate}

%%%%%%%%%%%%%%%%%%%%%%%%
 % proposition          %
 %%%%%%%%%%%%%%%%%%%%%%%% 
\begin{proposition}\label{sl3}
  Let $j=1$ or $2$ and $\k_1 > \k_0>0$. Let $x^0 \in \partial \Omega$, with
  $\Omega$ locally given by $\{ x_d>0\}$ and let $W$ be a bounded open
  \nhd of $x^0$ in $\R^d$. Let $\varphi$ be such that
  $\partial_d \varphi \geq C>0$ in $W$ and such that
  $(\Qsig^j, \varphi)$ satisfies the sub-ellipticity condition in
  $\ovl{W}$.  Let
  $\y^{0\prime} = (x^0, \xi^{0\prime},\tau^0,\sigma^0)$ with
  $(\xi^{0\prime},\tau^0,\sigma^0)\in \R^{d-1}\times [0,+\infty)
  \times [0,+\infty)$ nonvanishing with $\tau^0 \geq \k_1 \sigma^0$
  and such that $(\Qsig^j, B,\varphi)$ satisfies the
  \LS condition of Definition~\ref{de2} at $\y^{0\prime}$.

  \begin{enumerate}
    \item Assume that $\Im \pi_{j,2} (\y^{0\prime})>0$.
  Then, there exists $\U$ a 
  conic neighborhood of $\y^{0\prime}$ in
  $W\times \R^{d-1}\times [0,+\infty) \times [0,+\infty)$ where
  $\tau \geq \k_0 \sigma$  such that if  $\chi \in \Ssct^0$,
  homogeneous of degree $0$ in $(\xi',\tau,\sigma)$ with $\supp (\chi)\subset \U$, there exist $C>0$ and $\tau_0>0$ such that
  \begin{align}\label{eq: microlocal estimate Qconj-1}
    \Normsc{\Opt (\chi) v}{2}
    +\normsc{\trace (\Opt (\chi)v)}{1,1/2}\leq
    C \big(
    \Norm{\Qconj^jv}{+}
    + \normsc{B_{\varphi}v\br}{3/2-k}
    +\Normsc{v}{2,-1} \big),
\end{align}
for $\sigma\geq 0$, $\tau\geq \max(\tau_0, \k_0\sigma)$  and $v\in
\Cbarc(W_+)$.

\item Assume that $\Im \pi_{j,2} (\y^{0\prime})=0$.
  Then, there exists $\U$ a 
  conic neighborhood of $\y^{0\prime}$ in
  $W\times \R^{d-1}\times [0,+\infty) \times [0,+\infty)$ where
  $\tau \geq \k_0 \sigma$  such that if  $\chi \in \Ssct^0$,
  homogeneous of degree $0$ in $(\xi',\tau,\sigma)$ with $\supp (\chi)\subset \U$, there exist $C>0$ and $\tau_0>0$ such that
  \begin{align}\label{eq: microlocal estimate Qconj-2}
    \tau^{-1/2 }\Normsc{\Opt (\chi) v}{2}
    +\normsc{\trace (\Opt (\chi)v)}{1,1/2}\leq
    C \big(
    \Norm{\Qconj^jv}{+}
    + \normsc{B_{\varphi}v\br}{3/2-k}
    +\Normsc{v}{2,-1} \big),
\end{align}
for $\sigma\geq 0$, $\tau\geq \max(\tau_0, \k_0\sigma)$  and $v\in
\Cbarc(W_+)$.

\item Assume that $\Im \pi_{j,2} (\y^{0\prime})<0$.
  Then, there exists $\U$ a 
  conic neighborhood of $\y^{0\prime}$ in
  $W\times \R^{d-1}\times [0,+\infty) \times [0,+\infty)$ where
  $\tau \geq \k_0 \sigma$  such that if  $\chi \in \Ssct^0$,
  homogeneous of degree $0$ in $(\xi',\tau,\sigma)$ with $\supp (\chi)\subset \U$, there exist $C>0$ and $\tau_0>0$ such that
  \begin{align}\label{eq: microlocal estimate Qconj-3}
    \Normsc{\Opt (\chi) v}{2}
    +\normsc{\trace (\Opt (\chi)v)}{1,1/2}\leq
    C \big(
    \Norm{\Qconj^jv}{+}
    +\Normsc{v}{2,-1} \big),
\end{align}
for $\sigma\geq 0$, $\tau\geq \max(\tau_0, \k_0\sigma)$  and $v\in
\Cbarc(W_+)$.
\end{enumerate}
\end{proposition}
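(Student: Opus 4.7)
The plan is to treat the three estimates by a unified Carleman-style argument near the boundary, differing only in the case analysis at the end. Setting $u = \Opt(\chi) v$, commuting $\Opt(\chi)$ past $\Qconj^j$ produces a remainder in $\Psisc^{2,-1}$ that is absorbed in the $\Normsc{v}{2,-1}$ term, so it suffices to control $u$ in terms of $\Qconj^j u$ and of the boundary data of $v$. Decomposing $\Qconj^j = \Qs^j + i\Qa$ and expanding $\Norm{\Qconj^j u}{+}^2$, the cross term $2\Re(i\inp{\Qs^j u}{\Qa u}_+)$ splits, via integration by parts using the explicit structure \eqref{eq: form of Qs Qa}, into an interior commutator term $\inp{i[\Qs^j,\Qa]u}{u}_+$ plus a boundary quadratic form $\mathcal{B}_0(u)$ of type $(1,1)$ whose symbol depends explicitly on the traces $u\br$ and $D_d u\br$.

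The interior piece is then handled by G{\aa}rding. The principal symbol of the interior differential quadratic form $\mu\bigl(\Norm{\Qs^j u}{+}^2 + \Norm{\Qa u}{+}^2\bigr) + \tau\inp{i[\Qs^j,\Qa]u}{u}_+$ coincides with $\mu((\qs^j)^2 + \qa^2) + \tau\{\qs^j,\qa\}$, which by Lemma \ref{urg1} is bounded below by $C\lsc^4$ on the region $\tau \geq \k_0 \sigma$ once $\mu$ is chosen large. Proposition \ref{prop: Gaarding quadratic forms} then yields, after shrinking $\U$ and for $\tau$ sufficiently large,
\[
\mu\bigl(\Norm{\Qs^j u}{+}^2 + \Norm{\Qa u}{+}^2\bigr) + \tau\inp{i[\Qs^j,\Qa]u}{u}_+ \geq C_1 \Normsc{u}{2}^2 - C'\normsc{\trace u}{1,1/2}^2 - C_N \Normsc{v}{2,-N}^2.
\]
Combined with the integration-by-parts identity and the bound $\tau \geq \mu$, this produces a preliminary inequality
\[
\Normsc{u}{2}^2 \lesssim \tau\Norm{\Qconj^j u}{+}^2 + \tau\,|\mathcal{B}_0(u)| + \normsc{\trace u}{1,1/2}^2 + \Normsc{v}{2,-N}^2.
\]

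The three cases then differ in how the boundary quadratic form and the trace term on the right are handled. In Case~3, both roots of $\qconj^j(x,\xi',\xi_d,\tau)$ lie in the open lower half-plane in a full conic neighbourhood of $\y^{0\prime}$, so $\qconj^{j,+}=1$ and the microlocal elliptic estimate (Lemma~\ref{el1} of the appendix) applies directly, delivering \eqref{eq: microlocal estimate Qconj-3} with no boundary datum on the right-hand side; the boundary form $\mathcal{B}_0(u)$ is itself positive enough to absorb $\normsc{\trace u}{1,1/2}^2$. In Case~1, Lemma~\ref{sl1} reduces the \LS condition to $b_\varphi(\y^{0\prime},\pi_{j,2})\neq 0$, and by continuity this non-degeneracy persists, homogeneously of degree $k$, throughout a suitable conic neighborhood $\U$. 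The factorisation $\qconj^j = (\xi_d-\pi_{j,1})(\xi_d-\pi_{j,2})$ with $\Im\pi_{j,1}<0$ allows, through a boundary-symbol identity of the type used in Section~\ref{sec: Symbol postivity at the boundary}, to express the trace combination missing from $\mathcal{B}_0$ in terms of $B_\varphi u$ modulo $\qconj^{j,+}$; the boundary G{\aa}rding inequality (Proposition~\ref{prop: boundary form -Gaarding tangentiel}) then dominates $\normsc{\trace u}{1,1/2}^2$ by $\normsc{B_\varphi v\br}{3/2-k}^2$, yielding \eqref{eq: microlocal estimate Qconj-1}.

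The main obstacle is Case~2, where $\pi_{j,2}$ is real. Here the boundary symbol of $\mathcal{B}_0$ degenerates on the characteristic variety of $\qconj^j$, and one cannot directly absorb $\normsc{\trace u}{1,1/2}^2$ into a positive boundary contribution. The classical remedy, which is the only source of loss in the estimate, is to downgrade the interior G{\aa}rding step by one half power of $\tau$: replacing $\Sigma_\mu(u)$ by $\tau^{-1}\Sigma_\mu(u)$ produces positivity at the level of $\tau^{-1/2}\Normsc{u}{2}$ rather than $\Normsc{u}{2}$, and the LS condition (again via Lemma~\ref{sl1}) still provides just enough boundary control through $B_\varphi v$ to close the estimate, leading to \eqref{eq: microlocal estimate Qconj-2}. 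The detailed bookkeeping of the weights $\tau^{-1/2}$ in the boundary form and the verification that no additional loss appears is the delicate point; Cases~1 and~3 follow the same template with no loss.
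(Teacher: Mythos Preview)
Your square-expansion approach with sub-ellipticity G{\aa}rding is indeed what the paper uses for the \emph{interior} estimate in Case~2 (Lemma~\ref{lemma: carre Carleman}), and invoking Lemma~\ref{el1} for Case~3 is correct and is what the paper does there. The genuine gap is in Case~1, and also in the boundary-control step that you leave vague in Cases~1 and~2.

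The preliminary inequality you derive carries a factor $\tau$ in front of $\Norm{\Qconj^j u}{+}^2$; this loss is intrinsic to the sub-ellipticity mechanism of Lemma~\ref{urg1}, which needs the $\tau$-weighted Poisson bracket to compensate for the vanishing of $|\qconj^j|^2$ on its characteristic set. Even granting perfect boundary control of $\normsc{\trace u}{1,1/2}$ by $\normsc{B_\varphi v\br}{3/2-k}$, substituting back into your inequality yields only $\tau^{-1/2}\Normsc{u}{2}\lesssim\Norm{\Qconj^j v}{+}+\cdots$, i.e.\ the Case~2 estimate. Your unified interior argument therefore cannot produce the lossless estimate~\eqref{eq: microlocal estimate Qconj-1}.

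The paper's route for Case~1 is different and does not invoke sub-ellipticity at all. It factorizes $\Qconj^j$ into first-order pieces $L_k=D_d-\Opt(\tchi\pi_{j,k})$, $k=1,2$. A multiplier argument exploiting $\Im\pi_{j,1}<0$ gives a lossless estimate for $L_1$ including a trace (Lemma~\ref{sl2}); a dual multiplier exploiting $\Im\pi_{j,2}>0$ gives a lossless estimate for $L_2$ once the zeroth trace is supplied (Lemma~\ref{lemma: estimate L2}); and the \LS condition enters through the boundary G{\aa}rding Lemma~\ref{lr1}, which bounds $\normsc{\trace u}{1,1/2}$ by $\normsc{B_\varphi u\br}{3/2-k}+\normsc{L_2 u\br}{1/2}$. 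The last term is itself controlled by applying the $L_1$-estimate to $L_2 v$, since $L_1 L_2 = \Qconj^j \mod\Psisc^{1,0}$. This factorization-and-concatenation structure is what delivers no loss in Case~1; your sketch mentions the factorization but uses it only at the boundary-symbol level, not for the interior estimate, which is where it is actually needed. The same Lemmas~\ref{lr1} and~\ref{sl2} also supply the missing trace control in Case~2: the paper first bounds $\normsc{\trace u}{1,1/2}$ this way and only then appeals to the half-loss interior estimate of Lemma~\ref{lemma: carre Carleman}.
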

The notation of the function space $\Cbarc(W_+)$ is introduced in \eqref{eq: notation Cbarc}.

\subsubsection{Case (i): one root lying in the upper complex half-plane.}
One has
 $\Im\pi_{j,2}(\y^{0\prime}) > 0$ and $\Im \pi_{j,1}
 (\y^{0\prime})<0$.

 Since the \LS condition holds for $(\Qsig^j, B,\varphi)$ at
 $\y^{0\prime}$, by Lemma \ref{sl1} one has
 \begin{align*}
   b_{\varphi}(x^0,\xi^{0\prime},\xi_d=\pi_{j,2}(\y^{0\prime}),\tau^0)
   =b \big(x^0, \xi^{0\prime}+i\tau^0 d_{x'}\varphi(x^0),
   i\alpha_j(\y^{0\prime})\big)
   \neq 0.
 \end{align*}
 As the roots
 $\pi_{j,1}$ and $\pi_{j,2}$ are locally smooth with respect to
 $\y'=(x,\xi',\tau,\sigma)$ and homogeneous of degree one in
 $(\xi',\tau,\sigma),$ there exists $\U$ a conic neighborhood
 of $\y^{0\prime}$ in
 $W\times\R^{d-1}\times[0,+\infty) \times[0,+\infty) $ and $C_1>0,$ $C_2>0$ such
 that
 $\mathbb{S}_{\ovl{\U}}=\{\y'\in
 \ovl{\U}; |\xi'|^2+\tau^2+\sigma^2=1\}$ is
 compact
 and
 \begin{align*}
   \tau \geq \k_0 \sigma, \quad
   \Im\pi_{j,2}(\y')\geq C_ 2 \lsct,
   \quad \text{and}\
   \Im\pi_{j,1}(\y')\leq -C_1 \lsct,
 \end{align*}
 and
\begin{equation}\label{oub1}
  b_{\varphi}(x,\xi',\xi_d=\pi_{j,2}(\y'),\tau)\neq 0,
\end{equation}
if $\y'= (x,\xi',\tau,\sigma) \in\ovl{\U}$.

We let $\chi\in \Ssct^0$ and
$\tchi\in \Ssct^0$ be homogeneous of degree zero
in the variable $(\xi',\tau,\sigma)$ and be such that
$\supp(\tchi)\subset\U$ and
$\tchi\equiv 1$ on  a \nhd of $\supp(\chi)$. From the smoothness and
the homogeneity of the roots, one  has
$\tchi\pi_{j,k}\in \Ssct^1$, $k=1,2$. We
set
\begin{align*}
  L_2=D_d-\Opt(\tchi\pi_{j,2})
  \quad \text{and} \
  L_1=D_d-\Opt(\tchi\pi_{j,1}).
\end{align*}

\medskip
The proof of Estimate~\eqref{eq: microlocal estimate Qconj-2} is based on three lemmata that we now
list. Their proofs are given at the end of this section.

\medskip
The following lemma provides an estimate for $L_2$ and  boundary traces.
%%%%%%%%%%%%%%%%%%%%%%%%
% lemma                %
%%%%%%%%%%%%%%%%%%%%%%%%
\begin{lemma}\label{lr1}
There exist $C>0$ and $\tau_0>0$ such that for any $N\in\N,$ there exists $C_N>0$ such that
\begin{equation*}
  \bignormsc{\trace \big(\Opt(\chi)w\big)}{1,1/2}
  \leq C\big(
  \bignormsc{B_{\varphi}\Opt(\chi)w\br}{3/2-k}
  +\bignormsc{L_2 \Opt(\chi)w\br}{1/2}
  \big)
  +C_N \normsc{\trace(w)}{1,-N},
\end{equation*}
for $\tau\geq \max(\tau_0, \k_0\sigma)$ and $w\in \Sbarp.$
\end{lemma}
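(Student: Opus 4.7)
The plan is to view the boundary identity involving $B_\varphi$ and $L_2$ as an elliptic equation for $u\br$ on the boundary, solvable because of the Lopatinski\u{\i}-\v{S}apiro condition. Set $u=\Opt(\chi)w$. Since $L_2=D_d-\Opt(\tchi\pi_{j,2})$ and $B_\varphi=\hat{B}^k_\varphi+B^{k-1}_\varphi D_d$, substituting $D_d u\br = L_2 u\br + \Opt(\tchi\pi_{j,2})u\br$ into $B_\varphi u\br$ yields
\begin{equation*}
A\, u\br = B_\varphi u\br - B^{k-1}_\varphi L_2 u\br,
\qquad A := \hat{B}^k_\varphi + B^{k-1}_\varphi\,\Opt(\tchi\pi_{j,2}),
\end{equation*}
where $A$ is a tangential operator of order $k$ whose principal symbol, on $\supp(\chi)$ (where $\tchi\equiv 1$), is precisely $b_\varphi(x,\xi',\pi_{j,2}(\y'),\tau)$.

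Next, by \eqref{oub1} this symbol does not vanish on the compact set $\mathbb{S}_{\ovl{\U}}$ and, by homogeneity, satisfies $|b_\varphi(x,\xi',\pi_{j,2},\tau)|\geq C_0\lsct^k$ on $\supp(\chi)$. Thus $A$ is microlocally elliptic on $\supp(\chi)$ in the tangential calculus of Section~\ref{sec: Psido}, which applies uniformly for $\tau\geq\k_0\sigma$ by the discussion of Section~\ref{sec: calculus with spectral parameter}. A standard iterative parametrix construction then produces $R\in\Psisct^{-k}$ with $RA\,\Opt(\chi)=\Opt(\chi)+R'$, where $R'$ maps into $\Psisct^{-N}$ for any prescribed $N$. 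Applied to the boundary identity above this gives
\begin{equation*}
u\br = R\bigl(B_\varphi u\br\bigr) - R\bigl(B^{k-1}_\varphi L_2 u\br\bigr) + R''\,u\br,
\qquad R''\in\Psisct^{-N}.
\end{equation*}
Taking the $\normsc{\cdot}{3/2}$-norm and using the mapping properties of tangential operators together with the order of $B^{k-1}_\varphi$ yields
$\normsc{u\br}{3/2}\lesssim \normsc{B_\varphi u\br}{3/2-k} + \normsc{L_2 u\br}{1/2} + C_N\normsc{u\br}{-N}$.

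Finally, to recover the normal derivative, I use $D_d u\br = L_2 u\br + \Opt(\tchi\pi_{j,2})u\br$ and the fact that $\tchi\pi_{j,2}\in\Ssct^1$, so that $\normsc{D_d u\br}{1/2}\lesssim \normsc{L_2 u\br}{1/2}+\normsc{u\br}{3/2}$. Summing the two contributions gives the bound for $\normsc{\trace(u)}{1,1/2}$, and the smoothing remainder is absorbed into $C_N\normsc{\trace(w)}{1,-N}$ since $u = \Opt(\chi)w$ and $\chi$ is of order zero. The main point requiring care is the parametrix step: one must verify that microlocal ellipticity on $\supp(\chi)$ (rather than on all of $\ovl{\U}$) is enough for the inversion, which is standard once $\tchi$ has been chosen to equal $1$ on a neighborhood of $\supp(\chi)$ so that the principal symbol of $A$ agrees with $b_\varphi(\cdot,\pi_{j,2},\cdot)$ on that set and the remainders live in the small calculus.
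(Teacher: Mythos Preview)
Your proof is correct and takes a genuinely different route from the paper's.

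The paper treats the two boundary data together as a quadratic form: it sets
\[
\mathcal T(w)=\normsc{B_\varphi w\br}{3/2-k}^2+\normsc{L_2 w\br}{1/2}^2,
\]
computes the associated bilinear symbol $\un{\mathcal T}(\y',\z,\z)$ in the variables $\z=(z_0,z_1)$ standing for $(u\br,D_d u\br)$, and observes that if $\un{\mathcal T}(\y',\z,\z)=0$ with $\z\neq 0$ then $z_1=\pi_{j,2}z_0$ and $b_\varphi(x,\xi',\pi_{j,2},\tau)=0$, contradicting \eqref{oub1}. Positive definiteness of $\un{\mathcal T}$ on $\U_1$ then follows by homogeneity, and the boundary G{\aa}rding inequality (Proposition~\ref{prop: boundary form -Gaarding tangentiel}) yields the estimate directly.

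Your approach is instead an elimination/parametrix argument: you substitute $D_d u\br=L_2 u\br+\Opt(\tchi\pi_{j,2})u\br$ into $B_\varphi u\br$, obtaining a scalar tangential equation $A(u\br)=B_\varphi u\br-B^{k-1}_\varphi(L_2 u)\br$ with $A$ microlocally elliptic of order $k$ on a neighborhood of $\supp(\chi)$ (principal symbol $b_\varphi(\cdot,\pi_{j,2},\cdot)$), and then invert $A$ by a tangential parametrix. This is legitimate in the calculus of Section~\ref{ss2}, uniformly in $\sigma$ since $\tau\geq\k_0\sigma$ (Section~\ref{sec: calculus with spectral parameter}), and recovering $D_d u\br$ from $L_2 u\br$ and $u\br$ is immediate. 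One small slip: the smoothing remainder should be applied to $w\br$ rather than $u\br$ (from $RA\,\Opt(\chi\br)=\Opt(\chi\br)+R'$), but since $u\br=\Opt(\chi\br)w\br$ with $\chi\in\Ssct^0$ this makes no difference in the final bound.

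Both arguments ultimately rest on the same algebraic fact $b_\varphi(x,\xi',\pi_{j,2},\tau)\neq 0$. Your approach is more hands-on and avoids the quadratic-form machinery; the paper's approach has the advantage of fitting uniformly into the framework of Section~\ref{sec: Boundary  differential quadratic forms}, which is also what drives the higher-order trace estimate of Section~\ref{sec: boundary norm under LS}.
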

The proof of Lemma~\ref{lr1} relies on the \LS condition.

The
following lemma gives an estimate for $L_1$.
%%%%%%%%%%%%%%%%%%%%%%%%
% lemma                %
%%%%%%%%%%%%%%%%%%%%%%%%
\begin{lemma}\label{sl2}
  Let $\chi\in \Ssct^0$, homogeneous of degree 0, be such that
  $\supp(\chi)\subset\U$ and $s \in \R$.  There exist $C>0$, $\tau_0>0$ and
  $N\in\N$ such that
  \begin{align*}
    \Normsc{\Opt(\chi)w}{1,s}
    +\normsc{\Opt(\chi)w\br}{s+1/2}
    \leq C \big(\Normsc{L_1 \Opt(\chi)w}{0,s}
    +\Normsc{w}{0,-N} \big),
  \end{align*}
  for  $w\in\Sbarp$ and $\tau\geq \max(\tau_0, \k_0\sigma)$.
\end{lemma}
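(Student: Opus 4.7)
The proof is an $x_d$-energy estimate built around the pairing $I=\inp{L_1 u}{Mu}_+$ with $u=\Opt(\chi)w$ and $M=\Lsct^{2s+1}$, a selfadjoint tangential Fourier multiplier commuting exactly with $D_d$. The structural input is that, by the choice of the conic set $\U$ made at the beginning of the proof of Proposition~\ref{sl3}, one has $\Im\pi_{j,1}(\y')\leq -C_1\lsct$ on $\ovl{\U}$, so that with $A=\Opt(\tchi\pi_{j,1})\in\Psisct^1$ the symbol of $L_1=D_d-A$ has imaginary part bounded below by $C_1\lsct$ on $\supp(\chi)$ (where $\tchi\equiv 1$).

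The upper bound is $|I|=|\inp{\Lsct^s L_1 u}{\Lsct^{s+1}u}_+|\leq\Normsc{L_1 u}{0,s}\Normsc{u}{0,s+1}$, using Cauchy-Schwarz and the factorization $M=\Lsct^s\cdot\Lsct^{s+1}$. For the lower bound, integration by parts in $x_d$ yields the exact identity $\inp{D_d u}{Mu}_+-\inp{Mu}{D_d u}_+=i\normsc{u\br}{s+1/2}^2$. Splitting $A=A_s+iA_a$ with $A_s,A_a$ selfadjoint of order one and principal symbols $\Re(\tchi\pi_{j,1}),\Im(\tchi\pi_{j,1})$, one obtains
\begin{align*}
\Im I=\tfrac12\normsc{u\br}{s+1/2}^2-\tfrac{1}{2i}\inp{[M,A_s]u}{u}_+-\tfrac12\inp{Tu}{u}_+,
\end{align*}
with $T:=MA_a+A_aM$ selfadjoint of order $2s+2$ and principal symbol $2\lsct^{2s+1}\Im(\tchi\pi_{j,1})$. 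The commutator $[M,A_s]$ is anti-selfadjoint of order $2s+1$, so the second term is real and of size at most $C\Normsc{u}{0,s+1/2}^2\leq C\tau^{-1}\Normsc{u}{0,s+1}^2$.

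The critical term is $-\tfrac12\inp{Tu}{u}_+$. The principal symbol of $-T$ satisfies $\geq 2C_1\lsct^{2s+2}$ on a conic \nhd $\U'$ of $\supp(\chi)$ where $\tchi\equiv 1$. Since $\inp{-Tu}{u}_+$ is a purely tangential quadratic form of type $(m,r)=(0,s+1)$, the trace remainder $\normsc{\trace(\Opt(\chi)w)}{-1,s+3/2}$ appearing in Proposition~\ref{prop: Gaarding quadratic forms} is an empty sum and vanishes identically, so the microlocal G{\aa}rding inequality yields
\begin{align*}
-\inp{T\Opt(\chi)w}{\Opt(\chi)w}_+\geq 2C_1'\Normsc{\Opt(\chi)w}{0,s+1}^2-C_N\Normsc{w}{0,-N}^2.
\end{align*}
Preserving this purely tangential structure is the main obstacle of the proof: any non-tangential contribution to the form would produce a trace remainder of order $s+3/2$ that could not be absorbed into the target estimate, whose right-hand side contains no trace term at all.

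Combining upper and lower bounds via Young's inequality $\Normsc{L_1 u}{0,s}\Normsc{u}{0,s+1}\leq\frac{1}{4\eps}\Normsc{L_1 u}{0,s}^2+\eps\Normsc{u}{0,s+1}^2$ with $\eps<C_1'$, and taking $\tau\geq\max(\tau_0,\k_0\sigma)$ large enough to also absorb the $\tau^{-1}\Normsc{u}{0,s+1}^2$ commutator contribution into the G{\aa}rding gain, one arrives at
\begin{align*}
\Normsc{\Opt(\chi)w}{0,s+1}^2+\normsc{\Opt(\chi)w\br}{s+1/2}^2\lesssim\Normsc{L_1\Opt(\chi)w}{0,s}^2+\Normsc{w}{0,-N}^2.
\end{align*}
The normal derivative is then recovered algebraically from $D_d u=L_1 u+Au$ with $A\in\Psisct^1$, giving $\Normsc{D_d\Opt(\chi)w}{0,s}\lesssim\Normsc{L_1\Opt(\chi)w}{0,s}+\Normsc{\Opt(\chi)w}{0,s+1}$. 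Since $\Normsc{\Opt(\chi)w}{1,s}\asymp\Normsc{\Opt(\chi)w}{0,s+1}+\Normsc{D_d\Opt(\chi)w}{0,s}$, this concludes the proof.
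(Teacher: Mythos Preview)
Your proof is correct and follows essentially the same route as the paper: pair $L_1 u$ with $i\Lsct^{2s+1}u$, use integration by parts in $x_d$ to extract the boundary term $\normsc{u\br}{s+1/2}^2$, apply a microlocal G{\aa}rding inequality to the resulting tangential operator whose principal symbol $-2\Im\pi_{j,1}\,\lsct^{2s+1}\gtrsim\lsct^{2s+2}$ on $\supp(\chi)$, and finish with Young's inequality and the algebraic recovery of $D_d u$. The only cosmetic difference is that you split the interior term into $-T$ and the commutator $[M,A_s]$ and treat the latter separately as a $\tau^{-1}$-small remainder, whereas the paper keeps $i(\Lsct^{2s+1}A-A^*\Lsct^{2s+1})$ as a single operator and applies G{\aa}rding once, the commutator being automatically subprincipal; your observation that the trace remainder in Proposition~\ref{prop: Gaarding quadratic forms} is vacuous for a purely tangential form ($m=0$) is a correct and useful clarification.
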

The proof of Lemma \ref{sl2} is based on a multiplier method and
relies on the fact that the root $\pi_{j,1}$ that appears in the
principal symbol of $L_1$ lies in the lower complex  half-plane.

The
following lemma gives an estimate for $L_2$.
%%%%%%%%%%%%%%%%%%%%%%%%
% lemma                %
%%%%%%%%%%%%%%%%%%%%%%%%
\begin{lemma}\label{lemma: estimate L2}
  Let $\chi\in \Ssct^0$, homogeneous of degree 0, be such that
  $\supp(\chi)\subset\U$ and $s \in \R$.  There exist $C>0$, $\tau_0>0$ and
  $N\in\N$ such that
  \begin{align*}
    \Normsc{\Opt(\chi)w}{1,s}
    \leq C \big(\Normsc{L_2 \Opt(\chi)w}{0,s}
    + \normsc{\Opt(\chi)w\br}{s+1/2}
    +\Normsc{w}{0,-N} \big),
  \end{align*}
  for  $w\in\Sbarp$ and $\tau\geq \max(\tau_0, \k_0\sigma)$.
\end{lemma}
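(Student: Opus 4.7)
The plan is to adapt the multiplier-type energy argument used for $L_1$ in Lemma~\ref{sl2}, exploiting the fact that in Case~(i) the second root $\pi_{j,2}$ has strictly positive imaginary part $\Im\pi_{j,2} \geq C_2\lsct$ on the conic neighborhood $\U$. The key calculation will be to evaluate the imaginary part of $\inp{L_2 u}{\Lsct^{2s+1} u}_+$ for $u = \Opt(\chi) w$, splitting according to $L_2 = D_d - A$ with $A = \Opt(\tchi\, \pi_{j,2}) \in \Psisct^1$.

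For the $D_d$ term, the integration-by-parts identity $\inp{D_d f}{g}_+ - \inp{f}{D_d g}_+ = i\inp{f\br}{g\br}_\partial$, combined with the fact that the tangential self-adjoint multiplier $\Lsct^{2s+1}$ commutes with $D_d$, yields $\Im\inp{D_d u}{\Lsct^{2s+1} u}_+ = \tfrac12 \normsc{u\br}{s+1/2}^2$, a \emph{positive} boundary term. For the $A$ term, I decompose $A = A_s + iA_a$ into its self-adjoint and anti-self-adjoint components, with principal symbols $\Re(\tchi\,\pi_{j,2})$ and $\Im(\tchi\,\pi_{j,2})$ respectively. A direct manipulation gives
\begin{align*}
  \Im\inp{A u}{\Lsct^{2s+1} u}_+
  = \Re\inp{A_a u}{\Lsct^{2s+1} u}_+ + \tfrac{1}{2i}\inp{u}{[A_s,\Lsct^{2s+1}]u}_+,
\end{align*}
in which the commutator $[A_s,\Lsct^{2s+1}] \in \Psisct^{2s+1}$ is one order below the leading piece. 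Since $\tchi \equiv 1$ on a conic neighborhood of $\supp(\chi)$ and $\Im\pi_{j,2} \geq C_2\lsct$ there, the principal symbol of the interior differential quadratic form $\inp{A_a\cdot}{\Lsct^{2s+1}\cdot}_+$ (of type $(0,s+1)$) is bounded below by $C_2\lsct^{2s+2}$, and the microlocal G{\aa}rding inequality of Proposition~\ref{prop: Gaarding quadratic forms} yields, for any $C_1 < C_2$,
\begin{align*}
  \Re\inp{A_a u}{\Lsct^{2s+1} u}_+ \geq C_1 \Norm{\Lsct^{s+1} u}{+}^2 - C_N \Normsc{w}{0,-N}^2.
\end{align*}

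Combining the $D_d$ and $A$ contributions and bounding $|\Im\inp{L_2 u}{\Lsct^{2s+1} u}_+| \leq \Normsc{L_2 u}{0,s}\,\Norm{\Lsct^{s+1} u}{+}$ by Cauchy--Schwarz and Young, I can absorb half of $C_1\Norm{\Lsct^{s+1} u}{+}^2$ on the left to obtain
\begin{align*}
  \Norm{\Lsct^{s+1} u}{+}^2 \lesssim \Normsc{L_2 u}{0,s}^2 + \normsc{u\br}{s+1/2}^2 + \Normsc{w}{0,-N}^2.
\end{align*}
The residual $\Norm{\Lsct^{s+1/2} u}{+}^2$ produced by the commutator term is absorbed through the interpolation $\Norm{\Lsct^{s+1/2} u}{+}^2 \leq \varepsilon\Norm{\Lsct^{s+1} u}{+}^2 + \varepsilon^{-1}\Norm{\Lsct^s u}{+}^2$ combined with $\Norm{\Lsct^s u}{+} \leq \tau^{-1}\Norm{\Lsct^{s+1} u}{+}$, valid once $\tau_0$ is taken large enough. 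Finally, the normal-derivative component of $\Normsc{u}{1,s}$ is recovered from the algebraic identity $D_d u = L_2 u + Au$, which gives $\Norm{\Lsct^s D_d u}{+} \lesssim \Normsc{L_2 u}{0,s} + \Norm{\Lsct^{s+1} u}{+}$, and assembling these pieces yields the claimed bound.

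There is no deep obstacle: the argument is the mirror image of Lemma~\ref{sl2} with the sign of $\Im\pi_{j,k}$ reversed, so the boundary term now lies on the \emph{favourable} side of the estimate and must be tolerated as an input on the right-hand side (one genuinely cannot control $u\br$ from $L_2 u$ alone when $\Im\pi_{j,2} > 0$, since $\ker L_2 \ni e^{i\pi_{j,2} x_d} u_0$ belongs to $L^2(\Rdp)$). The only care required is in tracking the pseudo-differential lower-order remainders and in verifying that G{\aa}rding is applied on a conic open set contained in $\{\tchi \equiv 1\}$ and still containing $\supp(\chi)$; both are routine given the calculus set up in Section~\ref{sec: Psido}.
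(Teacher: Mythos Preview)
Your proof is correct and follows essentially the same route as the paper: both pair $L_2 u$ against $\pm i\Lsct^{2s+1} u$, integrate by parts to produce the boundary term $\normsc{u\br}{s+1/2}^2$ (now with the unfavourable sign, hence placed on the right), and apply the microlocal G{\aa}rding inequality via $\Im\pi_{j,2}\gtrsim\lsct$ in $\U$ to extract $\Normsc{u}{0,s+1}^2$, then recover $D_d u$ from $L_2 u + A u$. The only cosmetic difference is that you split $A=A_s+iA_a$ and track $[A_s,\Lsct^{2s+1}]$ as a separate lower-order remainder, whereas the paper applies G{\aa}rding directly to $i\bigl(\Opt(\tchi\pi_{j,2})^*\Lsct^{2s+1}-\Lsct^{2s+1}\Opt(\tchi\pi_{j,2})\bigr)$, where that commutator is already subprincipal.
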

Note that this estimate is weaker than that of Lemma~\ref{sl2}

\bigskip
Observing that
\begin{align*}
  L_1 \Opt(\chi)L_2 &=\Opt(\chi)L_1 L_2  \mod \Psisc^{1,0}\\
  &= \Opt(\chi)\Qconj^j \mod \Psisc^{1,0},
\end{align*}
and applying Lemma~\ref{sl2} to $w=L_2 v$ with $s=0$, one obtains
\begin{align*}
  \Normsc{\Opt(\chi) L_2 v}{1}
 + \normsc{\Opt(\chi) L_2 v\br}{1/2}
    &\lesssim \Norm{L_1 \Opt(\chi) L_2 v}{+}
    +\Normsc{v}{1,-N}\\
    &\lesssim  \Norm{\Qconj^jv}{+}
                +\Normsc{v}{1},
\end{align*}
for $\tau \geq \k_0 \sigma$ chosen \suff large.
We set $u=\Opt(\chi)v,$ and using the trace inequality
\begin{align*}
  \normsc{w\br}{s}\lesssim \Normsc{w}{s+1/2},
  \qquad w\in
  \Sbarp \ \text{and} \ s>0,
\end{align*}
we have
\begin{align*}
   \Normsc{L_2 u}{1}+ 
  \normsc{L_2 u\br}{1/2}
  &\lesssim \Normsc{\Opt(\chi) L_2 v}{1}
    + \normsc{\Opt(\chi)L_2 v\br}{1/2}
    + \Normsc{v}{1} +\normsc{v\br}{1/2}\\
  &\lesssim  \Normsc{\Opt(\chi) L_2 v}{1} + \normsc{\Opt(\chi)L_2 v\br}{1/2} + \Normsc{v}{1}.
\end{align*}
Therefore, we obtain
\begin{align*}
   \Normsc{L_2 u}{1}+ \normsc{L_2 u\br}{1/2} 
  \lesssim  \Norm{\Qconj^jv}{+}
                +\Normsc{v}{1}.
\end{align*}
With Lemma~\ref{lr1}, one has the estimate
\begin{equation*}
  \bignormsc{\trace (u)}{1,1/2} + \Normsc{L_2 u}{1}
  \lesssim 
  \normsc{B_{\varphi}u\br}{3/2-k}
  + \Norm{\Qconj^jv}{+}
  +\Normsc{v}{2,-1},
\end{equation*}
for $\tau \geq \k_0 \sigma$ chosen \suff large
using the following trace inequality
\begin{align*}
  \normsc{\trace(w)}{m,s}
  \lesssim \Normsc{w}{m+1,s-1/2},
  \qquad w\in \Sbarp \ \text{and} \ m\in\N, \
  s\in\R.
\end{align*}
With Lemma~\ref{lemma: estimate L2} for $s=1$ one obtains
\begin{equation*}
   \Normsc{u}{1,1} + \bignormsc{\trace (u)}{1,1/2} + \Normsc{L_2 u}{1}
  \lesssim 
  \normsc{B_{\varphi}u\br}{3/2-k}
  + \Norm{\Qconj^jv}{+}
  +\Normsc{v}{2,-1},
\end{equation*}
for $\tau \geq \k_0 \sigma$ chosen \suff large.
Finally, we write 
\begin{equation*}
  \Normsc{D_du}{1} \leq
  \Normsc{L_2 u}{1} +  \Normsc{\Opt(\tchi \pi_{j,2}) u}{1}
  \lesssim
  \Normsc{L_2 u}{1} + \Normsc{u}{1,1}, 
  \end{equation*}
  yielding
  \begin{equation*}
   \Normsc{u}{2} + \bignormsc{\trace (u)}{1,1/2} 
  \lesssim 
  \normsc{B_{\varphi}u\br}{3/2-k}
  + \Norm{\Qconj^jv}{+}
  +\Normsc{v}{2,-1}.
\end{equation*}
As $u=\Opt(\chi)v$, with a commutator argument we obtain
\begin{align*}
  \normsc{B_{\varphi}u\br}{3/2-k}
  & \lesssim
    \normsc{B_{\varphi}v\br}{3/2-k}
    + \normsc{\trace(v)}{1,-1/2}\\
    & \lesssim
    \normsc{B_{\varphi}v\br}{3/2-k}
      + \Normsc{v}{2,-1}.
\end{align*}
yielding~\eqref{eq: microlocal estimate Qconj-1} and
thus concluding the proof of
Proposition~\ref{sl3} in  Case~(i).\hfill \qedsymbol \endproof

%%%%%%%%%%%%%%%%%%%%%%%%%%%%%%%%%%%%
\medskip
We now provide the proofs the three key lemmata used above.
%%%%%%%%%%%%%
%% proof of lemma
%%%%%%%%%%%%%
\begin{proof}[Proof of Lemma~\ref{lr1}]
Set 
\begin{align*}
  \mathcal T(w) &= \normsc{B_\varphi w\br}{3/2-k}^2 
         + \normsc{L_2 w\br}{1/2}^2
=\norm{\Lsct^{3/2-k} B_\varphi w\br}{\partial}^2
                  + \norm{\Lsct^{1/2} L_2 w\br}{\partial}^2.
\end{align*}
This is a boundary differential quadratic form of type $(1,1/2)$ in
the sense of Definition~\ref{def: boundary quadratic form}.
The associated bilinear symbol is given by 
\begin{align*}
  \un{\mathcal T}(\y',\z,\z') 
  &= \lsct^{3-2k} \big (\hat{b}^k_{\varphi}(x,\xi',\tau) z_0 +
  b^{k-1}_{\varphi}(x,\xi',\tau) z_1\big) 
  \big(\ovl{\hat{b}^k_{\varphi}}(x,\xi',\tau) \bar{z}_0'+
  \ovl{ b^{k-1}_{\varphi}}(x,\xi',\tau) \bar{z}_1'\big)\\
  &\quad + \lsct \big(z_1 - \tchi \pi_{j,2}(\y') z_0 \big)
    \big(\bar{z}_1' - \tchi \ovl{\pi_{j,2}(\y')}\bar{z}_0'\big),
\end{align*}
with $\z = (z_0, z_1) \in \C^2$ and $\z' = (z_0', z_1') \in \C^2$,
yielding
\begin{align*}
  \un{\mathcal T}(\y',\z,\z) 
  &= \lsct^{3-2k} \big |\hat{b}^k_{\varphi}(x,\xi',\tau) z_0 +
  b^{k-1}_{\varphi}(x,\xi',\tau) z_1\big|^2 
    + \lsct \big|z_1 - \tchi \pi_{j,2}(\y') z_0 \big|^2.
\end{align*}
One has $\un{\mathcal T}(\y',\z,\z)\geq 0$. 
For $\z\neq (0,0)$ if $\un{\mathcal T}(\y',\z,\z)=0$ then 
\begin{align*}
  \begin{cases}
    z_1 = \tchi \pi_{j,2}(\y') z_0,\\
    \hat{b}^k_{\varphi}(x,\xi',\tau) z_0 +
  b^{k-1}_{\varphi}(x,\xi',\tau) z_1=0,
  \end{cases}
\end{align*}
implying that $z_0 \neq 0$ and 
\begin{align*}
   b_\varphi \big( x,\xi', \xi_d= \tchi \pi_{j,2}(\y'), \tau \big) 
  = \hat{b}^k_{\varphi}(x,\xi',\tau) +
    b^{k-1}_{\varphi}(x,\xi',\tau)  \tchi \pi_{j,2}(\y') =0.
\end{align*}
Let $\U_1 \subset \U$ be a conic open set such that $\supp(\chi)
\subset \U_1$ and $\tchi=1$ in a conic \nhd of 
$\ovl{\U_1}$. Then, for  $\y' \in \ovl{\U_1}$  one has 
\begin{align*}
  b_\varphi \big( x,\xi', \xi_d= \tchi \pi_{j,2}(\y'), \tau \big) 
  = b_\varphi \big( x,\xi', \xi_d= \pi_{j,2}(\y'), \tau \big) \neq 0,
\end{align*}
by \eqref{oub1}. From the homogeneity of
$b^{k-1}_{\varphi}(x,\xi',\tau)$ and
$\hat{b}^k_{\varphi}(x,\xi',\tau)$ in $\y'$, 
it follows that there exists some $C>0$ such that 
\begin{align*}
  \un{\mathcal T}(\y',\z,\z) \geq C \big(\lsct^3 |z_0|^2 + \lsct  |z_1|^2 \big),
\end{align*}
if $\y' \in \U_1$.
The result of Lemma~\ref{lr1} thus follows from Proposition~\ref{prop:
  boundary form -Gaarding tangentiel}, having in mind what is exposed in
Section~\ref{sec: calculus with spectral parameter} since we have $\tau \geq \k_0 \sigma$ here.
\end{proof}
%%%%%%%%%%%%%
%% proof of lemma
%%%%%%%%%%%%%
\begin{proof}[Proof of Lemma~\ref{sl2}]
We let $u=\Opt(\chi)w.$ Performing an integration by parts, one has
\begin{align*}
  2\Re\biginp{L_1 u}{i\Lsct^{2s+1} u}_+
  & = 2\Re\biginp{ \big(D_d-\Opt(\tchi\pi_{j,1})\big)u} {i\Lsct^{2s+1} u}_+\\
  &=  \Re \biginp{i
    \big(\Lsct^{2s+1}\Opt(\tchi\pi_{j,1})-\Opt(\tchi\pi_{j,1} )^*\Lsct^1\big)u
    }{u}_+\\
 &\quad + \Re\inp{\Lsct^{2s+1}u\br}{u\br}_{\partial}.
\end{align*}
Note that 
$\Re\inp{\Lsct^{2s+1}u\br}{u\br}_{\partial}
  = \normsc{u\br}{s+1/2}^2$.

\medskip
 Next, the operator
$i \big(
\Lsct^{2s+1}\Opt(\tchi\pi_{j,1})-\Opt(\tchi\pi_{j,1})^*\Lsct^{2s+1}\big) $
has the following {\em real} principal symbol
\begin{align*}
  \vartheta(\y')=-2\Im\pi_{j,1}(\y')\lsct^{2s+1}.
\end{align*}
and since $\Im\pi_{j,1}(\y') \leq - C_1 \lsct <0$ in $\U$ one obtains
$\vartheta(\y')\gtrsim\lsct^{2s+2}$ in $\U$. Since $\U$ is \nhd of
$\supp(\chi)$, the microlocal G{\aa}rding inequality of Theorem~2.49 in
 \cite{JGL-vol1} (the proof adapts to the case with parameter $\sigma$ as
 explained in Section~\ref{sec: calculus with spectral parameter} since $\sigma
\lesssim \tau$) yields
 \begin{align*}
   2\Re\biginp{L_1 u}{i\Lsct^{2s+1} u}_+
   \geq \normsc{u\br}{s+1/2}^2
   +C\Norm{\Lsct^{s+1}u}{+}^2
   -C_N\Normsc{w}{0,-N}^2,
 \end{align*}
 for $\tau\geq \k_0\sigma$ chosen \suff
 large. 
 With the Young inequality one obtains
 \begin{align*}
   |\biginp{L_1 u}{i\Lsct^{2s+1} u}_+|
   \lesssim \frac{1}{\varepsilon} \Norm{\Lsct^s L_1 u}{+}^2
   +\varepsilon
   \Norm{\Lsct^{s+1} u}{+}^2,
   \end{align*}
which yields for $\varepsilon$ chosen \suff small,
 \begin{equation}\label{mch1}
   \normsc{u\br}{s+1/2}
   +\Normsc{u}{0,s+1}
   \lesssim \Normsc{ L_1 u}{0,s} +\Normsc{w}{0,-N}.
 \end{equation}
Finally, we write 
\begin{equation}\label{mch2}
  \Normsc{D_du}{0,s} \leq
  \Normsc{L_1 u}{0,s} +  \Normsc{\Opt(\tchi \pi_{j,1}) u}{0,s}
  \lesssim
  \Normsc{L_1 u}{0,s} + \Normsc{u}{0,s+1}.
  \end{equation}
Putting together \eqref{mch1} and \eqref{mch2}, the result of
Lemma~\ref{sl2} follows.
\end{proof}

%%%%%%%%%%%%%
%% proof of lemma
%%%%%%%%%%%%%
\begin{proof}[Proof of Lemma~\ref{lemma: estimate L2}]
We let $u=\Opt(\chi)w.$ Performing an integration by parts, one has
\begin{align*}
  2\Re\biginp{L_2 u}{-i\Lsct^{2s+1} u}_+
  & = 2\Re\biginp{ \big(D_d-\Opt(\tchi\pi_{j,2})\big)u} {-i\Lsct^{2s+1} u}_+\\
  &=  \Re \biginp{i
    \big(\Opt(\tchi\pi_{j,2})^*\Lsct^1 - \Lsct^{2s+1}\Opt(\tchi\pi_{j,2})\big)
    u}{u}_+\\
 &\quad - \Re\inp{\Lsct^{2s+1}u\br}{u\br}_{\partial}.
\end{align*}
Note that 
$\Re\inp{\Lsct^{2s+1}u\br}{u\br}_{\partial}
  = \normsc{u\br}{s+1/2}^2$.

\medskip
 Next, the operator
$i \big(\Opt(\tchi\pi_{j,2})^*\Lsct^{2s+1}
-\Lsct^{2s+1}\Opt(\tchi\pi_{j,2})\big) $
has the following {\em real} principal symbol
\begin{align*}
  \vartheta(\y')=2\Im\pi_{j,2}(\y')\lsct^{2s+1}.
\end{align*}
and since $\Im\pi_{j,2}(\y') \geq C_2 \lsct >0$ in $\U$ one obtains
$\vartheta(\y')\gtrsim\lsct^{2s+2}$ in $\U$. Since $\U$ is \nhd of
$\supp(\chi)$, the microlocal G{\aa}rding inequality of Theorem~2.49 in
 \cite{JGL-vol1} (the proof adapts to the case with parameter $\sigma$ as
 explained in Section~\ref{sec: calculus with spectral parameter} since $\sigma
\lesssim \tau$) yields
 \begin{align*}
   2\Re\biginp{L_2 u}{i\Lsct^{2s+1} u}_+
   \geq - \normsc{u\br}{s+1/2}^2
   +C\Norm{\Lsct^{s+1}u}{+}^2
   -C_N\Normsc{w}{0,-N}^2,
 \end{align*}
 for $\tau\geq \k_0\sigma$ chosen \suff
 large. 
 The end of the proof is then similar to that of Lemma~\ref{sl2}.
\end{proof}

 \subsubsection{Case (ii): one real root.}
 \label{sec: Case 1: one root  up}
 One has
 $\Im\pi_{j,2}(\y^{0\prime}) = 0$ and $\Im \pi_{j,1} (\y^{0\prime})<0$.
 
 Since the \LS condition holds for $(\Qsig^j, B,\varphi)$ at
 $\y^{0\prime}$, by Lemma \ref{sl1} one has
 \begin{align*}
   b_{\varphi}(x^0,\xi^{0\prime},\xi_d=\pi_{j,2}(\y^{0\prime}),\tau^0)
   =b \big(x^0, \xi^{0\prime}+i\tau^0 d_{x'}\varphi(x^0),
   i\alpha_j(\y^{0\prime})\big)
   \neq 0.
 \end{align*}
 As the roots
 $\pi_{j,1}$ and $\pi_{j,2}$ are locally smooth with respect to
 $\y'=(x,\xi',\tau,\sigma)$ and homogeneous of degree one in
 $(\xi',\tau,\sigma),$ there exists $\U$ a conic neighborhood
 of $\y^{0\prime}$ in
 $W\times\R^{d-1}\times[0,+\infty) \times[0,+\infty) $ and $C_1>0,$ $C_2>0$ such
 that
 $\mathbb{S}_{\ovl{\U}}=\{\y'\in
 \ovl{\U}; |\xi'|^2+\tau^2+\sigma^2=1\}$ is
 compact
 and
 \begin{align*}
   \tau \geq \k_0 \sigma, \quad
   \pi_{j,1}(\y')\neq\pi_{j,2}(\y'),
   \quad \Im\pi_{j,2}(\y')\geq - C_ 2 \lsct,
   \quad \text{and}\
   \Im\pi_{j,1}(\y')\leq -C_1 \lsct,
 \end{align*}
 and
\begin{equation}\label{oub2}
  b_{\varphi}(x,\xi',\xi_d=\pi_{j,2}(\y'),\tau)\neq 0,
\end{equation}
if $\y'= (x,\xi',\tau,\sigma) \in\ovl{\U}$.

\medskip We let $\chi\in \Ssct^0$ and
$\tchi\in \Ssct^0$ be homogeneous of degree zero
in the variable $(\xi',\tau,\sigma)$ and be such that
$\supp(\tchi)\subset\U$ and
$\tchi\equiv 1$ on $\supp(\chi)$. From the smoothness and
the homogeneity of the roots, one  has
$\tchi\pi_{j,k}\in \Ssct^1$, $k=1,2$. We
set
\begin{align*}
  L_2=D_d-\Opt(\tchi\pi_{j,2})
  \quad \text{and} \
  L_1=D_d-\Opt(\tchi\pi_{j,1}).
\end{align*}

\medskip
Lemma~\ref{lr1} and Lemma~\ref{sl2} also apply in Case~(ii) and we shall use them.
In addition to these two lemmata we
shall need the following lemma.
%%%%%%%%%%%%%%%%%%%%%%%%
% lemma                %
%%%%%%%%%%%%%%%%%%%%%%%%
\begin{lemma}
  \label{lemma: carre Carleman}
  There exist $C>0$, $\tau_0>0$ such that
  \begin{align*}
    \tau^{-1/2} \Normsc{w}{2} \leq C \big(
    \Norm{\Qconj^j w}{+} + \normsc{\trace(w)}{1,1/2}
    \big),
  \end{align*}
  for  $\tau\geq \max(\tau_0, \k_0\sigma)$ and $w\in \Cbarc(W_+)$.
\end{lemma}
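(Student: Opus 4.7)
\emph{Plan.} Decompose $\Qconj^j = \Qs^j + i\Qa$ as in \eqref{eq: form of Qs Qa}, both operators being formally selfadjoint on $L^2(\R^d)$. The strategy is to combine a squared Carleman identity with a sub-elliptic G\aa rding inequality.

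First, expand
\begin{align*}
  \Norm{\Qconj^j w}{+}^2 = \Norm{\Qs^j w}{+}^2 + \Norm{\Qa w}{+}^2 + i\bigl[\inp{\Qa w}{\Qs^j w}_+ - \inp{\Qs^j w}{\Qa w}_+\bigr],
\end{align*}
and integrate by parts in $x_d$ in the cross term. Using the formal selfadjointness of $\Qs^j$ and $\Qa$, this produces the identity
\begin{align*}
  \Norm{\Qconj^j w}{+}^2 = \Norm{\Qs^j w}{+}^2 + \Norm{\Qa w}{+}^2 + \inp{i[\Qs^j,\Qa]w}{w}_+ + B_0(w),
\end{align*}
where $B_0(w)$ is a boundary quadratic form in the traces $w\br$ and $D_d w\br$. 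Using the explicit form \eqref{eq: form of Qs Qa} of $\Qs^j$ and $\Qa$, a direct bookkeeping shows that the a priori dangerous contributions involving $(D_d^2 w)\br$ cancel between the two pieces arising from moving $\Qs^j$ and $\Qa$ across; the remaining terms form a boundary quadratic form of type $(1,1/2)$ in the sense of Definition~\ref{def: boundary quadratic form}, and therefore
\begin{align*}
  |B_0(w)| \leq C\normsc{\trace(w)}{1,1/2}^2.
\end{align*}

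For a parameter $\mu>0$ to be fixed large, introduce the interior differential quadratic form of type $(2,0)$
\begin{align*}
  T(w) := \mu\bigl(\Norm{\Qs^j w}{+}^2 + \Norm{\Qa w}{+}^2\bigr) + \tau\inp{i[\Qs^j,\Qa]w}{w}_+,
\end{align*}
whose homogeneous principal symbol equals $\mu\bigl((\qs^j)^2 + \qa^2\bigr) + \tau\{\qs^j,\qa\}$, of degree four in $\lsc$. By the sub-ellipticity hypothesis on $(\Qsig^j,\varphi)$ and Lemma~\ref{urg1}, for $\mu$ taken large enough and $\tau\geq\tau_0\sigma$ this symbol is bounded from below by $C\lsc^4$ on $\ovl W\times\R^d$. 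Since $\tau\geq\k_0\sigma$ in the regime at hand, Proposition~\ref{prop: Gaarding quadratic forms local} applies through the calculus with spectral parameter of Section~\ref{sec: calculus with spectral parameter} and yields, for $\tau$ sufficiently large,
\begin{align*}
  T(w) \geq C_1\Normsc{w}{2}^2 - C_2\normsc{\trace(w)}{1,1/2}^2.
\end{align*}

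Substituting the identity of the first paragraph into the definition of $T(w)$ gives
\begin{align*}
  T(w) = (\mu-\tau)\bigl(\Norm{\Qs^j w}{+}^2 + \Norm{\Qa w}{+}^2\bigr) + \tau\Norm{\Qconj^j w}{+}^2 - \tau B_0(w).
\end{align*}
Imposing $\tau \geq \mu$ the first term on the right is nonpositive, and combining the G\aa rding lower bound with the estimate on $|B_0(w)|$ one obtains
\begin{align*}
  C_1\Normsc{w}{2}^2 \leq \tau\Norm{\Qconj^j w}{+}^2 + C_3\tau\normsc{\trace(w)}{1,1/2}^2.
\end{align*}
Taking square roots and dividing by $\tau^{1/2}$ produces the announced inequality, the residual $\tau^{-1/2}$ in front of the trace norm being harmless since $\tau\geq 1$. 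The main technical obstacle is thus the cancellation of the $(D_d^2 w)\br$ contributions in $B_0(w)$ and the verification that the remaining boundary form is indeed of type $(1,1/2)$; this reduces to an explicit but routine integration-by-parts computation using the structure of $\Qs^j$ and $\Qa$ recorded in \eqref{eq: form of Qs Qa}.
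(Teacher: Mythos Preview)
Your proof is correct and follows essentially the same route as the paper: the squared expansion $\Norm{\Qconj^j w}{+}^2 = \Norm{\Qs^j w}{+}^2 + \Norm{\Qa w}{+}^2 + \text{(commutator)} + \text{(boundary form)}$, the cancellation of the $D_d^2 w\br$ traces in the boundary form (the paper records this as Lemma~\ref{lemma: quadratic form at the boundary}), and the G{\aa}rding step via the sub-ellipticity Lemma~\ref{urg1} (packaged in the paper as Lemma~\ref{fg2}). The only difference is cosmetic: you substitute the identity into $T(w)$ and discard the nonpositive term $(\mu-\tau)(\Norm{\Qs^j w}{+}^2+\Norm{\Qa w}{+}^2)$, whereas the paper bounds the right-hand side from below by $\tau^{-1}T(w)$ for $\mu\tau^{-1}\leq 1$; these are the same manipulation.
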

Proving Lemma~\ref{lemma: carre Carleman} is fairly classical,
based on writing $\Qconj^j=\Qs^j+i\Qa$  and on an expansion of
$\Norm{\Qconj^j w}{+}^2$ and some integration by parts. We provide
the details in the proof below  as the occurence of the parameter
$\sigma$ is not that  classical.  Lemma~\ref{lemma: carre Carleman}
expresses the loss of a half-derivative if one root, here $\pi_{j,2}$,
is real.

\bigskip
Observing that
\begin{align*}
  L_1 \Opt(\chi)L_2 &=\Opt(\chi)L_1 L_2  \mod \Psisc^{1,0}\\
  &= \Opt(\chi)\Qconj^j \mod \Psisc^{1,0},
\end{align*}
and applying Lemma~\ref{sl2} to $w=L_2 v$, one obtains
\begin{align*}
 \normsc{\Opt(\chi) L_2 v\br}{1/2}
    &\lesssim \Norm{L_1 \Opt(\chi) L_2 v}{+}
    +\Normsc{v}{1,-N}\\
    &\lesssim  \Norm{\Qconj^jv}{+}
                +\Normsc{v}{1},
\end{align*}
for $\tau \geq \k_0 \sigma$ chosen \suff large.
We set $u=\Opt(\chi)v,$ and using the trace inequality
\begin{align*}
  \normsc{w\br}{s}\lesssim \Normsc{w}{s+1/2},
  \qquad w\in
  \Sbarp \ \text{and} \ s>0,
\end{align*}
we have
\begin{align*}
  \normsc{L_2 u\br}{1/2}
  &\lesssim \normsc{\Opt(\chi)L_2 v\br}{1/2}+\normsc{v\br}{1/2}\\
  &\lesssim  \normsc{\Opt(\chi)L_2 v\br}{1/2} + \Normsc{v}{1}.
\end{align*}
Therefore, we obtain
\begin{align*}
  \normsc{L_2 u\br}{1/2}
  \lesssim  \Norm{\Qconj^jv}{+}
                +\Normsc{v}{1}.
\end{align*}

On the one hand, together with Lemma~\ref{lr1}, one has the estimate
\begin{equation}\label{lr3}
  \bignormsc{\trace (u)}{1,1/2}
  \lesssim 
  \normsc{B_{\varphi}u\br}{3/2-k}
  + \Norm{\Qconj^jv}{+}
  +\Normsc{v}{2,-1},
\end{equation}
for $\tau \geq \k_0 \sigma$ chosen \suff large
using the following trace inequality
\begin{align*}
  \normsc{\trace(w)}{m,s}
  \lesssim \Normsc{w}{m+1,s-1/2},
  \qquad w\in \Sbarp \ \text{and} \ m\in\N, \
  s\in\R.
\end{align*}
On the other hand, with Lemma~\ref{lemma: carre Carleman} one has
\begin{align*}
  \tau^{-1/2} \Normsc{u}{2}
  \lesssim 
  \Norm{\Qconj^j u}{+} + \normsc{\trace(u)}{1,1/2},
\end{align*}
again for $\tau \geq \k_0 \sigma$ chosen \suff large
and since $[\Qconj^j, \Opt(\chi)]\in \Psisc^{1,0}$  one finds
\begin{align}
  \label{lr4}
  \tau^{-1/2} \Normsc{u}{2}
  \lesssim 
  \Norm{\Qconj^j v}{+} + \Normsc{v}{1} + \normsc{\trace(u)}{1,1/2},
\end{align}
Now, with $\varepsilon>0$ chosen \suff small if one computes $\eqref{lr3} + \varepsilon\times\eqref{lr4}$ one obtains 
\begin{align*}
  \tau^{-1/2} \Normsc{u}{2}
  + \normsc{\trace(u)}{1,1/2}
  &\lesssim
    \Norm{\Qconj^j v}{+}
    + \normsc{B_{\varphi}u\br}{3/2-k}
    +\Normsc{v}{2,-1}.
\end{align*}
As $u=\Opt(\chi)v$, with a commutator argument we obtain
\begin{align*}
  \normsc{B_{\varphi}u\br}{3/2-k}
  & \lesssim
    \normsc{B_{\varphi}v\br}{3/2-k}
    + \normsc{\trace(v)}{1,-1/2}\\
    & \lesssim
    \normsc{B_{\varphi}v\br}{3/2-k}
      + \Normsc{v}{2,-1}.
\end{align*}
yielding~\eqref{eq: microlocal estimate Qconj-2} and thus concluding the proof of
Proposition~\ref{sl3} in  Case~(ii).\hfill \qedsymbol \endproof

\medskip
We now provide a proof of Lemma~\ref{lemma: carre Carleman}.
%%%%%%%%%%%%%
%% proof of lemma
%%%%%%%%%%%%%
\begin{proof}[Proof of Lemma~\ref{lemma: carre Carleman}]
We recall that $\Qconj^j=\Qs^j+i\Qa$,  yielding 
\begin{align}
  \label{eq: carre Carleman}
  \Norm{\Qconj^j w}{+}^2
  =\Norm{\Qs^j w}{+}^2 +\Norm{\Qa w}{+}^2
    +2\Re \inp{\Qs^jw}{i\Qa w}_+.
\end{align}
With the integration by parts formula
$\inp{f}{D_d g}_+=\inp{D_d f}{g}_+- i \inp{f\br}{g\br}_{\partial}$, 
and the forms of $\Qs^j$ and $\Qa$ given in \eqref{eq: form of Qs Qa} one has
\begin{align*}
  \inp{f}{\Qs^j g}_+ = \inp{\Qs^j f}{ g}_+
  - i \inp{f\br}{D_d g\br}_{\partial} - i \inp{D_d f\br}{g\br}_{\partial},
\end{align*}
and
\begin{align*}
  \inp{f}{\Qa g}_+ = \inp{\Qa f}{ g}_+
  - 2 \tau i \inp{ \partial_d \varphi f\br}{ g\br}_{\partial},
\end{align*}
yiedling 
\begin{align*}
  &\inp{\Qa w}{\Qs^jw}_+
  =\inp{\Qs^j\Qa w}{w}_+
  -i \inp{\Qa w\br}{D_d w\br}_{\partial}
  -i \inp{D_d\Qa w\br}{w\br}_{\partial}\\
  &\inp{\Qs^jw}{\Qa w}_+
    =\inp{\Qa\Qs^jw}{w}_+
  -2i\tau\inp{\partial_d\varphi \Qs^jw \br}{w\br}_{\partial}.
\end{align*}
This gives
\begin{align}
  \label{eq: double product after IPP}
  2\Re \inp{\Qs^jw}{i\Qa w}_+
  = i \inp{[\Qs^j,\Qa]w}{w}_+
  +\tau\mathcal{A}(w)
\end{align}
with
\begin{align}
  \label{eq: boundary quadratic form}
  \mathcal{A}(w)
  =\tau^{-1} \inp{\Qa w}{D_d w}_{\partial}
  +\tau^{-1} \biginp{(D_d\Qa  -2 \tau \partial_d\varphi \Qs^j)w}{w}_{\partial}.
\end{align}
We have the following lemma adapted from Lemma~3.25 in \cite{JGL-vol1}.
%%%%%%%%%%%%%%%%%%%%%%%%
% lemma                %
%%%%%%%%%%%%%%%%%%%%%%%%
\begin{lemma}
  \label{lemma: quadratic form at the boundary}
 The operators $\Qa \in \tau \mathscr D^1$ and $D_d\Qa-2\tau \partial_d\varphi
 \Qs^j \in
  \Dsc^3$ can be cast in the following forms
  \begin{align*}
    \Qa  = 2 \tau \partial_d\varphi D_d + 2 \tilde{r} (x,D',\tau d_{x'}\varphi)
    \mod \tau \mathscr D^0,
  \end{align*}
  and 
  \begin{align*}
    D_d \Qa-2\tau \partial_d\varphi \Qs^j 
    &= - 2 \tau \partial_d\varphi
      \big(R(x,D') + (-1)^j\sigma^2
      -(\tau\partial_d\varphi)^2-r(x,\tau d_{x'}\varphi)   \big)\\
    & \quad
      + 2 \tilde{r} (x,D',\tau d_{x'} \varphi) D_d
     \mod \tau \Psisc^{1,0}.
  \end{align*}
\end{lemma}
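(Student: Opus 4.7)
The plan is to prove both identities by direct symbolic computation, relying only on the definitions of $\Qs^j$ and $\Qa$ in \eqref{eq: form of Qs Qa} together with elementary commutator identities; neither the \LS condition nor sub-ellipticity enter. First, for the expression of $\Qa$, I would start from $\Qa = \tau(\partial_d\varphi\, D_d + D_d\, \partial_d\varphi) + T_a$ and use $[D_d, \partial_d\varphi] = -i\,\partial_d^2\varphi$ to symmetrize, yielding
\begin{equation*}
  \tau(\partial_d\varphi\, D_d + D_d\, \partial_d\varphi) = 2\tau\,\partial_d\varphi\, D_d - i\tau\,\partial_d^2\varphi,
\end{equation*}
whose second term is a multiplication operator in $\tau\mathscr D^0$. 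Since $T_a \in \Dsct^2$ is formally selfadjoint with principal symbol $2\tau\,\tilde{r}(x,\xi',d_{x'}\varphi)$, and since $\tilde{r}(x,\cdot,\cdot)$ is symmetric, the operator $2\,\tilde{r}(x,D',\tau d_{x'}\varphi)$ is selfadjoint with the same principal symbol; hence $T_a - 2\,\tilde{r}(x,D',\tau d_{x'}\varphi)$ is a selfadjoint tangential operator of order at most one carrying a $\tau$-factor, i.e.\ in $\tau\mathscr D^0$. This yields the asserted expression for $\Qa$.

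For the second identity, I would substitute the expression for $\Qa$ just obtained into $D_d\Qa$ and compare with $2\tau\,\partial_d\varphi\,\Qs^j = 2\tau\,\partial_d\varphi\, D_d^2 + 2\tau\,\partial_d\varphi\, T_s^j$. Using $D_d(f\,\cdot) = f\, D_d - i(\partial_d f)$ one gets
\begin{equation*}
D_d(2\tau\,\partial_d\varphi\, D_d) = 2\tau\,\partial_d\varphi\, D_d^2 - 2i\tau(\partial_d^2\varphi)\, D_d,
\end{equation*}
so the $D_d^2$-terms cancel exactly against $2\tau\partial_d\varphi\,\Qs^j$, while $-2i\tau(\partial_d^2\varphi)D_d$ lies in $\tau\Psisc^{1,0}$. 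The commutator $[D_d, 2\,\tilde{r}(x,D',\tau d_{x'}\varphi)]$ involves only $x_d$-derivatives of the coefficients (and of $d_{x'}\varphi$) and produces a tangential operator of order one with a $\tau$-factor, again in $\tau\Psisc^{1,0}$. Finally, $T_s^j$ is formally selfadjoint with principal symbol $r(x,\xi') + (-1)^j\sigma^2 - (\tau\partial_d\varphi)^2 - \tau^2 r(x,d_{x'}\varphi)$, so using the selfadjointness of $R(x,D')$ one may choose as representative $R(x,D') + (-1)^j\sigma^2 - (\tau\partial_d\varphi)^2 - r(x,\tau d_{x'}\varphi)$ modulo $\Dsct^1$, and multiplication by $2\tau\,\partial_d\varphi$ keeps the remainder in $\tau\Psisc^{1,0}$.

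Gathering the pieces, the $D_d^2$-coefficient vanishes, the $D_d^1$-coefficient reads $2\,\tilde{r}(x,D',\tau d_{x'}\varphi)$, and the $D_d^0$-part equals $-2\tau\partial_d\varphi\big(R(x,D') + (-1)^j\sigma^2 - (\tau\partial_d\varphi)^2 - r(x,\tau d_{x'}\varphi)\big)$, modulo remainders in $\tau\Psisc^{1,0}$. No conceptual difficulty arises; the main point I expect to require care is the bookkeeping of symbol classes, so that each lower-order remainder is verified to carry one factor of $\tau$ and to have degree at most one in $\xi_d$ (consistent with the class $\tau\Psisc^{1,0}$). The spectral parameter $\sigma$ enters only through the term $(-1)^j\sigma^2$ in $T_s^j$, which is a lower-order contribution in the calculus under the standing assumption $\tau\gtrsim\sigma$, as recalled in Section~\ref{sec: calculus with spectral parameter}.
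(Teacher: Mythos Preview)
Your proposal is correct. The paper does not actually give a proof of this lemma; it simply states the result and cites it as ``adapted from Lemma~3.25 in \cite{JGL-vol1}''. Your direct symbolic computation from the forms \eqref{eq: form of Qs Qa}, using the commutator identities $[D_d,\partial_d\varphi]=-i\partial_d^2\varphi$ and $[D_d,\tilde r(x,D',\tau d_{x'}\varphi)]\in\tau\Dsct^1$, is precisely the computation that underlies the cited result and is fully adequate here. The bookkeeping you outline is accurate: each remainder indeed carries a factor of $\tau$ and is of degree at most one in $\xi_d$, hence lands in $\tau\Psisc^{1,0}$ (or $\tau\mathscr D^0$ for the first identity). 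One minor remark: your closing sentence about $\sigma$ being a ``lower-order contribution'' is slightly misleading, since the term $(-1)^j\sigma^2$ is kept explicitly in the final expression; what matters for the calculus is only that $\sigma\lesssim\tau$ ensures $\sigma^2\in\Ssct^2$, which you use implicitly when placing the difference $T_s^j-\big(R(x,D')+(-1)^j\sigma^2-(\tau\partial_d\varphi)^2-r(x,\tau d_{x'}\varphi)\big)$ in $\Dsct^1$.
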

With this lemma we find
\begin{align}
  \label{mb0}
  \mathcal{A}(w) &=2 \inp{\partial_d\varphi D_d  w\br}{D_dw\br}_\partial 
  + 2 \inp{\tilde{r} (x,D', d_{x'}\varphi) w\br}{D_dw\br}_\partial  \notag\\
 & \quad + 2 \biginp{ \tilde{r} (x,D', d_{x'} \varphi) D_d
 w\br}{w\br}_\partial \notag\\
 &\quad -  2 \biginp{  \partial_d\varphi
   \big(R(x,D') +(-1)^j \sigma^2-(\tau\partial_d\varphi)^2-r(x,\tau d_{x'}\varphi)\big)
 w\br}{w\br}_\partial \notag\\
 & \quad
 +  \inp{\Op(c_0) w\br}{ D_dw\br}_\partial 
 +  \biginp{  \big(\Op(\tilde{c}_0)  D_d+  \Op(c_1)\big)  w\br}{w\br}_\partial,
\end{align}
with $\Op(c_0), \Op(\tilde{c}_0) \in \mathscr D^0$ and $\Op(c_1) \in \Dsct^1$.
Observe that one has
\begin{equation}\label{nn1}
 |\mathcal{A}(w)|\lesssim \normsc{\trace(w)}{1,0}^2.
\end{equation}
From~\eqref{eq: carre Carleman} and \eqref{eq: double product after IPP} one writes 
\begin{align}
  \label{eq: carre Carleman-bis}
  \Norm{\Qconj^j w}{+}^2 + \tau \normsc{\trace(w)}{1,0}^2
  \gtrsim \Norm{\Qs^j w}{+}^2 +\Norm{\Qa w}{+}^2
    +\Re \inp{i [\Qs^j,\Qa]w}{w}_+.
\end{align}
We now use the following lemma whose proof is given below. 
%%%%%%%%%%%%%%%%%%%%%%%%
% lemma                %
%%%%%%%%%%%%%%%%%%%%%%%%
\begin{lemma}\label{fg2}
  There exists $C, C'>0$,  $\mu>0$ and $\tau_0>0$ such that
  \begin{align*}
    \mu \big( \Norm{\Qs^j w}{+}^2 +\Norm{\Qa w}{+}^2\big)
    + \tau \Re \inp{i [\Qs^j,\Qa]w}{w}_+\geq
    C \Normsc{w}{2}^2
    -C'\normsc{\trace(w)}{1,1/2}^2
  \end{align*}
for $\tau\geq \max(\tau_0, \k_0\sigma)$  and $w\in \Cbarc(W_+)$
\end{lemma}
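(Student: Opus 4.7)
The strategy is to recast the left-hand side as the real part of an \emph{interior differential quadratic form} of type $(2,0)$ in the sense of Definition~\ref{def: interior quadratic form} whose principal symbol coincides with the function $t(\y)$ analyzed in Lemma~\ref{urg1}, and then apply the G{\aa}rding inequality Proposition~\ref{prop: Gaarding quadratic forms local}. Concretely, set
\begin{align*}
  Q(w) = \mu\inp{\Qs^j w}{\Qs^j w}_+ + \mu\inp{\Qa w}{\Qa w}_+
  + \tau\inp{i[\Qs^j,\Qa]w}{w}_+,
\end{align*}
so that $\Re Q(w)$ equals the left-hand side of the lemma: the $\Re$ is automatic for the first two terms and is forced on the third since $i[\Qs^j,\Qa]$ is formally selfadjoint only modulo boundary contributions.

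The first two pairings are manifestly of the form $\inp{Aw}{Aw}_+$ with $A\in\Op(\Ssc^{2,0})$, contributing $\mu(\qs^j)^2$ and $\mu\qa^2$ to the principal symbol. The third term is the delicate one: from the explicit structure $\Qs^j = D_d^2 + T_s^j$ and $\Qa = \tau(\partial_d\varphi D_d + D_d\partial_d\varphi) + T_a$ given in \eqref{eq: form of Qs Qa}, a direct computation of $[\Qs^j,\Qa] = [D_d^2,\Qa] + [T_s^j,\Qa]$ shows that the commutator has differential order at most $2$ in $D_d$, so that $\tau i[\Qs^j,\Qa]\in\Op(\Ssc^{2,2})$ with principal symbol $\tau\{\qs^j,\qa\}$. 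Pairing this operator with $\id\in\Op(\Ssc^{2,-2})$ brings the third term into the type $(2,0)$ framework ($r'+r''=2+(-2)=0$) and contributes exactly $\tau\{\qs^j,\qa\}$ to the principal symbol of $Q$.

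Adding the three contributions, the principal symbol of $Q$ is
$t(\y) = \mu((\qs^j)^2+\qa^2) + \tau\{\qs^j,\qa\}$. By Lemma~\ref{urg1}, applicable thanks to the sub-ellipticity of $(\Qsig^j,\varphi)$ in $\ovl{W}$ (Definition~\ref{de1}) and the regime $\tau\geq\k_0\sigma$, one has $t(\y)\geq C\lsc^4$ provided $\mu$ is taken sufficiently large. Noting (Section~\ref{sec: calculus with spectral parameter}) that in our regime $\tau\gtrsim\sigma$ the calculus with spectral parameter behaves exactly like the usual $\Ssc$ calculus, Proposition~\ref{prop: Gaarding quadratic forms local} with $m=2$, $r=0$ yields
\begin{align*}
  \Re Q(w) \geq C_1 \Normsc{w}{2}^2 - C'\normsc{\trace(w)}{1,1/2}^2
\end{align*}
for $\tau\geq\tau_0$ sufficiently large, which is the desired inequality.

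The conceptual subtlety — and the main place where care is needed — is the \emph{asymmetric} factorization of the commutator term: a naive symmetric decomposition of $\tau\Re\inp{\Qs^j w}{i\Qa w}_+$ would cancel the commutator at the principal symbol level (since $\qs^j$ and $\qa$ commute as scalars) and one would be left only with $\mu((\qs^j)^2+\qa^2)$, which is not bounded below by $\lsc^4$. The asymmetric factorization $(\tau i[\Qs^j,\Qa])\cdot\id$ is precisely what keeps the essential Poisson bracket $\tau\{\qs^j,\qa\}$ inside the principal symbol, so that the subellipticity of Lemma~\ref{urg1} can be converted into a G{\aa}rding-type estimate.
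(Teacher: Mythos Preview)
Your proof is correct and follows essentially the same route as the paper's: recognize the left-hand side as an interior differential quadratic form of type $(2,0)$, identify its principal symbol as $t(\y)=\mu\big((\qs^j)^2+\qa^2\big)+\tau\{\qs^j,\qa\}$, invoke Lemma~\ref{urg1} to bound it below by $C\lsc^4$, and apply the G{\aa}rding inequality of Proposition~\ref{prop: Gaarding quadratic forms local}. The only cosmetic difference is in the factorization of the commutator term: the paper writes $\tau\Re\inp{i[\Qs^j,\Qa]w}{w}_+=\Re\inp{i\tau^{-1}[\Qs^j,\Qa]w}{\tau^2 w}_+$ with both factors in $\Ssc^{2,0}$, whereas you pair $\tau i[\Qs^j,\Qa]\in\Ssc^{2,2}$ against $\id\in\Ssc^{2,-2}$; both choices yield the same principal symbol $\tau\{\qs^j,\qa\}$ and the same estimate.
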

Let $\mu>0$ be as in Lemma~\ref{fg2} and let $\tau >0$ be such that
$\mu \tau^{-1}\leq 1$.
From~\eqref{eq: carre Carleman-bis} one then writes
\begin{align*}
  \Norm{\Qconj^j w}{+}^2 + \tau \normsc{\trace(w)}{1,0}^2
  &\gtrsim \tau^{-1} \Big( \mu \big(\Norm{\Qs^j w}{+}^2 +\Norm{\Qa w}{+}^2\big)
    +i \tau \inp{[\Qs^j,\Qa]w}{w}_+\Big),
\end{align*}
which with Lemma~\ref{fg2} yields
the result of Lemma~\ref{lemma: carre Carleman}
using that $\tau \normsc{\trace(w)}{1,0} \lesssim \normsc{\trace(w)}{1,1/2}$.
\end{proof}
%%%%%%%%%%%%%
%% proof of lemma
%%%%%%%%%%%%%
\begin{proof}[ Proof of Lemma \ref{fg2}]
  One has $[\Qs^j,\Qa]\in \tau\Dsc^2$. Writing
  \begin{align*}
    \tau\Re\left(i[\Qs^j,\Qa]w,w\right)_+
    =\Re\left(i\tau^{-1}[\Qs^j,\Qa]w, \tau^2 w\right)_+,
  \end{align*}
  it can be seen as a interior differential quadratic form of type $(2,0)$ as
  in Definition~\ref{def: interior quadratic form}. Therefore
  \begin{align*}
    T(w)=\mu \big( \Norm{\Qs^j w}{+}^2 +\Norm{\Qa
    w}{+}^2\big)+\tau\Re\left(i[\Qs^j,\Qa]w,w\right)_+
  \end{align*}
  is also an
  interior differential quadratic form of this type with principal symbol given by 
  \begin{align*}
    t(\y)=\mu |\qconj^j(\y)|^2+\tau
    \{q^j_s,q_{a}\}(\y),\qquad \y=(x,\xi,\tau,\sigma).
  \end{align*}
  Let $\tau_0>0$. By Lemma \ref{urg1}, the sub-ellipticity property of $(\Qsig^j,\varphi)$ implies
\begin{align*}
  t(\y)\gtrsim \lsc^4,\qquad \y\in\ovl{W}\times\R^d\times[0,+\infty) \times
  [0,+\infty) , \ \tau\geq \tau_0\sigma,
   \end{align*}
  for $\mu>0$ chosen \suff large. The G\aa rding inequality of
   proposition~\ref{prop: Gaarding quadratic forms local} yields
  \begin{align*}
    T(w)\geq C\|w\|^2_{2,\tau}-C'|\trace(w)|^2_{1,1/2,\tau},
  \end{align*}
  for some $C, C'>0$ and for $\tau\geq \k_0\sigma$ chosen \suff large).
\end{proof}

\subsubsection{Case (iii): both roots lying in the lower complex half-plane.}
\label{sec: Case 2: two roots down}

The result in the present case is a simple consequence of the general
result given in Lemma~\ref{el1} whose proof can be found in
\cite{ML}. In the second order case however, the proof does not require
the same level of technicality.

One has $\Im \pi_{j,1}(\y^{0\prime}) <0$ and  $\Im \pi_{j,2}(\y^{0\prime}) <0$. 
As the roots $\pi_{j,1}$ and $\pi_{j,2}$ depend continuously on the
variable $\y' = (x,\xi',\tau,\sigma)$, there exists $\U$ a conic open
neighborhood of $\y^{0\prime}$ in
$W\times\R^{d-1}\times[0,+\infty) \times[0,+\infty) $ and $C_0>0$ such that
\begin{align*}
   \tau \geq \k_0 \sigma, \quad \Im\pi_{j,1}(\y')\leq - C_ 0 \lsct,
   \quad \text{and}\
   \Im\pi_{j,2}(\y')\leq -C_0 \lsct,
 \end{align*}
if $\y'= (x,\xi',\tau,\sigma)\in\ovl{\U}$.

Let $\chi\in \Ssct^0$ be as in the statement of Proposition~\ref{sl3} and
set $u=\Opt(\chi)v$.

We recall that $\Qconj^j=\Qs^j+i\Qa$,  yielding 
\begin{align*}
  \Norm{\Qconj^j u}{+}^2
  =\Norm{\Qs^j u}{+}^2 +\Norm{\Qa u}{+}^2
    +2\Re \inp{\Qs^ju}{i\Qa u}_+.
\end{align*}
We set $L(u)=\Norm{\Qs^ju}{+}^2+\Norm{\Qa u}{+}^2$.
This is an  interior differential quadratic form in the sense of Definition~\ref{def: interior quadratic form}. Its principal symbol is given by
\begin{align*}
  \ell(\y)=(q^j_s)(\y)^2+q_a(\y)^2, \qquad \y= (x,\xi,\tau,\sigma).
\end{align*}
For $\varepsilon\in (0,1)$ we write 
\begin{equation}\label{mb1}
  \Norm{\Qconj^ju}{+}^2\geq \varepsilon L(u)
  + 2\Re \inp{\Qs^ju}{i\Qa u}_+.
\end{equation}
For concision we  write $\y = (\y',\xi_d)$ with $\y' = (x,\xi',\tau,\sigma)$. 
The set
\begin{align*}
  \mathscr{L}
  =\{\y = (\y',\xi_d); \y' \in \ovl{\U},\
  \xi_d\in\R, \ \text{and}\ |\xi|^2+\tau^2+\sigma^2=1\}
\end{align*}
is compact recalling that $W$ is bounded.
On $\mathscr{L}$ one has $|\qconj^j(\y)|\geq C>0$. By
homogeneity one has 
\begin{equation}\label{mb2}
  |\qconj^j(\y)|\gtrsim \lsc^2, \qquad
  \y' \in \U, \ \xi_d \in \R, \ \ \text{if} \  \tau \geq \tau_0 \sigma,
\end{equation}
for some $\tau_0>0$. 
Therefore 
\begin{equation}\label{mb3}
  \ell(\y)  \gtrsim \lsc^4,
  \qquad \y' \in \U, \ \xi_d \in \R, \ \ \text{if} \  \tau \geq \tau_0 \sigma.
\end{equation}
By the G\aa rding inequality  of Proposition~\ref{prop: Gaarding quadratic forms} one obtains
\begin{equation}
  \label{mb4}
  \Re L(u)\geq
  C\Normsc{u}{2}^2-C' \norm{\trace(u)}{1,1/2}^2-C_N\Normsc{v}{2,-N}^2,
\end{equation}
for $\tau\geq \k_0\sigma$ chosen \suff
large.

From the proof of Lemma~\ref{lemma: carre Carleman} one has 
\begin{align}
  \label{eq: lemma: carre Carleman}
  2\Re \inp{\Qs^ju}{i\Qa u}_+
  = i \inp{[\Qs^j,\Qa]u}{u}_+
  +\tau\mathcal{A}(u)
\end{align}
with the boundary quadratic form $\mathcal{A}$ given in \eqref{eq:
  boundary quadratic form}--\eqref{mb0}.

On the one hand, one has  $[\Qs^j,\Qa]\in\tau\Dsc^2$ and therefore 
\begin{equation}\label{mb5}
  |\Re \inp{[\Qs^j,\Qa]u}{u}_+|
  \lesssim \tau \Normsc{u}{2,-1}^2
  \lesssim  \tau^{-1}\Normsc{u}{2}^2.
\end{equation}
On the other hand, we have the following lemma that provides a
microlocal positivity property for the  boundary quadratic form $\mathcal{A}$. A proof is given
below.
%%%%%%%%%%%%%%%%%%%%%%%%
% lemma                %
%%%%%%%%%%%%%%%%%%%%%%%%
\begin{lemma}\label{sl4}
There exist $C, C_N$ and $\tau_{0}>0$ such that
\begin{align*}
  \tau \Re\mathcal{A}(u)
  \geq C \normsc{\trace (u)}{1,1/2}^2
  -C_N \normsc{\trace(v)}{1,-N}^2,
  \quad \text{for}\  u = \Opt(\chi)v,
  \end{align*}
 for $\tau\geq \max(\tau_0, \k_0\sigma)$.
\end{lemma}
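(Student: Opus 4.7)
The plan is to cast $\tau\mathcal{A}$ as a boundary differential quadratic form of type $(1,1/2)$ in the sense of Definition~\ref{def: boundary quadratic form} and then to invoke the microlocal G\aa rding inequality of Proposition~\ref{prop: boundary form -Gaarding tangentiel}. The first step is to extract from \eqref{mb0} the associated bilinear symbol on $\C^{2}\times\C^{2}$. Setting $A=\partial_d\varphi$, $B=\tilde{r}(x,\xi',d_{x'}\varphi)$ and
\[
\Gamma=\tau^2(\partial_d\varphi)^2+r(x,\tau d_{x'}\varphi)-(-1)^j\sigma^2-r(x,\xi'),
\]
a direct reading of the four principal lines of \eqref{mb0} gives
\begin{align*}
  \tau\,\un{\mathcal{A}}(\y',\z,\z)
  = 2\tau\bigl(A|z_1|^2+2B\,\Re(z_1\bar z_0)+A\Gamma|z_0|^2\bigr),
\end{align*}
while the last line of \eqref{mb0} contributes only a quadratic form of strictly lower bidegree, harmless in the G\aa rding step.

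The central point is to establish positive definiteness in a conic neighborhood of $\y^{0\prime}$ with the weights appropriate to type $(1,1/2)$, namely $\tau\,\Re\un{\mathcal{A}}(\y',\z,\z)\gtrsim \lsct^{3}|z_0|^2+\lsct|z_1|^2$. The $2\times 2$ matrix attached to the quadratic form in $\z$ has determinant $4(A^2\Gamma-B^2)$, and a short computation from $\alpha_j^2=r(x,\xi'+i\tau d_{x'}\varphi)+(-1)^j\sigma^2$ produces the algebraic identity
\begin{align*}
  A^2\Gamma-B^2
  =\tau^{-2}\bigl((\tau\partial_d\varphi)^2+(\Im\alpha_j)^2\bigr)\bigl((\tau\partial_d\varphi)^2-(\Re\alpha_j)^2\bigr).
\end{align*}
The second factor is precisely the quantity whose sign is characterized by Lemma~\ref{lemma: caracterisation Im pi2 <0} and governs the sign of $\Im\pi_{j,2}$. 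In Case~(iii) both $\Im\pi_{j,1}$ and $\Im\pi_{j,2}$ are strictly negative on $\ovl{\U}$, and in fact $\Im\pi_{j,2}\leq-C_0\lsct$ can be arranged by shrinking $\U$; homogeneity and compactness of the unit sphere in $\ovl{\U}$ then yield both $A^2\Gamma-B^2\gtrsim\lsct^2$ and $\Gamma\gtrsim\lsct^2$, together with $A\geq C_1>0$. A standard Young-inequality splitting of the cross term $4B\,\Re(z_1\bar z_0)$, with weight tuned so that the residual $A^2\Gamma\theta-B^2$ stays $\gtrsim\lsct^2$, then delivers $\un{\mathcal{A}}(\y',\z,\z)\gtrsim\lsct^2|z_0|^2+|z_1|^2$.

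It remains to convert the prefactor $\tau$ into the extra power of $\lsct$ demanded by type $(1,1/2)$. For this one invokes Lemma~\ref{lemma: low frequency caracterisation}: since Case~(iii) requires $\Im\pi_{j,2}<0$ and we operate in the range $\tau\geq\k_0\sigma$, the conic neighborhood $\U$ can be chosen so that $|\xi'|\lesssim\tau$ on $\U$, whence $\lsct\asymp\tau$ throughout $\U$. Multiplying the previous lower bound by $\tau$ therefore gives $\tau\,\un{\mathcal{A}}(\y',\z,\z)\gtrsim\lsct^{3}|z_0|^2+\lsct|z_1|^2$, which is exactly the positive definiteness condition at the boundary for a form of type $(1,1/2)$. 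The conclusion follows at once by applying Proposition~\ref{prop: boundary form -Gaarding tangentiel} with the given $\chi$ and setting $u=\Opt(\chi)v$; the subprincipal terms coming from the last line of \eqref{mb0} are absorbed by the error term $\normsc{\trace(v)}{1,-N}$.

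The step I expect to be most delicate is the algebraic factorization of $A^2\Gamma-B^2$ and its identification with the quantity controlling $\Im\pi_{j,2}$, which is what ultimately ties the positivity of the boundary quadratic form to the root-configuration hypothesis of Case~(iii). Once this matrix-level positivity is secured, together with the crucial reduction $\lsct\asymp\tau$ on $\U$, the passage to the operator estimate is routine through the G\aa rding calculus for boundary quadratic forms already developed in Section~\ref{sec: Boundary  differential quadratic forms}.
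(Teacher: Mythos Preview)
Your proof is correct and follows essentially the same approach as the paper: both identify the principal part of $\mathcal{A}$ as a $2\times2$ boundary quadratic form, tie the positivity of its determinant to the condition $\Im\pi_{j,2}<0$ via Lemma~\ref{lemma: caracterisation Im pi2 <0}, and then combine the microlocal G\aa rding inequality of Proposition~\ref{prop: boundary form -Gaarding tangentiel} with the low-frequency reduction $\lsct\asymp\tau$ from Lemma~\ref{lemma: low frequency caracterisation}. The only difference is ordering---the paper first applies G\aa rding for type $(1,0)$ and then upgrades to type $(1,1/2)$ via $\tau\asymp\lsct$, whereas you incorporate $\tau\asymp\lsct$ at the symbol level and apply G\aa rding once for type $(1,1/2)$; your explicit factorization of $A^2\Gamma-B^2$ is a nice addition that the paper only invokes implicitly.
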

With \eqref{eq: lemma: carre Carleman}--\eqref{mb5},  and 
Lemma \ref{sl4} one obtains
 \begin{align}\label{mb6}\notag
   2\Re \inp{\Qs^ju}{i\Qa u}_+
   &\geq C \normsc{\trace (u)}{1,1/2}^2
     - C' \tau^{-1} \Normsc{u}{2}^2
     - C_N \normsc{\trace(v)}{1,-N}^2\\
   &\geq C \normsc{\trace (u)}{1,1/2}^2
     - C' \tau^{-1} \Normsc{u}{2}^2
     - C_N' \Normsc{v}{2,-N}^2,
 \end{align}
 with a  trace inequality, for $\tau \geq \k_0 \sigma$ chosen \suff large.

 With \eqref{mb1}, \eqref{mb4}, and \eqref{mb6} one obtains
 \begin{align*}
   \Norm{\Qconj^ju}{+}^2 &\geq
   \varepsilon   C\Normsc{u}{2}^2
   -C'\varepsilon \norm{\trace(u)}{1,1/2}^2
   -C_N\varepsilon\Normsc{v}{2,-N}^2\\
   & \quad + C \normsc{\trace (u)}{1,1/2}^2
     - C' \tau^{-1} \Normsc{u}{2}^2
     - C_N' \Normsc{v}{2,-N}^2.
 \end{align*}
With $\varepsilon$ chosen \suff small and $\tau \geq \k_0 \sigma$
\suff large one obtains for any $N\in\N$
\begin{equation*}
  \Normsc{u}{2} + \normsc{\trace (u)}{1,1/2}
  \lesssim \Norm{\Qconj^ju}{+} +\Normsc{v}{2,-N}.
\end{equation*}
With a commutator argument, as $u=\Opt(\chi)v$ one finds
$ \Norm{\Qconj^ju}{+} \lesssim
  \Norm{\Qconj^j v}{+} +\Normsc{v}{2,-1}
$, yielding estimate~\eqref{eq: microlocal estimate Qconj-3}
and thus concluding the proof of Proposition~\ref{sl3} in Case~(iii).\hfill
\qedsymbol \endproof

 \begin{proof}[ Proof of Lemma \ref{sl4}]
With \eqref{mb0} one sees that it suffices to consider the following boundary
quadratic form 
\begin{align*}
\tilde{\mathcal{A}}(w) &=2 \inp{\partial_d\varphi D_d  w\br}{D_dw\br}_\partial 
  + 2 \inp{\tilde{r} (x,D', d_{x'}\varphi) w\br}{D_dw\br}_\partial  \notag\\
 & \quad + 2 \biginp{ \tilde{r} (x,D', d_{x'} \varphi) D_d
 w\br}{w\br}_\partial \notag\\
 &\quad -  2 \biginp{  \partial_d\varphi
   \big(R(x,D') +(-1)^j \sigma^2-(\tau\partial_d\varphi)^2-r(x,\tau d_{x'}\varphi)\big)
 w\br}{w\br}_\partial,
\end{align*}
in place of $\mathcal A$. It is of type $(1,0)$ in the sense of Definition~\ref{def: boundary quadratic form}.
Its principal symbol is given by $a_0(\y',\xi_d, \xi_d') = (1, \xi_d) A(\y')\, \transp(1, \xi_d)$ with
\begin{align*}
   A (\y') = \begin{pmatrix}
     - (\partial_d\varphi)\big(
       r(x,\xi') + (-1)^j\sigma^2-(\tau\partial_d\varphi)^2-r(x,\tau
       d_{x'}\varphi)\big)\br
     & \tilde{r}(x,\xi',d_{x'}\varphi)\br
     \\
     \tilde{r}(x,\xi',d_{x'}\varphi) \br
     & \partial_d\varphi\br
     \end{pmatrix},
 \end{align*}
 with $\y' = (x,\xi',\tau,\sigma)$.  The associated bilinear symbol
 introduced in \eqref{eq: bilinear
   symbol-boundary quadratic form}
is given by 
\begin{align*}
  \un{\mathcal A}(\y',\z,\z') = \z  A (\y') \, \transp \bar{\z}',\qquad \z=(z_0,
  z_1) \in \C^2, \ \z' =(z_0',z_1') \in \C^2.
\end{align*}
 One computes 
\begin{align*}
  \det A(\y')  = -
  \Big( 
  (\partial_d\varphi)^2 \big(  r(x,\xi') + (-1)^j\sigma^2-(\tau\partial_d\varphi)^2-r(x,\tau
  d_{x'}\varphi)\big)
  + \tilde{r}(x,\xi',d_{x'}\varphi) ^2
  \Big) \br.
\end{align*}
With  Lemma~\ref{lemma: caracterisation Im pi2 <0} one sees that $\Im \pi_{j,2}<0$
is equivalent to having $\det A (\y') > 0$.
We thus have
 \begin{align*}
   \det A (\y') \geq C>0,
   \quad \text{for} \ \y' = (x,\xi',\tau,\sigma)\in
   \mathbb{S}_{\ovl{\U}},
 \end{align*}
 with $\mathbb{S}_{\ovl{\U}}=\{\y'\in
 \ovl{\U};\ \xi_d\in\R,\ |\xi|^2+\tau^2+\sigma^2=1\}$ since
 $\mathbb{S}_{\ovl{\U}}$ is compact. 
  Since $\partial_d\varphi\br\geq C' >0$ then one finds that
 \begin{equation*}
    \Re \un{\mathcal A}(\y',\z,\z)  \geq C (|z_0|^2 + |z_1|^2),
   \quad \y' = (x,\xi',\tau,\sigma)\in\ovl{\U},
   \ \ |(\xi',\tau,\sigma)|= 1.
 \end{equation*}
 By homogeneity one obtains
\begin{equation*}
    \Re \un{\mathcal A}(\y',\z,\z)  \geq C ( \lsct^2 |z_0|^2 + |z_1|^2),
   \quad \y' = (x,\xi',\tau,\sigma)\in\ovl{\U},
   \ \ |(\xi',\tau,\sigma)|\geq 1.
 \end{equation*}
With Proposition~\ref{prop: boundary form -Gaarding tangentiel}, having in mind what is exposed in
Section~\ref{sec: calculus with spectral parameter} since we have
$\tau \geq \k_0 \sigma$ here, one obtains
\begin{align*}
  \Re\mathcal{\tilde{A}}(u)
  \geq C \normsc{\trace (u)}{1,0}^2
  -C_N \normsc{\trace(v)}{1,-N}^2,
  \quad \text{for}\  u = \Opt(\chi)v,
  \end{align*}
 for $\tau\geq \k_0\sigma$ chosen \suff large. 

 Here, we have $\Im \pi_{j,2} <0$ and thus $|\xi'| \lesssim \tau$ by
 Lemma~\ref{lemma: low frequency caracterisation}. Thus one has 
\begin{align*}
  \tau \normsc{\trace (u)}{1,0}^2 \gtrsim \normsc{\trace (u)}{1,1/2}^2
- \normsc{\trace(v)}{1,-N}^2,
\end{align*}
by the microlocal G{\aa}rding inequality, for instance invoking
Proposition~\ref{prop: boundary form -Gaarding tangentiel} for a boundary quadratic form of type $(1,1/2)$. This concludes the proof. 
 \end{proof}

%%%%%%%%%%%%%%%%%%%%%%%%
 % section         %
 %%%%%%%%%%%%%%%%%%%%%%%% 
 \section{Local Carleman estimate for the fourth-order operator}
 \label{sec: Local Carleman estimate for the fouth-order operator}
\subsection{A first estimate}

 %%%%%%%%%%%%%%%%%%%%%%%%
 % proposition          %
 %%%%%%%%%%%%%%%%%%%%%%%% 
\begin{proposition}
\label{prop: first microlocal estimate P}
  Let $\k_0' > \k_1'> \k_1 > \k_0>0$. Let $x^0 \in \partial \Omega$, with
  $\Omega$ locally given by $\{ x_d>0\}$ and let $W$ be a bounded open
  \nhd of $x^0$ in $\R^d$. Let $\varphi$ be such that
  $\partial_d \varphi \geq C>0$ in $W$ and such that
  $(\Qsig^j, \varphi)$ satisfies the sub-ellipticity condition in
  $\ovl{W}$ for both $j=1$ and $2$.  Let
  $\y^{0\prime} = (x^0, \xi^{0\prime},\tau^0,\sigma^0)$ with
  $(\xi^{0\prime},\tau^0,\sigma^0)\in \R^{d-1}\times [0,+\infty)
  \times [0,+\infty)$ nonvanishing with $\k_1 \sigma^0 \leq \tau^0
  \leq \k_1' \sigma^0$.

  Then, there exists $\U$ a 
  conic neighborhood of $\y^{0\prime}$ in
  $W\times \R^{d-1}\times [0,+\infty) \times [0,+\infty)$ where
  $\k_0 \sigma \leq \tau \leq \k_0' \sigma $  such that if  $\chi \in \Ssct^0$,
  homogeneous of degree $0$ in $(\xi',\tau,\sigma)$ with $\supp (\chi)\subset \U$, there exist $C>0$ and $\tau_0>0$ such that
  \begin{align}\label{eq: microlocal estimate Pconj}
    \tau^{-1/2} \Normsc{\Opt (\chi) v}{4}
    \leq
    C \big(
    \Norm{\Pconj v}{+}
    + \normsc{\trace (v)}{3,1/2}
    +\Normsc{v}{4,-1} \big),
\end{align}
for $\tau\geq \tau_0$,
$\k_0 \sigma \leq \tau \leq \k_0' \sigma $, and $v\in \Cbarc(W_+)$.
\end{proposition}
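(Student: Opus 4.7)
The strategy is to factorize $\Pconj=\Qconj^1\Qconj^2$ and combine the microlocal second-order estimates of Proposition~\ref{sl3} for the two factors. The decisive structural fact is Lemma~\ref{lemma: no real double root}: when $\sigma>0$ (which holds in the regime $\k_0\sigma\leq\tau\leq\k_0'\sigma$ for $\tau\geq\tau_0>0$), the upper roots $\pi_{1,2}$ and $\pi_{2,2}$ are distinct and cannot both be real. By continuity of the roots, after possibly relabeling the two factors I may assume that $\Im\pi_{2,2}$ is bounded away from zero on a small enough conic neighborhood $\ovl{\U}$ of $\y^{0\prime}$; the remaining factor $\Qconj^1$ may then fall into any of the three cases of Proposition~\ref{sl3}, with Case~(ii) being the source of the $\tau^{-1/2}$ loss. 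Moreover, since both $\pi_{2,1}$ and $\pi_{2,2}$ then have nonzero imaginary parts, the polynomial $\xi_d\mapsto\qconj^2(x,\xi',\xi_d,\tau)$ does not vanish for real $\xi_d$ on $\U$: the factor $\Qconj^2$ is microlocally elliptic on $\supp(\chi)\times\R_{\xi_d}$.

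The first step is to apply Proposition~\ref{sl3} to $\Qconj^1$ (with worst-case $\tau^{-1/2}$ loss) on the auxiliary function $w=\Qconj^2 v\in\Cbarc(W_+)$, which is indeed compactly supported since $\Qconj^2$ is a differential operator. Together with the identity $\Qconj^1\Qconj^2=\Pconj$ and $\Normsc{\Qconj^2v}{2,-1}\lesssim\Normsc{v}{4,-1}$, this yields
\begin{equation*}
\tau^{-1/2}\Normsc{\Opt(\chi)\Qconj^2v}{2} \lesssim \Norm{\Pconj v}{+} + \normsc{B_{1,\varphi}\Qconj^2v\br}{3/2-k_1} + \Normsc{v}{4,-1},
\end{equation*}
after discarding the nonnegative trace contribution on the left (the boundary term is absent if $\Qconj^1$ falls into Case~(iii)).

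The second step is the microlocal elliptic estimate for $\Qconj^2$: since $\qconj^2$ has no real roots on $\U$, Lemma~\ref{el1} of the appendix gives $\Normsc{\Opt(\chi)v}{4}\lesssim\Normsc{\Qconj^2\Opt(\chi)v}{2}+\Normsc{v}{4,-N}$ for any $N$. A commutator argument, relying on $[\Opt(\chi),\Qconj^2]\in\Psisc^{2,-1}$, converts this into $\Normsc{\Opt(\chi)v}{4}\lesssim\Normsc{\Opt(\chi)\Qconj^2v}{2}+\Normsc{v}{4,-1}$. Multiplying by $\tau^{-1/2}$ and combining with the previous step produces the target estimate modulo the boundary contribution.

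The principal technical obstacle is controlling $\normsc{B_{1,\varphi}\Qconj^2v\br}{3/2-k_1}$, whose normal order can reach $k_1+2\leq 5$, while $\normsc{\trace(v)}{3,1/2}$ is limited to normal order~$3$. The reduction uses the identity $\Pconj v=D_d^4v+\sum_{j<4}P_j(x,D',\tau)D_d^jv$ (with $P_j$ tangential of order $4-j$) and its $D_d$-derivative to substitute $D_d^4v$ and $D_d^5v$ at the boundary. This recasts the boundary term as a sum of quantities already contained in $\normsc{\trace(v)}{3,1/2}$ and of boundary traces of $\Pconj v$ and $D_d\Pconj v$; the latter are in turn converted into interior $L^2$-quantities via integration in the normal variable (using the compact support of $v$ in $\ovl{\Rdp}$), and absorbed into $\Norm{\Pconj v}{+}$ and $\Normsc{v}{4,-1}$ for $\tau$ sufficiently large.
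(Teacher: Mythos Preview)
Your overall strategy---factorize $\Pconj=\Qconj^1\Qconj^2$, use Lemma~\ref{lemma: no real double root} to ensure at most one factor carries a $\tau^{-1/2}$ loss, then concatenate the two second-order estimates---matches the paper. The execution, however, has a genuine gap.

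The proposition makes \emph{no hypothesis} on $B_1,B_2$. When you invoke Proposition~\ref{sl3} for $\Qconj^1$ with the boundary operator $B_1$, you need the \LS condition for $(\Qsig^1,B_1,\varphi)$ at $\y^{0\prime}$, which is nowhere assumed. More damagingly, even if it held, the resulting boundary term $\normsc{B_{1,\varphi}\Qconj^2 v\br}{3/2-k_1}$ involves up to $D_d^{k_1+2}v\br$ (possibly $D_d^5$), and your proposed fix---substituting via $\Pconj v=D_d^4v+\cdots$ and ``integrating in the normal variable''---does not yield a usable bound: the trace of $\Pconj v$ at $x_d=0$ is not controlled by $\Norm{\Pconj v}{+}$, and any trace inequality for it would bring back a fifth normal derivative of $v$, which you cannot absorb.

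The paper sidesteps this entirely by applying Proposition~\ref{sl3} with the \emph{Dirichlet} boundary operator $Bw=w\br$ (order $k=0$), for which $b_\varphi\equiv1$ and the \LS condition for each $(\Qsig^j,B,\varphi)$ holds automatically by Lemma~\ref{sl1}. The boundary contribution is then $\normsc{\Qconj^1 v\br}{3/2}$, involving only $D_d^j v\br$ for $j\leq 2$, which is directly dominated by $\normsc{\trace(v)}{3,1/2}$. After that, the paper applies the loss-free estimate for $\Qconj^2$ first (to $\Qconj^1 v$), commutes to get control of $\Normsc{\Qconj^1\Opt(\chi)v}{2}$, and then applies the $\tau^{-1/2}$-loss estimate for $\Qconj^1$ to $\Opt(\chi)D_d^k\Lsct^{2-k}v$ for $k=0,1,2$ to recover the full $\Normsc{\Opt(\chi)v}{4}$.

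A secondary point: your use of Lemma~\ref{el1} for the ``elliptic'' factor $\Qconj^2$ requires all roots of $\qconj^2$ to lie in the \emph{lower} half-plane, not merely off the real axis; if $\Im\pi_{2,2}>0$ (Case~(i)), Lemma~\ref{el1} does not apply and you must use the corresponding case of Proposition~\ref{sl3} instead. This is easy to fix, but the boundary-operator issue above is the essential one.
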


An important aspect is that here we have $\sigma \gtrsim \tau$; this
explains that only one root of $\pconj$ can lie on the real axis and
thus only one half derivative is lost in this estimate. The proof of
Proposition~\ref{prop: first microlocal estimate P} is based on the
microlocal results of Proposition~\ref{sl3}.

\begin{proof}
  We shall concatenate the estimates of Proposition~\ref{sl3} for
  $\Qconj^1$ and $\Qconj^2$ with the boundary operator $B$ simply given by
  the Dirichlet trace operator, $B u\br = u\br$. 
  
  One has $b (x,\xi) =1$ and $b_\varphi(x,\xi',\xi_d,\tau) = 1$. 
Since $\partial_d \varphi >0$ then $\Im \pi_{j,1} <0$. Thus, either
  $\qconj^{j,+} (x,\xi',\xi_d,\tau)=1$ or
  $\qconj^{j,+} (x,\xi',\xi_d,\tau)=\xi_d-\pi_{j,2}$.  With
  Lemma~\ref{sl1} one sees that the \LS holds for
  $(\Qconj^1, B,\varphi)$ and $(\Qconj^2, B,\varphi)$ at
  $\y^{0\prime}$.

  Proposition~\ref{sl3} thus applies. Let $\U_j$ be the conic \nhd of
  $\y^{0\prime}$ obtained invoking this proposition for $\Qconj^j$,
  for $j=1$ or $2$. 
  In $\U_j$ one has $\tau \geq \k_0 \sigma$. We set
  \begin{align*}
    \U = \U_1 \cap \U_2 \cap \{ \tau \leq \k_0' \sigma\},
  \end{align*}
  and we consider $\chi \in \Ssct^0$, homogeneous of degree $0$ in
  $(\xi',\tau,\sigma)$ with $\supp (\chi)\subset \U$.

  Since in $\U$ one has $\sigma >0$ then $\pi_{1,2}$ and $\pi_{2,2}$
  cannot be both real by Lemma~\ref{lemma: no real double root}.
  Proposition~\ref{sl3} thus implies that we necessarily have the
  following two estimates
  \begin{align}\label{eq: microlocal estimate Q1}
    \tau^{-\ell_1}\Normsc{\Opt (\chi) w}{2}
    \lesssim
    \Norm{\Qconj^1w}{+}
    +\normsc{w\br}{3/2}
    +\Normsc{w}{2,-1},
  \end{align}
  and 
  \begin{align}\label{eq: microlocal estimate Q2}
    \tau^{-\ell_2}\Normsc{\Opt (\chi) w}{2}
    \lesssim
    \Norm{\Qconj^2w}{+}
    +\normsc{w}{3/2}
    +\Normsc{w}{2,-1},
  \end{align}
  with either $(\ell_1,\ell_2) = (1/2,0)$ or  $(\ell_1,\ell_2) = (0,
  1/2)$, for $w\in
\Cbarc(W_+)$ and $\tau \geq \k_0\sigma$ chosen \suff large.

\medskip
Let us assume that $(\ell_1,\ell_2) = (1/2,0)$. The other case can be
treated similarly. Writing $\Pconj =\Qconj^2 \Qconj^1$, with \eqref{eq: microlocal estimate Q2} one has
\begin{align*}
    \Normsc{\Opt (\chi) \Qconj^1 v}{2}
    &\lesssim
    \Norm{ \Pconj  v}{+}
    +\normsc{\Qconj^1 v\br}{3/2}
      +\Normsc{v}{4,-1}\\
    &\lesssim
      \Norm{\Pconj  v}{+}
    +\normsc{\trace (v)}{2,3/2}
      +\Normsc{v}{4,-1}.
  \end{align*}
  Since $[\Opt (\chi), \Qconj^1 ] \in  \Psisc^{1,0}$ one finds
  \begin{align}
    \label{eq: concatenate 1}
    \Normsc{\Qconj^1 \Opt (\chi)  v}{2}
    \lesssim
    \Norm{\Pconj  v }{+}
    +\normsc{\trace (v)}{3,1/2}
    +\Normsc{v}{4,-1}.
  \end{align}
  For $k=0,1$ or $2$, one writes
  \begin{multline*}
    \Norm{\Qconj^1 \Opt (\chi)  D_d^k \Lsct^{2-k} v}{+}
    + \normsc{\trace (\Opt (\chi) D_d^k \Lsct^{2-k} v)}{1,1/2}
    + \Normsc{\Opt (\chi) D_d^k \Lsct^{2-k} v}{2,-1}\\
    \lesssim 
    \Normsc{\Qconj^1 \Opt (\chi)  v}{2}
    + \normsc{\trace (v)}{3,1/2}
    + \Normsc{v}{4,-1},
  \end{multline*}
  since $[\Qconj^1 \Opt (\chi),  D_d^k\Lsct^{2-k} ] \in \Psisc^{4,-1}$.

  Let $\tchi\in \Ssct^0$ be homogeneous of degree zero
in the variable $(\xi',\tau,\sigma)$ and be such that
$\supp(\tchi)\subset\U$  and
$\tchi\equiv 1$ on a \nhd of $\supp(\chi)$.
   With \eqref{eq: microlocal estimate Q1}, from \eqref{eq: concatenate
    1} one thus obtains
  \begin{align*}
    \tau^{-1/2}\Normsc{\Opt(\tchi)\Opt (\chi)  D_d^k\Lsct^{2-k}  v}{2}
    \lesssim
    \Norm{\Pconj  v }{+}
    +\normsc{\trace (v)}{3,1/2}
    +\Normsc{v}{4,-1}.
  \end{align*}
  Since $\Opt(\tchi)\Opt (\chi) D_d^k\Lsct^{2-k}  =\Lsct^{2-k}  D_d^k \Opt (\chi)  \mod
  \Psisc^{2,-1}$ one deduces 
  \begin{align*}
    \tau^{-1/2}\Normsc{D_d^k\Opt (\chi)  v}{2,2-k}
    \lesssim
    \Norm{\Pconj  v}{+}
    +\normsc{\trace (v)}{3,1/2}
    +\Normsc{v}{4,-1}.
  \end{align*}
Using that $k=0,1$ or $2$, the result follows. 
\end{proof}

Consequence of this  microlocal result is the following local result
by means of a patching procedure as for the proof of
Proposition~\ref{prop: local boundary estimate} in Section~\ref{sec: proof prop: local boundary estimate}.
%%%%%%%%%%%%%%%%%%%%%%%%
 % proposition          %
 %%%%%%%%%%%%%%%%%%%%%%%% 
\begin{proposition}
\label{prop: first local estimate P}
  Let $\k_0' > \k_0>0$. Let $x^0 \in \partial \Omega$, with
  $\Omega$ locally given by $\{ x_d>0\}$ and let $W$ be a bounded open
  \nhd of $x^0$ in $\R^d$. Let $\varphi$ be such that
  $\partial_d \varphi \geq C>0$ in $W$ and such that
  $(\Qsig^j, \varphi)$ satisfies the sub-ellipticity condition in
  $\ovl{W}$ for both $j=1$ and $2$.

  Then, there exists $W^0$ a \nhd of $x^0$, $C>0$, $\tau_0>0$ such that
  \begin{align}\label{eq: local estimate Pconj-1}
    \tau^{-1/2} \Normsc{ v}{4}
    \leq
    C \big(
    \Norm{\Pconj v}{+}
    + \normsc{\trace (v)}{3,1/2}\big),
\end{align}
for $\tau\geq \tau_0$,
$\k_0 \sigma \leq \tau \leq \k_0' \sigma $, and $v\in \Cbarc(W_+)$.
\end{proposition}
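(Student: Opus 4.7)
The plan is to derive this estimate by patching together the microlocal estimates of Proposition~\ref{prop: first microlocal estimate P} via a tangential pseudo-differential partition of unity, following the exact same scheme as the derivation of Proposition~\ref{prop: local boundary estimate} from Proposition~\ref{prop: microlocal boundary estimate} in Section~\ref{sec: proof prop: local boundary estimate}. First I choose auxiliary constants $0 < \tk_0 < \tk_1 \leq \k_0$ and $\k_0' \leq \tk_1' < \tk_0'$ and apply Proposition~\ref{prop: first microlocal estimate P} with $(\tk_0, \tk_1, \tk_1', \tk_0')$ in place of $(\k_0, \k_1, \k_1', \k_0')$. Then at every point $\y^{0\prime} = (x^0, \xi^{0\prime}, \tau^0, \sigma^0)$ on the unit sphere satisfying $\k_0 \sigma^0 \leq \tau^0 \leq \k_0' \sigma^0$, the hypothesis $\tk_1 \sigma^0 \leq \tau^0 \leq \tk_1' \sigma^0$ of Proposition~\ref{prop: first microlocal estimate P} is met, so one obtains a conic neighborhood $\U_{\y^{0\prime}} = \scrO_{\y^{0\prime}} \times \Gamma_{\y^{0\prime}}$ of $\y^{0\prime}$ contained in $W \times \{\tk_0 \sigma \leq \tau \leq \tk_0'\sigma\}$, together with the associated microlocal estimate for any $\chi \in \Ssct^0$, homogeneous of degree zero, with $\supp(\chi)\subset \U_{\y^{0\prime}}$.

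By compactness of $\{x^0\}\times \{(\xi',\tau,\sigma)\in\R^{d-1}\times[0,+\infty)\times[0,+\infty) :\ |(\xi',\tau,\sigma)|=1,\ \k_0 \sigma\leq \tau \leq \k_0'\sigma\}$, I extract a finite subcover $\tilde\U_i = \scrO_i \times \Gamma_i$, $i\in I$, set $\scrO = \cap_{i\in I} \scrO_i$, $\U_i = \scrO \times \Gamma_i$, and pick an open \nhd $W^0$ of $x^0$ with $\ovl{W^0}\Subset \scrO$. Associated with the cover $\{\U_i\}_{i\in I}$ is a tangential partition of unity $\{\chi_i\}_{i\in I}$, each $\chi_i \in \Ssct^0$ homogeneous of degree zero for $|(\xi',\tau,\sigma)|\geq 1$, with $\supp(\chi_i)\subset \U_i$ and $\sum_{i\in I}\chi_i = 1$ in a conic \nhd of $\ovl{W^0}\times \{\k_0\sigma \leq \tau \leq \k_0'\sigma\}$ for $|(\xi',\tau,\sigma)|\geq 1$. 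Applying Proposition~\ref{prop: first microlocal estimate P} to each $\Opt(\chi_i) v$ yields, for $v\in \Cbarc(W^0_+)$ and $\tau$ large enough with $\k_0 \sigma \leq \tau \leq \k_0' \sigma$,
\begin{equation*}
\tau^{-1/2}\Normsc{\Opt(\chi_i) v}{4} \lesssim \Norm{\Pconj v}{+} + \normsc{\trace(v)}{3,1/2} + \Normsc{v}{4,-1}.
\end{equation*}
The residual $\tchi = 1-\sum_{i\in I}\chi_i$ lies in $\Ssct^{-\infty}$ locally on a \nhd of $\ovl{W^0}$ under the two-sided restriction $\k_0 \sigma \leq \tau \leq \k_0'\sigma$, so using $\supp(v)\subset W^0_+$ one has $\Normsc{\Opt(\tchi) v}{4}\lesssim \Normsc{v}{4,-N}$ for any $N\in\N$.

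Summing the microlocal bounds over $i \in I$ and combining with the estimate on the residual piece, we arrive at
\begin{equation*}
\tau^{-1/2}\Normsc{v}{4} \lesssim \Norm{\Pconj v}{+} + \normsc{\trace(v)}{3,1/2} + \Normsc{v}{4,-1} + \tau^{-1/2}\Normsc{v}{4,-N}.
\end{equation*}
Since $\Normsc{v}{4,-1}\lesssim \tau^{-1}\Normsc{v}{4}$ and $\Normsc{v}{4,-N}\lesssim \tau^{-N}\Normsc{v}{4}$, both of these last two terms are absorbed into the left-hand side upon choosing $\tau \geq \tau_0$ sufficiently large, which yields~\eqref{eq: local estimate Pconj-1}. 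The only delicate point in the argument is the proper bookkeeping of the two-sided angular region $\k_0\sigma\leq \tau \leq \k_0'\sigma$: the choice of the auxiliary constants $\tk_0 < \tk_1 \leq \k_0 \leq \k_0' \leq \tk_1' < \tk_0'$ is essential to ensure that the partition covers precisely the conic region where the estimate is claimed, while every base point $\y^{0\prime}$ remains strictly inside the range of validity of Proposition~\ref{prop: first microlocal estimate P}.
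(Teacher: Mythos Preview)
Your proof is correct and follows precisely the approach indicated by the paper, which simply refers the reader to the patching procedure of Section~\ref{sec: proof prop: local boundary estimate}. Your careful bookkeeping of the two-sided angular constraint via the auxiliary constants $\tk_0 < \tk_1 \leq \k_0 \leq \k_0' \leq \tk_1' < \tk_0'$ is exactly the adaptation required here (compared with the one-sided constraint $\tau\geq \k_0\sigma$ in Section~\ref{sec: proof prop: local boundary estimate}), and the absorption of the remainder terms $\Normsc{v}{4,-1}$ and $\tau^{-1/2}\Normsc{v}{4,-N}$ into $\tau^{-1/2}\Normsc{v}{4}$ for large $\tau$ is standard.
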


%%%%%%%%%%%%%
% Section          %
%%%%%%%%%%%%%
\subsection{Final estimate}
\label{sec: Carleman final estimate}

Combining the local results of Section~\ref{sec: boundary norm under LS} for
the estimation of the boundary norm under  the \LS condition and the
previous local result without any prescribed boundary condition we
obtain the Carleman estimate of Theorem~\ref{theorem: local Carleman estimate-intro}. For a precise statement we write the following theorem. 
%%%%%%%%%%%%%%%%%%%%%%%%
% theorem              %
%%%%%%%%%%%%%%%%%%%%%%%%
\begin{theorem}[local Carleman estimate for $\Psig$]
  \label{theorem: main Carleman estimate}
  Let $\k_0' > \k_0>0$. Let $x^0 \in \partial \Omega$, with
  $\Omega$ locally given by $\{ x_d>0\}$ and let $W$ be a bounded open
  \nhd of $x^0$ in $\R^d$. Let $\varphi$ be such that
  $\partial_d \varphi \geq C>0$ in $W$ and such that
  $(\Qsig^j, \varphi)$ satisfies the sub-ellipticity condition in
  $\ovl{W}$ for both $j=1$ and $2$.

  Assume that  $(\Psig, B_1, B_2,\varphi)$ satisfies the
  \LS condition of Definition~\ref{def: LS after conjugation} at  $\y'=(x^0,\xi',\tau,\sigma)$
  for all $(\xi',\tau,\sigma) \in \R^{d-1} \times [0,+\infty) \times
  [0,+\infty)$ such that $\tau \geq \k_0\sigma$.
  
  Then, there exists $W^0$ a \nhd of $x^0$, $C>0$, $\tau_0>0$ such that
  \begin{align}\label{eq: local estimate Pconj-final}
    \tau^{-1/2} \Normsc{e^{\tau \varphi} u}{4}
    + \normsc{ \trace( e^{\tau \varphi} u)}{3,1/2}
    \leq
    C \big(
    \Norm{e^{\tau \varphi} \Psig u}{+}
    + \sum\limits_{j=1}^{2}
    \normsc{e^{\tau \varphi} B_{j}v\br}{7/2-k_{j}}
    \big),
\end{align}
for $\tau\geq \tau_0$,
$\k_0 \sigma \leq \tau \leq \k_0' \sigma $, and $u\in \Cbarc(W^0_+)$.
\end{theorem}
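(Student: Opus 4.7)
The heavy technical work has already been carried out in the two preceding results, so the proof amounts to combining them and substituting $v = e^{\tau\varphi} u$. Let me set $v = e^{\tau\varphi} u$, so that $v \in \Cbarc(W^0_+)$ with the same compact support as $u$, and
\begin{equation*}
\Pconj v = e^{\tau\varphi} \Psig u,
\qquad
B_{j,\varphi} v\br = e^{\tau\varphi} (B_j u)\br, \quad j=1,2.
\end{equation*}
Under this substitution the right-hand side of \eqref{eq: local estimate Pconj-final} is, up to constants, the right-hand side we will obtain after combining our two ingredients applied to $v$, and the left-hand side is the one expressed in $v$.

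First I would apply Proposition \ref{prop: first local estimate P} (the local Carleman estimate for $\Pconj$ without any prescribed boundary condition, whose proof relied on the microlocal estimate of Proposition \ref{prop: first microlocal estimate P} followed by patching): there exist a neighborhood $W^0_1$ of $x^0$, $C_1>0$ and $\tau_1>0$ such that
\begin{equation*}
  \tau^{-1/2} \Normsc{v}{4}
  \leq C_1 \bigl( \Norm{\Pconj v}{+} + \normsc{\trace(v)}{3,1/2}\bigr),
\end{equation*}
for $\tau \geq \tau_1$, $\k_0 \sigma \leq \tau \leq \k_0'\sigma$, and $v \in \Cbarc(W^0_{1,+})$. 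Next I would invoke Proposition \ref{prop: local boundary estimate} to estimate the trace norm appearing on the right; this is exactly where the \LS condition for $(\Psig, B_1, B_2, \varphi)$ is used, and it yields a neighborhood $W^0_2$ of $x^0$, $C_2>0$ and $\tau_2>0$ such that
\begin{equation*}
  \normsc{\trace(v)}{3,1/2}
  \leq C_2 \Bigl( \Norm{\Pconj v}{+}
  + \sum_{j=1}^{2} \normsc{B_{j,\varphi} v\br}{7/2-k_j}
  + \Normsc{v}{4,-1} \Bigr),
\end{equation*}
for $\tau \geq \max(\tau_2, \k_0\sigma)$ and $v \in \Cbarc(W^0_{2,+})$.

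Choosing $W^0 \Subset W^0_1 \cap W^0_2$ and adding the two inequalities, I obtain
\begin{equation*}
  \tau^{-1/2} \Normsc{v}{4}  + \normsc{\trace(v)}{3,1/2}
  \leq C \Bigl( \Norm{\Pconj v}{+}
  + \sum_{j=1}^{2} \normsc{B_{j,\varphi} v\br}{7/2-k_j}
  + \Normsc{v}{4,-1} \Bigr),
\end{equation*}
for $v \in \Cbarc(W^0_+)$ and $\tau$ in the prescribed range, $\tau \geq \max(\tau_1,\tau_2)$. It then only remains to absorb the remainder term $\Normsc{v}{4,-1}$ into the left-hand side. By the elementary bound $\Normsc{v}{4,-1} \lesssim \tau^{-1} \Normsc{v}{4}$ recalled in Section \ref{sec: Function norms}, and since $\tau^{-1} \ll \tau^{-1/2}$ for $\tau$ large, one has $C\Normsc{v}{4,-1} \leq \tfrac12 \tau^{-1/2} \Normsc{v}{4}$ for $\tau \geq \tau_0$ with $\tau_0$ large enough. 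Rewriting everything back in terms of $u$ via $v = e^{\tau\varphi} u$ then yields \eqref{eq: local estimate Pconj-final}.

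No genuine obstacle arises: the interior half-derivative loss is already built into Proposition \ref{prop: first local estimate P} (and reflects the fact that in the regime $\k_0\sigma \leq \tau \leq \k_0'\sigma$ exactly one of $\pi_{1,2}$, $\pi_{2,2}$ may be real, by Lemma \ref{lemma: no real double root}), while the trace control under the \LS condition is supplied by Proposition \ref{prop: local boundary estimate}. The only small point to watch is the compatibility of neighborhoods and the choice of $\tau_0$ large enough to both control the absorption of $\Normsc{v}{4,-1}$ and satisfy the hypotheses of both propositions.
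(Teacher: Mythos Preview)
Your proposal is correct and follows essentially the same approach as the paper: combine Proposition~\ref{prop: first local estimate P} with Proposition~\ref{prop: local boundary estimate} on a common neighborhood, absorb the remainder $\Normsc{v}{4,-1}$ into $\tau^{-1/2}\Normsc{v}{4}$ for $\tau$ large, and substitute $v=e^{\tau\varphi}u$. The paper's proof is virtually identical, differing only in cosmetic ordering (it sets $v=e^{\tau\varphi}u$ at the end rather than the beginning).
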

The notation of the function space $\Cbarc(W^0_+)$ is introduced in
\eqref{eq: notation Cbarc}. 

For the application of this theorem, one has to design a weight
function that yields the two important properties: sub-ellipticity and
the \LS condition. Sub-ellipticity
is obtained by means of Proposition~\ref{prop: sub-ellipticity recipe}; the
\LS condition by means of Proposition~\ref{prop: LS after conjugation}.

%%%% proof of theorem
\begin{proof}[Proof of Theorem~\ref{theorem: main Carleman estimate}]
  
  The assumption of the theorem allows one to invoke both
  Propositions~\ref{prop: local boundary estimate} and \ref{prop:
    first local estimate P} yielding the existence of a \nhd $W^0$ of $x^0$
where,  by Proposition~\ref{prop: local boundary estimate},  one has
  \begin{align}
    \label{eq: local estimate Pconj*1}
    \normsc{ \trace(v)}{3,1/2} \lesssim
    \Norm{\Pconj v}{+}
    +\sum\limits_{j=1}^{2}
    \normsc{B_{j, \varphi}v\br}{7/2-k_{j}}
    + \Normsc{v}{4,-1},
  \end{align}
  for $\sigma\geq 0$, $\tau\geq \max(\tau_1, \k_0\sigma)$ for some
  $\tau_1>0$ and $v\in \Cbarc(W_+^0)$. With the Proposition~\ref{prop:
    first local estimate P} one also has 
  \begin{align}
    \label{eq: local estimate Pconj*2}
    \tau^{-1/2} \Normsc{ v}{4}
    \lesssim
    \Norm{\Pconj v}{+}
    + \normsc{\trace (v)}{3,1/2},
  \end{align}
  for $\tau\geq \tau_1'$ and $\k_0 \sigma \leq \tau \leq \k_0' \sigma$ for some
  $\tau_1'>0$.

  \medskip
  Consider $\sigma>0$ and $\tau \geq \max (\tau_1, \tau_1')$ such that
  $\k_0 \sigma \leq \tau \leq \k_0' \sigma$. Combined together 
  \eqref{eq: local estimate Pconj*1} and \eqref{eq: local estimate
    Pconj*2} yield
  \begin{align*}   
    \tau^{-1/2} \Normsc{ v}{4}
    + \normsc{ \trace(v)}{3,1/2}
    \lesssim
    \Norm{\Pconj v}{+}
    +\sum\limits_{j=1}^{2}
    \normsc{B_{j, \varphi}v\br}{7/2-k_{j}}
    + \Normsc{v}{4,-1}.
  \end{align*}
  Since $\Normsc{v}{4,-1} \ll \tau^{-1/2} \Normsc{ v}{4}$ for $\tau$
  large one obtains
  \begin{align*}
    \tau^{-1/2} \Normsc{ v}{4}
    + \normsc{ \trace(v)}{3,1/2}
    \lesssim
    \Norm{\Pconj v}{+}
    + \sum\limits_{j=1}^{2}
    \normsc{B_{j, \varphi}v\br}{7/2-k_{j}}.
  \end{align*}
  If we set $v=e^{\tau\varphi}u$ then the conclusion follows. 
\end{proof}

%%%%%%%%%%%%%
% Section          %
%%%%%%%%%%%%%
 \section{Global Carleman estimate and observability}
 \label{sec: Global Carleman estimate}
Using the local Carleman estimate of Theorem~\ref{theorem: main Carleman
  estimate} we prove a global version of this estimate.
This allows us to obtain an observability inequality with observation in some open subset
 $\scrO$ of $\Omega$. 
 In turn in Section~\ref{sec:resolvent, stab} we use this
 latter inequality to obtain a resolvent estimate for the plate
 semigroup generator that allows one to deduce a stabilization result
 for the damped plate equation.

\subsection{A global Carleman estimate}
Assume that the \LS condition of Definition~\ref{def: LS} holds for $(P_0, B_1, B_2)$ on $\partial
\Omega$. 

Let $\scrO_0, \scrO_1, \scrO$ be  open sets such that $\scrO_0 \Subset \scrO_1\Subset \scrO \Subset\Omega$. With
Proposition~3.31 and Remark~3.32 in \cite{JGL-vol1} there exists $\psi
\in \Cinf(\ovl{\Omega})$ such that
\begin{enumerate}
\item $\psi = 0$ and $\partial_\nu \psi <- C_0<0$ on $\partial \Omega$;
  \item $\psi >0$ in $\Omega$;
  \item $d \psi \neq 0$ in $\Omega \setminus \scrO_0$.
\end{enumerate}
Then, by Proposition~\ref{prop: sub-ellipticity recipe}, for $\csp$
chosen \suff large, one finds that $\varphi = \exp(\csp \psi)$ is such
that a
\begin{enumerate}
\item $\varphi = 1$ and $\partial_\nu \varphi <- C_0<0$ on $\partial \Omega$;
  \item $\varphi >1$ in $\Omega$;
  \item $(\Qsig^j,\varphi)$ satisfies the sub-ellipticity condition in
    $\Omega \setminus \scrO_0$, for $j=1,2$, for $\tau \geq \tau_0
    \sigma$ for $\tau_0$ chosen \suff large.
\end{enumerate}
Then, with Proposition~\ref{prop: LS after conjugation}, for $\k_0>0$
 chosen \suff large one finds that the \LS condition holds for
 $(\Psig, B_1, B_2, \varphi)$ at any 
 $(x,\xi',\tau,\sigma)$ for any $x \in \partial \Omega$, $\xi' \in
 T_x^* \partial \Omega \simeq \R^{d-1}$, $\tau>0$, and $\sigma>0$ such that $\tau \geq
 \k_0 \sigma$, for $\k_0$ chosen \suff large, using that $\partial
 \Omega$ is compact.

 Thus for any $x \in \partial \Omega$ the local estimate of
 Theorem~\ref{theorem: main Carleman estimate} applies.
 A similar result applies in the \nhd of any point of $\Omega
 \setminus \scrO_0$.

 With the weight function $\varphi$
 constructed above, following the patching procedure described in the proof of Theorem~3.34 in
 \cite{JGL-vol1}, one obtains the following global estimate
\begin{align}\label{eq: global estimate Pconj}
    \tau^{-1/2} \Normsc{e^{\tau \varphi} u}{4}
    + \normsc{ \trace( e^{\tau \varphi} u)}{3,1/2}
    \lesssim
    \Norm{e^{\tau \varphi} \Psig u}{L^2(\Omega)}
    + \sum\limits_{j=1}^{2} \normsc{e^{\tau \varphi} B_{j}u\bd}{7/2-k_{j}}
    + \tau^{-1/2} \Normsc{e^{\tau \varphi} \chi_0 u}{4},
\end{align}
for $\tau\geq \tau_0$,
$\k_0 \sigma \leq \tau \leq \k_0' \sigma $, and $u\in
\Cinf(\ovl{\Omega})$, and where $\chi_0 \in \Cinfc(\scrO)$ such that
$\chi_0 \equiv 1$ in a \nhd of $\ovl{\scrO_1}$.
Here, $\Normsc{.}{s}$ and $\normsc{.}{s}$, the Sobolev norms  with the large parameter $\tau$,
are understood
in $\Omega$ and $\partial\Omega$ respectively.

%%%%%%%%%%%%%%%%%%%%%%%%
% remark               %
%%%%%%%%%%%%%%%%%%%%%%%%
\begin{remark}
  \label{remark: global estimate third-order perturbation}
  Observe that inequality~\eqref{eq: global estimate Pconj} also holds
  for third-order perturbations of $\Psig$. Below, we shall use it for
  a second-order perturbation $\Psig-i \sigma^2 \alpha = \Delta^2 -
  \sigma^4 -i \sigma^2 \alpha $.
\end{remark}

\subsection{Observability inequality}

By density one finds that inequality~\ref{eq: global estimate Pconj}
holds for $u \in H^4(\Omega)$.

Let $ C_0 > \sup_{\ovl{\Omega}}\varphi-1$. Since $1 \leq \varphi
\leq \sup_{\ovl{\Omega}}\varphi$ one obtains
\begin{align}\label{eq: global estimate Pconj-2}
    \Norm{u}{H^4(\Omega)}
    \lesssim
    e^{C_0 \tau} \big( \Norm{\Psig u}{L^2(\Omega)}
    + \sum\limits_{j=1}^{2} \norm{ B_{j}u\bd}{H^{7/2-k_j}(\partial \Omega)}
  + \Norm{u}{H^4(\scrO_1)}
  \big).
\end{align}
 for $\tau\geq \tau_0$,
$\k_0 \sigma \leq \tau \leq \k_0' \sigma $.

With the ellipticity of $P_0$ one has
\begin{align*}
  \Norm{u}{H^4(\scrO_1)}
  \lesssim \Norm{P_0 u}{L^2(\scrO)}
  + \Norm{u}{L^2(\scrO)},
\end{align*}
since $\scrO_1 \Subset \scrO$. This can be proven by the introduction
of a parametrics for $P_0$.
One thus obtain
\begin{align*}
  \Norm{u}{H^4(\scrO_1)}
  \lesssim \Norm{\Psig u}{L^2(\Omega)}
  + (1+ \sigma^4) \Norm{u}{L^2(\scrO)},
\end{align*}
and thus with \eqref{eq: global estimate Pconj-2} one obtains the
following observability result.
%%%%%%%%%%%%%%%%%%%%%%%%
% theorem              %
%%%%%%%%%%%%%%%%%%%%%%%%
\begin{theorem}[observability inequality]
  \label{th: observation}
  Let $\Psig = \Delta^2 - \sigma^4$ and 
  let $B_1$ and $B_2$ be two boundary operators of order $k_1$ and
  $k_2$ as given in Section~\ref{sec: LS for bi-Laplace}.
  Assume that the \LS condition of Definition~\ref{def: LS} holds. 
  Let $\scrO$ be an open set of $\Omega$. There exists $C>0$ such that
  \begin{align*}
    \Norm{u}{H^4(\Omega)} \leq C
    e^{C|\sigma|} \big(
    \Norm{\Psig u}{L^2(\Omega)}
    + \sum\limits_{j=1}^{2} \norm{B_j u\bd}{H^{7/2-k_j}(\partial\Omega)}
    + \Norm{u}{L^2(\scrO)}
    \big),
  \end{align*}
  for $u \in H^4(\Omega)$.
\end{theorem}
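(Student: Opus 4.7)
The strategy is to combine the global Carleman estimate \eqref{eq: global estimate Pconj-2} (which is already in hand via the patching procedure described in the preceding subsection) with interior elliptic regularity for the bi-Laplace operator $\Delta^2$ in the observation region, and then to optimize the large parameter $\tau$ against the spectral parameter $\sigma$.

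First, I would extend \eqref{eq: global estimate Pconj-2} from $u \in \Cinf(\ovl{\Omega})$ to $u \in H^4(\Omega)$ by a density argument, which is straightforward since all the norms on both sides are continuous for the $H^4(\Omega)$ topology. This delivers, for $\tau \geq \tau_0$ and $\k_0 \sigma \leq \tau \leq \k_0' \sigma$,
\begin{align*}
\Norm{u}{H^4(\Omega)}
\lesssim e^{C_0 \tau}
\Bigl( \Norm{\Psig u}{L^2(\Omega)}
+ \sum_{j=1}^{2} \norm{B_j u\bd}{H^{7/2-k_j}(\partial\Omega)}
+ \Norm{u}{H^4(\scrO_1)} \Bigr).
\end{align*}

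Next, I would eliminate the local $H^4(\scrO_1)$ term by interior elliptic regularity for $P_0 = \Delta^2$. Since $\scrO_1 \Subset \scrO$, one may introduce a cutoff $\chi \in \Cinfc(\scrO)$ with $\chi \equiv 1$ on a neighborhood of $\ovl{\scrO_1}$, and apply a parametrix for the fourth-order elliptic operator $\Delta^2$ to the compactly supported function $\chi u$, commuting $\chi$ against $\Delta^2$ at the cost of lower-order contributions that can be reabsorbed by standard interpolation. This yields the elliptic estimate $\Norm{u}{H^4(\scrO_1)} \lesssim \Norm{\Delta^2 u}{L^2(\scrO)} + \Norm{u}{L^2(\scrO)}$. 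Writing $\Delta^2 u = \Psig u + \sigma^4 u$ gives
\begin{align*}
\Norm{u}{H^4(\scrO_1)} \lesssim \Norm{\Psig u}{L^2(\Omega)} + (1+\sigma^4)\Norm{u}{L^2(\scrO)}.
\end{align*}

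Finally, for $\sigma$ large enough that $\k_0 \sigma \geq \tau_0$, I choose $\tau = \k_0 \sigma$, which gives the factor $e^{C_0 \k_0 \sigma}$ in front of the right-hand side. The polynomial loss $(1+\sigma^4)$ coming from the interior elliptic estimate is trivially absorbed into $e^{C\sigma}$ after enlarging the constant $C$, yielding the claimed observability inequality in this regime. The handling of the remaining bounded range $\sigma \in [0,\tau_0/\k_0]$ is the only mildly delicate point; on this compact set the constant $e^{C|\sigma|}$ is uniformly bounded below, so it suffices to produce a uniform-in-$\sigma$ estimate, which follows either from the same Carleman argument applied with a fixed admissible $\sigma' = \tau_0/\k_0'$ together with the perturbation $\Psig - \Psigma^{\sigma'} = ((\sigma')^4 - \sigma^4) \id$ absorbed through continuity in $\sigma$, or equivalently from elliptic regularity for the \LS-elliptic boundary value problem $(P_0,B_1,B_2)$ combined with the unique continuation property (whose finite-dimensional obstruction is controlled by $\Norm{u}{L^2(\scrO)}$). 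The main obstacle is purely bookkeeping here, since all the analytic work is concentrated in the Carleman estimate itself.
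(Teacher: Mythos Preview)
Your proposal is correct and follows the paper's argument essentially verbatim: density, the interior elliptic estimate $\Norm{u}{H^4(\scrO_1)} \lesssim \Norm{P_0 u}{L^2(\scrO)} + \Norm{u}{L^2(\scrO)}$ via a parametrix, the substitution $P_0 = \Psig + \sigma^4$, and absorption of the polynomial factor into the exponential. You are in fact slightly more careful than the paper in singling out the bounded range $\sigma \in [0,\tau_0/\k_0]$, which the paper leaves implicit.
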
 

%%%%%%%%%%%%%%%%%%%%%%%%
% remark               %
%%%%%%%%%%%%%%%%%%%%%%%%
\begin{remark}
  \label{remark: observability third-order perturbation}
With Remark~\ref{remark: global estimate third-order perturbation} the
result of Theorem~\ref{th: observation} hold for $\Psig= \Delta^2 -\sigma^4$ replaced by $\Psig-i \sigma^2 \alpha = \Delta^2 -\sigma^4 -i \sigma^2 \alpha $.
\end{remark}

%%%%%%%%%%%%%
% Section          %
%%%%%%%%%%%%%
\section{Solutions to the damped plate equations}
\label{sec: Solutions to the damped plate equations}
Here, we review some aspects of the solutions of the damped plate
equation whose form we recall from the introduction:
\begin{equation}
  \label{eq: damped-plate equation}
\begin{cases}
\partial^2_ty +P y+\alpha(x)\partial_t y =0 
&(t,x) \in  \R_+ \times\Omega,\\
B_1 y_{|\R_+ \times\partial \Omega}= B_2u_{|\R_+ \times\partial \Omega}=0,\\
y_{|t=0}=y^0, \ \ \partial_t y_{|t=0}=y^1,
\end{cases}
\end{equation}
where $P = \Delta^2$ and $\alpha\geq 0$, positive on some open subset
of $\Omega$. The boundary operators $B_1$
and $B_2$ of orders $k_j$, $j=1,2$, less than or equal to $3$ in the normal direction are chosen so that
\begin{enumerate}
\item[(i)] 
  the \LS condition of Definition~\ref{def: LS} is fulfilled for $(P,
  B_1, B_2)$ on $\partial\Omega$;
\item[(ii)] 
  the operator $P$ is symmetric under homogeneous boundary
  conditions, that is, 
  \begin{align}
    \label{eq: symmetry property}
    \inp{Pu }{v}_{L^2(\Omega)} = \inp{u}{P v}_{L^2(\Omega)}, 
  \end{align}
  for $u, v \in H^4(\Omega)$ such that $B_j u_{|\partial \Omega} = B_j v_{|\partial \Omega} = 
  0$ on $\partial\Omega$, $j=1,2$.  Examples of such conditions are
  given in Section~\ref{sec: Examples  of boundary operators yielding symmetry}. 
\end{enumerate}
With the assumed \LS condition the operator 
\begin{align}
  \label{eq: Fredholm operator}
  L: H^4 (\Omega) &\to L^2(\Omega) \oplus H^{7/2-k_1} (\partial \Omega)
  \oplus H^{7/2-k_2} (\partial \Omega), \notag\\
  u & \mapsto (Pu, B_1 u_{|\partial \Omega} , B_2 u_{|\partial \Omega}),
\end{align}
is Fredholm.

\begin{enumerate}
  \item[(iii)]
We shall further assume that the Fredholm  index of the operator $L$ is zero.
\end{enumerate}

The previous symmetry property gives
$\inp{Pu}{u}_{L^2(\Omega)} \in \R$. We further assume the following
nonnegativity property:
\begin{enumerate}
\item[(iv)] For $u \in H^4(\Omega)$ such that
  $B_j u_{|\partial \Omega} = 0$ on $\partial\Omega$, $j=1,2$ one has
  \begin{align}
    \label{eq: nonnegativity assumption P}
    \inp{Pu}{u}_{L^2(\Omega)}\geq 0.
    \end{align}
  \end{enumerate}
 This last property is very natural to define a nonnegative energy for the
     plate equation given in \eqref{eq: damped-plate equation}.

     We first review some properties of the unbounded operator
     associated with the bi-Laplace operator and the two homogeneous
     boundary conditions based on the assumptions made here. Second,
     the well-posedness of the plate equation is reviewed by means of
     the a semigroup formulation. This semigroup formalism is also
     central in the stabilization result in Sections~\ref{sec: Resolvent estimate}--\ref{sec: Stabilization result}.

%%%%%%%%%%%%%
% Section          %
%%%%%%%%%%%%%
\subsection{The unbounded operator associated with the bi-Laplace
  operator}
\label{sec: unbounded bi-Laplace operator}

Associated with $P$ and the boundary operators $B_1$
and $B_2$ is the operator $(\Pell, D(\Pell))$ on $L^2(\Omega)$, with domain
\begin{align*}
  D(\Pell) = \big\{ u \in L^2(\Omega); \ P u \in L^2(\Omega), \ B_1
  u_{|\partial \Omega}
  = B_2 u_{|\partial \Omega}=0\big\},
\end{align*}
and given by $\Pell u = P u \in  L^2(\Omega)$ for $u \in  D(\Pell)$.
The definition of $D(\Pell)$ makes sense since having $P u \in
L^2(\Omega)$ for $u \in L^2(\Omega)$ implies that the traces $\partial_\nu^k
u_{|\partial \Omega}$ are well defined for $k=0, 1,2,3$. 

Since the \LS condition holds on $\partial \Omega$ one has $D(\Pell)
\subset H^4(\Omega)$ (see for instance Theorem~20.1.7 in
\cite{Hoermander:V3}) and thus one can also write $D(\Pell)$ as in
\eqref{eq: domain P intro}. 
From the assumed nonnegativity in \eqref{eq: nonnegativity assumption P} above one finds that
$\Pell + \id$ is injective. Since the operator
\begin{align*}
  L': H^4 (\Omega) &\to L^2(\Omega) \oplus H^{7/2-k_1} (\partial \Omega)
  \oplus H^{7/2-k_2} (\partial \Omega)\\
  u & \mapsto (Pu +u, B_1 u_{|\partial \Omega} , B_2 u_{|\partial \Omega})
\end{align*}
is Fredholm and has the same zero index as $L$ defined in \eqref{eq:
  Fredholm operator}, one finds that $L'$ is surjective. Thus
$\range(\Pell + \id) = L^2(\Omega)$.
One thus concludes that $\Pell$ is maximal monotone. 
From the assumed symmetry property \eqref{eq: symmetry property} and
one finds that   $\Pell$ is selfadjoint, using
that a symmetric maximal monotone operator is selfadjoint (see for
instance Proposition~7.6 in \cite{Brezis:11}).

The resolvent of $\Pell +\id$ being compact on $L^2(\Omega)$,  $\Pell$ has a sequence of
eigenvalues with finite multiplicities. With the assumed nonnegativity
\eqref{eq: nonnegativity assumption P}   they take the form of a sequence 
\begin{align*}
0\leq \mu_0 \leq \mu_1 \leq \cdots \leq \mu_k \leq \cdots 
\end{align*}
that grows to $+\infty$. Associated with this sequence is
$(\phi_j)_{j \in \N}$ a Hilbert basis of $L^2(\Omega)$. 
Any $u \in L^2(\Omega)$ reads $u = \sum_{j \in \N} u_j \phi_j$, with
$u_j = \inp{u}{\phi_j}_{L^2(\Omega)}$.
We define the Sobolev-like scale 
\begin{align}
  \label{eq: Sobolev scale}
  H^k_B (\Omega)= \{ u \in L^2(\Omega); (\mu_j^{k/4} u_j)_j \in \ell^2(\C)\} 
  \quad \text{for} \ k\geq 0.  
\end{align}
One has $D(\Pell) = H^4_B (\Omega)$ and $L^2(\Omega) = H^0_B(\Omega)$. Each $H^k_B (\Omega)$, $k \geq 0$, is equipped with
the inner product and norm
\begin{align*}
  \inp{u}{ v}_{H^k_B (\Omega)} 
  = \sum_{j \in \N} (1 + \mu_j )^{k/2} u_j \ovl{v_j}.
  \qquad \Norm{u}{H^k_B (\Omega)}^2 = \sum_{j \in \N} (1 + \mu_j )^{k/2}  |u_j|^2,
\end{align*}
yielding a Hilbert space structure. 
The space $H^k_B (\Omega)$ is dense in $H^{k'}_B (\Omega)$ if $0\leq
k' \leq k$ and the injection is compact.
Note that one uses $(1 + \mu_j)^{k/2}$ in place of $\mu_j^{k/2}$ since
$\ker(\Pell)$ may not be trivial. Note that if $k=0$ one recovers the
standard $L^2$-inner product and norm.

Using $L^2(\Omega)$ as a pivot space, for $k >0$ we also define the space
$H^{-k}_B(\Omega)$ as the dual space of $H^k_B(\Omega)$. One
finds that any $u \in H^{-k}_B(\Omega)$ takes  the form of the
following limit of $L^2$-functions
\begin{align*}
  u = \lim_{\ell \to \infty} \sum_{j=0}^\ell u_j \phi_j, 
\end{align*}
for some $(u_j)_j \subset  \C$ such that $\big((1 + \mu_j )^{-k/4} u_j\big)_j   \in
\ell^2(\C)$, with the limit occurring in
$\big(H^k_B(\Omega)\big)'$ with the natural dual strong topology. Moreover, one has $u_j =
\dup{u}{\ovl{\phi_j}}_{H^{-k}_B, H^{k}_B}$.
If $u = \sum_{j \in \N} u_j \phi_j  \in H^{-k}_B(\Omega)$ and $v =
\sum_{j \in \N} v_j \phi_j  \in H^{k}_B(\Omega)$ one finds
\begin{align*}
  \dup{u}{\ovl{v}}_{H^{-k}_B, H^{k}_B} = \sum_{j \in \N} u_j \ovl{v_j}.
\end{align*}

One can then extend (or restrict) the action of $\Pell$ on any space
$H^k_B(\Omega)$, $k \in \R$. One has $\Pell: H^k_B(\Omega) \to
H^{k-4}_B(\Omega)$ continuously with 
\begin{align}
  \label{eq: action P}
  \Pell u  = \sum_{j\in \N}\mu_j u_j \phi_j,  
  \quad \text{with convergence in} \ H^{k-4}_B(\Omega) \ \text{for} \ u = \sum_{j\in \N}u_j \phi_j
  \in H^k_B(\Omega).
\end{align} 
In particular, for $u \in H^4_B(\Omega)= D(\Pell)$ and $v \in H^{2}_B(\Omega)$
one has
\begin{align}
  \label{eq: pseudo-norm}
  \inp{\Pell u }{v}_{L^2(\Omega)} = \dup{\Pell u }{\bar{v}}_{H^{-2}_B,
  H^{2}_B}
  = \sum_{j \in \N} \mu_j u_j \ovl{v_j}
\end{align} 
and if $u,v \in H^{2}_B(\Omega)$ one has 
\begin{align}
  \label{eq: norm H2B}
  \inp{u}{v}_{H^{2}_B(\Omega)} = \inp{u }{v}_{L^2(\Omega)} 
  +  \dup{\Pell u }{\bar{v}}_{H^{-2}_B, H^{2}_B}
  = \sum_{j \in \N} (1+\mu_j) u_j \ovl{v_j}.
\end{align}
Note that 
\begin{align}
  \label{eq: norm H2B-bis}
 \dup{\Pell u }{\bar{v}}_{H^{-2}_B, H^{2}_B}
 = \inp{\Pell^{1/2} u}{\Pell^{1/2} v}_{L^2(\Omega)},
\end{align}
with the operator $\Pell^{1/2}$ easily defined by means of the Hilbert
basis $(\phi_j)_{j \in \N}$. In fact, $H^{2}_B(\Omega)$ is the domain
of $\Pell^{1/2}$ viewed as un unbounded operator on $L^2(\Omega)$.

We make the following observations. 
\begin{enumerate}
\item 
If $\ker (\Pell) = \{0\}$ then  
\begin{align*}
  (u,v) \mapsto \dup{\Pell u }{\bar{u}}_{H^{-2}_B, H^{2}_B},
\end{align*}
is also an inner-product on $H^{2}_B(\Omega)$, that yields an
equivalent norm.

\item 
If $0$ is an eigenvalue, that is, 
$\dim \ker(\Pell)= n\geq 1$ then $(\phi_0, \dots, \phi_{n-1})$ is a
orthonormal basis of $\ker(\Pell)$ for the $L^2$-inner product. 
From a classical unique continuation
property, since $\alpha(x) >0$ for $x$ in an open subset of $\Omega$ one sees that
\begin{align}
  \label{eq: second inner product ker P}
  (u,v) \mapsto \inp{\alpha u}{v}_{L^2(\Omega)}
\end{align} is also an inner product on
the finite dimensional space $\ker(\Pell) \subset L^2(\Omega)$. 
We introduce a second basis $(\varphi_0, \dots, \varphi_{n-1})$ of  $\ker(\Pell)$ orthonormal with respect to this second
inner product. 
\end{enumerate}
In what follows, we treat the more difficult case where $\dim
\ker(\Pell)= n\geq 1$. The case $\ker (\Pell)=\{0\}$ is left to the
reader.

%%%%%%%%%%%%%
% Section          %
%%%%%%%%%%%%%
\subsection{The plate semigroup generator}
\label{sec: The plate semigroup generator}
Set $\H = H^2_B (\Omega) \oplus L^2(\Omega)$ with natural inner
product and norm
\begin{align}
  \label{eq: norm H}
  &\biginp{(u^0,u^1)}{(v^0,v^1)}_{\H} = \inp{u^0}{v^0}_{H^2_B(\Omega)} +
  \inp{u^1}{v^1}_{L^2(\Omega)}, \\
  &\bigNorm{(u^0,u^1)}{\H}^2 = \Norm{u^0}{H^2_B(\Omega)}^2 + \Norm{u^1}{L^2(\Omega)}^2.
\end{align}

Define the unbounded operator
\begin{align}
  \label{eq: generator damped plate}
  A = \begin{pmatrix} 0 & -1\\ \Pell & \alpha(x)  \end{pmatrix},
\end{align}
on $\H$ 
with domain given by $D(A) = D(\Pell) \oplus H^2_B(\Omega)$.
This domain is dense in $\H$ and $A$ is a closed operator. One has 
\begin{align*}
  \mathcal N = \ker  (A) = \big\{ \transp (u^0, 0); \ u^0 \in \ker
  (\Pell)\big\}.
\end{align*} 

The important result of this section is the following proposition.
%%%%%%%%%%%%%%%%%%%%%%%%
% proposition          %
%%%%%%%%%%%%%%%%%%%%%%%%
\begin{proposition}
  \label{prop: A semigroup generator}
  The operator $(A, D(A))$ generates a bounded semigroup $S(t) = e^{-t
  A}$ on $\H$. 
\end{proposition}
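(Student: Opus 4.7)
The plan is to apply the Lumer--Phillips theorem after endowing $\H$ with an inner product equivalent to $\inp{\cdot}{\cdot}_{\H}$ but adapted to $\ker(\Pell)$, and then to upgrade the resulting quasi-contraction bound to uniform boundedness of $S(t)$ in $t$ via a physical energy argument. Density of $D(A) = D(\Pell) \oplus H^2_B(\Omega)$ in $\H$ and closedness of $A$ (inherited from closedness of $\Pell$ together with boundedness of multiplication by $\alpha \in L^\infty(\Omega)$) are immediate. For the range condition, I would solve $(\lambda \id + A)(u^0, u^1) = (f, g)$ with $\lambda > 0$ and $(f,g) \in \H$ by setting $u^1 = \lambda u^0 - f$, which reduces the system to $\bigl(\Pell + \lambda(\lambda + \alpha)\bigr) u^0 = g + (\lambda + \alpha) f$; since $\Pell$ is nonnegative selfadjoint with domain $D(\Pell)$ and the added term $\lambda(\lambda + \alpha)$ is a bounded strictly positive perturbation, the left-hand operator is an isomorphism $D(\Pell) \to L^2(\Omega)$, producing $(u^0, u^1) \in D(A)$.

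Next I would equip $\H$ with the inner product
\begin{equation*}
\inp{(u^0, u^1)}{(v^0, v^1)}_* = \bigdup{\Pell u^0}{\ovl{v^0}}_{H^{-2}_B, H^2_B} + \inp{\alpha \Pi_0 u^0}{\Pi_0 v^0}_{L^2} + \inp{u^1}{v^1}_{L^2},
\end{equation*}
where $\Pi_0$ denotes $L^2$-orthogonal projection onto $\ker(\Pell)$. This is equivalent to $\inp{\cdot}{\cdot}_{\H}$: a spectral-gap estimate yields $\Norm{u^0}{L^2} \lesssim \Norm{\Pell^{1/2} u^0}{L^2}$ on $(\ker \Pell)^\perp \cap H^2_B(\Omega)$, while on the finite-dimensional $\ker(\Pell)$ the form $(u, v) \mapsto \inp{\alpha u}{v}_{L^2}$ is an inner product by \eqref{eq: second inner product ker P}. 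Using selfadjointness of $\Pell$ and \eqref{eq: norm H2B-bis}, a direct computation gives, for $U \in D(A)$,
\begin{equation*}
\Re \inp{AU}{U}_* = \inp{\alpha u^1}{u^1}_{L^2} - \Re \inp{\alpha \Pi_0 u^1}{\Pi_0 u^0}_{L^2},
\end{equation*}
and Young's inequality applied to the cross term yields $\Re \inp{(A + \omega \id) U}{U}_* \geq 0$ for $\omega > 0$ sufficiently large. Combining this accretivity with the range condition of the previous step, the Lumer--Phillips theorem gives that $-(A + \omega \id)$ generates a contraction semigroup on $(\H, \inp{\cdot}{\cdot}_*)$, hence $-A$ generates a $C_0$-semigroup $S(t) = e^{-tA}$ on $\H$ with $\Norm{S(t)}{\H \to \H} \leq C e^{\omega t}$.

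For the uniform boundedness $\sup_{t \geq 0} \Norm{S(t)}{\H \to \H} < \infty$, I would use the physical energy $\E(U) = \Norm{u^1}{L^2}^2 + \bigdup{\Pell u^0}{\ovl{u^0}}_{H^{-2}_B, H^2_B}$, which is non-increasing along strong trajectories as $\frac{d}{dt} \E(S(t) U_0) = -2 \Norm{\sqrt{\alpha}\, u^1(t)}{L^2}^2 \leq 0$; this controls $\Norm{\Pell^{1/2} u^0(t)}{L^2}$ and $\Norm{u^1(t)}{L^2}$ uniformly in $t$. The missing piece in the $\H$-norm is the $L^2$-norm of $w(t) := \Pi_0 u^0(t) \in \ker(\Pell)$, which satisfies the finite-dimensional second-order ODE $\ddot w(t) + \Pi_0\bigl(\alpha(\dot w(t) + v_1(t))\bigr) = 0$ with forcing $v_1(t) := (\id - \Pi_0) u^1(t)$ already uniformly bounded in $t$; since the bilinear form $(u, v) \mapsto \inp{\alpha u}{v}_{L^2}$ is positive-definite on $\ker(\Pell)$ by \eqref{eq: second inner product ker P}, standard finite-dimensional damped-oscillator estimates close the bound. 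The main obstacle is precisely this last step: although $\mathcal N = \ker A$ is a finite-dimensional $A$-invariant subspace on which $S(t)$ acts as the identity, no natural complement of $\mathcal N$ in $\H$ is $A$-invariant, so one cannot simply reduce to quotient dynamics, and it is the non-degeneracy of $\alpha$ on $\ker(\Pell)$ — guaranteed by unique continuation — that provides the effective damping closing the argument.
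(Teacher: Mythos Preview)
Your route differs from the paper's, and the final step has a genuine gap as phrased. The ODE you derive for $w=\Pi_0 u^0$ reads $\ddot w + M\dot w = g(t)$ with $M=\Pi_0(\alpha\,\cdot)|_{\ker\Pell}$ positive definite and $g(t)=-\Pi_0(\alpha v_1(t))$ bounded. But this is a damped system with \emph{no restoring force}: the homogeneous matrix $\bigl(\begin{smallmatrix}0&I\\0&-M\end{smallmatrix}\bigr)$ has a semisimple zero eigenvalue of multiplicity $n=\dim\ker(\Pell)$, and a merely bounded forcing can drive $w$ to grow linearly (already $\ddot w+\dot w=1$ has $w(t)=t$ among its solutions). ``Standard damped-oscillator estimates'' therefore do not close the bound. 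What does close it is integrating the ODE once: since $v_1=(\id-\Pi_0)\partial_t u^0$ one gets $\int_0^t g = -\Pi_0\bigl(\alpha(\id-\Pi_0)(u^0(t)-u^0(0))\bigr)$, which is bounded by the energy control of $(\id-\Pi_0)u^0$; combined with the bounded $\dot w(t)=\Pi_0 u^1(t)$ this gives $M w(t)$ bounded, hence $w(t)$ bounded.

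That single integration is, in disguise, exactly the paper's structural observation. The integrated identity reads $\Pi_0\bigl(\alpha u^0(t)+u^1(t)\bigr)=\text{const}$, i.e.\ the linear forms $F_v(U)=\inp{\alpha u^0}{v}_{L^2}+\inp{u^1}{v}_{L^2}$ for $v\in\ker(\Pell)$ are conserved along trajectories. The paper isolates this directly as Lemma~\ref{lemma: stability dot H}, namely $\range(A)\subset\dot{\H}:=\bigcap_v\ker F_v$. So, contrary to your closing remark, there \emph{is} a natural $A$-invariant complement of $\mathcal N$ in $\H$, and the paper exploits it: on $\dot{\H}$ the energy seminorm \eqref{eq: norm dot H} is an equivalent norm, the restriction $\dot A$ satisfies the dissipativity estimate of Lemma~\ref{lemma: a priori resolvent estimate} with no shift, and Hille--Yosida yields a genuine contraction semigroup $\dot S(t)$ on $\dot{\H}$ (Lemma~\ref{lemma: reduced generator}); one then sets $S(t)=\dot S(t)\circ\Pi_{\dot{\H}}+\Pi_{\mathcal N}$. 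This bypasses both your modified inner product and the quasi-contraction-then-upgrade detour.
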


The understanding of this generator property relies on the
introduction of a reduced function space associated with
$\ker (\Pell)$, following for instance the analysis of
\cite{LR:97}. It will be also important in the derivation of a precise
resolvent estimate in Section~\ref{sec: Resolvent estimate}.  If
$\ker(\Pell) = \{0\}$, that is, $\mu_0 >0$, this procedure is not
necessary.  For $v \in \ker(\Pell)$, $v\neq 0$, we introduce the
linear form
\begin{align}
  \label{eq: linear form ker(P)}
  F_{v}: \H &\to \C\\ \notag
(u^0, u^1 )
  &\mapsto
    \inp{\alpha v}{v}_{L^2(\Omega)}^{-1} 
    \big( 
    \inp{\alpha u^0}{v}_{L^2(\Omega)}
    + \inp{u^1}{v}_{L^2(\Omega)}
    \big),
 \end{align}
We set 
\begin{align}
   \label{eq: defintion dot H}
  \dot{\H} = \bigcap_{{v \in \ker(\Pell)} \atop {v \neq 0}} \ker (F_{v})
  = \bigcap_{0\leq j \leq n-1} \ker (F_{\varphi_j}),
\end{align}
with the basis $(\varphi_0, \dots,
\varphi_{n-1})$ of $\ker(\Pell)$ introduced above. 
If $(v,0) \in \ker (A)$, with $0 \neq
 v \in \ker(\Pell)$,
note that  $F_{v} (v,0) = 1$. 
We set $\Theta_j = \transp (\varphi_j,0)$, $j=0, \dots, n-1$ and
\begin{align*}
  \Pi_{\mathcal N} V = \sum_{j=0}^{n-1}  F_{\varphi_j} (V)
  \Theta_j, 
  \qquad \text{for} \ V \in \H,
\end{align*}
and $\Pi_{\dot{\H}} = \id_{\H} - \Pi_{\mathcal N}$. We obtain that 
$\Pi_{\mathcal N}$ and $\Pi_{\dot{\H}}$ are {\em continuous} projectors associated with
the direct sum
\begin{align}
  \label{eq: decomposition H}
  \H = \dot{\H} \oplus \mathcal N 
  \ \ \text{and} \ \ 
  \dot{\H}= \ker (\Pi_{\mathcal N}).
\end{align}
Note that
  $\dot{\H}$ and ${\mathcal N}$ are {\em not orthogonal} in $\H$. 
Yet, it is important to note the following result.
%%%%%%%%%%%%%%%%%%%%%%%%
% lemma                %
%%%%%%%%%%%%%%%%%%%%%%%%
\begin{lemma}
  \label{lemma: stability dot H}
  We have $\range(A) \subset \dot{\H}$.
\end{lemma}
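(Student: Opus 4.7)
The plan is to unfold the definition of $F_v$ applied to $AV$ for an arbitrary $V = \transp(u^0,u^1) \in D(A)$ and observe that the two contributions coming from the damping term $\alpha u^1$ cancel, leaving only a term of the form $\inp{\Pell u^0}{v}_{L^2(\Omega)}$ which vanishes by self-adjointness of $\Pell$ since $v \in \ker(\Pell)$.

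More precisely, I would proceed as follows. Fix $V = \transp(u^0,u^1) \in D(A) = D(\Pell)\oplus H^2_B(\Omega)$. By the definition of $A$ in \eqref{eq: generator damped plate} one has $AV = \transp(-u^1, \Pell u^0 + \alpha u^1)$. Note that $\Pell u^0 \in L^2(\Omega)$ because $u^0 \in D(\Pell)$, and $\alpha u^1 \in L^2(\Omega)$ because $\alpha \in L^\infty(\Omega)$ (or simply smooth and bounded) and $u^1 \in H^2_B(\Omega) \subset L^2(\Omega)$. Thus $AV \in \H$ and we can evaluate $F_v(AV)$ for any nonzero $v \in \ker(\Pell)$ via the defining formula \eqref{eq: linear form ker(P)}.

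Plugging in the components of $AV$ yields
\begin{align*}
  F_v(AV)
  = \inp{\alpha v}{v}_{L^2(\Omega)}^{-1}
    \Big(
    \inp{\alpha(-u^1)}{v}_{L^2(\Omega)}
    + \inp{\Pell u^0 + \alpha u^1}{v}_{L^2(\Omega)}
    \Big)
  = \inp{\alpha v}{v}_{L^2(\Omega)}^{-1}
    \inp{\Pell u^0}{v}_{L^2(\Omega)},
\end{align*}
since the two terms involving $\alpha u^1$ cancel. Now $v \in \ker(\Pell) \subset D(\Pell)$ and, as recalled in Section~\ref{sec: unbounded bi-Laplace operator}, $\Pell$ is self-adjoint on $L^2(\Omega)$. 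Therefore
\begin{align*}
  \inp{\Pell u^0}{v}_{L^2(\Omega)}
  = \inp{u^0}{\Pell v}_{L^2(\Omega)} = 0,
\end{align*}
so $F_v(AV) = 0$.

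Since this holds for every nonzero $v \in \ker(\Pell)$, by the definition \eqref{eq: defintion dot H} of $\dot{\H}$ we conclude $AV \in \dot{\H}$, which proves the lemma. No serious obstacle is expected: the only conceptual point is that the damping contributions cancel exactly because $\alpha u^1$ appears with opposite signs in the two components of $AV$ after pairing against $v$ through $F_v$, which is precisely how the linear form $F_v$ was designed.
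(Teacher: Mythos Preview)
Your proof is correct and follows essentially the same argument as the paper: compute $F_v(AV)$ directly, observe the cancellation of the $\alpha u^1$ terms, and use the self-adjointness of $\Pell$ together with $v\in\ker(\Pell)$ to conclude. The only difference is notational (the paper names the components of $V$ as $v^0,v^1$ and the kernel element $\varphi$).
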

%%%% proof of lemma
\begin{proof}
  Let $U = \transp(u^0,u^1)= A V$ with $V= \transp(v^0,v^1) \in
  D(A)$. One has $u^0 = -v^1 \in  H^2_B (\Omega) $ and $u^1 = \Pell
  v^0 + \alpha v_1 \in
  L^2(\Omega)$. If $0 \neq \varphi \in \ker(\Pell)$ one writes 
  \begin{align*}
    \inp{\alpha\varphi}{\varphi}_{L^2(\Omega)}
    F_{\varphi} (U) &= 
    \inp{-\alpha v^1}{\varphi}_{L^2(\Omega)}
    + \inp{\Pell v^0 + \alpha v^1}{\varphi}_{L^2(\Omega)}\\
    &=\inp{\Pell v^0}{\varphi}_{L^2(\Omega)}
      = \inp{v^0}{\Pell \varphi}_{L^2(\Omega)}=0.    
  \end{align*}
  using that $v^0, \varphi \in D(\Pell)$, that $(\Pell, D(\Pell))$ is
  selfadjoint, and that $\varphi \in \ker(\Pell)$. The conclusion
  follows from the definition of $\dot{\H}$ in \eqref{eq: defintion dot H}. 
\end{proof}
The space $\dot{\H}$ inherits the natural inner product and  norm of
$\H$ given in \eqref{eq: norm H}.
Yet one finds that the inner product 
\begin{align}
  \label{eq: inner product dot H}
  \inp{(u^0,u^1)}{(v^0,v^1)}_{\dot \H} 
  = \dup{\Pell u^0}{\ovl{v^0}}_{H^{-2}_B,H^2_B} 
  + \inp{u^1}{v^1}_{L^2(\Omega)}, 
\end{align}
and associated norm
\begin{align}
  \label{eq: norm dot H}
  \Norm{(u^0,u^1)}{\dot \H}^2 
  = \dup{\Pell u^0}{\ovl{u^0}}_{H^{-2}_B,H^2_B} + \Norm{u^1}{L^2(\Omega)}^2, 
\end{align}
yields an equivalent norm on $\dot \H$ by a Poincar\'e-like argument.

We introduce the unbounded operator
$\dot{A}$ on $\dot{\H}$ given by the domain
  $D(\dot{A}) = D(A) \cap \dot{\H}$ 
and such that $\dot{A} V = A V$ for $V \in D(\dot{A})$. We then have
$ A= \dot{A}\circ \Pi_{\dot \H}$.
Observe that $D(\dot{A}) = \Pi_{\dot \H} \big( D(A)\big)$ since
$\mathcal N = \ker(A) \subset D(A)$.   Thus, 
one has
\begin{align}
  \label{eq: decomposition D(A)}
  D(A) =D(\dot{A}) \oplus {\mathcal N}.
\end{align}
As for the decomposition of $\H$ given in \eqref{eq: decomposition H}
note that $D(\dot{A})$ and ${\mathcal N}$ are not orthogonal. 

%%%%%%%%%%%%%%%%%%%%%%%%
% lemma                %
%%%%%%%%%%%%%%%%%%%%%%%%
\begin{lemma}
  \label{lemma: a priori resolvent estimate}
  Let $z \in \C$ 
  be such that $\Re z<0$. We have 
  \begin{equation*}
    \Norm{(z \id_{\dot{\H}} - \dot A) U}{\dot{\H}} \geq |\Re z| \,
    \Norm{U}{\dot{\H}}, \quad U \in D(\dot A).
  \end{equation*}
\end{lemma}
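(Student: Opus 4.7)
The plan is to show that $\dot A$ is accretive with respect to the inner product on $\dot{\H}$, and then read the inequality off via Cauchy--Schwarz. Concretely, for $U = \transp(u^0, u^1) \in D(\dot A) \subset D(\Pell) \oplus H^2_B(\Omega)$, I would compute the real part of $\inp{\dot{A} U}{U}_{\dot \H}$. By the definition \eqref{eq: inner product dot H} and since $\dot A U = A U = \transp(-u^1, \Pell u^0 + \alpha u^1)$, this is
\begin{equation*}
  \inp{\dot A U}{U}_{\dot{\H}}
  = - \dup{\Pell u^1}{\ovl{u^0}}_{H^{-2}_B, H^2_B}
  + \inp{\Pell u^0}{u^1}_{L^2(\Omega)}
  + \inp{\alpha u^1}{u^1}_{L^2(\Omega)}.
\end{equation*}

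Using \eqref{eq: norm H2B-bis} applied to $u^0, u^1 \in H^2_B(\Omega)$ one rewrites both of the first two terms through $\Pell^{1/2}$: namely $\dup{\Pell u^1}{\ovl{u^0}}_{H^{-2}_B, H^2_B} = \inp{\Pell^{1/2} u^1}{\Pell^{1/2} u^0}_{L^2}$ and similarly $\inp{\Pell u^0}{u^1}_{L^2} = \dup{\Pell u^0}{\ovl{u^1}}_{H^{-2}_B, H^2_B} = \inp{\Pell^{1/2} u^0}{\Pell^{1/2} u^1}_{L^2}$. The two terms are therefore complex conjugates of each other, and their sum is purely imaginary. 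Since $\alpha \geq 0$, this yields
\begin{equation*}
  \Re \inp{\dot A U}{U}_{\dot{\H}}
  = \inp{\alpha u^1}{u^1}_{L^2(\Omega)} \geq 0,
\end{equation*}
i.e.\ $\dot A$ is accretive on $\dot{\H}$.

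From here the stated inequality is immediate: for $z \in \C$ with $\Re z <0$,
\begin{equation*}
  \Re \inp{(z \id_{\dot \H} - \dot A) U}{U}_{\dot \H}
  = \Re z \, \Norm{U}{\dot{\H}}^2 - \Re \inp{\dot A U}{U}_{\dot{\H}}
  \leq \Re z \, \Norm{U}{\dot{\H}}^2,
\end{equation*}
so its absolute value is at least $|\Re z|\,\Norm{U}{\dot{\H}}^2$. Cauchy--Schwarz then gives
\begin{equation*}
  \Norm{(z \id_{\dot \H}- \dot A) U}{\dot \H} \, \Norm{U}{\dot \H}
  \geq \bigl|\inp{(z \id_{\dot \H}-\dot A) U}{U}_{\dot \H}\bigr|
  \geq |\Re z|\,\Norm{U}{\dot{\H}}^2,
\end{equation*}
and dividing by $\Norm{U}{\dot{\H}}$ (the case $U=0$ being trivial) concludes the proof.

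There is no real obstacle: the only thing to watch is that $u^1$ is only assumed to lie in $H^2_B(\Omega)$, not in $D(\Pell)$, so one should not try to write $\inp{\Pell u^0}{u^1}_{L^2} = \inp{u^0}{\Pell u^1}_{L^2}$ via self-adjointness directly; going through $\Pell^{1/2}$ as above circumvents this, and is exactly what \eqref{eq: norm H2B-bis} is designed for. The fact that $U \in \dot{\H}$ (as opposed to $\H$) plays no explicit role in this bound, but is needed for $\Norm{\cdot}{\dot{\H}}$ as given by \eqref{eq: norm dot H} to actually be a norm rather than only a seminorm.
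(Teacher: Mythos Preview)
Your proof is correct and follows essentially the same approach as the paper: compute $\inp{(z\id_{\dot\H}-\dot A)U}{U}_{\dot\H}$, observe that the cross terms combine to a purely imaginary quantity so that the real part is $\Re z\,\Norm{U}{\dot\H}^2 - \inp{\alpha u^1}{u^1}_{L^2}$, and finish with Cauchy--Schwarz. Your explicit passage through $\Pell^{1/2}$ via \eqref{eq: norm H2B-bis} to justify that the two cross terms are conjugate is a welcome clarification of a step the paper does in one line.
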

The proof of this lemma is quite classical. It is given in
Appendix~\ref{sec: lemma: a priori resolvent estimate}.

With the previous lemma, with the Hille-Yosida theorem one
proves the following result. 
%%%%%%%%%%%%%%%%%%%%%%%%
% lemma                %
%%%%%%%%%%%%%%%%%%%%%%%%
\begin{lemma}
  \label{lemma: reduced generator}
  The operator $(\dot A, D(\dot A))$ generates a semigroup of
  contraction 
  $\dot S(t) = e^{- t \dot A}$ on $\dot{\H}$. 
\end{lemma}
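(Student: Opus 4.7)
The plan is to verify the hypotheses of the Hille--Yosida theorem applied to $-\dot A$: that $\dot A$ is closed and densely defined on $\dot \H$, that the open half-line $(-\infty,0)$ lies in its resolvent set, and that for every $\lambda>0$ one has the resolvent bound $\|(\lambda \id_{\dot \H}+\dot A)^{-1}\|\leq 1/\lambda$. This last bound is exactly what Lemma~\ref{lemma: a priori resolvent estimate} provides after setting $z=-\lambda$, once surjectivity of $\lambda \id_{\dot \H}+\dot A$ has been established.

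The preparatory steps --- density and closedness of $\dot A$ on $\dot \H$ --- are routine once one uses the continuity of the projectors $\Pi_{\dot \H}$ and $\Pi_{\mathcal N}$ and the decomposition $D(A)=D(\dot A)\oplus \mathcal N$ given by~\eqref{eq: decomposition D(A)}. For density I would take $V\in\dot \H$ and a sequence $V_n\in D(A)$ with $V_n\to V$ in $\H$; then $\Pi_{\dot \H}V_n\in D(A)\cap \dot \H=D(\dot A)$, since $\Pi_{\mathcal N}V_n\in \mathcal N\subset D(A)$, and continuity of $\Pi_{\dot \H}$ yields $\Pi_{\dot \H}V_n\to V$ in $\dot \H$. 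For closedness I would combine the closedness of $A$ on $\H$ with the closedness of $\dot \H$ as a subspace of $\H$.

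The key step is the surjectivity of $\lambda \id_{\dot \H}+\dot A$ for $\lambda>0$, which I would obtain by first establishing surjectivity of $\lambda \id_{\H}+A$ from $D(A)$ onto $\H$. Given $\transp(f^0,f^1)\in \H$, eliminating $u^1=\lambda u^0-f^0$ from the system $(\lambda \id_{\H}+A)\transp(u^0,u^1)=\transp(f^0,f^1)$ reduces it to the single scalar equation
\begin{equation*}
\bigl(\Pell +\lambda^2+\lambda \alpha\bigr)u^0=f^1+(\lambda+\alpha)f^0
\end{equation*}
to be solved in $D(\Pell)$. Since $\Pell$ is selfadjoint and nonnegative on $L^2(\Omega)$ with compact resolvent and $\alpha\geq 0$, the operator $\Pell+\lambda^2+\lambda\alpha$ is selfadjoint with spectrum contained in $[\lambda^2,+\infty)$, hence boundedly invertible on $L^2(\Omega)$; this yields $u^0\in D(\Pell)$ and then $u^1=\lambda u^0-f^0\in H^2_B(\Omega)$, so that $U=\transp(u^0,u^1)\in D(A)$.

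Finally, to pass from surjectivity on $\H$ to surjectivity on $\dot \H$, given $V\in\dot \H$ I would take the solution $U\in D(A)$ of $(\lambda \id_{\H}+A)U=V$ and write $U=\dot U+W$ with $\dot U\in D(\dot A)$ and $W\in \mathcal N$ using~\eqref{eq: decomposition D(A)}; since $AW=0$ and $A\dot U=\dot A\dot U\in\dot \H$ by Lemma~\ref{lemma: stability dot H}, the $\mathcal N$-component of $(\lambda \id_{\H}+A)U$ equals $\lambda W$, which must vanish as $V\in\dot \H$. Hence $U=\dot U\in D(\dot A)$ solves $(\lambda \id_{\dot \H}+\dot A)\dot U=V$. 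The only conceptual care needed throughout is to track the non-orthogonal direct sum $\H=\dot \H\oplus \mathcal N$; once this bookkeeping is handled via the continuous projectors and the invariance $\range(A)\subset\dot \H$ of Lemma~\ref{lemma: stability dot H}, the Hille--Yosida theorem delivers the contraction semigroup $\dot S(t)=e^{-t\dot A}$ on $\dot \H$.
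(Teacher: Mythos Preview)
Your proof is correct and follows exactly the approach the paper indicates: it invokes the Hille--Yosida theorem together with Lemma~\ref{lemma: a priori resolvent estimate}, and you have simply filled in the details (density, closedness, surjectivity of $\lambda\id_{\dot\H}+\dot A$) that the paper leaves implicit. The only point worth making explicit is that the contraction property is with respect to the $\dot\H$-norm of~\eqref{eq: norm dot H}, since that is the norm in which Lemma~\ref{lemma: a priori resolvent estimate} is stated and proved.
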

If we set 
\begin{align}
  \label{eq: semigroup}
    S(t) = \dot{S}(t) \circ \Pi_{\dot{\H}}  + \Pi_{\mathcal N},
  \end{align}
we find that $S(t)$ is a semigroup on $\H$ generated by $(A,D(A))$,
thus proving Proposition~\ref{prop: A semigroup generator}. 
If $Y^0 \in D(A)$, the solution of the semigroup equation
  $\frac{d}{d t} Y(t) + A Y(t) =0$ reads
  \begin{align}
    \label{eq: semigroup solution}
    Y(t)= S(t) Y^0= \dot{S}(t) \circ \Pi_{\dot{\H}}  Y^0 +
    \Pi_{\mathcal N} Y^0.
  \end{align}
  We set $\dot{Y}(t) = \Pi_{\dot{\H}}  Y(t) = \dot{S}(t) \circ \Pi_{\dot{\H}}  Y^0$.

\bigskip
The adjoint of $\dot A$ has domain $D(\dot A^*) = D(A)$ and is given
by 
\begin{align*}
  \dot A^* = \begin{pmatrix} 0 & 1\\ -\Pell & \alpha(x)  \end{pmatrix}.
\end{align*}
Similarly to Lemma~\ref{lemma: a priori resolvent estimate} one has
the following result with a similar proof.
\begin{lemma}
  \label{lemma: a priori resolvent estimate bis}
  Let $z \in \C$ 
  be such that $\Re z<0$. We have 
  \begin{equation*}
    \Norm{(z \id_{\dot{\H}} - \dot A^*) U}{\dot{\H}} \geq |\Re z| \,
    \Norm{U}{\dot{\H}}, \quad U \in D(\dot A^*) = D(\dot A).
  \end{equation*}
\end{lemma}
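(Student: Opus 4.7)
The plan is to mirror the proof of Lemma~\ref{lemma: a priori resolvent estimate} for $\dot A$, exploiting the fact that the formal expression of $\dot A^*$ given above differs from $\dot A$ only by a sign on the off-diagonal $\pm \Pell$ entries while the damping term $\alpha$ keeps the same sign. Since the ``stiffness'' contribution from the $\Pell$ blocks turned out to be purely imaginary in the computation of $\Re \inp{\dot A U}{U}_{\dot \H}$, the exact same phenomenon will occur for $\dot A^*$, and $\Re \inp{\dot A^* U}{U}_{\dot \H}$ will reduce to the dissipation term $\inp{\alpha u^1}{u^1}_{L^2(\Omega)} \geq 0$.

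First, I would take $U = \transp(u^0, u^1) \in D(\dot A^*) = D(A) \cap \dot \H$, so that $u^0 \in D(\Pell) \subset H^2_B(\Omega)$ and $u^1 \in H^2_B(\Omega)$. Using the expression of $\dot A^*$ recalled above, one has $\dot A^* U = \transp\!\bigl(u^1,\, -\Pell u^0 + \alpha u^1\bigr)$, with $u^1 \in H^2_B(\Omega)$ and $-\Pell u^0 + \alpha u^1 \in L^2(\Omega)$. Inserting this into the inner product~\eqref{eq: inner product dot H} on $\dot \H$ yields
\begin{equation*}
  \inp{\dot A^* U}{U}_{\dot \H}
  = \dup{\Pell u^1}{\ovl{u^0}}_{H^{-2}_B, H^2_B}
  - \inp{\Pell u^0}{u^1}_{L^2(\Omega)}
  + \inp{\alpha u^1}{u^1}_{L^2(\Omega)}.
\end{equation*}

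Next, I would exploit the self-adjointness of $(\Pell, D(\Pell))$. Using the Hilbert basis representation, or equivalently~\eqref{eq: pseudo-norm}--\eqref{eq: norm H2B-bis}, one has $\dup{\Pell u^1}{\ovl{u^0}}_{H^{-2}_B, H^2_B} = \sum_j \mu_j u^1_j \ovl{u^0_j}$ and $\inp{\Pell u^0}{u^1}_{L^2(\Omega)} = \sum_j \mu_j u^0_j \ovl{u^1_j}$, so these two terms are complex conjugates of each other. Their difference is therefore purely imaginary, giving
\begin{equation*}
  \Re \inp{\dot A^* U}{U}_{\dot \H}
  = \inp{\alpha u^1}{u^1}_{L^2(\Omega)} \geq 0,
\end{equation*}
since $\alpha \geq 0$.

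Finally, I would write $\Re \inp{(z \id_{\dot\H} - \dot A^*) U}{U}_{\dot \H} = \Re z\, \Norm{U}{\dot \H}^2 - \inp{\alpha u^1}{u^1}_{L^2(\Omega)}$, which is $\leq \Re z \,\Norm{U}{\dot\H}^2 \leq 0$ under the assumption $\Re z < 0$. Taking absolute values gives $|\Re z|\, \Norm{U}{\dot \H}^2 \leq \bigl|\Re \inp{(z \id_{\dot\H} - \dot A^*) U}{U}_{\dot \H}\bigr|$, and the Cauchy--Schwarz inequality on $\dot \H$ then yields
\begin{equation*}
  |\Re z|\, \Norm{U}{\dot \H}^2 \leq \Norm{(z \id_{\dot\H} - \dot A^*) U}{\dot \H}\, \Norm{U}{\dot \H},
\end{equation*}
from which the stated bound follows. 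There is no serious obstacle here: the only point requiring any care is to confirm that the ``$\Pell$-cross'' terms indeed cancel in real part, which is a direct consequence of the selfadjointness of $\Pell$ already installed in Section~\ref{sec: unbounded bi-Laplace operator}; and to note that the argument is valid in the quotient norm~\eqref{eq: norm dot H} on $\dot \H$, which is legitimate because $D(\dot A^*) \subset \dot \H$ so the expression $\dup{\Pell u^0}{\ovl{u^0}}_{H^{-2}_B, H^2_B}$ defines an equivalent norm on the relevant space.
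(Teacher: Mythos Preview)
Your proof is correct and follows essentially the same approach as the paper's proof of Lemma~\ref{lemma: a priori resolvent estimate}: compute $\inp{(z\id_{\dot\H}-\dot A^*)U}{U}_{\dot\H}$, observe that the $\Pell$-cross terms contribute only to the imaginary part by selfadjointness of $\Pell$, deduce that the real part equals $\Re z\,\Norm{U}{\dot\H}^2 - \inp{\alpha u^1}{u^1}_{L^2(\Omega)}$, and conclude via Cauchy--Schwarz. The paper itself does not spell out the argument for $\dot A^*$ beyond noting that the proof is similar, and your write-up supplies exactly those details.
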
 

 %%%%%%%%%%%%%
% Section          %
%%%%%%%%%%%%%   
\subsection{Strong and weak solutions to the damped plate equation}
\label{sec: Strong and weak solutions}
For $y(t)$ a solution to the damped plate equation~\eqref{eq:
  damped-plate equation} one has $Y(t) = \transp(y(t), \partial_t y(t))$
formally solution to $\frac{d}{d t} Y(t) + A Y(t) =0$ and conversely. 

The semigroup $S(t)$ generated by $A$ as given by
Proposition~\ref{prop: A semigroup generator} allows one to go beyond
this formal observation and one obtains the
following well-posedness result for strong solutions of the damped
plate equation.
%%%%%%%%%%%%%%%%%%%%%%%%
% proposition          %
%%%%%%%%%%%%%%%%%%%%%%%%
\begin{proposition}[strong solutions of the damped plate equation]
  \label{prop: strong solutions plate equation}
  For $(y^0, y^1) \in H^4_B(\Omega)\times H^2_B(\Omega)$ there exists a unique 
  \begin{align*}
    y \in \Con^0 \big([0,+\infty); H^4_B(\Omega)\big)
    \cap \Con^1 \big([0,+\infty); H^2_B(\Omega) \big) \cap
  \Con^2\big([0,+\infty); L^2(\Omega)\big) 
  \end{align*}
  such that 
  \begin{align}
    \label{eq: damped plate equation-strong solution}
    \partial_t^2 y + P y + \alpha \partial_t y =0 \quad \text{in} \
    L^\infty([0,+\infty); L^2(\Omega)), \qquad y_{|t=0}  = y^0, \ \partial_t y_{|t=0}  = y^1.
  \end{align}
  Moreover, there exists $C>0$ such that 
  \begin{align}
    \label{eq: continuity strong solution damped plate equation-LS}
    \Norm{y (t)}{H^4_B(\Omega)} +  \Norm{\partial_t y (t)}{H^2_B(\Omega)}
    \leq C  \big( \Norm{y^0}{H^4_B(\Omega)} +
    \Norm{y^1}{H^2_B(\Omega)}\big), \qquad t\geq 0.
  \end{align}
\end{proposition}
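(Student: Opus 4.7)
The plan is to realize the damped plate equation as a first-order evolution equation on the Hilbert space $\H = H^2_B(\Omega) \oplus L^2(\Omega)$ driven by the generator $A$ of \eqref{eq: generator damped plate}, and then to exploit Proposition~\ref{prop: A semigroup generator}. Set $Y^0 = \transp(y^0, y^1)$; the hypothesis $(y^0, y^1) \in H^4_B(\Omega) \times H^2_B(\Omega)$ is exactly $Y^0 \in D(A) = D(\Pell) \oplus H^2_B(\Omega)$. By the Hille--Yosida theorem applied to $(A, D(A))$, the orbit $Y(t) = S(t) Y^0$ lies in $\Con^0([0,+\infty); D(A)) \cap \Con^1([0,+\infty); \H)$ and satisfies $\frac{d}{dt} Y(t) + A Y(t) = 0$, $Y(0) = Y^0$.

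Writing $Y(t) = \transp(y(t), v(t))$, the two rows of the ODE give $\partial_t y = v$ and $\partial_t v = -\Pell y - \alpha v$. The inclusion $Y \in \Con^0([0,+\infty); D(A))$ immediately yields $y \in \Con^0([0,+\infty); H^4_B(\Omega))$ and $v = \partial_t y \in \Con^0([0,+\infty); H^2_B(\Omega))$. The identity $\partial_t^2 y = -\Pell y - \alpha \partial_t y$ then holds pointwise in $L^2(\Omega)$, its right-hand side being continuous in $t$ with values in $L^2(\Omega)$, which gives \eqref{eq: damped plate equation-strong solution} and the continuity class for $y$. For uniqueness, any other strong solution $\tilde y$ produces $\tilde Y = \transp(\tilde y, \partial_t \tilde y) \in \Con^1(\H) \cap \Con^0(D(A))$ solving the same first-order ODE, hence $\tilde Y(t) = S(t) Y^0$ by the standard abstract uniqueness argument (differentiate $t \mapsto S(T-t) \tilde Y(t)$, or apply the a priori dissipativity bound of Lemma~\ref{lemma: a priori resolvent estimate} to the difference after projection onto $\dot \H$).

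For the continuity estimate \eqref{eq: continuity strong solution damped plate equation-LS}, one uses the decomposition $\H = \dot{\H} \oplus \mathcal N$ and formula \eqref{eq: semigroup solution}. First, boundedness of the projectors $\Pi_{\dot \H}, \Pi_{\mathcal N}$ and contractivity of $\dot S(t)$ in $\dot\H$ (Lemma~\ref{lemma: reduced generator}) give $\Norm{S(t) Y^0}{\H} \leq C \Norm{Y^0}{\H}$. Second, since $\Pi_{\mathcal N} Y^0 \in \mathcal N = \ker A \subset D(A)$, the decomposition \eqref{eq: decomposition D(A)} gives $\Pi_{\dot\H} Y^0 \in D(\dot A)$ and $\dot A \Pi_{\dot\H} Y^0 = A Y^0$ (using also Lemma~\ref{lemma: stability dot H} which places $AY^0 \in \dot\H$); then commuting $\dot A$ with $\dot S(t)$ and applying the contraction on $\dot\H$ gives
\begin{equation*}
  \Norm{A S(t) Y^0}{\H}
  = \Norm{\dot S(t) \dot A \Pi_{\dot\H} Y^0}{\dot\H}
  \leq \Norm{A Y^0}{\H}.
\end{equation*}
The graph norm $\Norm{Y}{\H} + \Norm{AY}{\H}$ is equivalent to the product norm of $H^4_B(\Omega) \times H^2_B(\Omega)$, using the elliptic regularity identification $D(\Pell) = H^4_B(\Omega)$ with $\Norm{u}{H^4_B} \asymp \Norm{u}{L^2} + \Norm{\Pell u}{L^2}$ (together with $\alpha \in L^\infty$ to absorb the cross term in $AY$). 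This yields the claimed estimate.

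The main obstacle is the last step: because the topological decomposition $\H = \dot{\H} \oplus \mathcal N$ is \emph{not} orthogonal (as emphasized after \eqref{eq: decomposition H}), one cannot simply read off contractivity for $S(t)$ on $\H$; one must instead exploit Lemma~\ref{lemma: stability dot H}, which sends $A Y^0$ into $\dot\H$, to transfer the $\dot\H$-contraction of $\dot S(t)$ into a uniform bound on the $D(A)$-norm of $S(t) Y^0$. Everything else is a routine translation between the semigroup formulation on $\H$ and the second-order PDE formulation in terms of $y$.
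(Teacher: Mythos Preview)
Your proof is correct and follows the same approach as the paper: the proposition is obtained as a direct consequence of Proposition~\ref{prop: A semigroup generator} (boundedness of the semigroup $S(t)$ on $\H$) together with standard semigroup regularity for initial data in $D(A)$. The paper does not spell out the details of the continuity estimate~\eqref{eq: continuity strong solution damped plate equation-LS}; your argument via the decomposition $S(t) = \dot S(t)\circ\Pi_{\dot\H} + \Pi_{\mathcal N}$, the identity $A S(t) Y^0 = \dot S(t) A Y^0$ (using $\mathcal N = \ker A$ and Lemma~\ref{lemma: stability dot H}), and the equivalence of the graph norm with the $H^4_B\times H^2_B$ norm is a correct way to fill this in.
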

With $Y(t)$ as above, for such a solution $y(t)$ one has 
\begin{align*}
  \frac{d}{d t} Y(t) + A Y(t) =0, \qquad Y(0) = Y^0 = \transp(y^0,y^1), 
\end{align*}
that is,
\begin{align*}
  Y(t) = S(t) Y^0\in
  \Con^0 \big([0,+\infty); D(A) \big)
  \cap \Con^1 \big([0,+\infty); H^2_B(\Omega) \oplus L^2(\Omega) \big).
  \end{align*}

A weak solution to the damped plate equation is simply associated with
an initial data $(y^0, y^1) \in H^2_B(\Omega) \times L^2(\Omega)$ and
given by the first coordinate of $Y(t) = S(t) Y^0$. Then one has
\begin{align*}
   Y(t) \in
  \Con^0 \big([0,+\infty); \H \big)
  \cap \Con^1 \big([0,+\infty); L^2(\Omega)  \oplus H^{-2}_B(\Omega) \big).
\end{align*}
or equivalently
\begin{align*}
  y \in \Con^0 \big([0,+\infty); H^2_B(\Omega)\big)
  \cap \Con^1 \big([0,+\infty); L^2(\Omega) \big)
  \cap\Con^2\big([0,+\infty); H^{-2}_B(\Omega)\big) .
  \end{align*}

  \medskip
  For a strong solution, the natural energy is given by
  \begin{align}
    \label{eq: energy}
    \E( y) (t) = \frac12 \big(
    \Norm{\partial_t y (t)}{L^2(\Omega)}^2
    + \inp{\Pell y (t)}{y (t)}_{L^2(\Omega)}
    \big).
  \end{align}
  Observe that if $y^0 \in \ker(\Pell)$ then $y(t) = y^0$ is solution
  to \eqref{eq: damped-plate equation} with $y^1=0$. This is
  consistent with the form of the semigroup $S(t)$ given in \eqref{eq:
    semigroup}. Such a solution is independent of the evolution
  variable $t$, and thus, despite damping, there is no decay. However,
  note that such a solution is `invisible' for the energy defined in
  \eqref{eq: energy}. In fact, for a strong solution to \eqref{eq:
    damped-plate equation}  as given by Proposition~\ref{prop: strong solutions plate equation} one has
  \begin{align}
    \label{eq: connection energy norm H dot}
     \E( y) (t) = \frac12 \Norm{\dot Y(t)}{\dot \H}^2,
  \end{align}
  with $\dot Y(t)$ as defined below
  \eqref{eq: semigroup solution} and $\Norm{.}{\dot \H}$ defined in
  \eqref{eq: norm dot H}. For a strong solution, we write
  \begin{align*}
    \frac{d}{d t} \E( y) (t)
    &= \Re \inp{\partial_t y(t) }{\partial_t^2 y(t)}_{L^2(\Omega)}
    + \frac12 \dup{\Pell \partial_t y(t)}{\ovl{y(t)}}_{H^{-2}_B, H^2_B}
    + \frac12 \inp{\Pell  y(t)}{\partial_t y(t)}_{L^2(\Omega)}\\
    &= \Re \inp{\partial_t y(t) }{( \partial_t^2 + \Pell) y(t)}_{L^2(\Omega)}
    = - \Re \inp{\partial_t y(t) }{\alpha\partial_t  y(t)}_{L^2(\Omega)} \leq 0
  \end{align*}
  since $\alpha \geq 0$. Thus, the energy of a strong solution is nonincreasing.
  To understand the decay of the energy one has to focus on the
  properties of the semigroup $\dot S(t)$ and its generator $(\dot A,
  D(\dot A))$ on $\dot \H$. This is done in Section~\ref{sec: Resolvent estimate}.

  \medskip
  For a weak solution $y(t) \in \Con^0 \big([0,+\infty); H^2_B(\Omega)\big)
  \cap \Con^1 \big([0,+\infty); L^2(\Omega) \big)$ the energy is
  defined by
  \begin{align*}
    \E( y) (t) = \frac12 \big(
    \Norm{\partial_t y (t)}{L^2(\Omega)}^2
    + \dup{\Pell y (t)}{\ovl{y (t)}}_{H^{-2}_B, H^2_B}
    \big)
  \end{align*}
  that coincides with \eqref{eq: energy} for a strong solution.
  The stabilization result we are interested in only concerns strong
  solutions (see Section~\ref{sec: Stabilization result}). Thus, we shall
  not mention weak solutions in what follows.

%%%%%%%%%%%%%
% Section          %
%%%%%%%%%%%%%
 \section{Resolvent estimates and applications to
   stabilization}
 \label{sec:resolvent, stab}
 Here we use the observability inequality of Theorem~\ref{th: observation}
to obtain a resolvent estimate for the plate
 semigroup generator that allows one to deduce a stabilization result
 for the damped plate equation. This a sequence of argument comes from
 the seminal works of Lebeau~\cite{Lebeau:96} and
 Lebeau-Robbiano~\cite{LR:97}.

%%%%%%%%%%
% Section          %
%%%%%%%%%
\subsection{Resolvent estimate}
\label{sec: Resolvent estimate}
We prove a resolvent estimate for the unbounded operator $(\dot{A}, D (\dot{A}))$ that
acts on $\dot{\H}$. First, we establish that $\{ \Re z \leq 0\}$ lies in the
resolvent set of $\dot{A}$.
%%%%%%%%%%%%%%%%%%%%%%%%
% proposition          %
%%%%%%%%%%%%%%%%%%%%%%%%
\begin{proposition}
  \label{prop: position spectrum boundary damping}
  The spectrum of $(\dot{A}, D (\dot{A}))$ is contained in $\{ z \in \C; \Re (z) >0\}$.
\end{proposition}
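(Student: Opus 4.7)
The plan is to establish $\{\Re z \leq 0\}\subset \rho(\dot A)$. By Lemma~\ref{lemma: reduced generator}, $\dot A$ generates a contraction semigroup on $\dot\H$, so $\{\Re z < 0\}\subset \rho(\dot A)$; what remains is to show $i\R \subset \rho(\dot A)$. First, I would observe that $\dot A$ has compact resolvent: indeed $D(\dot A) \subset H^4_B(\Omega)\oplus H^2_B(\Omega)$ injects compactly into $\dot\H\subset \H$, and $\rho(\dot A)$ is non-empty by the preceding step. Hence the spectrum of $\dot A$ is a discrete set of eigenvalues of finite multiplicity, and it suffices to exhibit no eigenvalue on $i\R$.

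The point $z=0$ is immediate from Lemma~\ref{lemma: stability dot H} and the direct sum \eqref{eq: decomposition H}: if $V\in D(\dot A)$ with $\dot A V = 0$ then $V\in \ker(A)\cap \dot\H = \mathcal N \cap \dot\H = \{0\}$.

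For $z = i\beta$ with $\beta\in\R\setminus\{0\}$, suppose $V=(v^0,v^1)\in D(\dot A)$ satisfies $(i\beta\, \id_{\dot\H} - \dot A)V = 0$. Writing out the two components yields $v^1 = -i\beta v^0$ and then $\Pell v^0 = (i\beta - \alpha)v^1 = (\beta^2 + i\beta\alpha)v^0$, with $v^0\in H^4(\Omega)$ satisfying the homogeneous conditions $B_1 v^0_{|\partial\Omega} = B_2 v^0_{|\partial\Omega} = 0$. Pairing this identity with $v^0$ in $L^2(\Omega)$ and using self-adjointness of $\Pell$, so that $\inp{\Pell v^0}{v^0}_{L^2(\Omega)}\in\R$, the imaginary part gives $\beta\inp{\alpha v^0}{v^0}_{L^2(\Omega)} = 0$; since $\alpha\geq 0$ and $\beta\neq 0$, the function $v^0$ vanishes on the open set $\{\alpha>0\}$. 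I would then pick an open set $\scrO\Subset \{\alpha>0\}$ and invoke Theorem~\ref{th: observation} together with Remark~\ref{remark: observability third-order perturbation} with $\sigma=|\beta|^{1/2}$: the eigenvalue equation reads $(\Delta^2 - \sigma^4 - i\sigma^2\alpha)v^0 = 0$ when $\beta>0$, and one reduces the case $\beta<0$ to the same form by passing to $\ovl{v^0}$. Since then $v^0$ (resp.\ $\ovl{v^0}$) satisfies the equation, the boundary conditions, and vanishes on $\scrO$, the observability estimate forces $\Norm{v^0}{H^4(\Omega)}=0$, hence $V=0$.

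The main obstacle is the final step: matching the eigenvalue equation to the precise form $\Delta^2 - \sigma^4 - i\sigma^2\alpha$ required by Theorem~\ref{th: observation} with $\sigma\geq 0$, which calls for a complex-conjugation trick when $\beta<0$. The rest is routine spectral bookkeeping based on compactness of the resolvent and the direct-sum structure that defines $\dot\H$.
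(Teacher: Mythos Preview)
Your proposal is correct and follows the same broad outline as the paper: first $\{\Re z<0\}\subset\rho(\dot A)$, then rule out $i\R$. Two points of comparison are worth noting.

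For surjectivity on $i\R$, the paper uses a direct Fredholm-index argument: writing $z\,\id_{\dot\H}-\dot A = T+\iota$ with $T=(z-1)\,\id_{\dot\H}-\dot A$ invertible (by Case~1) and $\iota:D(\dot A)\hookrightarrow\dot\H$ compact, one concludes that $z\,\id_{\dot\H}-\dot A$ is Fredholm of index $0$, hence bijective once injectivity is known. Your route via compact resolvent (spectrum $=$ point spectrum) is equivalent in spirit and arguably cleaner.

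For the unique-continuation step the paper is more elementary than your proposal. After establishing that $v^0$ vanishes on $\{\alpha>0\}$, one has $\alpha v^0=0$ a.e., so the eigenvalue equation simplifies to $\Pell v^0=\beta^2 v^0$; thus $v^0$ is an eigenfunction of the selfadjoint operator $\Pell$ that vanishes on an open set, and classical unique continuation for $\Delta^2-\beta^2$ yields $v^0\equiv 0$. Your appeal to Theorem~\ref{th: observation} (with Remark~\ref{remark: observability third-order perturbation}) is valid but brings in the paper's full Carleman machinery where it is not needed. Moreover, the complex-conjugation trick for $\beta<0$ is unnecessary and, as stated, would require the boundary operators $B_1,B_2$ to have real coefficients; once $\alpha v^0=0$ the equation is simply $(\Delta^2-\beta^2)v^0=0$, which is real and handled by Theorem~\ref{th: observation} with $\sigma=|\beta|^{1/2}$ for either sign of $\beta$.
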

The proof of this proposition is rather classical based on a unique
continuation argument and a Fredholm index argument for a compact
perturbation. It is given in Appendix~\ref{sec: prop:
  position spectrum boundary damping}.

%%%%%%%%%%%%%%%%%%%%%%%%
% theorem              %
%%%%%%%%%%%%%%%%%%%%%%%%
\begin{theorem}\label{res}
Let $\scrO$ be an open subset of $\Omega$ such that $\alpha\geq
\delta>0$ on $\scrO$. Then, for $\sigma \in \R$ the unbounded
operator $i\sigma\id -\dot {A}$ is invertible on $\dot \H$ and
for there exist $C>0$ such  that 
\begin{align}
  \label{eq: precise resolvent estimate}
  \Norm{ (i\sigma\id-\dot{A})^{-1}}{\mathcal{L}(\dot{\H})}
  \leq C e^{C |\sigma|^{1/2}}, \qquad \sigma \in \R.
  \end{align}
\end{theorem}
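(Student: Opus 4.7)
The invertibility of $i\sigma\id - \dot A$ for every $\sigma\in\R$ is immediate from Proposition~\ref{prop: position spectrum boundary damping}, which places the spectrum of $\dot A$ in $\{\Re z > 0\}$; since the resolvent is continuous on its resolvent set, it is uniformly bounded on any compact portion of the imaginary axis. Only large $|\sigma|$ therefore requires an argument. Let $F = \transp(f^0,f^1)\in\dot\H$ and $U = \transp(u^0,u^1)\in D(\dot A)$ satisfy $(i\sigma\id - \dot A) U = F$. Eliminating $u^1 = f^0 - i\sigma u^0$ reduces the system to the scalar equation
$$(\Pell - \sigma^2 - i\sigma\alpha)u^0 = g, \qquad g := -f^1 + (i\sigma-\alpha)f^0,$$
with $u^0\in H^4_B(\Omega)$ satisfying the homogeneous boundary conditions $B_j u^0_{|\partial\Omega}=0$. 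Note that $\Norm{g}{L^2(\Omega)}\lesssim (1+|\sigma|)\Norm{F}{\dot\H}$.

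Setting the Carleman spectral parameter $s = |\sigma|^{1/2}$, the operator $\Pell - \sigma^2 - i\sigma\alpha$ equals $\Delta^2 - s^4 \mp i s^2\alpha$ depending on $\mathrm{sgn}(\sigma)$, a zero-order perturbation of $\Delta^2 - s^4$. Theorem~\ref{th: observation} combined with Remark~\ref{remark: observability third-order perturbation} (which applies equally to the opposite sign of the perturbation, being only a second-order term) then yields
$$\Norm{u^0}{H^4(\Omega)} \leq C e^{Cs}\bigl(\Norm{g}{L^2(\Omega)} + \Norm{u^0}{L^2(\scrO)}\bigr),$$
the homogeneous LS boundary traces being zero. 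To kill the residual $\Norm{u^0}{L^2(\scrO)}$ I would invoke the damping energy estimate: pairing the scalar equation with $u^0$ in $L^2(\Omega)$ and taking the imaginary part, the first two terms on the left being real by selfadjointness of $\Pell$, gives
$$|\sigma|\int_\Omega \alpha|u^0|^2 \leq \Norm{g}{L^2(\Omega)}\Norm{u^0}{L^2(\Omega)},$$
and since $\alpha\geq\delta$ on $\scrO$ this produces $\delta|\sigma|\,\Norm{u^0}{L^2(\scrO)}^2 \leq \Norm{g}{L^2(\Omega)}\Norm{u^0}{L^2(\Omega)}$.

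Bounding $\Norm{u^0}{L^2(\Omega)}$ by the observability inequality and applying a Young inequality to absorb a fraction of $\Norm{u^0}{L^2(\scrO)}$ on the left, one deduces, for $|\sigma|\geq 1$, that $\Norm{u^0}{L^2(\scrO)} \lesssim |\sigma|^{-1/2} e^{Cs}\Norm{g}{L^2(\Omega)}$. Reinjecting this into the observability yields $\Norm{u^0}{H^4(\Omega)} \leq C e^{Cs}\Norm{g}{L^2(\Omega)}$. Returning to $U$ via $u^1 = f^0 - i\sigma u^0$ and $\Norm{g}{L^2(\Omega)}\lesssim (1+|\sigma|)\Norm{F}{\dot\H}$, one absorbs the polynomial factor $(1+|\sigma|)\leq e^{s}$ (valid for $s$ large, since $s^2 = |\sigma|$) into the exponential to obtain $\Norm{U}{\dot\H} \leq C e^{C|\sigma|^{1/2}}\Norm{F}{\dot\H}$. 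The main obstacle is this Young absorption: the observability supplies the error $\Norm{u^0}{L^2(\scrO)}$ with coefficient $e^{Cs}$ while the damping only controls it with the far smaller factor $|\sigma|^{-1}=s^{-2}$, so the balance must be performed carefully to prevent the exponential from spoiling the absorption. One must also verify that the solution indeed lies in $\dot\H$ and not merely in $\H$; for $\sigma\neq 0$ this follows from $\range(A)\subset\dot\H$ (Lemma~\ref{lemma: stability dot H}), which forces $i\sigma\Pi_{\mathcal N}U = 0$, and the $\sigma=0$ case is handled directly by Proposition~\ref{prop: position spectrum boundary damping}.
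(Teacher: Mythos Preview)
Your argument is correct and follows the same route as the paper: reduce to the scalar equation $(\Pell-\sigma^2-i\sigma\alpha)u^0=g$, apply the observability inequality of Theorem~\ref{th: observation} with spectral parameter $|\sigma|^{1/2}$, and close with the imaginary-part identity $|\sigma|\int_\Omega\alpha|u^0|^2=|\Im\inp{g}{u^0}|$.

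The only difference is in how the loop is closed. You first bound $\Norm{u^0}{L^2(\Omega)}$ by the observability inequality, substitute into the damping estimate, and absorb $\Norm{u^0}{L^2(\scrO)}$ into itself. The paper instead drops the factor $|\sigma|$ to a constant $\sigma_0$, writes $\Norm{u^0}{L^2(\scrO)}\lesssim\Norm{g}{}^{1/2}\Norm{u^0}{L^2(\Omega)}^{1/2}$, inserts this directly into the observability inequality, and uses the Young inequality $e^{Cs}\Norm{g}{}^{1/2}\Norm{u^0}{}^{1/2}\le\eps^{-1}e^{2Cs}\Norm{g}{}+\eps\Norm{u^0}{L^2(\Omega)}$ to absorb $\eps\Norm{u^0}{L^2(\Omega)}$ into the $H^4$-norm on the left. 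This avoids the double use of the observability inequality and makes plain that the absorption parameter $\eps$ is chosen independently of $\sigma$, so the exponential cannot obstruct it; your worry about ``the balance must be performed carefully'' is therefore unfounded, and the paper's ordering makes this transparent. Your final remark about membership in $\dot\H$ is unnecessary: by hypothesis $U\in D(\dot A)\subset\dot\H$ from the outset.
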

\begin{proof}
By Proposition~\ref{prop: position spectrum boundary damping}
$i\sigma\id -\dot {A}$ is indeed invertible. Observe that it then
suffices to prove the resolvent estimate~\eqref{eq: precise resolvent
  estimate} for $|\sigma| \geq \sigma_0$ for some $\sigma_0>0$. 
  
Let $U= \transp (u^0,u^1)\in D(\dot A)$ and $F= \transp (f^0,f^1)\in \dot{\H}$ be such that 
$(i\sigma\id-\dot{A}) U=F$. This reads
\begin{align*}
  f^0=i\sigma u^0+u^1,
  \qquad
  f^1=-\Pell u^0+(i\sigma-\alpha)u^1.
\end{align*}
which gives
\begin{equation*}
  (\Pell -\sigma^2-i\sigma\alpha)u^0=f 
\end{equation*}
with $f=(i\sigma-\alpha)f^0-f^1$. Computing the $L^2$-inner product
with $u^0$ one finds
\begin{align*}
  \inp{(\Pell - \sigma^2) u^0}{u^0}_{L^2(\Omega)}
  - i \sigma  \inp{\alpha u^0}{u^0}_{L^2(\Omega)}
  =  \inp{f}{u^0}_{L^2(\Omega)}.
\end{align*}
As $\alpha \geq 0$,  computing the imaginary part one obtains
\begin{align*}
  \sigma \Norm{\alpha^{1/2} u^0}{L^2(\Omega)}^2
  =-\Im  \inp{f}{u^0}_{L^2(\Omega)}.
  \end{align*}
  Since $\alpha\geq \delta>0$ in $\scrO$ by assumption and since we consider
  $|\sigma| \geq \sigma_0$ one has
\begin{align*}
  \delta \sigma^0 \Norm{u^0}{L^2(\scrO)}^2
  \leq \Norm{f}{L^2(\Omega)}
  \Norm{u^0}{L^2(\Omega)}.
\end{align*}
Applying Theorem~\ref{th: observation} (with Remark~\ref{remark: observability third-order perturbation}) one has
\begin{align*}
  \Norm{u^0}{H^4(\Omega)}
  \lesssim e^{C|\sigma|^{1/2}}
  \big(
  \Norm{f}{L^2(\Omega)}+\Norm{u^0}{L^2(\scrO)}
  \big),
\end{align*}
replacing $|\sigma|$ by $|\sigma|^2$ therein.
 Thus, we obtain
  \begin{align*}
  \Norm{u^0}{H^4(\Omega)}
  \lesssim e^{C|\sigma|^{1/2}}
  \big(
  \Norm{f}{L^2(\Omega)}+\Norm{f}{L^2(\Omega)}^{1/2}
  \Norm{u^0}{L^2(\Omega)}^{1/2}
  \big),
  \end{align*}
  for $|\sigma| \geq \sigma_0$. 
  With Young inequality we write, for $\eps>0$, 
  \begin{align*}
    e^{C|\sigma|^{1/2}}\Norm{f}{L^2(\Omega)}^{1/2}
    \Norm{u^0}{L^2(\Omega)}^{1/2}
    \lesssim  \eps^{-1}e^{2C|\sigma|^{1/2}}\Norm{f}{L^2(\Omega)}
    + \eps \Norm{u^0}{L^2(\Omega)}.
  \end{align*}
  Thus, with $\eps$ chosen \suff small one obtains
  \begin{align*}
  \Norm{u^0}{H^4(\Omega)}
  \lesssim e^{C|\sigma|^{1/2}}
  \Norm{f}{L^2(\Omega)}.
  \end{align*}
Since $u^1=f^0-i\sigma u^0$  and  $f=(i\sigma-\alpha)f^0-f^1$ we finally obtain that 
 \begin{align*}
   \Norm{u^0}{H^4(\Omega)}
   +\Norm{u^1}{L^2(\Omega)}
   &\lesssim  e^{C|\sigma|^{1/2}}
     \big( \Norm{f^0}{L^2(\Omega)}+\Norm{f^1}{L^2(\Omega)}\big) \\
   &\lesssim e^{C|\sigma|^{1/2}} \Norm{F}{\dot \H}.
 \end{align*}
 Since $u^0 \in H^4(\Omega)$ one has
 \begin{align*}
   \big|\inp{\Pell u^0}{u^0}_{L^2(\Omega)}\big|
   \leq \Norm{u^0}{H^4(\Omega)}\Norm{u^0}{L^2(\Omega)}
   \leq \Norm{u^0}{H^4(\Omega)}^2
 \end{align*}
 and thus one finally obtains
 \begin{align*}
   \Norm{U}{\dot \H}^2  = \inp{\Pell u^0}{u^0}_{L^2(\Omega)}
   +\Norm{u^1}{L^2(\Omega)}^2
   &\lesssim e^{C|\sigma|^{1/2}} \Norm{F}{\dot \H}^2,
 \end{align*}
 which concludes the proof of the resolvent estimate~\eqref{eq: precise resolvent estimate}.
\end{proof}

%%%%%%%%%%%%%
% Section          %
%%%%%%%%%%%%%
\subsection{Stabilization result}
\label{sec: Stabilization result}
As an application of the resolvent estimate of
Theorem~\ref{res}, we give a logarithmic stabilization result of the damped
plate equation \eqref{eq: damped plate equation - intro}.

For the plate generator $(A,D(A))$ its iterated domains are
inductively given by
\begin{align*}
  D(A^{n+1}) = \{ U \in D(A^n); A U \in D(A^n)\}.
\end{align*}
With Proposition~\ref{prop: strong solutions plate equation}, for $Y^0 =\transp(y^0,y^1) \in D(A^n)$ then the first
component of $Y(t) = S(t) Y^0$ is precisely the solution to \eqref{eq:
  damped-plate equation}.
One has $Y(t) = \dot{Y}(t) + \Pi_{\mathcal N} Y^0$ with $\dot{Y}(t)
=\dot{S}(t) \Pi_{\dot{\H}} Y^0$ with the semigroup $\dot{S}(t)$
defined in Section~\ref{sec: The plate semigroup generator}. Moreover,
by \eqref{eq: connection energy norm H dot} the energy of $y(t)$ is
given by the square of the $\dot{\H}$-norm of $\dot{Y}(t)$.

With the resolvent estimate of Theorem~\ref{res}, with the result of
Theorem 1.5 in \cite{BD} one obtains the following bound for the
energy of $y(t)$:
\begin{align}
  \label{eq: logarithmic energy decay}
  \E(y)(t) &= \Norm{\dot{Y} (t) }{\dot{\H}}^2
             \leq  \frac{C}{\big(\log(2+t)\big)^{4 n}}
             \Norm{A^n Y^0}{\dot{\H}}^2.
\end{align}  
We have thus obtain the following theorem.
%%%%%%%%%%%%%%%%%%%%%%%%
% theorem              %
%%%%%%%%%%%%%%%%%%%%%%%%
\begin{theorem}[logarithmic stabilization for the damped plate equation]
  \label{theorem: stabilisation theorem}
  Assume that conditions (i) to (iv) of Section~\ref{sec: Solutions to
    the damped plate equations} hold. Let $n \in \N$, $n \geq 1$. Then, there exists $C>0$ such
  that for any $Y^0 =\transp(y^0,y^1) \in D(A^n)$ the associated
  solution $y(t)$ of the damped plate equation~\eqref{eq: damped plate equation - intro} has the logarithmic energy decay given by \eqref{eq: logarithmic energy decay}.
\end{theorem}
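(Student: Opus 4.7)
The plan is to obtain the theorem as a direct consequence of the resolvent estimate of Theorem \ref{res} combined with an abstract result relating polynomial/logarithmic resolvent growth on the imaginary axis to decay rates of the associated semigroup on iterated domains. First, for $Y^0 = \transp(y^0,y^1) \in D(A^n)$ with $n \geq 1$, I would decompose according to \eqref{eq: decomposition H} and use \eqref{eq: semigroup solution}: $Y(t) = S(t)Y^0 = \dot{S}(t)\Pi_{\dot{\H}} Y^0 + \Pi_{\mathcal N} Y^0$, so $\dot Y(t) = \dot S(t) \Pi_{\dot{\H}} Y^0$. Because the second coordinate of $\Pi_{\mathcal N} Y^0$ vanishes and its first coordinate lies in $\ker(\Pell)$, identity \eqref{eq: connection energy norm H dot} gives $\E(y)(t) = \tfrac12 \Norm{\dot S(t) \Pi_{\dot{\H}} Y^0}{\dot \H}^2$, so the entire decay analysis reduces to a bound on the reduced semigroup on $\dot{\H}$.

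Next, I would check that the abstract hypotheses are satisfied on $\dot{\H}$. Lemma \ref{lemma: reduced generator} provides boundedness (in fact contractivity) of $\dot S(t)$, Proposition \ref{prop: position spectrum boundary damping} together with Theorem \ref{res} ensures $i\R \subset \rho(\dot A)$, and Theorem \ref{res} yields the explicit resolvent bound $\Norm{(i\sigma\id - \dot A)^{-1}}{\mathcal L(\dot{\H})} \leq M(|\sigma|)$ with $M(s) = C e^{C s^{1/2}}$. Applying the abstract theorem (Theorem 1.5 of \cite{BD}) one inverts the function $s \mapsto s\,M(s)$; since $\log(s M(s)) \sim C s^{1/2}$ as $s \to \infty$, one finds $(sM(s))^{-1}(t) \asymp (\log t)^2$, whence
\begin{align*}
    \Norm{\dot S(t) Z}{\dot{\H}} \leq \frac{C}{(\log(2+t))^{2n}} \Norm{\dot A^n Z}{\dot{\H}}, \qquad Z \in D(\dot A^n),
\end{align*}
for every $n \geq 1$.

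To close the argument, I would transfer the bound back to $Y^0$ and $A$. Since by Lemma \ref{lemma: stability dot H} one has $\range(A) \subset \dot{\H}$ and $\mathcal N = \ker A \subset D(A)$, the decomposition \eqref{eq: decomposition D(A)} gives $D(A^n) \ni Y^0 \Longrightarrow \Pi_{\dot{\H}} Y^0 \in D(\dot A^n)$ and, by induction on $n$, $\dot A^n \Pi_{\dot{\H}} Y^0 = A^n Y^0$ (the kernel part is killed at the first application of $A$). Setting $Z = \Pi_{\dot{\H}} Y^0$ in the inequality above and squaring yields
\begin{align*}
    \E(y)(t) = \tfrac12 \Norm{\dot S(t)\Pi_{\dot{\H}} Y^0}{\dot{\H}}^2 \leq \frac{C}{(\log(2+t))^{4n}} \Norm{A^n Y^0}{\dot{\H}}^2,
\end{align*}
which is precisely \eqref{eq: logarithmic energy decay}.

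There is no real obstacle beyond bookkeeping: all the analytical difficulty has already been absorbed into the Carleman estimate of Section \ref{sec: Global Carleman estimate}, the observability inequality of Theorem \ref{th: observation}, and the resolvent estimate of Theorem \ref{res}. The only nontrivial checks are (a) identifying the asymptotic behavior $(sM(s))^{-1}(t) \asymp (\log t)^2$ so that the exponent $4n$ (rather than, say, $2n$) in the final bound is correct after squaring, and (b) verifying the commutation $A^n Y^0 = \dot A^n \Pi_{\dot{\H}} Y^0$, which is a direct consequence of $\range(A) \subset \dot{\H}$ and $\mathcal N = \ker A$.
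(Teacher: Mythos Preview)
Your proposal is correct and follows essentially the same approach as the paper: apply the Batty--Duyckaerts theorem \cite{BD} to the contraction semigroup $\dot S(t)$ on $\dot{\H}$, using the resolvent bound of Theorem~\ref{res}, and then transfer the estimate back to $A$ via the decomposition $\H = \dot{\H} \oplus \mathcal N$. The paper's argument is extremely terse (it simply cites Theorem~1.5 of \cite{BD} and states the resulting inequality), whereas you have correctly supplied the bookkeeping details the paper omits, in particular the asymptotic inversion yielding the exponent $4n$ and the identity $\dot A^n \Pi_{\dot{\H}} Y^0 = A^n Y^0$ following from $\range(A)\subset\dot{\H}$ and $\mathcal N=\ker A$.
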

Note that for $n=1$ using the form of $A$ and \eqref{eq: norm H2B-bis}
one recovers the statement of Theorem~\ref{theorem: stabilisation theorem-intro} in the introductory section.

%%%%%%%%%%
% Appendices 
%%%%%%%%%%
\appendix

%%%%%%%%%%%%%
% Section          %
%%%%%%%%%%%%%
\section{Some technical results and proofs}

\subsection{A perfect elliptic estimate}
\label{SB}
Here we consider $a(\y',\xi_d)$ polynomial in the $\xi_d$ variable and
such that its root have negative imaginary parts microlocally.
%%%%%%%%%%%%%%%%%%%%%%%%
% lemma                %
%%%%%%%%%%%%%%%%%%%%%%%%
\begin{lemma}\label{el1}
  Let $\k_0 >0$.
Let $a(\y',\xi_d)\in \Ssc^{k,0}$, with $\y'=(x,\xi',\tau,\sigma)$ and with
$k\geq 1$, 
that is,  $a(\y',\xi_d) = \sum_{j=0}^k a_j (\y') \xi_d^{k-j}$, 
and where the coefficients $a_j$ are homogeneous in
$(\xi',\tau,\sigma)$. Moreover, assume that $a_0(\y') =1$.
Set $A=\Op(a)$.

Let $\U$ be a conic open subset of  $W \times\R^{d-1}\times
[0,+\infty) \times[0,+\infty)$ where $\tau \geq \k_0 \sigma$ and such
that all the roots of $a(\y',\xi_d)$ have a negative imaginary part for $\y'\in\U$.

Let $\chi(\y')\in \Ssct^{0}$ be homogeneous of degree zero and such
that $\supp(\chi)\subset\U$ and $N\in\N$.  Then there exist $C>0$, $C_N>0$, and $\tau_0>0$ such that 
\begin{align*}
  \Normsc{\Opt(\chi)v}{k}
  + \normsc{\trace(\Opt(\chi)v)}{k-1,1/2}
  \leq C
  \Norm{A \Opt(\chi) v}{+}
  + \Normsc{v}{k,-N},
  \end{align*}
for $w\in\Sbarp$ and $\tau\geq \max (\tau_0,\k_0 \sigma)$.
\end{lemma}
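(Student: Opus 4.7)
The strategy is to factor the order-$k$ operator $A$ into $k$ first-order operators whose characteristic sets lie in the open lower half of $\xi_d$ on $\U$, and to iterate a multiplier estimate in the spirit of Lemma~\ref{sl2} in order to gain all $k$ derivatives together with the $k$ boundary traces of $\Opt(\chi) v$.

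First I would carry out the factorization. Since every root $\pi_j(\y')$ of $a(\y', \cdot)$ lies in the open lower half-plane for $\y' \in \U$ and satisfies $|\pi_j| \lesssim \lsct$, continuity and homogeneity yield $\Im \pi_j(\y') \leq -C \lsct$ on any conically compact subset of $\U$. After possibly refining the cone and using a partition of unity, the roots can be chosen smoothly (branch points of the discriminant are avoided by shrinking, and a patching argument handles the rest). Picking $\tchi \in \Ssct^0$ homogeneous of degree zero with $\tchi \equiv 1$ in a conic neighborhood of $\supp(\chi)$ and $\supp(\tchi) \subset \U$, set $L_j := D_d - \Opt(\tchi \pi_j) \in \Dsc^1$. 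By tangential symbolic calculus, $L_1 L_2 \cdots L_k$ has principal symbol $\prod_j(\xi_d - \tchi \pi_j)$, which coincides with $a$ wherever $\tchi = 1$ and in particular on $\supp(\chi)$. Therefore $A - L_1 \cdots L_k$ has symbol vanishing on $\supp(\chi)$, so composing with $\Opt(\chi)$ on the right produces remainders of arbitrarily low order, giving
\begin{align*}
  A \, \Opt(\chi) v = L_1 L_2 \cdots L_k \, \Opt(\chi) v + Rv,
  \qquad \Norm{Rv}{+} \lesssim \Normsc{v}{k,-N}, \quad N \in \N.
\end{align*}

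Next I would establish a per-factor estimate. For each $L_j$ the integration-by-parts argument of Lemma~\ref{sl2} extends immediately to arbitrary $s \in \R$: for any $\chi' \in \Ssct^0$ homogeneous of degree zero whose support is contained in the conic set where $\Im(\tchi \pi_j) \leq -C \lsct$, one has
\begin{align*}
  \Normsc{\Opt(\chi') w}{1,s} + \normsc{\Opt(\chi') w\br}{s+1/2}
  \lesssim \Normsc{L_j \Opt(\chi') w}{0,s} + \Normsc{w}{0,-N},
  \qquad w \in \Sbarp.
\end{align*}
The argument is identical to Lemma~\ref{sl2}: expand $2\Re \inp{L_j \Opt(\chi') w}{i \Lsct^{2s+1} \Opt(\chi') w}_+$, integrate by parts to produce the boundary contribution $\normsc{\Opt(\chi') w\br}{s+1/2}^2$, and apply the microlocal G\aa rding inequality to the operator with real principal symbol $-2 \Im \pi_j \, \lsct^{2s+1} \geq C \lsct^{2s+2}$ on $\supp(\chi')$ to obtain $\Norm{\Lsct^{s+1} \Opt(\chi') w}{+}^2$. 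Uniformity in $\sigma$ is granted by $\tau \geq \k_0 \sigma$, as explained in Section~\ref{sec: calculus with spectral parameter}.

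Finally I would iterate. Choose a nested family of cutoffs $\chi = \chi_{(k)} \prec \chi_{(k-1)} \prec \cdots \prec \chi_{(0)} \prec \tchi$, all in $\Ssct^0$ and supported in $\U$, with $\chi_{(\ell)} \equiv 1$ on $\supp(\chi_{(\ell+1)})$. Set $u_j := L_{j+1} L_{j+2} \cdots L_k \, \Opt(\chi) v$, so $u_k = \Opt(\chi) v$ and $u_0 = L_1 L_2 \cdots L_k \, \Opt(\chi) v$. Since the $L_\ell$ are differential operators and do not enlarge the tangential microlocal support, $\Opt(\chi_{(j)}) u_j = u_j$ modulo a smoothing error of size $\Normsc{v}{k,-N}$. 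Applying the per-factor estimate at level $j$ with exponent $s = k-1-j$, and chaining via $L_j u_j = u_{j-1}$ for $j = k-1, k-2, \ldots, 0$, I gain one derivative in $D_d$ together with the corresponding trace at each step, eventually reaching
\begin{align*}
  \Normsc{\Opt(\chi) v}{k} + \normsc{\trace(\Opt(\chi) v)}{k-1, 1/2}
  \lesssim \Norm{L_1 L_2 \cdots L_k \, \Opt(\chi) v}{+} + \Normsc{v}{k,-N},
\end{align*}
and the factorization paragraph replaces the right-hand side by $\Norm{A \, \Opt(\chi) v}{+}$, concluding the proof. \textbf{The main obstacle} is the smooth selection of roots in the factorization, as the discriminant of $a$ may vanish inside $\U$; this is absorbed into a microlocal partition of unity that reduces to neighborhoods where a smooth ordering of the roots exists. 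A careful book-keeping of microlocal supports and the exponent $s$ through the iteration is tedious but routine.
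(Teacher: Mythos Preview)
The paper does not give a proof of this lemma; it refers to Lemma~4.1 in \cite{ML} and remarks that the argument there adapts to the extra parameter $\sigma$ since $\sigma\lesssim\tau$. Your strategy --- factor $a$ into first-order pieces with roots in the open lower half-plane, apply the multiplier estimate of Lemma~\ref{sl2} to each factor, and iterate --- is the natural route and is essentially the argument in \cite{ML}. The iteration you describe is correct in spirit; the index range ``$j=k-1,\ldots,0$'' and the single exponent ``$s=k-1-j$'' are slightly off (one actually applies the first-order estimate with several values of $s$ at each stage, or equivalently recovers $D_d$-derivatives through the identity $D_d u_j = u_{j-1}+\Opt(\tchi\pi_j)u_j$), but this is routine bookkeeping.

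The genuine soft spot is the one you flag yourself, and your proposed fix is not quite adequate. If the discriminant of $a(\y',\cdot)$ vanishes at some $\y_0'\in\U$, shrinking does not help: every \nhd of $\y_0'$ still meets the branch locus, and a smooth labeling of individual roots simply need not exist there (for smooth coefficients the discriminant may even vanish on a set with nonempty interior). The standard remedy is a \emph{block} factorization: near $\y_0'$ group the roots of $a(\y_0',\cdot)$ by distinct value, and factor $a=\prod_\ell a_\ell$, where each $a_\ell$ is the smooth monic polynomial in $\xi_d$ whose roots are those of $a$ lying in a small circle about one value (these factors are smooth because they are given by contour integrals in $\xi_d$, and the clusters are stable). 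Each $a_\ell$ has degree $<k$ unless all $k$ roots coincide at $\y_0'$, and one concludes by induction on the degree. For a block of degree $m$ whose roots all sit in one small disc in the lower half-plane, one can either run the same induction again in a smaller \nhd, or prove the estimate directly without factorizing, exactly as the paper does in Case~(iii) of Proposition~\ref{sl3} for $m=2$: interior control comes from the symbol ellipticity $|a_\ell|\gtrsim\lsc^{m}$ via the G{\aa}rding inequality of Proposition~\ref{prop: Gaarding quadratic forms}, and the trace control comes from the positivity of the boundary quadratic form arising from the commutator, which is equivalent (as in Lemma~\ref{sl4}) to all roots lying in the lower half-plane. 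With this refinement of the factorization step, your proof goes through.
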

We refer to \cite{ML} for a proof  (see Lemma~4.1 therein and its
proof that adapts to the presence of the parameter $\sigma$ with
$\sigma\lesssim \tau$ in a straightforward manner). 

%%%%%%%%%%%%%
% Section          %
%%%%%%%%%%%%%
\subsection{Basic resolvent estimation}
\label{sec: lemma: a priori resolvent estimate}
Here we provide a proof of Lemma~\ref{lemma: a priori resolvent
  estimate}

Let $U= \transp (u^0, u^1) \in D(\dot{A})$.  With \eqref{eq: inner product dot H} We write
      \begin{align*} 
        \inp{(z \id_{\dot{\H}} - \dot{A})U}{U}_{\dot{\H}} 
        &= \left( 
        \begin{pmatrix}z u^0 + u^1\\zu^1 - \Pell u^0 - \alpha u^1 \end{pmatrix}, 
        \begin{pmatrix} u^0 \\u^1\end{pmatrix}
          \right)_{\dot{\H}}\\
        &= z  \Norm{U}{\dot{\H}}^2 
          + \dup{\Pell u^1}{\ovl{u^0}}_{H^{-2}_B,H^2_B} 
           - \inp{\Pell u^0}{u^1}_{L^2(\Omega)}
          - \inp{\alpha u^1}{u^1}_{L^2 (\Omega)}
         \\
        &= z \Norm{U}{\dot{\H}}^2 
          + 2 i \Im \inp{u^1}{\Pell u^0}_{L^2 (\Omega)} 
         - \inp{\alpha u^1}{u^1}_{L^2 (\Omega)}.
      \end{align*}
      Computing the real part one obtains
      \begin{align}
        \label{eq: scalar product sigma id -A boundary damping}
        - \Re \inp{(z \id_{\dot{\H}}- \dot{A})U}{U}_{\dot{\H}} 
        = - \Re (z) \Norm{U}{\dot{\H}}^2 
        + \inp{\alpha u^1}{u^1}_{L^2 (\Omega)}.
      \end{align}
      As $\alpha \geq 0$ and $\Re z<0$, this gives
       \begin{align*}
        |\Re \inp{(z \id_{\dot{\H}} - \dot{A})U}{U}_{\dot{\H}} |
        \geq |\Re (z) |\, \Norm{U}{\dot{\H}}^2 ,
      \end{align*}
      which yields the conclusion of Lemma~\ref{lemma: a priori resolvent estimate}.  \hfill \qedsymbol \endproof

%%%%%%%%%%%%%
% Section          %
%%%%%%%%%%%%%
\subsection{Basic estimation for the resolvent set}
\label{sec: prop: position spectrum boundary damping}

Here we provide a proof of Proposition~\ref{prop: position spectrum
  boundary damping}.

Let $z \in \C$. We consider the two cases.

\medskip\noindent
  \paragraph{{\bfseries Case 1: $\bld{\Re z <0}$.}}
  By Lemma~\ref{lemma: a priori resolvent estimate}
      $z \id_{\dot{\H}} - \dot{A}$ is injective. Moreover, as its adjoint 
      $\ovl{z} \id_{\dot{\H}} - \dot{A}^*$ is injective and satisfies $\Norm{(\ovl{z} \id_{\dot{\H}} - \dot{A}^*) U}{\dot{\H}} \gtrsim \Norm{U}{\dot{\H}}$ for $U \in
      D(\dot{A})$ by Lemma~\ref{lemma: a priori resolvent estimate
        bis} the map
      $z \id_{\dot{\H}} - \dot{A}$ is surjective (see for instance \cite[Theorem~2.20]{Brezis:11}). The estimation of
      Lemma~\ref{lemma: a priori resolvent estimate}  then gives the
      continuity of the operator $(z \id_{\dot{\H}} - \dot{A})^{-1}$ on $\dot{\H}$. 
      
     \medskip\noindent
    \paragraph{{\bfseries Case 2: $\bld{\Re z =0}$.}}
    We start by proving the injectivity of $z \id_{\dot{\H}} - \dot{A}$. 
      Let thus $U =\transp (u^0,u^1) \in D(\dot{A})$ be such that $z U - \dot{A}U=0$.
      This gives
      \begin{align}
        \label{eq: injectivity sigma - A boundary damping}
        z u^0 +u^1 =0, \quad 
        - \Pell u^0  + (z-\alpha) u^1 =0.
      \end{align}
      First, if $z=0$ one has $u^1=0$, and then $\Pell u^0=0$. Thus,
      $u^0 \in \ker(\Pell)$ given $U \in \mathcal N = \ker(A)$. From
      the definition of $\dot \H$ this gives $U=0$.

  Second,
      if now $z \neq 0$, using \eqref{eq: scalar product sigma id -A
        boundary damping} we obtain
      \begin{align*}
        0 =  \Re \inp{(z \id_{\dot{\H}} - \dot{A})U}{U}_{\dot{\H}} 
        = -  \inp{\alpha u^1}{u^1}_{L^2 (\Omega)}.
      \end{align*}
      As $\alpha\geq 0$, this implies that $u^0$ vanishes {a.e.\@\xspace} on $\supp(\alpha)$.
      Observe that 
      \begin{align*}
          \Pell u^0 = z u^1 = -z^2 u^0. 
      \end{align*}
      The function $u^0$ is thus an eigenfunction for $\Pell$ that
      vanishes on an open set. 
      With the unique continuation property we obtain that
      $u^0$ vanishes in $\Omega$ and $u^1$ as well.

      \medskip If we now prove that $z \id_{\dot{\H}} - \dot{A}$ is
      surjective, the result then follows from the closed graph
      theorem as $\dot{A}$ is a closed operator. We write
      $z \id_{\dot\H} - \dot{A} = T +\id_{\dot\H}$ with
      $T = (z- 1) \id_{\dot\H} - \dot{A}$. By the first part of the
      proof, $T$ is invertible with a bounded inverse. The operator
      $T$ is unbounded on $\dot{\H}$. We denote by $\tilde{T}$ the
      restriction of $T$ to $D(\dot{A})$ equipped with the graph-norm
      associated with $\dot{A}$. The operator $\tilde{T}$ is bounded. It is also
      invertible.  It is thus a bounded
      Fredholm operator of index $\ind \tilde{T}=0$.  Similarly, we denote
      by $\iota$ the injection of $D(\dot{A})$ into $\dot{\H}$ and
      $\tilde{A}$ the restriction of $\dot{A}$ on $D(\dot{A})$
      viewed as a bounded operator. We have
      $z \iota - \tilde{A} = \tilde{T} + \iota$.  Since $\iota$
      is a compact operator, we obtain that $z \iota - \tilde{A}$ is
      also a bounded Fredholm operator of index $0$.
      Hence, $z \iota - \tilde{A}$ is surjective since
      $z \id_{\H} - \dot{A}$ is injective as proven above. Consequently,
      $z \id_\H - \dot{A}$ is surjective.
      This concludes the proof of Proposition~\ref{prop: position
        spectrum boundary damping}.
      \hfill \qedsymbol \endproof


\begin{thebibliography}{XXX}

\bibitem{Alabau:06}
{\sc Alabau-Boussouira, F.}
\newblock Piecewise multiplier method and nonlinear integral inequalities for
  {P}etrowsky equation with nonlinear dissipation.
\newblock {\em J. Evol. Equ.}, 6(1):95--112, 2006.
  
\bibitem{AA:11}
{\sc Alabau-Boussouira, F., and Ammari, K.}
\newblock Sharp energy estimates for nonlinearly locally damped {PDE}s via
  observability for the associated undamped system.
  \newblock {\em J. Funct. Anal.}, 260(8):2424--2450, 2011.

\bibitem{APT:17}
{\sc Alabau-Boussouira, F., Privat, Y., and Tr\'elat, E.}
\newblock Nonlinear damped partial differential equations and their uniform
  discretizations.
\newblock {\em J. Funct. Anal.}, 273(1):352--403, 2017.
  
  \bibitem{ATT:07}
{\sc Ammari, K., Tucsnak, M., and Tenenbaum, G.}
\newblock A sharp geometric condition for the boundary exponential
  stabilizability of a square plate by moment feedbacks only.
\newblock In {\em Control of coupled partial differential equations}, volume
  155 of {\em Internat. Ser. Numer. Math.}, pages 1--11. Birkh\"auser, Basel,
  2007.
  
\bibitem{Barbu:00}
{\sc Barbu V.}
\newblock Exact controllability of the superlinear heat equation.
\newblock {\em Appl. Math. Optim.}, 42:73--89, 2000.
  
\bibitem{BLR:92}
{\sc Bardos, C., Lebeau, G., and Rauch, J.}
\newblock Sharp sufficient conditions for the observation, control, and
  stabilization of waves from the boundary.
\newblock {\em SIAM J. Control Optim. 30\/} (1992), 1024--1065.
  
\bibitem{BD}
{\sc Batty, C. J.~K., and Duyckaerts, T.}
\newblock Non-uniform stability for bounded semi-groups on {B}anach spaces.
\newblock {\em J. Evol. Equ.\/} (2008), 765--780.

\bibitem{ML}
{\sc Bellassoued, M., and Le Rousseau, J.}
\newblock Carleman estimates for elliptic operators with complex coefficients.
  {P}art {I}: {B}oundary value problems.
\newblock {\em J. Math. Pures Appl. (9) 104\/} (2015), 657--728.



\bibitem{Brezis:11}
{\sc Brezis, H.}
\newblock {\em Functional {A}nalysis, {S}obolev {S}paces and {P}artial
  {D}ifferential {E}quations}.
\newblock Universitext. Springer, New York, 2011.

\bibitem{RB}
{\sc Buffe, R.}
\newblock Stabilization of the wave equation with {V}entcel boundary condition.
\newblock {\em J. Math. Pures Appl. (9) 108\/} (2017), 207--259.

\bibitem{BK:81}
{\sc Bukhgeim, A.~L.,  and Klibanov, M.~V.}
\newblock Global uniqueness of class of multidimensional inverse problems.
\newblock {\em Soviet Math. Dokl.}, 24:244--247, 1981.


\bibitem{Calderon:58}
{\sc Calder{\'o}n, A.-P.}
\newblock Uniqueness in the {C}auchy problem for partial differential
  equations.
  \newblock {\em Amer. J. Math.}, 80:16--36, 1958.
  
\bibitem{Carleman:39}
{\sc Carleman, T.}
\newblock Sur une probl\`eme d'unicit\'e pour les syst\`emes d'\'equations aux
  d\'eriv\'ees partielles \`a deux variables ind\'ependantes.
\newblock {\em Ark. Mat. Astr. Fys.}, 26B(17):1--9, 1939.


\bibitem{CR:14}
{\sc Cornilleau, P., and Robbiano, L.}
\newblock Carleman estimates for the {Z}aremba boundary condition and
  stabilization of waves.
\newblock {\em Amer. J. Math.}, 136:393--444, 2014.

\bibitem{DS:15}
{Denk, R., and Schnaubelt, R.}
\newblock A structurally damped plate equation with {D}irichlet-{N}eumann
  boundary conditions.
  \newblock {\em J. Differential Equations}, 259(4):1323--1353, 2015.
  
\bibitem{DDSF:05}
{\sc Dos Santos~Ferreira, D.}
\newblock Sharp {$L^p$} {C}arleman estimates and unique continuation.
\newblock {\em Duke Math. J.}, 129(3):503--550, 2005.

\bibitem{DKSU:09}
{\sc Dos Santos~Ferreira, D., Kenig, C.~E., Salo, M., and Uhlmann, G.}
\newblock Limiting carleman weights and anisotropic inverse problems.
\newblock {\em Invent. Math.}, 178:119--171, 2009.

\bibitem{FZ:00}
{\sc Fern{\'a}ndez-Cara, E., and Zuazua, E.}
\newblock Null and approximate controllability for weakly blowing up semilinear
  heat equations.
\newblock {\em Ann. Inst. H. Poincar\'e, Analyse non lin.},
17:583--616, 2000.

\bibitem{FI:96}
{\sc Fursikov, A., and Imanuvilov, O.~Y.}
\newblock {\em Controllability of evolution equations}, volume~34.
\newblock Seoul National University, Korea, 1996.
\newblock Lecture notes.

\bibitem{Hoermander:58}
{\sc H{\"o}rmander, L.}
\newblock On the uniqueness of the {C}auchy problem.
\newblock {\em Math. Scand.}, 6:213--225, 1958.


\bibitem{Hoermander:V3}
{\sc H{\"o}rmander, L.}
\newblock {\em The {A}nalysis of {L}inear {P}artial {D}ifferential
  {O}perators}, vol.~III.
\newblock Springer-Verlag, 1985.
\newblock Second printing 1994.

\bibitem{Hoermander:V4}
{\sc H{\"o}rmander, L.}
\newblock {\em The {A}nalysis of {L}inear {P}artial {D}ifferential
  {O}perators}, vol.~IV.
\newblock Springer-Verlag, 1985.


\bibitem{IIY:03}
{\sc Imanuvilov, O.~Y., Isakov, V., and Yamamoto, M.}
\newblock An inverse problem for the dynamical {L}am\'e system with two sets of
  boundary data.
  \newblock {\em Comm. Pure Appl. Math.}, 56:1366--1382, 2003.
  
\bibitem{Isakov:98}
{\sc Isakov, V.}
\newblock {\em Inverse problems for partial differential equations}.
\newblock Springer-Verlag, Berlin, 1998.

\bibitem{Jaffard:90}
{\sc Jaffard, S.}
\newblock Contr\^ole interne exact des vibrations d'une plaque rectangulaire.
\newblock {\em Portugal. Math.}, 47(4):423--429, 1990.

\bibitem{JK:85}
{\sc Jerison, D., and Kenig, C.~E.}
\newblock Unique continuation and absence of positive eigenvalues for
  {S}chr\"odinger operators.
\newblock {\em Ann. of Math. (2)}, 121(3):463--494, 1985.
\newblock With an appendix by E. M. Stein.

\bibitem{Kahane:62}
{\sc Kahane, J.-P.}
\newblock Pseudo-p\'eriodicit\'e et s\'eries de {F}ourier lacunaires.
\newblock {\em Ann. Sci. \'Ecole Norm. Sup. (3)}, 79:93--150, 1962.

\bibitem{KSU:07}
{\sc Kenig, C.~E., Sj{\"o}strand, J., and Uhlmann, G.}
\newblock The {C}alder{\'o}n problem with partial data.
\newblock {\em Ann. of Math.}, 165:567--591, 2007.

\bibitem{KT:01}
{\sc Koch, H., and Tataru, D.}
\newblock Carleman estimates and unique continuation for second-order elliptic
  equations with nonsmooth coefficients.
\newblock {\em Comm. Pure Appl. Math.}, 54(3):339--360, 2001.

\bibitem{KT:02}
{\sc Koch, H., and Tataru, D.}
\newblock Sharp counterexamples in unique continuation for second order
  elliptic equations.
\newblock {\em J. Reine Angew. Math.}, 542:133--146, 2002.

\bibitem{KT:05}
{\sc Koch, H., and Tataru, D.}
\newblock Dispersive estimates for principally normal pseudodifferential
  operators.
\newblock {\em Comm. Pure Appl. Math.}, 58:217–--284, 2005.

\bibitem{Kubo:00}
{\sc Kubo, M.}
\newblock Uniqueness in inverse hyperbolic problems---{C}arleman estimate for
  boundary value problems.
\newblock {\em J. Math. Kyoto Univ.}, 40(3):451--473, 2000.

\bibitem{JGL-vol1} {\sc Le Rousseau, J., Lebeau, G., and Robbiano, L.}
 \newblock {\em Elliptic Carleman estimates and applications to stabilization and
  controllability}, Vol.~1, Dirichlet boundary condition on Euclidean
space, to appear.

\bibitem{JGL-vol2}  {\sc Le Rousseau, J., Lebeau, G., and Robbiano, L.}
 \newblock {\em Elliptic Carleman estimates and applications to stabilization and
  controllability}, Vol.~2, general boundary conditons on Riemannian
manifold, to appear.


\bibitem{JL}
{\sc Le Rousseau, J., and Robbiano, L.}
\newblock Spectral inequality and resolvent estimate for the bi-laplace
  operator.
\newblock {\em J. Eur. Math. Soc.\/}, 22:1003--1094,
2020.


\bibitem{Lebeau:96}
{\sc Lebeau, G.}
\newblock \'{E}quation des ondes amorties.
\newblock In {\em Algebraic and geometric methods in mathematical physics
  ({K}aciveli, 1993)}, vol.~19 of {\em Math. Phys. Stud.} Kluwer Acad. Publ.,
Dordrecht, 1996, pp.~73--109.

\bibitem{LR:95}
{\sc Lebeau, G., and Robbiano, L.}
\newblock Contr\^ole exact de l'\'equation de la chaleur.
\newblock {\em Comm. Partial Differential Equations}, 20:335--356,
1995.

\bibitem{LR:97}
{\sc Lebeau, G., and Robbiano, L.}
\newblock Stabilisation de l'\'equation des ondes par le bord.
\newblock {\em Duke Math. J. 86\/} (1997), 465--491.

\bibitem{RTT:06}
{\sc Ramdani, K., Takahashi, T., and Tucsnak, M.}
\newblock Internal stabilization of the plate equation in a square: the
  continuous and the semi-discretized problems.
\newblock {\em J. Math. Pures Appl. (9)}, 85(1):17--37, 2006.


\bibitem{RT:74}
{\sc Rauch, J., and Taylor, M.}
\newblock Exponential decay of solutions to hyperbolic equations in bounded
  domains.
\newblock {\em Indiana Univ. Math. J. 24\/} (1974), 79--86.

\bibitem{Ro}
{\sc Robbiano, L.}
\newblock Fonction de co\^ut et contr\^ole des solutions des \'equations
  hyperboliques.
  \newblock {\em Asymptotic Anal. 10\/} (1995), 95--115.

  \bibitem{Sogge:89}
{\sc Sogge, C.~D.}
\newblock Oscillatory integrals and unique continuation for second order
  elliptic differential equations.
\newblock {\em J. Amer. Math. Soc.}, 2(3):491--515, 1989.

\bibitem{Tebou:09}
{\sc Tebou, L.}
\newblock Well-posedness and stability of a hinged plate equation with a
  localized nonlinear structural damping.
\newblock {\em Nonlinear Anal. Theory, Methods \& Applications.},
71(12):e2288--e2297, 2009.

\bibitem{Tebou:12}
{\sc Tebou, L.}
\newblock Well-posedness and stabilization of an {E}uler-{B}ernoulli equation
  with a localized nonlinear dissipation involving the {$p$}-{L}aplacian.
\newblock {\em Discrete Contin. Dyn. Syst. Series A}, 32(6):2315--2337, 2012.

\bibitem{Zuily:83}
{\sc Zuily, C.}
\newblock {\em Uniqueness and {N}on {U}niqueness in the {C}auchy {P}roblem}.
\newblock Birkhauser, Progress in mathematics, 1983.



\end{thebibliography}
\end{document}